\tikzstyle directed=[postaction={decorate,decoration={markings,
    mark=at position #1 with {\arrow{>}}}}]
\tikzset{->-/.style={decoration={
  markings,
  mark=at position #1 with {\arrow{>}}},postaction={decorate}}}
\tikzset{middlearrow/.style={
        decoration={markings,
            mark= at position 0.5 with {\arrow{#1}} ,
        },
        postaction={decorate}
    }
}
\newcommand{\END}{{\rm END}}
\theoremstyle{plain}
\newtheorem{theorem}{Theorem}
\newtheorem{corollary}[theorem]{Corollary}
\newtheorem{proposition}[theorem]{Proposition}
\newtheorem{lemma}[theorem]{Lemma}
\theoremstyle{definition}
\newtheorem{definition}[theorem]{Definition}
\theoremstyle{definition}
\newtheorem{remark}[theorem]{Remark}
\numberwithin{equation}{section}
\numberwithin{theorem}{section}
\newcommand{\maps}{\colon}
\newcommand{\und}[1]{\underline{#1}}
\newcommand{\refequal}[1]{\xy {\ar@{=}^{#1}
(-1,0)*{};(1,0)*{}};
\endxy}
\newcommand{\cat}[1]{\ensuremath{\mbox{\bfseries {\upshape {#1}}}}}
\newcommand{\To}{\Rightarrow}
\newcommand{\Hom}{{\rm Hom}}
\newcommand{\HOM}{{\rm HOM}}
\renewcommand{\to}{\rightarrow}
\def\Ind{{\mathrm{Ind}}}
\def\dmod{{\mathrm{-dmod}}}   
\def\stmod{{\mathrm{-\underline{mod}}}}
\def\Id{\mathrm{Id}}
\def\mf{\mathfrak}
\numberwithin{equation}{section}
\let\hat=\widehat
\let\epsilon=\varepsilon
\def\N{{\mathbbm N}}
\def\Z{{\mathbbm Z}}
\def\cal#1{\mathcal{#1}}%
\def\1{\mathbbm{1}}%
\def\nn{\notag}
\newcommand{\scs}{\scriptstyle}
\def\la{\langle}
\def\ra{\rangle}
\renewcommand{\l}{\lambda}
\def\cal#1{\mathcal{#1}}
\newcommand\nc{\newcommand}
\nc\rnc{\renewcommand}
\nc\Kar{\operatorname{Kar}}
\nc\End{\operatorname{End}}
\nc\Sym{\operatorname{Sym}}
\newcommand{\bigb}[1]{
\begin{tikzpicture}
\node[draw,  fill=white,rounded corners=4pt,inner sep=3pt] (X) at (0,.75) {$\scs #1$};
\end{tikzpicture}}
\nc\Omit[1]{}
\nc\sfk{\mathsf{k}}
\nc\Q{\mathbb{Q}}
\nc\IZ{\mathbb{Z}}
\nc\NH[1]{\mathsf{NH}_{#1}}
\nc\ONH{\mathsf{ONH}}
\nc\OPol{\mathsf{OPol}}
\nc\OL{\mathsf{O}\Lambda}
\nc\symL[1]{\Lambda_{#1}}
\nc\Pol[1]{\mathsf{P}_{#1}}
\nc\ten{\otimes}
\nc\Wedge{\bigwedge}
\nc\lp{\left(}
\nc\rp{\right)}
\nc\wdt[1]{\widetilde{#1}}
\nc\id{\operatorname{id}}
\nc\aand{\qquad\mbox{and}\qquad}
\newcommand{\U}{\dot{{\bf U}}}
\newcommand{\UA}{{_{\cal{A}}\dot{{\bf U}}}}
\def\Upi{{\bf U}_{q,\pi}}
\newcommand{\B}{\dot{\mathbb{B}}}
\newcommand{\oUcat}{\mf{U}}
\newcommand{\oUqp}{\mf{U}_{q,\pi}}
\newcommand{\oUbar}{\underline{\mf{U}}_{q,\pi}}
\newcommand{\oUdot}{\dot{\underline{\mf{U}}}_{q,\pi}}
\newcommand{\Ucat}{\cal{U}}
\newcommand{\UcatD}{\dot{\cal{U}}}
\renewcommand{\l}{\lambda}
\newcommand{\onel}{{\mathbf 1}_{\lambda}}
\newcommand{\onen}{{\mathbf 1}_{\l}}
\newcommand{\onenn}[1]{{\mathbf 1}_{#1}}
\newcommand{\Udotpi}{\dot{U}_{q,\pi}}
\newcommand{\AUdotpi}{_{\cal{A}}\dot{U}_{q,\pi}}
\newcommand{\E}[1]{E^{( #1 )}}
\newcommand{\F}[1]{F^{( #1 )}}
\newcommand{\bbpef}[1]{\xybox{%
  (-6,0)*{};
  (6,0)*{};
  (-4,0)*{}="t1";
  (4,0)*{}="t2";
  "t1";"t2" **\crv{(-4,-6) & (4,-6)}; ?(.15)*\dir{>} ?(.9)*\dir{>}
   ?(.5)*\dir{}+(0,-2)*{\scriptstyle{#1}};
}}
\newcommand{\bbpfe}[1]{\xybox{%
  (-6,0)*{};
  (6,0)*{};
  (-4,0)*{}="t1";
  (4,0)*{}="t2";
  "t2";"t1" **\crv{(4,-6) & (-4,-6)}; ?(.15)*\dir{>} ?(.9)*\dir{>}
  ?(.5)*\dir{}+(0,-2)*{\scriptstyle{#1}};
}}
\newcommand{\bbcfe}[1]{\xybox{%
  (-6,0)*{};
  (6,0)*{};
  (-4,0)*{}="t1";
  (4,0)*{}="t2";
  "t1";"t2" **\crv{(-4,6) & (4,6)}; ?(.15)*\dir{>} ?(.9)*\dir{>}
  ?(.5)*\dir{}+(0,2)*{\scriptstyle{#1}};
}}
\newcommand{\bbcef}[1]{\xybox{%
  (-6,0)*{};
  (6,0)*{};
  (-4,0)*{}="t1";
  (4,0)*{}="t2";
  "t2";"t1" **\crv{(4,6) & (-4,6)}; ?(.15)*\dir{>}
  ?(.9)*\dir{>} ?(.5)*\dir{}+(0,2)*{\scriptstyle{#1}};
}}
\newcommand{\lbub}[1]{
\xybox{
  (-3,0)*{};(3,0)*{} **\crv{(-3,4) & (3,4)}  ?(1)*\dir{>};
  (3,0)*{};(-3,0)*{} **\crv{(3,-4) & (-3,-4)} ?(.85)*{\bullet}+(0,-3.5)*{\scs {\ast+#1}};
  (-5,-5)*{}; (5,5)*{};
}}
\newcommand{\rbub}[1]{
\xybox{
  (-3,0)*{};(3,0)*{} **\crv{(-3,4) & (3,4)} ?(.0)*\dir{<};
  (3,0)*{};(-3,0)*{} **\crv{(3,-4) & (-3,-4)} ?(.1)*{\bullet}+(0,-3.5)*{\scs {\ast+#1}};
  (-5,-5)*{}; (5,5)*{};
}}
\newcommand{\lbbub}[1]{
\xybox{
  (-3,0)*{};(3,0)*{} **\crv{(-3,4) & (3,4)} ?(1)*\dir{>};
  (3,0)*{};(-3,0)*{} **\crv{(3,-4) & (-3,-4)} ?(.9)*{\bullet}+(0,-3.5)*{\scs {#1}};
  (-5,-5)*{}; (5,5)*{};
}}
\newcommand{\rbbub}[1]{
\xybox{
  (-3,0)*{};(3,0)*{} **\crv{(-3,4) & (3,4)} ?(0)*\dir{<};
  (3,0)*{};(-3,0)*{} **\crv{(3,-4) & (-3,-4)} ?(.1)*{\bullet}+(0,-3.5)*{\scs {#1}};
  (-5,-5)*{}; (5,5)*{};
}}
\newcommand{\smccbub}[1]{
\xybox{%
 (-5,0)*{};
  (5,0)*{};
  (-2.5,0)*{}="t1";
  (2.5,0)*{}="t2";
  "t2";"t1" **\crv{(2.5,3.3) & (-2.5,3.3)};   ?(1)*\dir{>}; ?(.3)*\dir{};
  "t2";"t1" **\crv{(2.5,-3.3) & (-2.5,-3.3)};
  (0,0)*{  {#1}}
}}
 \newcommand{\smcbub}[1]{
\xybox{%
 (-5,0)*{};
  (5,0)*{};
  (2.5,0)*{}="t1";
  (-2.5,0)*{}="t2";
  "t2";"t1" **\crv{(-2.5,3.3) & (2.5,3.3)};   ?(1)*\dir{>}; ?(.3)*\dir{};
  "t2";"t1" **\crv{(-2.5,-3.3) & (2.5,-3.3)};
  (0,0)*{  {#1}}
}}
\title{
DG structures on odd categorified quantum $sl(2)$
}
\begin{document}

\setcounter{tocdepth}{1}

\author{Ilknur Egilmez}
\email{egilmez@usc.edu}
\address{Department of Mathematics\\ University of Southern California \\ Los Angeles, CA}

\author{Aaron D. Lauda}
\email{lauda@usc.edu}
\address{Department of Mathematics\\ University of Southern California \\ Los Angeles, CA}

\date{September 10th, 2020}

\begin{abstract}
We equip Ellis and Brundan's version of the odd categorified quantum group for $sl(2)$ with a differential giving it the structure of a graded dg-2-supercategory.  The presence of the super grading gives rise to two possible decategorifications of the associated dg-2-category.  One version gives rise to a categorification of quantum $sl(2)$ at a fourth root of unity, while the other version produces a subalgebra of quantum $gl(1|1)$ defined over the integers.  Both of these algebras appear in connection with quantum algebraic approaches to the Alexander polynomial.
\end{abstract}

\maketitle

\tableofcontents

%
\section{Introduction}
%

%
\subsection{Motivations from link homology theory}
%

Khovanov homology, categorifying a certain normalization of the Jones polynomial~\cite{Kh1,Kh2}, is the simplest of a family of link homology theories associated to quantum groups and their representations. Surrounding Khovanov homology is an intricate system of related combinatorial and geometric ideas. Everything from extended 2-dimensional TQFTs~\cite{Kh2,LP3,CMW}, planar algebras~\cite{BN1,BN2},  category $\mathcal{O}$~\cite{Strop1,Strop2,BrSt3,BFK}, coherent sheaves on  quiver varieties~\cite{CK01}, matrix factorizations~\cite{KhR,KhR2}, homological mirror symmetry~\cite{SeSm}, arc algebras~\cite{Kh2,ChK,Strop1,BrSt1
}, Springer varieties~\cite{KhSp,Strop1,SW}, stable homotopy theory~\cite{LS,LS2,LS3}, and 5-dimensional gauge theories~\cite{GSV,Witten,Witten2} appear in descriptions of Khovanov homology, among many other constructions.

Given that Khovanov homology provides a nexus bridging the sophisticated structures described above, it is  surprising to discover that there exists a distinct categorification of the Jones polynomial.
 Ozsv\'{a}th, Rasmussen, Szab\'{o} found an {\em odd} analogue of Khovanov homology~\cite{ORS} that agrees with the original Khovanov homology when coefficients are taken modulo 2.  Both of these theories categorify the Jones polynomial, and results of Shumakovitch~\cite{Shum} show that these categorified link invariants are not equivalent.


The discovery of odd Khovanov homology was motivated by the existence of a spectral sequence from
ordinary Khovanov homology to the Heegaard Floer homology of the double branch cover \cite{OS-branched} with $\Z_2$ coefficients. Odd Khovanov homology was defined in an attempt to extend this spectral sequence to $\Z$ coefficients, rather than $\Z_2$.  Indeed, in \cite{ORS} they conjecture that for a link $K$ in $S^3$,  there is a spectral sequence whose $E^2$ term is the reduced odd Khovanov homology ${\rm Khr}(K)$ of $K$ and whose $E^{\infty}$ term is  the Heegaard-Floer homology $\hat{HF}(-\Sigma(K))$ of the branched double cover $\Sigma(K)$ with the orientation reversed (with coefficients in $\Z$).
\[
 \xy
  (-20,7)*+{{\rm Khr}(K)}="1";
  (-20,-7)*+{{\rm OKhr}(K)}="3";
    (20,0)*+{\hat{HF}(-\Sigma(K))}="2";
    {\ar@{~>}^{\Z/2} "1";"2" };
    {\ar@{~>}_{\Z ?} "3";"2" };
 \endxy
\]
A related version of this conjecture was proven in the context of instanton homology in
\cite{Sca
}.

There are now a number of spectral sequences connecting variants of Khovanov homology to variants of Floer homology
\cite{
Ras2,
Szabo1, 
Bloom,
KM, 
Roberts, 
Hendricks, 
Beier, 
Baldwin1, 
Baldwin2
}.
For even Khovanov homology there are many interesting connections with knot Floer-homology $\widehat{HFK}(K)$.  This is a bigraded homology for knots and links
\[
 \widehat{HFK}(K) = \bigoplus_{m,a\in\Z} \widehat{HFK}_m(K,a)
\]
where $m$ is called the Maslov (or homological) grading and $a$ is the Alexander grading.  The graded Euler characteristic of $\widehat{HFK}(K)$ is the  Alexander polynomial
\[
\sum_{m,a\in \Z} (-1)^m t^a \cdot {\rm rank}_{\Z}\left(
\widehat{HFK}_m(K,a)
\right)
= \Delta_K(t).
\]

Many of the spectral sequences listed above arise via a collapse of the bigraded homology groups to a single $\delta$-grading.  For Khovanov homology the $\delta$-grading is given by $\delta = h-q/2$,
where $q$ denotes the quantum grading and $h$ the homological.  On $\widehat{HFK}$ the $\delta$-grading is $\delta = a - m$.
Rasmussen conjectured a spectral sequence between the singly $\delta$-graded Khovanov homology $Kh_{\delta}(K)$ and the $\delta$-graded knot Floer homology $\widehat{HFK}_{\delta}(K)$~\cite{Ras2}.  Under the collapse of grading the graded Euler characteristic becomes an integer rather than a polynomial.  It is interesting to note that if we set $q = \sqrt{-1}$ in the Euler characteristic formula
\[
 \sum_{i,j} (-1)^i q^j \mathrm{rk}(\mathrm{Kh}_{i,j}) |_{q=\sqrt{-1}}
 =
 \sum_{i,j} (-1)^{i-j/2} \mathrm{rk}({\mathrm{Kh}_{i,j}})
\]
we recover the Euler characteristic of the $\delta$-graded Khovanov homology theory.  Similarly, in $\widehat{HFK}$ where $\delta = a - m$, so that the parameters are related by $q^2 = t$, we see that $q=\sqrt{-1}$ corresponds to $t=-1$, so the Euler characteristic specializes to
 \[
\sum_{i,a\in \Z} (-1)^{i + a} \cdot {\rm rank}_{\Z}\left(
\widehat{HFK}_i(K,a)
\right)
= \Delta_K(-1).
\]

The $t=-1$ evaluation of Alexander polynomial is equal to the knot determinant $\det(K)$.  This invariant has another categorification via the Heegaard-Floer 3-manifold homology of the branched double cover of $K$,
\[
\chi\left( \widehat{HF}(\Sigma(K)) \right) = |H^2(\Sigma(K), \Z)| = \det(K) =| \Delta_K(-1)|
\]
see \cite[Section 3]{OS-branched}.    This variant of Heegaard-Floer homology is the target of the conjectured spectral sequence from odd Khovanov homology discussed above.

\subsection{Quantum algebra and a zoo of quantum invariants}
These connection between varients of Heegaard-Floer homology and even/odd Khovanov homology are somewhat striking given that these invariants are defined in very different ways.   However, quantum algebra sheds some light as to why such a connection is less surprising.  It is well known that the Jones polynomial can be interpreted as a quantum invariant associated to the quantum group for $\mf{sl}_2$ and its two dimensional representation.  Varying the semisimple Lie algebra $\mf{g}$ and the irreducible representations coloring the strands of a link, one arrives at a whole family of quantum invariants.

The Alexander-Conway function $\nabla_L(t_1,\dots, t_k)$ for a $k$ component link $L$ is a rational function in variables $t_1, \dots, t_k$.  Similarly, the Alexander polynomial $\Delta_{L}(t_1, \dots, t_k)$ is a Laurent polynomial in variables $t_1^{\frac{1}{2}}, \dots, t_1^{\frac{1}{2}}$.  They are related by
\[
\nabla_L(t_1,\dots, t_k) = \Delta_{L}(t_1^2, \dots t_k^2) \quad \text{if $k>1$, and } \quad
\nabla_L(t) = \frac{\Delta_{L}(t^2)}{t-t^{-1}}.
\]
The Alexander-Conway polynomial can be formulated as a (non-semisimple) quantum invariant in several ways.  One formulation realizes $\nabla_L$  using the quantum group associated to the super Lie algebra $\mf{gl}(1|1)$~\cite{RS93}.  Murakami gave a construction using quantum $\mf{sl}_2$ with the quantum parameter specialized to a fourth root of unity~\cite{Mur92,Mur93}. Kauffman and Saleur give a construction based on quantum $\mf{sl}(1|1)$.

A comparison and review of  the ${\bf U}_{\sqrt{-1}}(\mf{sl}_2)$ and ${\bf U}_q(\mf{gl}(1|1))$ Reshetikhin-Turaev functors are studied in \cite{Vir06}.  In this work, Viro shows that there is a `$q$-less subalgebra' ${\bf U}^1$ of ${\bf U}_q(\mf{gl}(1|1))$ that is responsible for producing the Reshetikhin-Turaev functor that is closely related to the one coming from ${\bf U}_{\sqrt{-1}}(\mf{sl}_2)$.  Similarly, an algebra that can be defined over $\Z$ also appears in the Kauffman-Saleur ${\bf U}_q(\mf{sl}(1|1))$ construction of the Alexander-Conway polynomial $\nabla_K$ via a specialization ($\mathit{\lambda}=1$ in their notation, see \cite[Equation (2.1)]{KaufS}), which corresponds in our notation to working with the subalgebra $\U(\mf{sl}(1|1))1_{1}$ of $\U(\mf{sl}(1|1))$, see section~\ref{sec:qless}.   The quantum parameter is not needed in the definition of this algebra, it only arises in the coalgebra structure when one acts on tensor product representations.

Connections between the Alexander invariant and the Jones polynomial then arise via an observation by Kauffman and Saleur that the
$R$-matrix for braiding the fundamental representations of $\mf{sl}_2$ and $\mf{sl}(1|1)$ agree when evaluated at $q=\sqrt{-1}$.  This  implies an identification of quantum invariants
\begin{equation}
 J_K(q)|_{q=\sqrt{-1}} = \nabla_K(t)_{t=\sqrt{-1}} = \Delta_K(t)_{t=-1}.
\end{equation}
Our aim in this article is to lay the groundwork for a higher representation theoretic categorification of the knot determinant $| \Delta_K(-1)|$ by categorifying the quantum algebras used to define it.  Our approach provides a new perspective on connections between these different approaches via the theory of covering Kac-Moody algebras.

%
\subsection{The oddification program}
%
The so called `oddification'  program~\cite{LauR} in higher representation theory grew out of an attempt to provide a representation theoretic explanation for a number of phenomena observed in connection with odd Khovanov homology.   The idea is that Khovanov homology shares many connections throughout mathematics and theoretical physics, suggesting that many of the other fundamental structures connected with Khovanov homology may also have odd analogs.
The oddification program looks for {\em odd} analogs of structures that are typically non-commutative, having the same graded ranks as traditional objects and becoming isomorphic when coefficients are reduced modulo two.  Often the odd world provides the same combinatorial relationships in a non-commutative setting.

The nilHecke algebra plays a central role in the theory of categorified quantum groups, giving rise to an integral categorification of the negative half of ${\bf U}_q(\mf{sl}_2)$~\cite{Lau1,KL3,Rou2}.  An oddification of this algebra was defined in \cite{EKL} which can be viewed as an algebra of operators on a skew polynomial ring.  The invariants under this action define an odd version of the ring of symmetric functions~\cite{EK,EKL}.  The odd nilHecke algebra also gives rise to ``odd" noncommutative analogs of the cohomology of Grassmannians and Springer varieties~\cite{LauR,EKL}.  It also fits into a 2-categorical structure~\cite{Lau-odd,BE2} giving an odd analog of the categorification of the entire quantum group ${\bf U}_q(\mf{sl}_2)$.
In each of these cases, the structures possess combinatorics quite similar to those of their even counterparts.  When coefficients are reduced modulo two the theories become identical, but the odd analogues possess an inherent non-commutativity making them distinct from the classical theory.

The odd nilHecke algebra appears to be connected to a number of important objects in traditional representation theory.
It was independently introduced by Kang, Kashiwara and Tsuchioka~\cite{KKT}
starting from the different perspective of trying to develop super analogues of KLR algebras.  Their quiver Hecke superalgebras become isomorphic to affine Hecke-Clifford superalgebras or affine Sergeev superalgebras after a suitable completion, and the $\mf{sl}_2$ case of their construction is isomorphic to the odd nilHecke algebra.   Cyclotomic quotients of quiver Hecke superalgebras supercategorify certain irreducible representations of Kac-Moody algebras~\cite{KKO,KKO2}. A closely related spin Hecke algebra associated to the affine Hecke-Clifford superalgebra appeared in earlier work of Wang~\cite{Wang} and many of the essential features of the odd nilHecke algebra including skew-polynomials appears much earlier in this and related works on spin symmetric groups~\cite{KW1,KW2,KW4}.

%
\subsection{Covering Kac-Moody algebras}
%

Clark, Hill, and Wang showed that the odd nilHecke algebra and its generalizations fit into a framework they called {\em covering Kac-Moody algebras}~\cite{HillWang,ClarkWang, CHW,CHW2}.  Their idea was to decategorify the supergrading on the odd nilHecke algebra by introducing a parameter $\pi$ with $\pi^2=1$.  The covering Kac-Moody algebra is then defined over $\Q(q)[\pi]/(\pi^2-1)$ for certain very specific families of Kac-Moody Lie algebras.
The specialization to $\pi=1$ gives the quantum enveloping algebra of a Kac-Moody algebra and the specialization to $\pi=-1$ gives a quantum enveloping algebra of a Kac-Moody superalgebra. This idea led   to a novel bar involution $\overline{q} =\pi q^{-1}$ allowing the first construction of canonical bases for Lie superalgebras~\cite{CHW2,ClarkWang}.
In the simplest case, the covering algebra $\Upi$ can be seen as a simultaneous generalization of the modifed quantum group $\U(\mf{sl}_2)$ and the modified quantum Lie superalgebra $\U(\mathfrak{osp}(1|2))$.  This relationship is illustrated below.
\[\xy
  (0,10)*+{\U_{q,\pi}}="t";%
  (-15,-5)*+{\U(\mathfrak{sl}_2)}="bl";
  (15,-5)*+{\U(\mathfrak{osp}(1|2))}="br";
  {\ar_{\pi \to 1} "t";"bl"};
  {\ar^{\pi \to -1} "t";"br"};
 \endxy\]

Covering Kac-Moody algebras are not an $\mf{sl}_n$ phenomenon.  In finite type, the covering Kac-Moody algebras ${\bf U}_{q,\pi}(\mf{g})$ can be defined connecting the superalgebra of the anisotropic Lie superalgebra $\mf{g}=\mf{osp}(1|2n)$ with the quantum Kac-Moody algebra $\mf{g}=\mf{so}(2n+1)$ obtained by forgetting the parity in the root datum~\cite{CHW,HillWang}.  In particular, the only finite type family of covering Kac-Moody algebras ${\bf U}_{q,\pi}(\mf{g})$ have a $\pi=1$ specialization equal to the quantum eveloping algebra ${\bf U}_q(\mf{so}(2n+1))$ and the $\pi=-1$ specialization the quantum superalgebra ${\bf U}_q(\mf{osp}(1|2n)$.  The connection to $\mf{sl}_2$ only arises because of the Lie algebra coincidence $\mf{sl}_2 \cong \mf{so}(3)$.

The algebra/superalgebra pairs connected by covering theory are closely connected by the theory of {\em twistors} developed by Clark, Fan ,Li, Wang~\cite{FanLi,Clark-III}.   Denote by $\mathbf{t}$ a square root of $-1$, and let $\U[\mathbf{t}]$ denote the algebra $\U_{q,\pi}$ with scalars extended by $\mathbf{t}$.  Then the twistor associated to a covering algebra $\U_{q,\pi}(\mf{g})$ gives an isomorphism
\begin{equation} \label{eq:twistor-iso}
 \dot{\Psi} \maps \U[\mathbf{t}]|_{\pi=-1} \longrightarrow \U[\mathbf{t}]|_{\pi=1}
\end{equation}
sending $\pi \mapsto -\pi$ and thereby switching between a quantum group and its super analog.  This map sends $q\mapsto \mathbf{t}^{-1}q$. Hence, $\U[\mathbf{t}]_{\pi=1}$ and $\U[\mathbf{t}]_{\pi=-1}$ can be regarded as two different rational forms of a common algebra $\U[\mathbf{t}]$.  These two rational forms each admit their own distinct integral forms.

The twistor isomorphism \eqref{eq:twistor-iso} has implications for the corresponding quantum link invariants.  Blumen showed that $\mf{osp}(1|2n)$ and $\mf{so}(2n+1)$ invariants colored by the standard $(2n+1)$-dimensional representations agree up to a substitution of variable~\cite{Bl}. To a knot or link $K$, Clark greatly extended this observation by defining covering colored knot invariants $J_K^{\l}(q,\mathbf{t})$ associated to ${\bf U}_{q,\pi}(\mf{g})$ and a dominant integral weight $\l \in X^+$.  These knot invariants take values in a larger field $\Q(q, \mathbf{t})^{\tau}$ with $\tau^2 = \pi$.  They have the property of simultaneously generalizing the colored $\mf{so}(2n+1)$ quantum invariant and the $\mf{osp}(1|2n)$ super quantum invariant.  If we define
${}_{\mf{so}}J_k^{\l}(q) := J_K^{\l}(q,1)$ and ${}_{\mf{osp}}J_k^{\l}(q) := J_K^{\l}(q,\mathbf{t})$ then Clark shows~\cite[Theorem 4.24]{Clark-knot} that the twistor isomorphism \eqref{eq:twistor-iso} gives rise to an identification of quantum knot invariants
\begin{equation} \label{eq:twistor-knots}
 _{\mf{osp}}J_k^{\l}(q) = \alpha(\l,K) \; {}_{\mf{so}}J_k^{\l}(\mathbf{t}^{-1}q)
\end{equation}
for some scalar $\alpha(\l,K)$ depending on the dominant weight $\l$ and the link $K$.  In the case when $n=1$ this gives the surprising observation that the colored Jones polynomial can be obtained from the super representation theory of $\mf{osp}(1|2)$  with appropriate scalars.

Here we  show that the covering algebra $\U_{q,\pi}$ for $n=1$ specializes at $(q,\pi)=(\sqrt{-1},1)$ to the small quantum group for $\mf{sl}_2$ (at a fourth root of unity) and at parameters $(q,\pi)=(-1,-1)$ to a ``$q$-less subalgebra" of modified $\mf{sl}(1|1)$, see Sections~\ref{sec:small} and \ref{sec:qless}.
The quantum knot invariant twistor isomorphism ~\eqref{eq:twistor-knots} at $n=1$
specializes at $q=-1$ to a connection between the $\mf{osp}(1|2)$ invariant at parameter $q=-1$  and the $\mf{sl}_2$-invariant at $q=\mathbf{t}^{-1}(-1) =\mathbf{t}$ which is a fourth root of unity.
Hence, the connection between a $q$-less subalgebra of quantum $\mf{sl}(1|1)$ and $\mf{sl}_2$ at a fourth root of unity may be a special case of a twistor arising from the covering Kac-Moody theory.

%
\subsection{Categorification}
%

The existence of a canonical basis for the covering algebra $\U_{q,\pi}$ led Clark and Wang to conjecture the existence of a categorification of this algebra~\cite{ClarkWang}. The conjecture was proven in \cite{Lau-odd} who defined a $\Z\times \Z_2$-graded categorification $\mf{U}_{q,\pi}$ of $\U_{q,\pi}$.
Later, Brundan and Ellis gave a simplified treatment~\cite{BE2} using the theory of monoidal supercategories~\cite{BE1}.  This work provided a drastic simplification that makes the present work possible.

Thus far, the odd categorification $\oUqp$ of quantum $\mf{sl}_2$ has yet to be applied to give a higher representation theoretic interpretation of odd Khovanov homology.  However, it is interesting to note the strong agreement between the existence of covering Kac-Moody algebras for $\mf{so}(2n+1)$ and the existence of an ``odd link homology" for the same algebras predicted by the string theoretic approach to link homology constructed by Mikhaylov and Witten using D3-branes with boundary on fivebrane~\cite{Witten-odd}.

Given the expected connections to odd link homology, the conjectural  spectral sequences connecting odd Khovanov homology and knot Floer homology motivates the investigation of 2-categorical differentials on the odd categorified quantum group.
%
In particular, we categorify both specializations of the covering algebra at $(q,\pi)= (\sqrt{-1},1)$ and $(-1,-1)$ corresponding $\mf{sl}_2$ at a fourth root of unity and a subalgebra of quantum $\mf{sl}(1|1)$, see Corollary~\ref{cor:main}.   This is not as straightforward as one might hope.  In both algebras there are relations of the form $E^2 = F^2 = 0$ and such relations are known to be nontrivial to categorify.

If the identity morphism of a generator $E$ in a category is represented diagrammatically by a vertical arrow, then two vertical strands represents the object $EE$.   Khovanov was the first to identify the representation theoretic importance of dg-structures with a diagrammatic relation defining the differential of a crossing to be two vertical strands.  Such structures appeared in work of Lipshitz, Ozsvath, Thurston \cite{LOT} providing a combinatorial construction of Heegaard-Floer homology.  Khovanov showed that such a relation could be used to produce the nilpotent relation $E^2=0$ needed for a categorification of the positive part of $\mf{gl}(1|1)$~\cite{Kh-gl}.  This led to a categorification of the positive part of $\mf{gl}(m|1)$\cite{KhS-glm}.

Since Khovanov's initial observations, there have been various proposals to categorifications connected with $\mf{gl}(1|1)$ appearing in the literature.  In \cite{EPV} the  tangle Floer dg algebra is identified with a tensor product of ${\bf U}_q(\mf{gl}(1|1))$  representations and dg-bimodules were defined giving the action of quantum group generators $E$ and $F$. Further, Ozvath and Szabo's new  bordered Heegaard-Floer homology~\cite{OS-Kauff, OS-bknot} can be seen as a categorification of $\mf{gl}(1|1)$ representations via the work of Manion~\cite{Manion}.
Motivated by contact geometry, Tian defined a categofication of ${\bf U}_q(\mf{sl}(1|1))$ using triangulated categories arising from the contact category of the disc with points on the boundary~\cite{Yin1,Yin2,Yin3}.    An approach to categorifying tensor powers of the vector representation of ${\bf U}_q(\mf{gl}(1|1))$ based on super Schur-Weyl duality is given in \cite{Sart}, which is related to the bordered theory in ~\cite{Manion-KhS}.

Here we extend Khovanov's observation in order to categorify  the specializations of the covering algebra at $q^2=-\pi$.  To do this we define new dg-structures on the 2-category $\mf{U}_{q,\pi}$.

%
\subsection{Differential graded structures on categorified quantum group}
%

Derivations on the even categorification $\cal{U}(\mf{sl}_2)$  were studied by Elias and Qi~\cite{EQ2}. They were interested in
categorifying the small quantum group for $\mf{sl}_2$ at a (prime) root of unity.  Their approach made use of the theory of Hopfological algebra initiated by Khovanov~\cite{Kh-hopf} and developed by Qi~\cite{Qi}.  The main idea in Hopfological algebra is to equip a given categorification with the structure of a p-dg algebra. This is like a dg-algebra, except that $d^p=0$ rather than $d^2=0$.

Within the framework of Hopfological algebra, there have been a number of investigations into categorifications at a prime root of unity.  A p-dg analog of the nilHecke algebra was studied in \cite{KhQ}.
In \cite{EQ2} Elias and Qi categorify the small quantum group for $\mf{sl}_2$ at a (prime) root of unity by equipping the 2-category $\Ucat$ with a $p$-differential giving it the structure of a p-dg-2-category.
Using thick calculus from \cite{KLMS}, in Elias and Qi categorify an idempotented form of quantum $\mf{sl}_2$ and some of its simple representations at a prime root of unity~\cite{EQ1}.   This involves equipping the Karoubi envelope $\UcatD$ of the 2-category $\Ucat$ with a p-dg structure.
Related categorifications studied were studied in~\cite{QiSussan}.  All of these approaches require $p$ to be a prime root of unity and the base field to have characteristic $p$.

Much less in known about honest dg-structures, or categorification at a root of unity working  over an arbitrary field (see \cite{LQi} for the current state of the art).   In particular, it was shown in \cite{EQ2} that there are no nontrivial differentials in characteristic zero on the original categorification $\Ucat(\mf{sl}_2)$.
The only clue we have is the work of Ellis and Qi that equips the odd nilHecke algebra with an honest dg-algebra structure~\cite{EllisQi} .   Their work gives a categorification of the positive part of ${\bf U}_q(\mf{sl}_2)$ with $q$ specialized to a fourth root of unity.  There are a couple of points here worth highlighting.  First, they work with the odd nilHecke algebra defined over an arbitrary field or $\Z$ (no need to work in characteristic $p$).  Second, the fourth root of unity doesn't come from considering a funny version of chain complexes with $d^4=0$; they use ordinary dg-algebras.  However, the differential they define on the odd nilHecke algebra is not bidegree zero.  Rather it has $\Z \times \Z_2$ -degree $(2,\bar{1})$ leading to so called mixed complexes, or `half graded' chain complexes of vector spaces.

The effect of having mixed complexes is a collapse of the $\Z\times \Z_2$-bigrading, analogous to the $\delta$-grading from link homology theory. At
the level of the Grothendieck ring of the derived category of dg-modules, this has the effect of imposing the relation $1+q^2\pi=0$ in the ground ring $\Z[q,q^{-1},\pi]/(\pi^2-1)$.  When $\pi=1$, this gives the Grothendieck ring the structure of $\Z[\sqrt{-1}]$-algebra.   So the fourth root of unity comes from the bidegree of the differential, {\em not} from the theory of $p$-dg algebras.  This is discussed in greater detail in section~\ref{sec:gaussian}.

Ellis and Qi suggested that their work on the differential graded odd nilHecke algebra should extend to the odd categorified quantum group $\mf{U}(\mf{sl}_2)$ to provide a characteristic zero lift of the differentials defined on the original categorification $\cal{U}(\mf{sl}_2)$ that were studied in finite characeteristic in \cite{EQ2}.  Here we prove this conjecture by defining a family of differentials on the odd 2-supercategory $\mf{U}$, see Proposition~\ref{prop:differential_on_oUcat}.

%
\subsection{Main Results}
%

In Proposition~\ref{prop:differential_on_oUcat} we classify 2-categorical differentials on the odd 2-category $\mf{U}_{q,\pi}$.
Our classification depends on the so-called nondegeneracy conjecture stating that certain spanning sets form a basis for the 2-homs in $\oUqp$.  However, our  results are independent of this conjecture as we define explicit differentials giving the desired categorifications.
Following similar arguments from \cite{EQ2}, we show that the odd 2-category $\oUqp$ is dg-Morita equivalent to a positivly graded dg-algebra enabling us to compute the Grothendieck ring of the dg-2-supercategory $(\oUqp,\partial)$ using the theory of fantastic filtrations developed by Elias and Qi~\cite{EQ1}. As explained in section~\ref{sec:gaussian}, we have freedom in how we treat the $\Z_2$-grading in the Grothendieck group.  In particular, the Grothendieck group is naturally a $\Z[q,q^{-1},\pi]/(\pi^2-1, 1+q^2\pi)$ module with $[M \Pi] = \pi [M]$.  We show in Corollary~\ref{cor:main} that taking $\pi=1$ specialization results in a categorification of $\U(\mf{sl}_2)$ at a fourth root of unity.  While taking the $\pi=-1$ specialization eliminates $q$ entirely and we are left with a $\Z$-module closely related to $\mf{gl}(1|1)$.  In particular, we have relations $E^2=F^2=0$ and a super commutator relation for $E$ and $F$.  In this way, ${\bf U}_{\sqrt{-1}}(\mf{sl}_2)$   together with a $q$-less version of $\mf{gl}(1|1)$ appear naturally via different decategorifications of the same 2-category $\mf{U}_{q,\pi}$.

The significants of a bidgree $(2,\bar{1})$, or $\delta$-grading preserving, differential connecting the odd 2-category $\oUqp$ with quantum algebras connected to the Alexander polynomial is the 
further evidence it provides that odd categorified quantum groups may supply a higher representation theoretic bridge between  odd Khovanov homology and $\widehat{HF}(\Sigma(K))$.   An odd categorified quantum groups construction of odd Khovanov homology should have interesting interactions with the differential defined here, inducing a spectral sequence associated to these new differentials, and providing a higher representation theoretic categorification of the knot determinant $\det(K) =| \Delta_K(-1)|$.

\subsection*{Acknowledgements}
The authors are very grateful to You Qi for patiently explaining the details of his previous work
and to Andy Manion for explaining his perspective on quantum algebraic aspects of Heegaard-Floer homology.
We would also like to thank Jon Brundan and Joshua Sussan for comments on an earlier draft of this article.  Both authors were partially supported by the NSF grants DMS-1255334 and DMS-1664240.

%
\section{Super dg theory }
%

Here we consider $\Z\times \Z_2$-graded dg categories.   This is a modest generalization of the standard theory of dg-categories, since a $\Z$-graded dg-category induces a $\Z_2$-graded one by collapsing the grading modulo 2.  However, we note that the $\Z_2$ grading on 2-morphisms in the 2-category $\mf{U}$ defined in section~\ref{sec:oddU} are {\bf not} the mod 2 reductions of the quantum $\Z$-grading.  It is easy to see this from the bigrading on caps and cups.  We consider differentials with respect to the $\Z_2$ (or super) grading. If the differential also has a nontrivial $\Z$-grading (as is the case with the differential on $\mf{U}$) this can produce interesting effects  on the Grothendieck ring.  In particular, if the differential has bidegree $(2, \bar{1})$ we are led to the notion of `half graded' complexes whose Grothendieck ring corresponds to the Gaussian integers, see section~\ref{sec:gaussian}.

The natural context for discussing $\Z_2$-graded dg categories is the super category formalism developed by Brundan and Ellis~\cite{BE1,BE2} that we review in section~\ref{sec:supercategories}.

%
\subsection{2-supercategories} \label{sec:supercategories}
%
Let $\Bbbk$ be a field with characteristic not equal to 2.   A \emph{superspace} is a $\Z_2$-graded vector space
$
V = V_{ \bar{0}} \oplus V_{ \bar{1}}.
$
For a homogeneous element $v \in V$, write $|v|$ for the parity of $v$.

Let \cat{SVect} denote the category of superspaces and all linear maps. Note that homs $\Hom_{\cat{SVect}}(V,W)$ has the structure of a superspace since and linear map $f\maps V \to W$ between superspaces decomposes uniquely into an even and odd map.
The usual tensor product of $\Bbbk$-vector spaces is again a superspace with
$
(V\otimes W)_{ \bar{0}} = V_{ \bar{0}} \otimes W_{ \bar{0}}\oplus V_{ \bar{1}} \otimes W_{ \bar{1}}
$
and
$
(V\otimes W)_{ \bar{1}} = V_{ \bar{0}} \otimes W_{ \bar{1}}\oplus V_{ \bar{1}} \otimes W_{ \bar{0}}.
$
Likewise, the tensor product $f\otimes g$ of two linear maps between superspaces is defined by
\begin{equation}
  (f\otimes g) (v \otimes w) := (-1)^{|g||v|} f(v) \otimes g(w).
\end{equation}
Note that this tensor product does \emph{not} define a tensor product on \cat{SVect}, as the usual interchange law between tensor product and composition has a sign in the presence of odd maps
\begin{equation}
  (f\otimes g) \circ (h \otimes k) = (-1)^{|g||h|} (f \circ h) \otimes (g\circ k).
\end{equation}
This failure of the interchange law depending on parity is the primary structure differentiating super monoidal categories from their non-super analogs.

If we set $\underline{\cat{SVect}}$ to be the subcategory consisting of only even maps, then the tensor product equips $\underline{\cat{SVect}}$ with a monoidal structure.  The map $u \otimes v \mapsto (-1)^{|u||v|} v \otimes u$ makes  $\underline{\cat{SVect}}$ into a symmetric monoidal category.
We now define supercategories, superfunctors, and supernatural transformations by enriching categories over the symmetric monoidal category $\underline{\cat{SVect}}$.   See \cite{kel1} for a review of the enriched category theory.

\begin{definition}
A \emph{supercategory} $\cal{A}$ is a category enriched in $\underline{\cat{SVect}}$.  A superfunctor $F \maps \cal{A} \to \cal{B}$ between supercategories is an $\underline{\cat{SVect}}$-enriched functor.
\end{definition}

Unpacking this definition, the hom spaces in a supercategory are superspaces
\[
 \HOM_{\cal{A}}(X,Y) = \Hom_{\cal{A}}^{\bar{0}}(X,Y) \oplus \Hom_{\cal{A}}^{\bar{1}}(X,Y)
\]
and composition is given by an even linear map.
Let $\cat{SCat}$ denote the category of all (small) supercategories, with morphisms given by superfunctors.  This category admits a monoidal structure making it a symmetric monoidal category~\cite[Definition 1.2]{BE2}.

\begin{definition}
A 2-supercategory is a category enriched in \cat{SCat}.  These means that for each pair of objects we have a supercategory of morphisms, with composition given by a superfunctor.
\end{definition}

For our purpose, it suffices to consider a 2-supercategory to be an extension of the definition of a 2-category to a context where the interchange law relating horizontal and vertical composition is replaced by the \emph{super interchange law}
\[
\xy 0;/r.19pc/:
 (-6,10);(-6,-10); **\dir{-} ?(.75)*\dir{ };
  (6,10);(6,-10); **\dir{-} ?(.75)*\dir{ };
 (-6,4)*{\bigb{g}};
 (6,-4)*{\bigb{f}};
 (10,3)*{ \scs \l};
 (0,3)*{\scs \mu};
 (-10,3)*{\scs \nu};
 (-6,-13)*{Y};
 (6,-13)*{X};
  (-6,13)*{Y'};
 (6,13)*{X'};
 \endxy
 \;\; = \;\;
 \xy 0;/r.19pc/:
 (-6,10);(-6,-10); **\dir{-} ?(.75)*\dir{ };
  (6,10);(6,-10); **\dir{-} ?(.75)*\dir{ };
 (-6,0)*{\bigb{g}};
 (6,0)*{\bigb{f}};
 (12,3)*{ \scs \l};
 (0,3)*{\scs \mu};
 (-12,3)*{\scs \nu};
  (-6,-13)*{Y};
 (6,-13)*{X};
  (-6,13)*{Y'};
 (6,13)*{X'};
 \endxy
 \;\; = \;\;
 (-1)^{|f||g|} \;\;
 \xy 0;/r.19pc/:
 (-6,10);(-6,-10); **\dir{-} ?(.75)*\dir{ };
  (6,10);(6,-10); **\dir{-} ?(.75)*\dir{ };
 (-6,-4)*{\bigb{g}};
 (6,4)*{\bigb{f}};
 (12,3)*{ \scs \l};
 (0,3)*{\scs \mu};
 (-12,3)*{\scs \nu};
  (-6,-13)*{Y};
 (6,-13)*{X};
  (-6,13)*{Y'};
 (6,13)*{X'};
 \endxy
\]
Effectively this means that when exchanging heights of morphisms we must take into account their parity.

\subsection{DG-superalgebras}

In this section we collect some facts about differential graded algebras in the  super setting.  Following \cite{EllisQi} we grade our dg algebras by $\Z/ 2\Z$.  Traditional dg algebras inherit a $\Z_2$ grading by collapsing the $\Z$-grading mod 2.  However, in our setting we will have both a $\Z$-grading and $\Z_2$-grading that is not the mod 2 reduction of the $\Z$ grading.

A  \emph{dg-superalgebra} $(A,\partial_A)$ is a superalgebra $A=A_{\bar{0}} \oplus A_{\bar{1}}$ and an odd parity $\bar{1}$ $\Bbbk$-linear map $\partial=\partial_A\maps A \to A$ satisfying $\partial^2=0$ and for any homogeneous $a,b\in A$
\begin{equation} \label{eq:super-L}
 \partial(ab) = \partial_A(ab) = \partial_A(a)b +(-1)^{|a|}a\partial_A(b).
\end{equation}
A left \emph{dg-supermodule} $(M,\partial_M)$ is a supermodule $M=M_{\bar{0}} \oplus M_{\bar{1}}$ equipped with an odd parity $\Bbbk$-linear map $\partial_M\maps M \to M$ such that for any homogeneous elements $a\in A$, $m \in M$ we have
\[
 \partial_M(am) = \partial_A(a)m +(-1)^{|a|}a\partial_M(m).
\]
If $A$ and $B$ are dg-superalgebras, then a \emph{dg $(A,B)$-superbimodule} is a superspace equipped with a differential and commuting left dg $A$-supermodule and right dg $B$-supermodule structure.  Given a dg $(A,B)$-superbimodule $M$, define the \emph{parity shift} superbimodule $\Pi M$ to have the same underlying vector space as $M$ viewed as a superspace with the oppposite $\Z_2$-grading
\[
(\Pi M)_{\bar{0}} = M_{\bar{1}}, \qquad (\Pi M)_{\bar{1}} = M_{\bar{0}}.
\]
The superbimodule structure on $\Pi M$ is defined by $a \cdot m \cdot b := (-1)^{|a|}avb$.  With this definition the identity function on the underlying vector space defines an odd dg-superbimodule isomorphism $\zeta_M \maps \Pi M \to M$.  For a morphism of superbimodules $f \maps M \to N$ we define $\Pi f \maps \Pi M \to \Pi N$ by the map $(-1)^{|f|}f$.  Then there is an isomorphism $\xi_M \maps \Pi^2 M \to M$ given by minus the identity.  The category of $(A,B)$-superbimodules equipped with the superfunctor $\Pi$ and odd supernatural isomorphism $\zeta \maps \Pi \to \Id$ equips the category of dg $(A,B)$-superbimodules with the structure of a $\Pi$-supercategory in the sense of \cite[Definition 1.7]{BE1}.

\subsection{Graded dg-superalgebras}
A \emph{graded superspace} is super vector space $V$ equipped with an additional $\Z$-grading
\[
V = \bigoplus_{n\in \Z} V_n = \bigoplus_{n\in \Z} V_{n,\bar{0}} \oplus V_{n,\bar{1}}
\]
that is independent of the $\Z_2$ parity grading.
A \emph{graded} dg-superalgebra is a graded superalgebra with a $\Z\times \Z_2$ degree $(2,\bar{1})$ $\Bbbk$-linear differential  $\partial=\partial_A\maps A \to A$ satisfying \eqref{eq:super-L}.  Graded dg-superbimodules are defined analogously.  Given a graded dg $(A,B)$-superbimodule $M$, define the \emph{$\Z$-grading shift} dg-superbimodule $Q M$ by shifting the $\Z$-grading of the underlying vector space
$
(Q M)_{n} = M_{n-1}
$
and leaving the parity grading untouched.  For graded dg-superalgebras $A$ and $B$ the category of graded dg $(A,B)$ superbimodules is a graded $(Q,\Pi)$-supercategory in the sense of \cite[Definition 6.4]{BE1} with grading and parity shift functors $Q$ and $\Pi$.
\medskip

Given a (graded) superalgebra $A$, denote by $\cal{C}(A)$ the homotopy category of (graded) dg-supermodules given by quotienting maps of dg-supermodules by null-homotopies.  Likewise, we denote by $\cal{D}(A)$ the derived category of (graded) dg-supermodules.  Both $\cal{C}(A)$ and $\cal{D}(A)$ are triangulated categories. In the super setting that we are working in, the translation functor $[1]$ acts by the parity shift :
\[
(M[1])^k := \Pi M = M^{k + \bar{1}} , \qquad \partial_{M[1]} := \Pi \partial_M:= -\partial_M.
\]

\subsection{DG-supercategories}
For standard results on  dg-categories see~\cite{Keller}.
\begin{definition}
A supercategory $\cal{A}$ is called a {\em dg-supercategory} if the morphism spaces between any two objects $X,Y \in \cal{A}$ are equipped with a degree $\bar{1}$ differential $\partial$
\[
\partial \maps \Hom_{\cal{A}}^{\bar{x}}(X,Y) \longrightarrow \Hom_{\cal{A}}^{\bar{x} +\bar{1}}(X,Y),
\]
which acts via the Leibnitz rule
\begin{align}
 \partial(g\circ f) = \partial(g)\circ f + (-1)^{|g|}g \circ \partial(f) \nn
\end{align}
on composable pairs morphisms $f$ and $g$.
\end{definition}

Given a dg-superalgebra   $A$, consider the dg-enhanced supermodule category $A_{\partial}\dmod$ by defining the HOM-complex between two dg modules $M$ and $N$ to be
\[
\HOM_A(M,N) = \Hom^{\bar{0}}_A(M,N) \oplus \Hom^{\bar{1}}_A(M,N).
\]
The differential $\partial$ acts on a homogenous map $f\in \HOM_A(M,N)$ as
\[
 \partial(f) := \partial_N \circ f - (-1)^{|f|}f \circ \partial_M.
\]
If we take $A=\Bbbk$ with trivial differential differential then $\Bbbk_{\partial}\dmod$ is just the dg-category of chain complexes of super vector spaces.

The morphism space in the homotopy category $\cal{C}(A)$ is just the degree-zero part of the resulting cohomology
\[
 \Hom_{\cal{C}(A)}( M,N) = H^0(\HOM_A(M,N)).
\]
 The morphism spaces $\Hom_{\cal{D}(A)}(M,N)$ in the derived category are computed by replacing $M$ by a cofibrant replacement $P_M$.  Recall that a dg module $P$ over $A$ is called \emph{cofibrant} if for any surjective quasi-isomorphism $f \maps M \to N$ and any morphism $g \maps P \to N$, there exists a morphism $\bar{g}\maps P \to M$ making the diagram
\[
 \xy
 (0,15)*+{P}="P";
(0,0)*+{N}="N";
(-15,0)*+{M}="M";
    {\ar^g "P";"N"};
    {\ar_f "M";"N"};
    {\ar@{-->}^{\bar{g}} "P";"M"};
 \endxy
\]
commute. Every dg-supermodule $M$ admits a cofibrant replacement $P_M$, unique up to quasi isomorphism see \cite[Proposition 2.3]{EllisQi} and there are natural isomorphisms
\begin{equation} \label{eq:cofib}
  \Hom_{\cal{D}(A)}(M,N) \cong \Hom_{\cal{C}(A)}(P_M,N) = H^0(\HOM_A(P_M,N)).
\end{equation}

\begin{definition} \label{def:rep}
A left (respectively right) {\em dg-supermodule} $\cal{M}$ over a dg-supercategory $\cal{A}$ is a superfunctor
\begin{equation}
  \cal{M} \maps \cal{A} \to \Bbbk_{\partial}\dmod, \qquad
  ( \text{resp. } \cal{M} \maps \cal{A}^{{\rm op}} \to \Bbbk_{\partial}\dmod),
\end{equation}
that commutes with the $\partial$-actions on $\cal{A}$ and $\Bbbk_{\partial}\dmod$.  A dg-supermodule is called \emph{representable} if $\cal{M} = \HOM_{\cal{A}}(X,-)$ for some object $X$  of $\cal{A}$.
\end{definition}

\subsection{ DG 2-supercategories} \label{sec:super-dg-2-cat}

\begin{definition}
A (strict) dg 2-supercategory $(\mf{U},\partial)$ consists of a 2-category $\mf{U}$, together with a differential on 2-morphisms satisfying the super Leibnitz rule for both horizontal and vertical composition.
\end{definition}

More explicitly,  a dg-2-supercategory consists of the following data.

\begin{enumerate}
  \item A set of objects $I={\l, \mu , \dots}$, and for an $\l,\mu \in I$ we have
\[
 {}_{\mu}\mf{U}_{\l} := \Hom_{\mf{U}}(\l,\mu)
\]
is a dg-supercategory.  In particular,   vertical composition of 2-morphisms obeys the dg-supercategory Leibnitz rule for morphisms.

\item For any pair of 1-morphisms
${}_{\mu}E_{\l}$,
${}_{\mu}E'_{\l}$ in the same Hom space, the space of 2-morphisms
\[
 \HOM_{{}_{\mu}\mf{U}_{\l}} ({}_{\mu}E_{\l}, {}_{\mu}E'_{\l} )
\]
is a chain complex of superspaces.

\item The vertical composition of 2-morphisms satisfied the Leibnitz rule.  That is, for any pair of objects $\l, \mu, \in I$, then
\begin{align*}
\HOM_{{}_{\mu}\mf{U}_{\l}}({}_{\mu}E_{\l}, {}_{\mu}F_{\l})
\times
\HOM_{{}_{\mu}\mf{U}_{\l}}({}_{\mu}E_{\l}, {}_{\mu}F_{\l})
&\longrightarrow
\HOM_{{}_{\mu}\mf{U}_{\l}}({}_{\mu}E_{\l}, {}_{\mu}F_{\l}) \\
(g,f) & \mapsto (g\circ f)
\end{align*}
satisfies
\[
\partial(g \circ f) = \partial(g) \circ f+(-1)^{|g|} g \circ \partial(f).
\]

\item The horizontal composition of 2-morphisms satisfied the Leibnitz rule.  That is, for any triple of objects $\l, \mu, \nu \in I$, then
\begin{align*}
\HOM_{{}_{\nu}\mf{U}_{\mu}}({}_{\nu}F_{\mu}, {}_{\nu}F_{\mu})
\times
\HOM_{{}_{\mu}\mf{U}_{\l}}({}_{\mu}E_{\l}, {}_{\mu}E'_{\l})
&\longrightarrow
\HOM_{{}_{\nu}\mf{U}_{\l}}({}_{\mu}FE_{\l}, {}_{\mu}F'E'_{\l}) \\
(h,f) & \mapsto (hf)
\end{align*}
satisfies
\[
\partial(hf) = \partial(h)f+(-1)^{|h|}h\partial(f).
\]
\end{enumerate}

\subsection{Gradings}

The notion of a dg 2-supercategory can be extended to the notion of a \emph{graded} dg 2-supercategory in a straightforward way.
Denote by $\underline{\cat{GSVect}}$ the symmetric monoidal category of super vector spaces and degree preserving linear maps.  A \emph{graded supercategory} is then a category enriched over $\underline{\cat{GSVect}}$.  A \emph{graded 2-supercategory} is then a category enriched over the monoidal category of all small graded supercategories.  A \emph{graded super-dg-2-category} is a graded 2-supercategory equipped with a differential $\partial$ of degree 2 and parity $\bar{1}$ satisfying the super Leibnitz rule as in Section~\ref{sec:super-dg-2-cat}.

\subsection{$(Q,\Pi)$-envelopes of dg-2-supercategories} \label{sec:QPi-envelope}

A graded $(Q, \Pi)$-supercategory is a graded supercategory $\cal{A}$ together with superfunctors $Q,Q^{-1},\Pi\maps \cal{A} \to \cal{A}$, an odd supernatural isomorphism $\zeta \maps \Pi \To I$ that is homogeneous of degree 0, and even supernatural isomorphisms $\sigma \maps Q \To I$, $\bar{\sigma} \maps Q^{-1} \To I$ that are homogeneous of degrees 1 and -1, respectively.  This data makes $Q$ and $Q^{-1}$ mutually inverse graded superequivalences and $\Pi$ a self-inverse graded superequivalence.  A graded $(Q,\Pi)$-2-supercategory can be defined similarly, where each hom category has the structure of a graded $(Q,\Pi)$-supercategory, see \cite[Definition 6.5]{BE1} for a precise definition.

We have already seen an example of a graded $(Q,\Pi)$-supercategory coming from the category of graded supermodules over a dg-superalgebra.  Given a graded supercategory it always possible to enhance it to a graded $(Q,\Pi)$-supercategory by taking its $(Q,\Pi)$-envelope.

\begin{definition}[\cite{BE2} Definition 1.6]
Given a graded 2-supercategory $\mf{U}$, its $(Q,\Pi)$-envelope $\mf{U}_{q,\pi}$ is the graded 2-supercategory with the same objects as $\mf{U}$, 1-morphisms defined from
\[
\Hom_{\mf{U}_{q,\pi}}(\l,\,u) := \left\{
 Q^m\Pi^a F \mid \text{for all $F \in \Hom_{\mf{U}}(\l,\mu)$ with $m\in \Z$ and $a \in \Z/2\Z$}
\right\}
\]
with composition law $(Q^n\Pi^bG)(Q^m\Pi^aF):= Q^{m+n}\Pi^{a+b}(GF)$.  The 2-morphisms are defined by
\[
\Hom_{\mf{U}_{q,\pi}}(Q^m\Pi^aF, Q^n\Pi^bG) :=
\left\{
x_{m,a}^{n,b} \mid \text{for all $x \in \Hom_{\mf{U}}$(F,G)}
\right\}
\]
viewed as a superspace with addition given by $x_{m,a}^{n,b} + y_{m,a}^{n,b} := (x+y)^{n,b}_{m,a}$ and scalar multiplication given by $c(x_{m,a}^{n,b}) := (cx)_{m,a}^{n,b}$. The degrees are given by $\deg( x_{m,a}^{n,b})=\deg(x)+n-m$, $|x_{m,a}^{n,b}|=|x|+a+b$.
The horizontal composition  is given by
\begin{equation} \label{eq:hor-rule}
  y_{m,c}^{n,d}\cdot x_{k,a}^{l,b}
:= (-1)^{c|x|+b|y|+ac+bc} (y \cdot x)_{k+m,a+c}^{l+n,b+d},
\end{equation}
and the vertical composition by
\begin{equation} \label{eq:vert-rule}
 y_{m,b}^{n,b} \circ x_{\ell,a}^{m,b} = (y\circ x)_{\ell,a}^{n,c}.
\end{equation}
\end{definition}

\begin{lemma}
Let $(\mf{U},\partial)$ denote a graded dg-2-supercategory. Then the $(Q,\Pi)$-envelope $\mf{U}_{q,\pi}$ of $\mf{U}$ admits a super dg structure defined by
\begin{align*}
 \partial \maps \Hom_{\mf{U}_{q,\pi}}(Q^m\Pi^a F, Q^n \Pi^b G) &\longrightarrow \Hom_{\mf{U}_{q,\pi}}(Q^m\Pi^a F, Q^{n} \Pi^{b} G) \\
x_{m,a}^{n,b} &\;\; \mapsto \;\;   (-1)^{b}\left(\partial(x)\right)_{m,a}^{n,b}
\end{align*}
\end{lemma}

\begin{proof}
We verify that this definition satisfies the super Leibnitz rule with respect to the horizontal composition rule \eqref{eq:hor-rule} in the $(Q,\Pi)$-envelope
\begin{align*}
&  \partial \left( y_{m,c}^{n,d}\cdot x_{k,a}^{l,b} \right)
\\
& \quad
\; :=\;  (-1)^{c|x|+b|y|+ac+bc }
\partial\left((y\cdot x)_{k+m,a+c}^{l+n,b+d}\right) \\
& \quad
\; :=\;  (-1)^{c|x|+b|y|+ac+bc  +b+d  }
\left( \partial(y\cdot x)\right)_{k+m,a+c}^{l+n,b+d}
\\
& \quad
\; =\;  (-1)^{c|x|+b|y|+ac+bc+b+d }
\left(\partial(y)\cdot x \right)_{k+m,a+c}^{l+n,b+d}
+
(-1)^{c|x|+b|y|+ac+bc+b+d }(-1)^{|y|}
\left(y \cdot \partial(x) \right)_{k+m,a+c}^{l+n,b+d}\\
& \quad
\; =\;  (-1)^{c|x|+b|\partial(y)|+ac+bc +d }
\left(\partial(y)\cdot x \right)_{k+m,a+c}^{l+n,b+d}
+
(-1)^{c|\partial(x)|+b|y|+ac+bc+b}(-1)^{|y|+c+d}
\left(y \cdot \partial(x) \right)_{k+m,a+c}^{l+n,b+d}
\\
& \quad
\; =\;   \partial \left( y_{m,c}^{n,d}\right) \cdot x_{k,a}^{l,b}
+
 (-1)^{|y_{m,c}^{n,d}|}
y_{m,c}^{n,d} \cdot \partial \left(x_{k,a}^{l,b}\right)
\end{align*}
and the vertical composition rule \eqref{eq:vert-rule}
\begin{align*}
  &\partial\left( y_{m,b}^{n,c} \circ x_{\ell,a}^{m,b}\right)
  = \partial\left((y\circ x)_{\ell,a}^{n,c}\right)
  \\
 & = (-1)^c\left(\partial(y\circ x)\right)_{\ell,a}^{n,c}
 \\
 & = (-1)^c\left(\partial(y)\circ x \right)_{\ell,a}^{n,c}  +(-1)^{c+|y|} \left(y \circ \partial(y) \right)_{\ell,a}^{n,c}
 \\
 & =
(-1)^c (\partial(y))_{m,b}^{n,c} \circ x_{\ell,a}^{m,b}
+
(-1)^{|y|+c}y_{m,b}^{n,c} \circ   (\partial(x))_{\ell,a}^{m,b}
\\
&=
(-1)^c (\partial(y))_{m,b}^{n,c} \circ x_{\ell,a}^{m,b}
+
(-1)^{|y|+b+c}y_{m,b}^{n,c} \circ (-1)^b (\partial(x))_{\ell,a}^{m,b}
\\
&= \partial(y_{m,b}^{n,c}) \circ x_{\ell,a}^{m,b}
    +(-1)^{|y_{m,b}^{n,c}|} y_{m,b}^{n,c} \circ \partial(x_{\ell,a}^{m,b}).
\end{align*}
\end{proof}

\begin{definition} \label{def:Uund}
The \emph{underlying 2-category} $\underline{\mf{U}}_{q,\pi}$ of the $(Q,\Pi)$ envelope $\mf{U}_{q,\pi}$ of a 2-supercategory $\mf{U}$ is obtained by restricting $\mf{U}_{q,\pi}$ to degree zero 2-morphisms.
\end{definition}

The 2-category $\underline{\mf{U}}_{q,\pi}$  carries the structure of a $(Q,\Pi)$-2-category in the sense of \cite[Definition 6.14]{BE1}.
If $(\oUcat, \partial)$ is a graded dg  2-supercategory then the differential $\partial$ restricts to a degree zero map $\underline{\partial}$ on $\oUbar$ giving it that structure of a dg 2-category $(\oUbar,\underline{\partial})$.

\section{Hopfological algebra}

One of the primary reasons that triangulated categories are prevalent in categorification is the need to accommodate minus signs in the Grothendieck ring.  For positive algebraic structures, typically additive categories suffice with basis elements corresponding to indecomposable objects in the categorification.   Quantum groups with their canonical basis are an excellent example of this phenomenon.  However, as we expand categorification to include non-positive structures like the Jones polynomial, minus signs are lifted via the shift functor $[1]$ for some triangulated category, with the shift functor $[1]$ inducing the map of multiplication by $-1$ at the level of the Grothendieck group.

In his proposal for categorification at roots of unity, Khovanov showed that the traditional world of dg-categories, together with their homotopy and derived categories of modules, fits into a framework of Hopfological algebra.  For our purposes, Hopfological algebra will provide a valuable perspective on the possible decategorifications of graded dg-2-supercategories.  We quickly review the relevant details of Hopfological algebra needed for these purposes.   For a more detailed review see \cite{Kh-hopf, Qi}.

\subsection{Basic setup}
Let $H$  be a finite-dimensional Hopf algebra.  Then $H$ is also a Frobenius algebra and every injective $H$-module is automatically projective.  Define the stable category $H\stmod$ as the quotient of the category $H{\rm -mod}$  by the ideal of morphisms that factor through a projective (equivalently injective) module.  The category $H\stmod$ is triangulated
, see for example \cite{Ha}.

The shift functor for the triangulated structure on $H\stmod$ is defined by the cokernel of an inclusion of $M$ as a submodule into an injective (projective) module $I$.  We can fix this inclusion by noting that for any $H$-module $M$, the tensor product $H \otimes M$ with a free module is a free module, and the tensor product $P \otimes M$ with a projective module is always projective \cite[Proposition 2]{Kh-hopf}. A left integral $\Lambda$ for a Hopf algebra $H$ is an element $\Lambda \in H$ satisfying
\[
h \Lambda = \varepsilon(h) \Lambda.
\]
Using the left integral, any $H$-module $M$ admits a canonical embedding into an injective module via $M \mapsto H \otimes M$ sending $m \mapsto \Lambda \otimes m$.
This allows us to define a shift functor on the category of stable $H$-modules via
\begin{align}
  T \maps H\stmod & \to \;\;\;H\stmod \\
   M\;\;  &\mapsto (H/(H\Lambda)) \otimes M.  \nn
\end{align}

We now define the basic objects of interest in the theory of Hopfological algebra that generalize dg-algebras and their modules.  The reader may find Figure~\ref{fig:comparison} helpful for tracking the analogy.
An $H$-module algebra $B$ is an algebra equipped with an action of $H$ by algebra automorphisms.
A left $H$-comodule algebra is an associative $\Bbbk$-algebra $A$ equipped with a map
\[
 \Delta_A \maps A \to H \otimes A
\]
making $A$ an $H$-comodule and such that $\Delta_A$ is a map of algebras.

There is a natural construction to form a left $H$-comodule algebra from a right $H$-module algebra by forming the smash product algebra  $A := H\#B$.  As a $\Bbbk$-vector space $A$ is just $H \otimes B$, with multiplication given by
\[
(h \otimes b)(\ell \otimes c) = \sum h \ell_{(1)} \otimes (b \cdot \ell_{(2)})c,
\]
where we use Sweedler notation for the coproduct $\Delta(\ell) = \sum_{(\ell)} \ell_{(1)} \otimes \ell_{(2)} \in H \otimes H$.  The left $H$-comodule structure on $A = H\#B$ is given by $\Delta_A(h \otimes b) = \Delta(h)\otimes b$.
Let $A{\rm -mod}$ denote the category of left $A$-modules and define $A_H\stmod$ to be the quotient of $A{\rm -mod}$ by the ideal of morphisms that factor through an $A$-module of the form $H\otimes N$.  The category $A_H\stmod$ is triangulated~\cite[Theorem 1]{Kh-hopf} with shift functor inherited from $H\stmod$ defined by sending an object $M$ in $A_H\stmod$ to the module
\begin{equation}
T(M) := (H/(k\Lambda)) \otimes M.
\end{equation}

Since $H$ is a subalgebra of $A=H\#B$, we can restrict an $A$-module to an $H$-module, which descends to an exact functor $A_H\stmod$ to $H\stmod$. %
In the context of the $H$-comodule algebra $A=H\#B$ we write $\cal{C}(B,H) = A_H\stmod$.
Define a morphism $f \maps M \to N$ in $A_H\stmod$ to be a {\em quasi-isomorphism} if it restricts to an isomorphism in $H\stmod$.
Denote by $\cal{D}(B,H)$ the localization of $A$ with respect to quasi-isomorphisms.  It is shown in \cite[Corollary 2]{Kh-hopf} and \cite[Corollary 7.15]{Qi} that $\cal{D}(B,H)$ is a triangulated category whose Grothendieck group is a module over $K(H\stmod)$.


\subsection{DG-algebras from the Hopfological perspective}

The standard theory of dg-algebras and their modules is equivalent to the Hopfological algebra of the $\Z$-graded Hopf superalgebra $H=\Bbbk[D]/D^2$ in the category of super vector spaces.  Here $\deg(D)=\bar{1}$ and
\begin{alignat}{2}
  \Delta(1) &= 1 \otimes 1, \qquad    &&\varepsilon(1)=1 \\
  \Delta(D) &= 1 \otimes D + D \otimes 1 , \qquad  &&\varepsilon(D)=0.
\end{alignat}
For the Hopf superalgebra $\Bbbk[D]/D^2$ the left integral is spanned by $\Lambda = D$.

For a graded $\Bbbk$-superalgebra $B$ to admit an $H$-module structure this is equivalent to $B$ having a degree $\bar{1}$ map $\partial \maps B \to B$ satisfying
\[
 \partial(ab) = \partial(a)b + (-1)^{|a|}a\partial(b), \qquad \partial^2(a)=0,
\]
for all $a,b \in B$.   Hence, an $H$-module algebra is the same thing as a dg-algebra.   In a similar way, if we set $A := B\#H$ then an $A$ module is the same thing as a $B$-dg-module.  Further, one can show that $\cal{C}(B,H) =A_H\stmod$ is equivalent to the homotopy category $\cal{C}(B)$ of $B$-dg modules and that $\cal{D}(B,H)$ is equivalent to the derived category $\cal{D}(B)$ of $B$-dg-modules.

\begin{figure} [h]\label{fig:comparison}
\begin{center}
 \begin{tabular}{|c|c|}
   \hline
   {\bf DG-algebras} & {\bf Hopfological algebra}
   \\  \hline
    DG-algebra $B$ & $H$-module algebra
    \\  \hline
   DG-module & $A:=B\#H$-module
    \\ \hline
   $B{\rm -dgmod}$ & $A{\rm -mod}$
    \\ \hline
    Homotopy category of $B{\rm -dgmod}$ & $A_{H}\stmod$
    \\ \hline
    Derived category of $B$-dg-modules & $\cal{D}(B,H)$
    \\ \hline
 \end{tabular}
\end{center}
\end{figure}

\subsection{Decategorification from the Hopfological perspective}

To have an interesting notion of Grothendieck group for the triangulated categories $A_H\stmod$ it is important that we restrict the classes of modules under consideration to avoid pathologies that can arise.  In the context of Hopfological algebra the correct notion is that of \emph{compact hopfological modules} from \cite[Section 7.2]{Qi}.  Denote by $\cal{D}^{c}(A,H)$ the strictly full subcategory of compact hopfological modules in $\cal{D}(A,H)$.
\begin{definition}[\cite{Qi}]
Let $B$ be an $H$-module algebra over a finite dimensional Hopf algebra $H$ over a base field $\Bbbk$.  Define the Grothendieck group $K_0(\cal{D}^{c}(B,H))$ to be the abelian group generated by symbols of isomorphism classes of objects in $\cal{D}^{c}(B,H)$, modulo the relation
\[
 [Y] = [X] + [Z],
\]
whenever there is a distinguished triangle inside $\cal{D}^{c}(B,H)$ of the form
\[
X \longrightarrow Y \longrightarrow Z \longrightarrow T(X).
\]
\end{definition}

Both the Grothendieck rings of categories $\cal{C}(B,H)$ and $\cal{D}(B,H)$ are left modules over the Grothendieck ring $K_0(H\stmod)$ (see \cite[Corollary 1 and 2]{Kh-hopf}).  Hence, the ground ring for decategorification provided by the theory of Hopfological algebra associated to the Hopf algebra $H$ is determined by  $K_0(H\stmod)$. Note this group has a ring structure because $H\stmod$ has an exact tensor product.  When $H$ is quasi-triangular then $K(H\stmod)$ is commutative, so that we do not need to distinguish between left and right modules~\cite[Remark 7.17]{Qi}.

\subsubsection{Ground ring for Grothendieck group from the Hopfological perspective}

In the special case when $A=\Bbbk$, the Grothendieck group for $\cal{D}(\Bbbk,H)$ is the same as $H\stmod$ since $H$ acts trivially on $\Bbbk$ \cite[Corollary 9.11]{Qi}.
Since $K_0(A_H\stmod)$ is a module over $K_0(H\stmod) \cong K_0(\cal{D}(\Bbbk,H))$, the Grothendieck ring of $\cal{D}(\Bbbk,H)$ determines the ground ring for the Grothendieck group of $A_H\stmod$.   In the language of dg-algebras, this just says that $K_0$ of the derived category of chain complexes of vector spaces determines the ground ring for $K_0$ of the category of dg-modules.

Consider the category of complexes of $\Bbbk$-vector spaces.  Considering the homological degree modulo two gives rise to a $\Z_2$ grading for the dg homotopy category of (ungraded) chain complexes $\cal{D}(\Bbbk)$ of vector spaces where the differential has degree $\deg(d) = \bar{1}$.
Assuming $\Bbbk=\Z$ or a field, it follows that any complex in $\cal{D}(\Bbbk)$ is isomorphic to a direct sum of indecomposable chain complexes of the following form:
\begin{itemize}
   \item a single copy of $\Bbbk$ in any bidegree;
   \item a copy of $S = \left( 0 \to  \Bbbk  \stackrel{d}{\longrightarrow}  \Bbbk \Pi \to 0 \right)$ where we include the parity shift of $\Pi$ on the right hand side to accommodate the degree of the differential.
\end{itemize}
Then the Grothendieck group is generated as a $\Z[\pi]/(\pi^2-1)$-module by the symbol $[\Bbbk]$ with  $[\Bbbk\Pi]=\pi[\Bbbk]$.    If the differential $d$ in the complex $S$ is given by multiplication by a unit in $\Bbbk$, then $S$ is contractible and therefore isomorphic to $0$ in $K_0(\cal{D}(\Bbbk))$. The contractibility of $S$ imposes the additional relation
\begin{equation}
  (1+\pi)[\Bbbk] = 0.
\end{equation}
The classication of objects in $\cal{D}(\Bbbk)$ implies that this is the only relation, and it forces the symbol of $S$ to be zero even when $d$ is not multiplication by an invertible element.   Hence, $\pi=-1$ and
\begin{equation}
  K_0(\cal{D}(\Bbbk)) \cong \Z[ \pi]/(1+\pi)  = \Z.
\end{equation}

The homological shift $\Bbbk[1]$ is given by the cokernel of the inclusion into $H\otimes \Bbbk$ with $k \mapsto \Lambda \Bbbk = D \otimes k$.  The injective envelope $H \otimes \Bbbk$ is two dimensional as a vector space spanned by the identity and $D$. We can represent $H \otimes \Bbbk$ by the complex
\[
 \xy
  (-10,0)*+{\Bbbk \Pi}="tl";
  (10,0)*+{\Bbbk}="tr";
  {\ar^{D} "tl"; "tr"};
 \endxy
\]
where $\Bbbk$ includes into the right most term via the map $D \otimes 1$.  Hence, the cokernel of this inclusion gives that $\Bbbk[1] = \Bbbk \Pi$.  So we have recovered from the hopfological perspective the fact that the shift $[1]$ is just the parity shift $\Pi$ and at the level of the Grothendieck group we have
\[
\big[ (\Bbbk[1]) \big] = \big[\Bbbk \Pi \big] = \pi\big[\Bbbk\big] = - \big[\Bbbk\big].
\]
We carefully reviewed the usual dg-case to set the stage for our treatment in the `mixed complex' setting.

\subsection{Gaussian integers}  \label{sec:gaussian}
The following section is an extension of the discussion in \cite[Section 2.2.4]{EllisQi} that was explained to us by You Qi.
Consider the category of $\Z \times \Z_2$-graded modules.  We denote by $\la 1\ra$ a shift of the quantum (or $\Z$-grading), and by $\Pi$ the parity shift functor.  Define a differential between such modules to be a map of bidegree $(2,\bar{1})$ that squares to 0.
The main difference between this case and the previous is that our Hopf algebra input into Hopfological algebra is now the Hopf superaglebra $H=\Bbbk[D]/D^2$
where $D$ has mixed degree $(2,\bar{1})$.
A chain complex is a $\Bbbk$-module equipped with such a differential.   Following \cite{EllisQi} we call such complexes {\em half-graded complexes} for reasons that will become clear.
Denote the corresponding homotopy category by $\cal{C}(\Bbbk)$ and the derived category by $\cal{D}(\Bbbk)$.

 Any category of $\Z \times \Z_2$ graded dg-modules with differentials of bidegree $(2,\bar{1})$ will have a Grothendieck ring that is a module over $K_0(\cal{D}(\Bbbk))$, so this Grothendieck ring controls the ground ring that appears in categorification via half-graded complexes.
Assuming $\Bbbk=\Z$ or a field, it follows that any complex in $\cal{D}(\Bbbk)$ is isomorphic to a direct sum of indecomposable chain complexes of the following form:
\begin{itemize}
   \item a single copy of $\Bbbk$ in any bidegree;
   \item a copy of $S = \left( 0 \to \Pi^b\Bbbk\la a\ra \stackrel{d}{\longrightarrow} \Pi^{b+1}\Bbbk\la a+2\ra \to 0 \right)$ with the first term in any bidegree $(a,b)$ and the right most copy in bidegree $(a+2,b+\bar{1})$.
\end{itemize}
Then the Grothendieck group is generated as a $\Z[q,q^{-1},\pi]/(\pi^2-1)$-module by the symbol $[\Bbbk]$ with $[\Bbbk\la 1 \ra] = q[\Bbbk]$ and $[\Bbbk\Pi]=\pi[\Bbbk]$.    If the differential $d$ in the complex $S$ is given by multiplication by a unit in $\Bbbk$, then $S$ is contractible and therefore isomorphic to $0$ in $K_0(\cal{D}(\Bbbk))$.  For simplicity take $a=b=0$ in $S$, the contractibility of $S$ imposes the additional relation
\begin{equation}
  (1+q^2\pi)[\Bbbk] = 0 .
\end{equation}
The classication of objects in $\cal{D}(\Bbbk)$ implies that this is the only relation, and it forces the symbol of $S$ to be zero even when $d$ is not multiplication by an invertible element.   Hence,
\begin{equation}
  K_0(\cal{D}(\Bbbk)) \cong \Z[q,q^{-1},\pi]/(\pi^2-1, 1+q^2\pi).
\end{equation}

The homological shift is now given by the inclusion of $\Bbbk$ into  $H \otimes \Bbbk$ via $D \otimes 1$
\[
 \xy
  (-10,0)*+{\Bbbk \la -2\ra\Pi}="tl";
  (10,0)*+{\Bbbk}="tr";
  {\ar^-{D} "tl"; "tr"};
 \endxy
\]
so that $\Bbbk[1]:= \Bbbk \la -2\ra \Pi$ and at the level of the Grothendieck group we have
\[
 \big[ \Bbbk[1] \big] = \big[\Bbbk \la -2 \ra \Pi\big] = \big[\Bbbk\big]q^{-2}\pi = - \big[\Bbbk\big]
\]
since $1+q^2\pi =0$.  Hence, the homological shift is multiplication by $-1$ on $K_0$.

If we specialize $\pi=-1$, then the equation imposed by the contractible complex implies that $q^2=1$, so the ground ring for reduces to $\Z$.  If we specialize $\pi =1$ then we have the relation $q^2=-1$ and we get that $q$ must be a fourth root of unity.  Hence, we have the following result.

\begin{proposition} \label{prop:K0-alg}
Given a $\Z \times \Z_2$ graded algebra equipped with a differential $d$ of bidegree $(2,\bar{1})$.  Then the Grothendieck group associated with the category of $\Z \times \Z_2$-graded dg-modules is a module over the ring
\[
\Z[q,q^{-1},\pi]/(\pi^2-1, 1+q^2\pi).
\]
At $\pi=-1$ this is just $\Z$ and at $\pi=1$ this is $\Z[\sqrt{-1}]$.
\end{proposition}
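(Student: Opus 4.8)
The plan is to deduce the statement directly from the Hopfological computation carried out in Section~\ref{sec:gaussian}. First I would observe that a $\Z \times \Z_2$-graded algebra $B$ together with a bidegree $(2,\bar 1)$ differential $d$ satisfying $d^2=0$ is exactly the data of an $H$-module algebra for the super Hopf algebra $H = \Bbbk[D]/D^2$ with $\deg(D) = (2,\bar 1)$, the action being $D\cdot b := d(b)$ and the coproduct, counit and integral $\Lambda = D$ as recorded above. Correspondingly, a $\Z \times \Z_2$-graded $(B,d)$-dg-module is the same thing as a module over the smash product $A := B\#H$, and its homotopy and derived categories are $\cal{C}(B,H) = A_H\stmod$ and $\cal{D}(B,H)$. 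Thus the proposition is precisely the assertion that $K_0$ of this derived category is a module over the ring $\Z[q,q^{-1},\pi]/(\pi^2-1,1+q^2\pi)$, together with the description of the two specializations.

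Next I would invoke the general structural fact recalled above, following \cite[Corollary 1 and 2]{Kh-hopf} and \cite[Corollary 7.15]{Qi}: the Grothendieck group $K_0(\cal{D}(B,H))$ is a module over the Grothendieck ring $K_0(H\stmod) \cong K_0(\cal{D}(\Bbbk,H)) \cong K_0(\cal{D}(\Bbbk))$, where on the right $\cal{D}(\Bbbk)$ is the derived category of half-graded complexes. It therefore suffices to identify this ground ring, and here I would reuse the computation already made: over $\Bbbk = \Z$ or a field, every object of $\cal{D}(\Bbbk)$ decomposes as a direct sum of single copies of $\Bbbk$ in arbitrary bidegrees and of two-term complexes $S = \lp\Pi^b\Bbbk\la a\ra \xrightarrow{d} \Pi^{b+1}\Bbbk\la a+2\ra\rp$. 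Writing $[\Bbbk\la 1\ra] = q[\Bbbk]$ and $[\Bbbk\Pi] = \pi[\Bbbk]$, the relation $\pi^2 = 1$ is built in, and contractibility of $S$ when $d$ is invertible imposes $(1+q^2\pi)[\Bbbk] = 0$; the decomposition statement guarantees this is the only further relation, so $K_0(\cal{D}(\Bbbk)) \cong \Z[q,q^{-1},\pi]/(\pi^2-1,1+q^2\pi)$, which is the claimed ground ring.

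Finally, the two specializations reduce to a one-line calculation in $\Z[q,q^{-1},\pi]/(\pi^2-1,1+q^2\pi)$: at $\pi = 1$ the defining relation becomes $1 + q^2 = 0$, so the ring is $\Z[q,q^{-1}]/(q^2+1) \cong \Z[\sqrt{-1}]$, while at $\pi = -1$ it becomes $1 - q^2 = 0$, forcing $q^2 = 1$ so that the quantum parameter no longer contributes and the ring collapses to $\Z$, exactly as explained in Section~\ref{sec:gaussian}.

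The only non-formal ingredient is the classification of indecomposable objects of the half-graded derived category $\cal{D}(\Bbbk)$ and the verification that contractibility of $S$ exhausts the relations on $K_0$ --- the half-graded analogue of the structure theorem for bounded complexes of free modules over a PID. Book-keeping the two independent gradings against a differential of mixed bidegree $(2,\bar 1)$ is the one place where care is needed, but this is exactly the point that was already settled in the discussion preceding the proposition, so I expect no genuinely new obstacle: everything else is a formal consequence of the Hopfological machinery reviewed above.
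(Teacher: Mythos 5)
Your proposal is correct and follows essentially the same route as the paper: the proposition is stated as a summary of the preceding discussion in the Gaussian integers subsection, which already (i) reduces to computing $K_0(\cal{D}(\Bbbk))$ for half-graded complexes via the Hopfological module structure over $K_0(H\stmod)$, (ii) uses the decomposition of objects of $\cal{D}(\Bbbk)$ into single bigraded copies of $\Bbbk$ and two-term complexes $S$, (iii) extracts the relation $(1+q^2\pi)[\Bbbk]=0$ from contractibility of $S$, and (iv) reads off the two specializations. Your added care in explicitly identifying the differential with an $H$-module algebra structure for $H=\Bbbk[D]/D^2$ with $\deg(D)=(2,\bar{1})$ makes the appeal to the Hopfological machinery cleaner but does not change the argument.
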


\section{Results on Grothendieck groups of dg-superalgebras}

\subsection{Grothendieck group of a dg-superalgebra }

Despite our protracted discussion of Hopfological algebra, the decategorification of categories of dg-supermodules is not so unlike the decategorification of normal dg-modules.   We detoured through Hopfological algebra to highlight the fact that the Grothendieck ring will have the structure of a module over the Gaussian integers $\Z[\sqrt{-1}]$.  Just as in the usual theory of dg-modules over a dg-algebra $A$, to have a sensible notion of Grothendieck group of $\cal{D}(A)$, we pass to the compact or perfect derived category $\cal{D}^c(A)$.
The category $\cal{D}^c(A)$ is a subcategory of $\cal{D}(A)$ consisting {\em compact } dg modules, that is, those dg-supermodules $M$ such that the functor $\HOM_{\cal{D}(A)}(M,-)$ commutes with infinite direct sums. This is the same as considering $\cal{D}^{c}(A,H)$ in the Hopfological setup with $H$ defined in section~\ref{sec:gaussian}.

For our purposes the connection between compact dg modules and finite-cell modules will be of particular relevance.  See for example \cite[Example 2.4]{EQ2}. A \emph{finite-cell} module over a dg-superalgebra $A$ is a dg-supermodule with a finite filtration whose subquotients are isomorphic as dg-supermodules to dg-summands of $A$.  Such finite-cell modules are always compact.

The {\em Grothendieck group} $K_0(A)$ of a (graded) dg-superalgebra $A$ is the quotient of the free abelian group on the isomorphism classes $[M]$ of compact (graded) dg-supermodules $M$ by the relation $[M]=[M_1]+[M_2]$ whenever
\[
M_1 \to M \to M_2 \to M_1[1]
\]
is an exact triangle of compact objects in $\cal{D}^c(A)$.  This is the same as $\cal{D}^c(A,H)$ for $H$ defined in section~\ref{sec:gaussian}.  The \emph{Grothendieck group $K_0(\cal{A})$} of a graded dg-supercategory $\cal{A}$ can be defined similarly, by regarding $\cal{A}$ as a graded dg-superalgebra
\[
 \mathbb{A}:= \bigoplus_{x,y \in {\rm Ob}\cal{A}} \Hom_{\cal{A}}(x,y)
\]
with orthogonal idempotents $1_x$ for each object $x \in \cal{A}$.

\subsection{Positively graded   dg-algebras}

A $\Z$-graded dg-superalgebra is called a {\em positive dg-superalgebra} (see \cite{Sch}) if it satisfies the following
\begin{enumerate}
  \item the superalgebra $A = \oplus_{i \in \Z}A^i$ is non-negatively graded,
  \item the degree zero part $A^0$ is semisimple, and
  \item the differential acts trivially on $A^0$.
\end{enumerate}
We say that $A$ is a {\em strongly positive dg-superalgebra} is it is a positive dg-superalgebra with degree zero part $A^0 \cong \Bbbk$.

The calculation of the Grothendieck ring of a positively graded dg-algebra is greatly simplified.

\begin{theorem} \label{thm:Ko-positive}
  Let $A$ be a positive dg-superalgebra, and $A^0$ be its homogeneous degree zero part. Then
  \[
  K_0(A) \cong K_0(A^0).
  \]
\end{theorem}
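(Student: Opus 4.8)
The plan is to reduce the computation to the degree-zero part in two steps: first show that $K_0(A)$ is generated by the classes of the finitely many indecomposable projective dg-modules, and then use a triangulated base-change functor to $\mathcal{D}^c(A^0)$ to see that those classes are linearly independent. To set things up, observe that since $A$ is non-negatively graded and $\partial$ raises degree, the two-sided ideal $A^{>0}=\bigoplus_{i>0}A^i$ is a dg-ideal, and because $\partial$ acts trivially on $A^0$ the quotient $A^0=A/A^{>0}$ is a dg-algebra with zero differential. Moreover $A^{<0}=0$ forces $B^0(A)=0$ and $Z^0(A)=A^0$, so $H^0(A)=A^0$; in particular a complete set of (matrix) idempotents of the semisimple algebra $A^0$ already lives in $A$ as honest orthogonal idempotents in cohomological degree $0$, giving a decomposition $A=\bigoplus_{i=1}^{n}P_i^{\oplus m_i}$ into pairwise non-isomorphic indecomposable projective dg-modules $P_i=Ae_i$, with $n$ the number of simple $A^0$-modules. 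Hence $\mathcal{D}^c(A)$, being the thick subcategory generated by $A$, is also the thick subcategory generated by $P_1,\dots,P_n$.

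The key structural input — cf. \cite{Sch}, and this is where positivity does real work — is that every compact dg-module $M$ admits a \emph{finite minimal cofibrant resolution}: a dg-module $P\xrightarrow{\sim}M$ which, after forgetting the differential, is a finite direct sum of shifts of the $P_i$, and whose differential satisfies $\partial(P)\subseteq A^{>0}\cdot P$. Such a $P$ carries a finite filtration by dg-submodules whose successive subquotients are shifts of the $P_i$ (a Postnikov tower), and therefore $[M]$ is an integer combination of $[P_1],\dots,[P_n]$ in $K_0(A)$. Consequently $[P_1],\dots,[P_n]$ generate $K_0(A)$.

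To see that they form a basis, consider the derived base-change functor $F:=A^0\otimes_A^{\mathbf L}(-)\colon \mathcal{D}^c(A)\to\mathcal{D}^c(A^0)$ attached to the dg-quotient map $A\twoheadrightarrow A^0$; it is triangulated and sends the compact generator $A$ to $A^0$, hence restricts as claimed and induces a homomorphism $K_0(F)\colon K_0(A)\to K_0(A^0)$. Since each $P_i$ is cofibrant, $F(P_i)=A^0\otimes_A Ae_i=A^0e_i$ with zero differential, which is an indecomposable projective $A^0$-module, hence simple because $A^0$ is semisimple; write $S_i:=A^0e_i$, so that $\{[S_1],\dots,[S_n]\}$ is a $\Z$-basis of $K_0(A^0)$. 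Now any relation $\sum_i a_i[P_i]=0$ in $K_0(A)$ maps under $K_0(F)$ to $\sum_i a_i[S_i]=0$, forcing all $a_i=0$. Thus the generating set $\{[P_i]\}$ of $K_0(A)$ is linearly independent, hence a basis, and $K_0(F)$ carries it bijectively onto the basis $\{[S_i]\}$ of $K_0(A^0)$; therefore $K_0(F)$ is the desired isomorphism $K_0(A)\cong K_0(A^0)$.

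The main obstacle is the structural statement in the second paragraph: producing a \emph{finite} minimal cofibrant resolution together with its finite filtration uses all three positivity hypotheses essentially — non-negativity of the grading (so that $A^0=H^0(A)$ and idempotents are literal cycles), semisimplicity of $A^0$ (so that minimal models are built from the $P_i$ with no further relations), and triviality of $\partial$ on $A^0$ (so that $A^0$ is a dg-quotient) — and without them $\mathcal{D}^c(A)$ need not be controlled by $A^0$ at all. Everything else in the argument is formal.
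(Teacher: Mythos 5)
Your proof is correct and reconstructs the argument of the cited references (the paper states this result without proof, citing Schn\"urer and Elias--Qi). Your two-step structure --- generation of $K_0(A)$ by the classes $[P_i]$ via finite minimal (semi-free) resolutions, then linear independence via the derived base-change to $A^0$ --- is precisely the mechanism behind Schn\"urer's theorem, and you have correctly isolated the existence of a \emph{finite} minimal cofibrant resolution as the one non-formal ingredient that needs all three positivity hypotheses. The remaining details (idempotents of the semisimple $A^0$ are cycles in $A$, so the $P_i=Ae_i$ are genuine dg-summands of $A$; the filtration of a minimal model is degreewise split and hence gives distinguished triangles; $A^0\otimes^{\mathbf L}_A P_i\cong A^0e_i$ since $P_i$ is cofibrant) are all handled correctly.
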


\begin{proof}
 This is a direct extension of the non-super result from \cite{Sch} and \cite[Corollary 2.6]{EQ2}.
\end{proof}

%
\subsection{Fantastic filtrations} \label{sec:fantastic}
%
In this section, we give a review of the fantastic filtration and recall the related theorems from \cite{EQ2}. Fantastic filtration is an essential tool in this work for determining the Grothendieck ring of the odd dg 2-category $\mf{U}$ defined in the next section.  The key issue is that if $A$ is a dg-superalgebra the direct sum decomposition of $A$-modules does not necessarily commute with the differential. However, if there exists a fantastic filtration $F^{\bullet}$ on an $A$-module $Ae$, where $e$ is an idempotent, then the direct sum decomposition of $Ae$ as $A$-modules becomes a filtered direct sum decomposition of dg-modules.

We collect several important results on fantastic filtrations from \cite[Section 5]{EQ2} that are easily adapted to the super dg-setting.
\begin{lemma} \label{lemma:orthogonal_idemp_cond.}
Let $R$ be a superring and the elements $u_i, v_i \in R$ with $|u_i|=|v_i|$, where $i \in I$ is a finite set, satisfy the following conditions:
\begin{equation} \label{eq:uv-equations}
\begin{split}
&u_iv_iu_i = u_i \\
&v_iu_iv_i = v_i \\
&v_iu_j = \delta_{i,j}
\end{split}
\end{equation}
then $e = \sum_{i} u_iv_i$ is an idempotent and we have a direct sum decomposition $Re \cong \oplus_{i}Rv_iu_i$.
\end{lemma}
Note that $u_iv_i$ is an idempotent for each $i \in I$, as $u_iv_iu_iv_i = u_iv_i$, and moreover $\{u_iv_i\}_{i \in I}$ is a set of orthogonal idempotents, as for any $i \neq j$,
$u_iv_iu_jv_j = u_jv_ju_iv_i = 0$. It follows that $e$ is an idempotent and $Re \cong \oplus_{i}Rv_iu_i$.

For a dg-algebra $A$ and any idempotent $e \in A$, the $A$-module $Ae$ is an $A_{\partial}\dmod$ summand if for any $a\in A$, we have $\partial(ae) \in Ae$ for any $be \in Ae$. By the Leibniz rule,
\begin{align*}
\partial(abe) &=
\partial(a)be + (-1)^{|a|}a\partial(b)e + (-1)^{|a|+|b|}ab\partial(e)\\
&=\partial(ab)e + (-1)^{|a|+|b|}ab\partial(e)
\end{align*}
so that $\partial(abe) \in Ae$  if $\partial(e)=0$.  The computation of the differential of an idempotent $e$ is important for determining if $Ae$ is compact in the derived category $\mathcal{D}(A)$, since $\partial(e)=0$ implies that $Ae$ is cofibrant and has a compact image in $\mathcal{D}(A)$.

The following is a straight-forward adaptation of Lemma 5.3 in \cite{EQ2}.
\begin{proposition} \label{prop: Filtration is dg}
Let $(A,\partial)$ be a dg-superalgebra, $i \in I$ a finite index set, $u_i,v_i \in A$  satisfying the hypothesis of Lemma \ref{lemma:orthogonal_idemp_cond.}.  Suppose that $e = \sum_{i}u_iv_i$, and $<$ is a total order on $I$. An $I$-indexed $A$-supermodule filtration $F^{\bullet}$ of $Ae$ is defined by
\[
F^{\leq i} := \sum_{ j\leq i} Ru_jv_j
\]
and $F^{\emptyset} := 0$, so that $F^{\leq i} / F^{<i} \cong Av_iu_i$ as $A$ modules. Then the following conditions are equivalent:
\begin{enumerate}
\item $F^{\bullet}$ is a filtration by dg-supermodules, so that $Av_iu_i$ is a dg-supermodule and the subquotient isomorphism is an isomorphism of dg-supermodules.
\item The following equations are satisfied for all $i \in I$,
\begin{align}
& v_i \partial(u_i) = 0\\
& u_i \partial(v_i) \in F^{< i}.
\end{align}
\end{enumerate}
\end{proposition}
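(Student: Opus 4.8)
The plan is to prove the equivalence $(1)\Leftrightarrow(2)$ by unwinding what it means for the $A$-module filtration $F^{\bullet}$ to be compatible with the differential $\partial$, step by step, and then transporting that condition across the subquotient isomorphisms $F^{\leq i}/F^{<i}\cong Av_iu_i$. The key computational inputs are the Leibniz rule for $\partial$ on the super dg-algebra $(A,\partial)$ together with the three relations $u_iv_iu_i=u_i$, $v_iu_iv_i=v_i$, $v_iu_j=\delta_{i,j}$ from Lemma~\ref{lemma:orthogonal_idemp_cond.}, which in particular give that $u_iv_i$ are mutually orthogonal idempotents summing to $e$.

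First I would record the basic observation that, for a cyclic $A$-module $Af$ with $f=f^2$, a differential making $Af$ a sub-dg-module of $(A,\partial)$ exists exactly when $\partial(f)\in Af$, equivalently $\partial(f)f=\partial(f)$ (using $\partial(f)=\partial(f^2)=\partial(f)f+f\partial(f)$ and idempotency to rearrange, up to the sign conventions — here all the $u_i,v_i$ are even so there are no sign subtleties). Applying this to $f=u_iv_i$ and, more importantly, working inside the filtration: $F^{\leq i}$ is a sub-dg-module of $Ae$ for every $i$ precisely when $\partial\bigl(\sum_{j\leq i}Au_jv_j\bigr)\subseteq \sum_{j\leq i}Au_jv_j$, which by the Leibniz rule reduces to the condition that $\partial(u_jv_j)=\partial(u_j)v_j+u_j\partial(v_j)\in F^{\leq j}$ for each $j$ — i.e.\ a ``lower triangularity'' of the differential with respect to the order $<$.

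Next I would translate this triangularity into the stated equations. Right-multiplying $\partial(u_jv_j)$ by the idempotent $v_ju_j$ and using $v_j u_j v_j = v_j$ isolates the ``diagonal'' $j$-component; the requirement that the off-diagonal part lands in $F^{<j}$ becomes, after multiplying on the left by $v_j$ and on the right by $u_j$ appropriately, the two conditions $v_j\partial(u_j)\in F^{<j}$ and $u_j\partial(v_j)\in F^{<j}$ (the statement's displayed equations — I read the two lines together as a single containment $v_i\partial(u_i),\,u_i\partial(v_i)\in F^{<i}$). Conversely, given these two containments, the same manipulations plus the Leibniz rule show $\partial(u_jv_j)\in F^{\leq j}$, hence $F^{\leq i}$ is a sub-dg-module for all $i$; and then $\partial$ descends to the subquotients $F^{\leq i}/F^{<i}$, which under the $A$-module isomorphism with $Av_iu_i$ equips the latter with the structure of a super dg-module making the subquotient iso a dg-iso. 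This gives $(2)\Rightarrow(1)$, closing the loop.

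The main obstacle is purely bookkeeping: carefully identifying which left/right multiplications by $u_i,v_i$ extract the ``diagonal'' versus ``strictly-lower'' parts of $\partial(u_iv_i)$, and checking that the two conditions in $(2)$ are simultaneously necessary and sufficient (neither one alone suffices because $\partial$ of a product splits into two Leibniz terms, one governed by $v_i\partial(u_i)$ after multiplying by $v_i$ on the left and one by $u_i\partial(v_i)$). Since we are in the super setting, I would also double-check signs in the Leibniz rule, but as all the idempotent data $u_i,v_i$ here is even, the signs are inert and the argument is a verbatim adaptation of \cite[Lemma 5.3]{EQ2}. No nondegeneracy or positivity hypotheses are needed for this lemma; it is a formal statement about idempotent decompositions in a super dg-algebra.
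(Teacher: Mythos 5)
The paper itself gives no proof of this proposition: it is introduced with the remark that it is ``a straight-forward adaptation of Lemma 5.3 in \cite{EQ2}'' and the statement is left to that reference. Your proposal takes essentially the same route (unwind the Leibniz rule, use the idempotent relations from Lemma~\ref{lemma:orthogonal_idemp_cond.}, and appeal to \cite[Lemma 5.3]{EQ2} for the bookkeeping), so at the level of strategy there is agreement. However, your sketch has a logical inconsistency that would prevent it from establishing the equivalence if carried out as written.

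The central issue is that you conflate the condition ``each $F^{\leq i}$ is a sub-dg-module'' with condition~(1). The former is exactly $\partial(u_jv_j)\in F^{\leq j}$ for all $j$, and since $\partial(u_j)v_j=(\partial(u_j)v_j)u_jv_j\in Au_jv_j$ automatically, this is the single containment $u_j\partial(v_j)\in F^{\leq j}$. That is strictly weaker than the pair of containments in (2), which also force the \emph{diagonal} component of $\partial(u_jv_j)$ (the one landing in $Au_jv_j$) to be controlled. The diagonal constraint is precisely what encodes the remaining part of (1): that the $A$-module isomorphism $F^{\leq i}/F^{<i}\cong Av_iu_i$ intertwines the subquotient differential with the differential on $Av_iu_i$ inherited from $(A,\partial)$. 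Your forward direction tries to extract both conditions of (2) from the filtration inclusion alone, which cannot work, while your backward direction discharges the dg-iso clause of (1) by transporting the subquotient differential across the $A$-module isomorphism and calling that ``a super dg-module structure making the subquotient iso a dg-iso'' — but under that reading (1) collapses to the filtration condition and is then strictly weaker than (2), so the two directions cannot both be right. The proof needs to treat the dg-isomorphism requirement as an honest extra constraint and show it is responsible for pushing $v_i\partial(u_i)$ and $u_i\partial(v_i)$ from $F^{\leq i}$ down into $F^{<i}$.

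Two smaller points. The projection onto the $j$-th summand of $Ae=\bigoplus_k Au_kv_k$ is right multiplication by $u_jv_j$, not by $v_ju_j$; the latter is an idempotent but does not isolate the component you want. And the assertion that ``all the idempotent data $u_i,v_i$ here is even'' is not a hypothesis of the proposition, and it fails in the application later in the paper: the $u_n,v_n$ of Section~\ref{sec:fantastic-EF-FE} are built from cups, caps, dots and bubbles whose $\Z_2$-degrees are nontrivial (only the composites $u_iv_i$ are forced to be even). The super signs therefore genuinely enter the Leibniz computations and cannot be dismissed.
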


\begin{definition}
If the filtration $F^{\bullet}$ in Proposition \ref{prop: Filtration is dg} satisfies $\partial(e) = 0$ and $\partial(v_iu_i) =0$ for all $i\in I$, then it is called a \emph{fantastic filtration} on the dg-module $Ae$.
\end{definition}

The main advantage of the fantastic filtration is that it gives a filtered direct sum decomposition of the images of idempotents as dg-modules.  By a straightforward extension of \cite[Corollary 5.8]{EQ2} the following theorem holds.
\begin{theorem}\label{thm:Fantastic_filt_eqn}
Let $A$ be a dg-superalgebra, $\{u_i,v_i\}_{i \in I}$ a finite set of elements of $A$ satifying Proposition \ref{prop: Filtration is dg}, then there is a fantastic filtration on the dg module $Ae$ if and only if there exists a total order on $I$ such that
\[
v_i \partial(u_j) = 0  \quad \text{for $j \geq i$.}
\] Moreover, in $K_0(A)$, we have the relation
\[
[Ae] = \sum_{i\in I} [Av_iu_i]. \]
\end{theorem}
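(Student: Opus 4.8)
The plan is to deduce Theorem~\ref{thm:Fantastic_filt_eqn} from Proposition~\ref{prop: Filtration is dg} together with the Grothendieck-group relation furnished by a filtration by super dg-modules. First I would observe that the elements $u_i,v_i$ already satisfy the hypotheses of Lemma~\ref{lemma:orthogonal_idemp_cond.}, so $e=\sum_i u_iv_i$ is an idempotent, $\{u_iv_i\}_{i\in I}$ is a set of orthogonal idempotents, and $Ae\cong\bigoplus_i Av_iu_i$ as plain $A$-modules; the only thing left to promote this to a statement about dg-modules is to control the differential.

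The heart of the argument is the equivalence in Proposition~\ref{prop: Filtration is dg}: the $I$-indexed filtration $F^{\leq i}=\sum_{j\leq i}Au_jv_j$ is a filtration by super dg-modules precisely when $v_i\partial(u_i)$ and $u_i\partial(v_i)$ lie in $F^{<i}$ for all $i$. So I would first show the single hypothesis ``$v_i\partial(u_j)=0$ for all $j\geq i$ (with respect to some total order)'' is equivalent to the pair of conditions in Proposition~\ref{prop: Filtration is dg} \emph{plus} the vanishing conditions $\partial(e)=0$ and $\partial(v_iu_i)=0$ that upgrade the filtration to a \emph{fantastic} one. The key computational inputs are the relations $u_iv_iu_i=u_i$, $v_iu_iv_i=v_i$, $v_iu_j=\delta_{ij}$: differentiating $v_iu_j=\delta_{ij}$ via the Leibniz rule gives $\partial(v_i)u_j + (-1)^{|v_i|}v_i\partial(u_j)=0$, which lets one trade $v_i\partial(u_j)$ for $-(-1)^{|v_i|}\partial(v_i)u_j$; differentiating $u_i=u_iv_iu_i$ and $v_i=v_iu_iv_i$ expresses $\partial(u_i)$ and $\partial(v_i)$ in terms of $\partial(v_iu_i)$, $\partial(u_iv_i)$ and the generators. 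Multiplying these identities by appropriate $v_k$ or $u_k$ on the left or right, and using $v_ku_j=\delta_{kj}$ to collapse sums, one checks that the condition $v_i\partial(u_j)=0$ for $j\geq i$ forces $v_i\partial(u_i)=0$, $u_i\partial(v_i)=0$, $\partial(v_iu_i)=0$ and $\partial(e)=\sum_i\partial(u_iv_i)=0$; conversely the fantastic filtration conditions imply $v_i\partial(u_j)=0$ for $j\geq i$. This reduces the ``if and only if'' to bookkeeping with the Leibniz rule and the idempotent relations — this is the step I expect to be the main obstacle, not because it is deep but because the signs coming from parities of $u_i,v_i$ must be tracked carefully and one must be sure no extra lower-triangular terms survive.

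Finally, for the Grothendieck-group statement I would invoke the general fact (as in \cite[Section~5]{EQ2}, adapted to the super dg setting) that a finite filtration $0=F^{\emptyset}\subset F^{\leq i_1}\subset\cdots\subset F^{\leq i_n}=Ae$ by super dg-modules, with subquotients $F^{\leq i}/F^{<i}\cong Av_iu_i$ as super dg-modules, yields in $\cal{D}^c(A)$ a sequence of exact triangles $F^{<i}\to F^{\leq i}\to Av_iu_i\to F^{<i}[1]$. Since $\partial(v_iu_i)=0$, each $Av_iu_i$ is cofibrant with compact image in $\cal{D}(A)$, so these are triangles of compact objects; applying the defining relation of $K_0(A)$ inductively along the filtration gives $[Ae]=\sum_{i\in I}[Av_iu_i]$. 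No convergence issues arise because $I$ is finite.
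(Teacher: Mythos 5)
The paper defers this proof to \cite[Corollary 5.8]{EQ2} (``a straightforward extension''), so I am assessing your proposal on its own terms. Your overall plan---translate the single vanishing condition into the filtration condition of Proposition~\ref{prop: Filtration is dg} plus the closedness requirements in the definition of a fantastic filtration, then read off the $K_0$ relation from the resulting ladder of exact triangles of compact cofibrant modules---is correct and matches the structure of the cited argument. However, several of the intermediate deductions you list as consequences of $v_i\partial(u_j)=0$ for $j\geq i$ are overstated.

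You assert the condition forces $u_i\partial(v_i)=0$; it does not. Differentiating $v_iu_j=\delta_{ij}$ gives $\partial(v_i)u_j=-(-1)^{|v_i|}v_i\partial(u_j)=0$ for $j\geq i$, hence $u_i\partial(v_i)\,u_jv_j=0$ for $j\geq i$, which places $u_i\partial(v_i)$ in $F^{<i}$---exactly what the Proposition demands---but there is no reason for it to vanish outright unless $i$ is minimal in the chosen order. More significantly, you claim the hypothesis forces $\partial(v_iu_i)=0$ and $\partial(e)=0$. Neither is a formal consequence of $v_i\partial(u_j)=0$ for $j\geq i$: your Leibniz bookkeeping with $v_iu_j=\delta_{ij}$, $u_i=u_iv_iu_i$, $v_i=v_iu_iv_i$ controls only the filtration-type products (it yields, e.g., $e\partial(u_i)=u_iv_i\partial(u_i)=0$ and $\partial(v_i)u_j=0$ for $j\geq i$), and tells you nothing directly about $\partial(e)=\sum_i\partial(u_iv_i)$. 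In the intended application $e$ and the $v_iu_i$ are identity $2$-morphisms of a dg-$2$-category, which are $\partial$-closed automatically by the Leibniz rule since the characteristic is not $2$; and the phrase ``the dg module $Ae$'' already presupposes $\partial(e)=0$. These two vanishings should therefore be treated as part of the standing setup (implicit in reading $v_iu_j=\delta_{ij}$ as an equality of $\partial$-constant idempotents), not as things to be derived. With that caveat made, the equivalence with the conditions of Proposition~\ref{prop: Filtration is dg} and the telescoping $K_0$ computation along the filtration are carried out correctly.
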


\subsection{Grothendieck ring of  dg 2-supercategories }  \label{section:Groth-dg-2cat}

Recall from section~\ref{sec:QPi-envelope} that if $\mf{U}$ is a graded 2-supercategory then we denote by $\oUqp$ is $(Q,\Pi)$-envelope and by $\oUbar$ its underlying 2-category obtained by restricting to degree zero maps.  The Grothendieck group of $\mf{U}$ is defined as
\[
K_0(\mf{U}) := K_0(\oUdot)
\]
where  $\oUdot$ denotes the Karoubi envelope of $\oUbar$.
The  $(Q,\Pi)$-2-category structure (see \cite[Section 6]{BE1} and section~\ref{sec:QPi-envelope}) on $\oUbar$ makes $K_0(\mf{U})$ a  $\Z[q,q^{-1},\pi]/(\pi^2-1)$-module with $[Q^mX ]= q^m[X]$ and $[X\Pi^a]=\pi^a[X]$ for $X\in \Hom_{\oUbar}(\l,\mu)$.

For our discussion in the dg-setting it is helpful to recall that the hom category $\Hom_{\oUdot}(\l,\mu)$ is equivalent to the abelian category of finitely generated graded projective left ${}_{\mu}\mathbb{A}_{\l}$-supermodules and morphisms that preserve degree and parity, where ${}_{\mu}\mathbb{A}_{\l}$ is the graded  superalgebra defined as the direct sum
\[
{}_{\mu}\mathbb{A}_{\l} := \left(\bigoplus_{x,y }\Hom_{\mf{U}}(\onenn{\mu}x \onel, \onenn{\mu}y\onel) \right)
\]
over 1-morphisms $x,y \maps \l \to \mu$ in $\mf{U}$.
We now consider the dg-setting.

\begin{definition}
Given a graded dg 2-supercategory $(\mf{U}, \partial)$ let
\begin{equation}
  \cal{D}(\mf{U}) := \bigoplus_{\l,\mu \in {\rm Ob}(\mf{U})} \cal{D}({}_{\mu}\mf{U}_{\l})
=  \bigoplus_{\l,\mu \in {\rm Ob}(\mf{U})} \cal{D}({}_{\mu}\mathbb{A}_{\l}),
\end{equation}
where ${}_{\mu}\mathbb{A}_{\l}$ is the graded dg-superalgebra obtained by summing over all objects of the dg-supercategory ${}_{\mu}\mf{U}_{\l}$.   Define the \emph{split Grothendieck group $K_0(\mf{U},\partial)$ of  $(\mf{U}, \partial)$} by
\begin{equation}
  K_0(\mf{U},\partial) = K_0(\oUdot,\underline{\partial})
:= \bigoplus_{\l,\mu \in {\rm Ob}(\mf{U})} K_0\left(\cal{D}^c\left(_{\mu}\mf{U}_{\l}\right)\right)
= \bigoplus_{\l,\mu \in {\rm Ob}(\mf{U})} K_0\left(\cal{D}^c\left(_{\mu}\mathbb{A}_{\l}\right)\right).
\end{equation}
\end{definition}

\begin{corollary} \label{cor:Ko-super}
Let $(\mf{U},\partial)$ be a graded dg 2-supercategory where the differential $\partial$ has bidgree  $(2,\bar{1})$.  Then the Grothendieck group of $K_0(\mf{U},\partial)$ is a module over the ring
\[
\mathcal{R}:= \Z[q,q^{-1},\pi]/(\pi^2-1, 1+q^2\pi)
\]
 with $[Q^mX ]= q^m[X]$ and $[X\Pi^a]=\pi^a[X]$ for $X\in \Hom_{\mf{U}}(\l,\mu)$ for some objects $\l,\mu \in \mf{U}$.
At $\pi=-1$ this is just $\Z$ and at $\pi=1$ this is $\Z[\sqrt{-1}]$.
\end{corollary}

\begin{proof}
This is an immediate corollary of Proposition~\ref{prop:K0-alg} applied to graded dg-superalgebras $_{\mu}\mathbb{A}_{\l}$ for objects $\l,\mu \in \mf{U}$.
\end{proof}

%
\section{The odd 2-category for $sl(2)$} \label{sec:oddU}
%

 \subsection{The odd nilHecke ring} \label{sec:oddnil}
The \text{odd nilHecke algebra} $\ONH_n$ is the graded unital associative superalgebra generated by elements $x_1,\ldots,x_n$ of degree 2 and parity $\bar{1}$ and elements $\partial_1,\ldots,\partial_{n-1}$ of degree $-2$ and parity $\bar{1}$, subject to the relations
\begin{eqnarray}
& & \partial_i^2 = 0 , \quad \partial_i \partial_{i+1}\partial_i =
 \partial_{i+1}\partial_i \partial_{i+1},\\
 & &  x_i \partial_i + \partial_i  x_{i+1} =1, \quad
   \partial_i x_i + x_{i+1}\partial_i = 1,\\
& & x_i x_j + x_j x_i =0 \quad (i\neq j),  \quad
\partial_i \partial_j + \partial_j \partial_i =0 \quad (|i-j|>1),  \\
& & x_i \partial_j +\partial_j x_i = 0 \quad (i\neq j,j+1).
\end{eqnarray}

For $w\in S_n$ and a choice  of a reduced expression $w=s_{i_1}\cdots s_{i_\ell}$ in terms of simple transpositions $s_i=(i\quad i+1)$, define
\begin{equation}
\partial_w=\partial_{i_1}\cdots\partial_{i_\ell}.
\end{equation}
Note that $\partial_w$ only depends on the reduced expression up to an overall sign.  For $w_0$ the longest word in $S_a$ we fix a preferred choice of reduced expression.
\[
\partial_{w_0} = \partial_1(\partial_2\partial_1) \dots (\partial_{n-1} \dots \partial_1).
\]
The element
\begin{equation} \label{eq:idempotent}
  {\sf e}_n := (-1)^{\binom{n}{3}}\partial_{w_0}x_1^{n-1} x_2^{n-1} \dots x_n^0
\end{equation}
is an idempotent of $\ONH_n$, see \cite[Lemma 2.17]{EllisQi} or \cite{EKL}.

The superalgebra of skew polnomials $\OPol_n$ is defined by
\begin{equation}
  \OPol_n := \Z \la x_1 ,\dots, x_n \ra/ (x_i x_j + x_j x_i =0 \; \text{if $i\neq j$.})
\end{equation}
The superalgebra $\ONH_n$ acts on $\OPol_n$ with $x_i$ acting by multiplication and $\partial_i$, the $i$-th odd divided difference operator
$\partial_i \maps \OPol_n \to \OPol_n$ defined by
\begin{align}
  \partial_i(x_j) &=
  \left\{
    \begin{array}{ll}
      1 & \hbox{$j=i,i+1$} \\
      0 & \hbox{$j \neq i,i+1$,}
    \end{array}
  \right.
\\
 \partial_i (fg) &= \partial_i(f) g + (-1)^{|f|} s_i(f) \partial_i(g).
\end{align}
Under this action, $\OPol_n \cong \ONH_n {\sf e}_n$ is the unique (up to isomorphism and grading shift) indecomposable projective $\ONH_n$-supermodule.  In \cite{EKL} it was shown that there is a superalgebra isomorphism
\begin{equation}
  \ONH_n \cong {\rm END}_{{\rm O}\Lambda_n}(\OPol_n),
\end{equation}
where ${\rm O}\Lambda_n$ is the superalgebra of {\em odd symmetric polynomials}.

For a superalgebra $A$ we denote by $A^{{\rm sop}}$ the superalgebra with multiplication defined by
\[
 x^{{\rm sop}} y^{{\rm sop}} := (-1)^{|x||y|} (y x)^{{\rm sop}}.
\]
There is a super algebra antiinvolution $\omega \maps \ONH_n \to \ONH_n^{{\rm sop}}$ defined by sending
\[
\omega(x_i) = x_i^{{\rm sop}}, \qquad \omega(\partial_i) = - \partial_i^{{\rm sop}}.
\]
In $\ONH_n^{{\rm sop}}$ the relations are the same except for the relation
\begin{equation}
 - x_i^{{\rm sop}} \partial_i^{{\rm sop}} - \partial_i^{{\rm sop}}  x_{i+1}^{{\rm sop}} =1, \quad
  - \partial_i^{{\rm sop}} x_i^{{\rm sop}} - x_{i+1}^{{\rm sop}}\partial_i^{{\rm sop}} = 1.
\end{equation}

%
\subsection{The odd categorified quantum group}
%

In \cite{BE2} Ellis and Brundan give a minimal presentation of the 2-category $\oUcat$ that requires the invertibility of certain maps.  Here we give a more traditional presentation by including the additional relations on 2-morphisms that are equivalent to the invertibility of these maps.

\begin{definition}
The odd $2$-supercategory $\oUcat=\oUcat(\mf{sl}_2)$ is the $2$-supercategory
consisting of
\begin{itemize}
\item objects $\l$ for $\l \in \Z$,
\item
for a signed sequence $\underline{\epsilon}=(\epsilon_1,\epsilon_2, \dots, \epsilon_m)$, with $\epsilon_1,\dots,\epsilon_m \in \{+,-\}$, define
\[\cal{E}_{\underline{\epsilon}}:= \cal{E}_{\epsilon_1} \cal{E}_{\epsilon_2} \dots \cal{E}_{\epsilon_m}
\]
where $\cal{E}_{+}:=\cal{E}$ and $\cal{E}_{-}:= \cal{F}$.  A
$1$-morphisms from $\l$ to $\l'$ is a formal finite direct sum of strings
\[
 \cal{E}_{\underline{\epsilon}}\onel   =
  \onenn{\l'}\cal{E}_{\underline{\epsilon}}
\]
for any   signed sequence $\underline{\epsilon}$ such that $\l'= \l+ 2\sum_{j=1}^m\epsilon_j1$.

\item $2$-morphisms are generated by
\begin{align}
  \xy 0;/r.17pc/:
 (0,7);(0,-7); **\dir{-} ?(.75)*\dir{>};
 (0,0)*{\bullet};
 (7,3)*{ \scs \l};
 (-9,3)*{\scs \l+2};
 (-10,0)*{};(10,0)*{};
 \endxy &\maps \cal{E}\onel  \to \cal{E}\onel   & \quad
 &
    \xy 0;/r.17pc/:
  (0,0)*{\xybox{
    (-4,-4)*{};(4,4)*{} **\crv{(-4,-1) & (4,1)}?(1)*\dir{>} ;
    (4,-4)*{};(-4,4)*{} **\crv{(4,-1) & (-4,1)}?(1)*\dir{>};
     (9,1)*{\scs \l};
     (-10,0)*{};(10,0)*{};
     }};
  \endxy \;\;\maps \cal{E}\cal{E}\onel   \to \cal{E}\cal{E}\onel   \nn \\
  & \text{degree } (2, \bar{1})  & & \qquad \quad   \text{degree } (-2,\bar{1} )  \nn
   \\
   & & & \nn \\
     \xy 0;/r.17pc/:
    (0,-3)*{\bbpef{}};
    (8,-5)*{\scs \l};
    (-10,0)*{};(10,0)*{};
    \endxy &\maps \onel   \to \cal{F}\cal{E}\onel  &
    &
   \xy 0;/r.17pc/:
    (0,-3)*{\bbpfe{}};
    (8,-5)*{\scs \l};
    (-10,0)*{};(10,0)*{};
    \endxy \maps \onel  \to\cal{E}\cal{F}\onel   \nn \\
     &  \text{degree } (1+\l,\bar{0} )   & & \qquad \quad \text{degree } (1-\l, \overline{\l+1}) ) \nn
   \\
      & & & \nn \\
  \xy 0;/r.17pc/:
    (0,0)*{\bbcef{}};
    (8,4)*{\scs  \l};
    (-10,0)*{};(10,0)*{};
    \endxy & \maps \cal{F}\cal{E}\onel  \to\onel  &
    &
 \xy 0;/r.17pc/:
    (0,0)*{\bbcfe{}};
    (8,4)*{\scs  \l};
    (-10,0)*{};(10,0)*{};
    \endxy \maps\cal{E}\cal{F}\onel    \to\onel  \nn \\
      & \text{degree } ( (1+\l, \overline{1+\l} )    & & \qquad \quad  \text{degree } (1-\l,  \bar{0})  \nn
\end{align}
where we have indicated a $Q$-grading and parity as an ordered tuple $(x,\bar{y})$.
 Note  that the $\Z_2$ degree of the right pointing cap and cup are not the mod 2 reductions of the $\Z$-degree.
\end{itemize}
The identity $2$-morphism of the $1$-morphism $\cal{E} \onen$ is represented by an upward oriented line (likewise, the identity $2$-morphism of $\cal{F} \onen$ is represented by a downward oriented line).

Composites of the above diagrams are interpreted using the conventions for supercategories from Section~\ref{sec:supercategories}.   The rightmost region in our diagrams is usually colored by $\l$.  The fact that we are defining a 2-supercategory means that diagrams with odd parity skew commute.
The $2$-morphisms satisfy the following relations
 (see \cite{BE2} for more details).
\begin{enumerate}

\item {\bf (Odd nilHecke)}
The $\cal{E}$'s carry an action of the odd nilHecke algebra with $x_i$ corresponding to a dot and $\partial_i$ corresponding to a crossing. Using the adjoint structure this induces an action of the odd nilHecke algebra on the $\cal{F}$'s via the antiinvolution $\omega$.

\begin{equation} \label{eq_nilHecke1}
  \vcenter{\xy 0;/r.18pc/:
    (-4,-4)*{};(4,4)*{} **\crv{(-4,-1) & (4,1)}?(1)*\dir{};
    (4,-4)*{};(-4,4)*{} **\crv{(4,-1) & (-4,1)}?(1)*\dir{};
    (-4,4)*{};(4,12)*{} **\crv{(-4,7) & (4,9)}?(1)*\dir{>};
    (4,4)*{};(-4,12)*{} **\crv{(4,7) & (-4,9)}?(1)*\dir{>};
 \endxy}
 \;\;= \;\; 0, \qquad \quad
 \vcenter{
 \xy 0;/r.18pc/:
    (-4,-4)*{};(4,4)*{} **\crv{(-4,-1) & (4,1)}?(1)*\dir{>};
    (4,-4)*{};(-4,4)*{} **\crv{(4,-1) & (-4,1)}?(1)*\dir{>};
    (4,4)*{};(12,12)*{} **\crv{(4,7) & (12,9)}?(1)*\dir{>};
    (12,4)*{};(4,12)*{} **\crv{(12,7) & (4,9)}?(1)*\dir{>};
    (-4,12)*{};(4,20)*{} **\crv{(-4,15) & (4,17)}?(1)*\dir{>};
    (4,12)*{};(-4,20)*{} **\crv{(4,15) & (-4,17)}?(1)*\dir{>};
    (-4,4)*{}; (-4,12) **\dir{-};
    (12,-4)*{}; (12,4) **\dir{-};
    (12,12)*{}; (12,20) **\dir{-};
  (18,8)*{\l};
\endxy}
 \;\; =\;\;
 \vcenter{
 \xy 0;/r.18pc/:
    (4,-4)*{};(-4,4)*{} **\crv{(4,-1) & (-4,1)}?(1)*\dir{>};
    (-4,-4)*{};(4,4)*{} **\crv{(-4,-1) & (4,1)}?(1)*\dir{>};
    (-4,4)*{};(-12,12)*{} **\crv{(-4,7) & (-12,9)}?(1)*\dir{>};
    (-12,4)*{};(-4,12)*{} **\crv{(-12,7) & (-4,9)}?(1)*\dir{>};
    (4,12)*{};(-4,20)*{} **\crv{(4,15) & (-4,17)}?(1)*\dir{>};
    (-4,12)*{};(4,20)*{} **\crv{(-4,15) & (4,17)}?(1)*\dir{>};
    (4,4)*{}; (4,12) **\dir{-};
    (-12,-4)*{}; (-12,4) **\dir{-};
    (-12,12)*{}; (-12,20) **\dir{-};
  (10,8)*{\l};
\endxy}
\end{equation}

\begin{equation} \label{eq_nilHecke2}
  \xy 0;/r.18pc/:
  (3,4);(3,-4) **\dir{-}?(0)*\dir{<}+(2.3,0)*{};
  (-3,4);(-3,-4) **\dir{-}?(0)*\dir{<}+(2.3,0)*{};
  (8,2)*{\l};
 \endxy
 \quad =
\xy 0;/r.18pc/:
  (0,0)*{\xybox{
    (-4,-4)*{};(4,4)*{} **\crv{(-4,-1) & (4,0)}?(1)*\dir{>}?(.25)*{\bullet};
    (4,-4)*{};(-4,4)*{} **\crv{(4,-1) & (-4,0)}?(1)*\dir{>};
     (8,1)*{\l};
     (-10,0)*{};(10,0)*{};
     }};
  \endxy
 \;\; +
 \xy 0;/r.18pc/:
  (0,0)*{\xybox{
    (-4,-4)*{};(4,4)*{} **\crv{(-4,-1) & (4,0)}?(1)*\dir{>}?(.75)*{\bullet};
    (4,-4)*{};(-4,4)*{} **\crv{(4,-1) & (-4,0)}?(1)*\dir{>};
     (8,1)*{\l};
     (-10,0)*{};(10,0)*{};
     }};
  \endxy
 \;\; =
\xy 0;/r.18pc/:
  (0,0)*{\xybox{
    (-4,-4)*{};(4,4)*{} **\crv{(-4,-1) & (4,0)}?(1)*\dir{>};
    (4,-4)*{};(-4,4)*{} **\crv{(4,-1) & (-4,0)}?(1)*\dir{>}?(.75)*{\bullet};
     (8,1)*{\l};
     (-10,0)*{};(10,0)*{};
     }};
  \endxy
 \;\; +
  \xy 0;/r.18pc/:
  (0,0)*{\xybox{
    (-4,-4)*{};(4,4)*{} **\crv{(-4,-1) & (4,0)}?(1)*\dir{>} ;
    (4,-4)*{};(-4,4)*{} **\crv{(4,-1) & (-4,0)}?(1)*\dir{>}?(.25)*{\bullet};
     (8,1)*{\l};
     (-10,0)*{};(10,0)*{};
     }};
  \endxy
\end{equation}

\item {\bf (Right adjunction axioms)}
\begin{equation} \label{eq_biadj1}
    \xy 0;/r.17pc/:
    (-8,5)*{}="1";
    (0,5)*{}="2";
    (0,-5)*{}="2'";
    (8,-5)*{}="3";
    (-8,-10);"1" **\dir{-}?(.5)*\dir{>};
    "2";"2'" **\dir{-};
    "1";"2" **\crv{(-8,12) & (0,12)} ;
    "2'";"3" **\crv{(0,-12) & (8,-12)};
    "3"; (8,10) **\dir{-}?(1)*\dir{>};
    (15,-9)*{ \l-2};
    (-12,9)*{\l};
    (10,8)*{\scs };
    (-10,-8)*{\scs };
    \endxy
    \quad = \quad
      \xy 0;/r.17pc/:
 (0,10);(0,-10); **\dir{-} ?(1)*\dir{>}+(2.3,0)*{\scriptstyle{}}
 ?(.1)*\dir{ }+(2,0)*{\scs };
 (-6,5)*{\l};
 (8,5)*{ \l-2};
 (-10,0)*{};(10,0)*{};(-2,-8)*{\scs };
 \endxy
    \qquad \qquad
          \xy 0;/r.17pc/:
 (0,10);(0,-10); **\dir{-} ?(0)*\dir{<}+(2.3,0)*{\scriptstyle{}}
 ?(.1)*\dir{ }+(2,0)*{\scs };
 (-6,5)*{ \l};
 (8,5)*{ \l+2};
 (-10,0)*{};(10,0)*{};(-2,-8)*{\scs };
 \endxy
    \quad = \quad
    \xy 0;/r.17pc/:
    (8,5)*{}="1";
    (0,5)*{}="2";
    (0,-5)*{}="2'";
    (-8,-5)*{}="3";
    (8,-10);"1" **\dir{-} ?(0)*\dir{<};
    "2";"2'" **\dir{-} ?(.5)*\dir{<};
    "1";"2" **\crv{(8,12) & (0,12)} ;
    "2'";"3" **\crv{(0,-12) & (-8,-12)};
    "3"; (-8,10) **\dir{-} ?(.5)*\dir{<};
    (15,9)*{\l+2};
    (-12,-9)*{\l};
    (-10,8)*{\scs };
    (10,-8)*{\scs };
    \endxy
\end{equation}

\item {\bf (Parity left adjoint)}
\begin{equation} \label{eq_biadj2}
   \xy 0;/r.17pc/:
 (0,10);(0,-10); **\dir{-} ?(1)*\dir{>}+(2.3,0)*{\scriptstyle{}}
 ?(.1)*\dir{ }+(2,0)*{\scs };
 (-6,5)*{\l};
 (8,5)*{ \l-2};
 (-10,0)*{};(10,0)*{};(-2,-8)*{\scs };
 \endxy
    \quad = \quad (-1)^{\l+1}
    \xy 0;/r.17pc/:
    (8,5)*{}="1";
    (0,5)*{}="2";
    (0,-5)*{}="2'";
    (-8,-5)*{}="3";
    (8,-10);"1" **\dir{-}?(.6)*\dir{>};
    "2";"2'" **\dir{-};
    "1";"2" **\crv{(8,12) & (0,12)} ;
    "2'";"3" **\crv{(0,-12) & (-8,-12)};
    "3"; (-8,10) **\dir{-}?(1)*\dir{>};
    (15,9)*{\l-2};
    (-12,-9)*{\l};
    (-10,8)*{\scs };
    (10,-8)*{\scs };
    \endxy
    \qquad
    \xy 0;/r.17pc/:
    (-8,5)*{}="1";
    (0,5)*{}="2";
    (0,-5)*{}="2'";
    (8,-5)*{}="3";
    (-8,-10);"1" **\dir{-}?(0)*\dir{<};
    "2";"2'" **\dir{-}?(.5)*\dir{<};
    "1";"2" **\crv{(-8,12) & (0,12)} ;
    "2'";"3" **\crv{(0,-12) & (8,-12)};
    "3"; (8,10) **\dir{-}?(.5)*\dir{<};
    (15,-9)*{ \l+2};
    (-12,9)*{\l};
    (10,8)*{\scs };
    (-10,-8)*{\scs };
    \endxy
    \quad = \quad
      \xy 0;/r.17pc/:
 (0,10);(0,-10); **\dir{-} ?(0)*\dir{<}+(2.3,0)*{\scriptstyle{}}
 ?(.1)*\dir{ }+(2,0)*{\scs };
 (-6,5)*{ \l};
 (8,5)*{ \l+2};
 (-10,0)*{};(10,0)*{};(-2,-8)*{\scs };
 \endxy
\end{equation}

\item {\bf (Bubble relations)}

\noindent Dotted bubbles of negative degree are zero, so that
for all $m \geq 0$ one has
\begin{equation} \label{eq_positivity_bubbles}
\xy
 (-12,0)*{\lbbub{m}{}};
 (-8,6)*{\l};
 \endxy
  = 0
 \qquad  \text{if $m< \l-1$,} \qquad
 \xy
 (-12,0)*{\rbbub{m}{}};
 (-8,6)*{\l};
 \endxy = 0\quad
  \text{if $m< -\l -1$.}
\end{equation}
Dotted bubbles of degree $0$ are equal to the identity $2$-morphism:
\begin{equation} \label{eq:degzero-bub}
\xy
 (0,0)*{\lbbub{\l-1}};
  (4,6)*{\l};
 \endxy
  =  \Id_{\onen} \quad \text{for $\l \geq 1$,}
  \qquad \quad
  \xy
 (0,0)*{\lbbub{-\l-1}};
  (4,6)*{\l};
 \endxy  =  \Id_{\onen} \quad \text{for $\l \leq -1$.}
 \end{equation}
We use the following notation for the
 dotted  bubbles:
\begin{equation*}
 \xy
(0,0)*{\lbub{m}};
 (5,4)*{\l};
 (6,0)*{};
\endxy := \xy
 (5,4)*{\l};
(0,0)*{\lbbub{ m+\l-1}};
\endxy,
\quad\quad
 \xy
(0,0)*{\rbub{m}};
 (6,4)*{\l};
 (6,0)*{};
\endxy := \xy
 (5,4)*{\l};
(0,0)*{\rbbub{m-\l-1}};
\endxy,
\end{equation*}
so that
$$\deg\left( \xy
(0,0)*{\lbub{m}};
 (5,4)*{\l};
 (6,0)*{};
\endxy\right)= \deg\left(\xy
(0,0)*{\rbub{m}};
 (5,4)*{\l};
 (6,0)*{};
\endxy\right)=2m.$$
The degree 2 bubbles are given a special notation as follows:
\begin{align} \label{eq:oddbub}
\xy
 (5,4)*{\l};
(0,0)*{\bigotimes};
\endxy
\;\; := \;\;
\left\{
  \begin{array}{ll}
     \xy
(0,0)*{\lbub{1}};
 (5,4)*{\l};
 (6,0)*{};
\endxy \;\; =\;\; \xy
 (5,4)*{\l};
(0,0)*{\lbbub{\l}};
\endxy, & \hbox{$\l \geq 0$,} \\
     \xy
(0,0)*{\rbub{1}};
 (5,4)*{\l};
 (6,0)*{};
 (5,-2)*{\scriptstyle 1};
\endxy  \;\; = \;\; \xy
 (5,4)*{\l};
(0,0)*{\rbbub{-\l}};
\endxy, & \hbox{$\l \leq 0$.}
  \end{array}
\right.
\end{align}
By the superinterchange law this bubble squares to zero
\begin{align} \label{eq:oddbub-square}
\left( \;\xy
 (5,4)*{\l};
(0,0)*{\bigotimes};
\endxy \; \right)^2 = 0
\end{align}

We call a clockwise (resp. counterclockwise) bubble  fake
if $m+n-1<0$ and (resp. if $m-n-1<0$). The fake bubbles are defined recursively by
the homogeneous terms of the equation
\begin{equation}\label{rel:odd_inf_grassm}
  \sum_{\overset{r,s \geq 0}{r+s=t}} \;
   \xy
(-3,5)*{\lbub{2r}};
(2,-5)*{\rbub{2s}};
 (5,4)*{\l};
 (6,0)*{};
\endxy \;\; = \;\; \delta_{t,0}.
\end{equation}
\begin{equation}\label{rel:decomp_of_bubble_2n+1_into_2n_and_1}
  \xy
(0,0)*{\lbub{2n+1}};
 (5,4)*{\l};
 (6,0)*{};
\endxy \;\; =\;\; \vcenter{\xy
 (5,4)*{\l};
(-3,5)*{\lbub{2n}};
(0,-2)*{\bigotimes};
\endxy},
\qquad \qquad
  \xy
(0,0)*{\rbub{2n+1}};
 (5,4)*{\l};
 (6,0)*{};
\endxy \;\; =\;\; \vcenter{\xy
 (5,4)*{\l};
(-3,5)*{\rbub{2n}};
(-5,-2)*{\bigotimes};
\endxy}
\end{equation}

\item {\bf (Centrality of odd bubbles) }
By the super interchange law it follows that the odd bubble squares to zero.  Further, we have
\begin{equation} \label{rel: centrality of odd bubbles}
\xy 0;/r.17pc/:
 (0,8);(0,-8); **\dir{-} ?(1)*\dir{>}+(2.3,0)*{\scriptstyle{}}
 ?(.1)*\dir{ }+(2,0)*{\scs };
 (-5,-2)*{\txt\large{$\bigotimes$}};
 (-8,6)*{ \l};
 (-10,0)*{};(10,0)*{};(-2,-8)*{\scs };
 \endxy
\;\; = \;\;
\xy 0;/r.17pc/:
 (0,8);(0,-8); **\dir{-} ?(1)*\dir{>}+(2.3,0)*{\scriptstyle{}}
 ?(.1)*\dir{ }+(2,0)*{\scs };
 (5,-2)*{\txt\large{$\bigotimes$}};
 (-8,6)*{ \l};
 (-10,0)*{};(10,0)*{};(-2,-8)*{\scs };
 \endxy
 \qquad \qquad
\xy 0;/r.17pc/:
 (0,8);(0,-8); **\dir{-} ?(0)*\dir{<}+(2.3,0)*{\scriptstyle{}}
 ?(.1)*\dir{ }+(2,0)*{\scs };
 (-5,-2)*{\txt\large{$\bigotimes$}};
 (-8,6)*{ \l};
 (-10,0)*{};(10,0)*{};(-2,-8)*{\scs };
 \endxy
\;\; = \;\;
\xy 0;/r.17pc/:
 (0,8);(0,-8); **\dir{-} ?(0)*\dir{<}+(2.3,0)*{\scriptstyle{}}
 ?(.1)*\dir{ }+(2,0)*{\scs };
 (5,-2)*{\txt\large{$\bigotimes$}};
 (-8,6)*{ \l};
 (-10,0)*{};(10,0)*{};(-2,-8)*{\scs };
 \endxy
\end{equation}

\item \label{item_cycbiadjoint} {\bf (Cyclicity propeties)}

\begin{equation} \label{eq_cyclic_dot}
\xy 0;/r.17pc/:
 (0,10);(0,-10); **\dir{-} ?(0)*\dir{<}+(2.3,0)*{\scriptstyle{}};
 (0,0)*{\bullet};
 (-9,-3)*{\l};
 (-10,0)*{};(10,0)*{};
 \endxy
 \; := \;
    \xy 0;/r.17pc/:
    (8,5)*{}="1";
    (0,5)*{}="2";
    (0,-5)*{}="2'";
    (-8,-5)*{}="3";
    (8,-10);"1" **\dir{-}?(1)*\dir{<};
    "2";"2'" **\dir{-} ?(.5)*\dir{<};
    "1";"2" **\crv{(8,12) & (0,12)} ?(0)*\dir{<};
    "2'";"3" **\crv{(0,-12) & (-8,-12)};
    "3"; (-8,10) **\dir{-};
    (-12,-9)*{\l};
    (0,4)*{\txt\large{$\bullet$}};
    (-10,8)*{\scs };
    (10,-8)*{\scs };
    \endxy
     \quad = \quad 2 \;
           \xy 0;/r.17pc/:
 (0,10);(0,-10); **\dir{-} ?(.75)*\dir{<}+(2.3,0)*{\scriptstyle{}}
 ?(.1)*\dir{ }+(2,0)*{\scs };
 (-5,0)*{\txt\large{$\bigotimes$}};
 (-8,8)*{ \l};
 (-10,0)*{};(10,0)*{};(-2,-8)*{\scs };
 \endxy
 \;\; - \;\;
         \xy 0;/r.17pc/:
    (-8,5)*{}="1";
    (0,5)*{}="2";
    (0,-5)*{}="2'";
    (8,-5)*{}="3";
    (-8,-10);"1" **\dir{-};
    "2";"2'" **\dir{-} ?(.5)*\dir{<};
    "1";"2" **\crv{(-8,12) & (0,12)} ?(0)*\dir{<};
    "2'";"3" **\crv{(0,-12) & (8,-12)}?(1)*\dir{<};
    "3"; (8,10) **\dir{-};
    (-12,9)*{\l};
    (0,4)*{\txt\large{$\bullet$}};
    (10,8)*{\scs };
    (-10,-8)*{\scs };
    \endxy
\end{equation}

The cyclic relations for crossings are given by
\begin{equation} \label{eq_cyclic}
   \xy 0;/r.17pc/:
  (0,0)*{\xybox{
    (-4,4)*{};(4,-4)*{} **\crv{(-4,1) & (4,-1)}?(1)*\dir{>} ;
    (4,4)*{};(-4,-4)*{} **\crv{(4,1) & (-4,-1)}?(1)*\dir{>};
     (9,1)*{\scs  \lambda};
     (-10,0)*{};(10,0)*{};
     }};
  \endxy \quad := \quad
  \xy 0;/r.17pc/:
  (0,0)*{\xybox{
    (4,-4)*{};(-4,4)*{} **\crv{(4,-1) & (-4,1)}?(1)*\dir{>};
    (-4,-4)*{};(4,4)*{} **\crv{(-4,-1) & (4,1)};
     (-4,4)*{};(18,4)*{} **\crv{(-4,16) & (18,16)} ?(1)*\dir{>};
     (4,-4)*{};(-18,-4)*{} **\crv{(4,-16) & (-18,-16)} ?(1)*\dir{<}?(0)*\dir{<};
     (-18,-4);(-18,12) **\dir{-};(-12,-4);(-12,12) **\dir{-};
     (18,4);(18,-12) **\dir{-};(12,4);(12,-12) **\dir{-};
     (22,1)*{ \lambda};
     (-10,0)*{};(10,0)*{};
     (-4,-4)*{};(-12,-4)*{} **\crv{(-4,-10) & (-12,-10)}?(1)*\dir{<}?(0)*\dir{<};
      (4,4)*{};(12,4)*{} **\crv{(4,10) & (12,10)}?(1)*\dir{>}?(0)*\dir{>};
     }};
  \endxy
\quad =  \quad
\xy 0;/r.17pc/:
  (0,0)*{\xybox{
    (-4,-4)*{};(4,4)*{} **\crv{(-4,-1) & (4,1)}?(1)*\dir{>};
    (4,-4)*{};(-4,4)*{} **\crv{(4,-1) & (-4,1)};
     (4,4)*{};(-18,4)*{} **\crv{(4,16) & (-18,16)} ?(1)*\dir{>};
     (-4,-4)*{};(18,-4)*{} **\crv{(-4,-16) & (18,-16)} ?(1)*\dir{<}?(0)*\dir{<};
     (18,-4);(18,12) **\dir{-};(12,-4);(12,12) **\dir{-};
     (-18,4);(-18,-12) **\dir{-};(-12,4);(-12,-12) **\dir{-};
     (22,1)*{ \lambda};
     (-10,0)*{};(10,0)*{};
      (4,-4)*{};(12,-4)*{} **\crv{(4,-10) & (12,-10)}?(1)*\dir{<}?(0)*\dir{<};
      (-4,4)*{};(-12,4)*{} **\crv{(-4,10) & (-12,10)}?(1)*\dir{>}?(0)*\dir{>};
     }};
  \endxy
\end{equation}

Sideways crossings satisfy the following identities:
\begin{align} \label{eq_crossl-gen-cyc}
  \xy 0;/r.18pc/:
  (0,0)*{\xybox{
    (-4,-4)*{};(4,4)*{} **\crv{(-4,-1) & (4,1)}?(1)*\dir{>} ;
    (4,-4)*{};(-4,4)*{} **\crv{(4,-1) & (-4,1)}?(0)*\dir{<};
     (9,2)*{ \lambda};
     (-12,0)*{};(12,0)*{};
     }};
  \endxy
\quad & := \quad
 \xy 0;/r.17pc/:
  (0,0)*{\xybox{
    (4,-4)*{};(-4,4)*{} **\crv{(4,-1) & (-4,1)}?(1)*\dir{>};
    (-4,-4)*{};(4,4)*{} **\crv{(-4,-1) & (4,1)};
     (-4,4);(-4,12) **\dir{-};
     (-12,-4);(-12,12) **\dir{-};
     (4,-4);(4,-12) **\dir{-};(12,4);(12,-12) **\dir{-};
     (16,1)*{\lambda};
     (-10,0)*{};(10,0)*{};
     (-4,-4)*{};(-12,-4)*{} **\crv{(-4,-10) & (-12,-10)}?(1)*\dir{<}?(0)*\dir{<};
      (4,4)*{};(12,4)*{} **\crv{(4,10) & (12,10)}?(1)*\dir{>}?(0)*\dir{>};%
     }};
  \endxy
  \quad = \quad
 \xy 0;/r.17pc/:
  (0,0)*{\xybox{
    (-4,-4)*{};(4,4)*{} **\crv{(-4,-1) & (4,1)}?(1)*\dir{<};
    (4,-4)*{};(-4,4)*{} **\crv{(4,-1) & (-4,1)};
     (4,4);(4,12) **\dir{-};
     (12,-4);(12,12) **\dir{-};
     (-4,-4);(-4,-12) **\dir{-};(-12,4);(-12,-12) **\dir{-};
     (16,1)*{\lambda};
     (10,0)*{};(-10,0)*{};
     (4,-4)*{};(12,-4)*{} **\crv{(4,-10) & (12,-10)}?(1)*\dir{>}?(0)*\dir{>};
      (-4,4)*{};(-12,4)*{} **\crv{(-4,10) & (-12,10)}?(1)*\dir{<}?(0)*\dir{<};
     }};
  \endxy
\\
\label{eq_crossr-gen-cyc}
  \xy 0;/r.18pc/:
  (0,0)*{\xybox{
    (-4,-4)*{};(4,4)*{} **\crv{(-4,-1) & (4,1)}?(0)*\dir{<} ;
    (4,-4)*{};(-4,4)*{} **\crv{(4,-1) & (-4,1)}?(1)*\dir{>};%
     (9,2)*{ \lambda};
     (-12,0)*{};(12,0)*{};
     }};
  \endxy
\quad &:= (-1)^{\l+1}\quad
 \xy 0;/r.17pc/:
  (0,0)*{\xybox{
    (-4,-4)*{};(4,4)*{} **\crv{(-4,-1) & (4,1)}?(1)*\dir{>};
    (4,-4)*{};(-4,4)*{} **\crv{(4,-1) & (-4,1)};
     (4,4);(4,12) **\dir{-};
     (12,-4);(12,12) **\dir{-};
     (-4,-4);(-4,-12) **\dir{-};(-12,4);(-12,-12) **\dir{-};
     (16,-6)*{\lambda};
     (10,0)*{};(-10,0)*{};
     (4,-4)*{};(12,-4)*{} **\crv{(4,-10) & (12,-10)}?(1)*\dir{<}?(0)*\dir{<};
      (-4,4)*{};(-12,4)*{} **\crv{(-4,10) & (-12,10)}?(1)*\dir{>}?(0)*\dir{>};
     }};
  \endxy
  \quad = -\quad
  \xy 0;/r.17pc/:
  (0,0)*{\xybox{
    (4,-4)*{};(-4,4)*{} **\crv{(4,-1) & (-4,1)}?(1)*\dir{<};
    (-4,-4)*{};(4,4)*{} **\crv{(-4,-1) & (4,1)};
     (-4,4);(-4,12) **\dir{-};
     (-12,-4);(-12,12) **\dir{-};
     (4,-4);(4,-12) **\dir{-};(12,4);(12,-12) **\dir{-};
     (16,6)*{\lambda};
     (-10,0)*{};(10,0)*{};
     (-4,-4)*{};(-12,-4)*{} **\crv{(-4,-10) & (-12,-10)}?(1)*\dir{>}?(0)*\dir{>};
      (4,4)*{};(12,4)*{} **\crv{(4,10) & (12,10)}?(1)*\dir{<}?(0)*\dir{<};
     }};
  \endxy
\end{align}

\item {\bf (Odd sl(2) relations)}
\begin{equation}
\begin{gathered}
\label{rei2}
\xy
{\ar (0,-8)*{}; (0,8)*{}};
(-3,0)*{};(3,0)*{};
\endxy\xy
{\ar (0,8)*{}; (0,-8)*{}};
(4,4)*{\l};
(-3,0)*{};(6,0)*{};
\endxy
\;\; =\;\;
 -\text{$\xy
(-3,-8)*{};(-3,8)*{} **\crv{(-3,-4) & (3,-4) & (3,4) & (-3,4)}?(1)*\dir{>};
(3,-8)*{};(3,8)*{} **\crv{(3,-4) & (-3,-4) & (-3,4) & (3,4)}?(0)*\dir{<};
(6,4)*{\l};
(-7,0)*{};(8,0)*{};
\endxy$} + \sum_{\overset{f_1+f_2+f_3}{=\l-1}}
(-1)^{f_2}\;
\text{$\xy
 (3,9)*{};(-3,9)*{} **\crv{(3,4) & (-3,4)} ?(1)*\dir{>} ?(.2)*{\bullet};
(5,6)*{\scriptstyle f_1};
 (0,0)*{\smccbub{}};
 (2.5,0)*{ \bullet};
(7,0)*{\scriptstyle \ast +f_2};
 (-3,-9)*{};(3,-9)*{} **\crv{(-3,-4) & (3,-4)} ?(1)*\dir{>}?(.8)*{\bullet};
(5,-6)*{\scriptstyle f_3};
(-6,4)*{\l};
 (-8,0)*{};(8,0)*{};
\endxy$}\, ,
\\
\xy
{\ar (0,8)*{}; (0,-8)*{}};
(-3,0)*{};(3,0)*{};
\endxy\xy
{\ar (0,-8)*{}; (0,8)*{}};
(4,4)*{\l};
(-3,0)*{};(6,0)*{};
\endxy \;\;=\;\;
 -\text{$\xy
(-3,-8)*{};(-3,8)*{} **\crv{(-3,-4) & (3,-4) & (3,4) & (-3,4)}?(0)*\dir{<};
(3,-8)*{};(3,8)*{} **\crv{(3,-4) & (-3,-4) & (-3,4) & (3,4)}?(1)*\dir{>};
(6,4)*{\l};
(-7,0)*{};(8,0)*{};
\endxy$} + \sum_{\overset{f_1+f_2+f_3}{=-\l-1}}
(-1)^{f_2}\;
\text{$\xy
 (3,9)*{};(-3,9)*{} **\crv{(3,4) & (-3,4)} ?(0)*\dir{<} ?(.2)*{\bullet};
(5,6)*{\scriptstyle f_1};
 (0,0)*{\smcbub{}};
 (-2.5,0)*{ \bullet};
(-6,-3)*{\scriptstyle \ast +f_2};
 (-3,-9)*{};(3,-9)*{} **\crv{(-3,-4) & (3,-4)} ?(0)*\dir{<}?(.8)*{\bullet};
(5,-6)*{\scriptstyle f_3};
(-6,4)*{\l};
 (-8,0)*{};(8,0)*{};
\endxy$}\, .
\end{gathered}
\end{equation}
\end{enumerate}
\end{definition}

\begin{remark} \label{rem:even-odd}
There are no 1-morphisms that change the weight $\l$ by an odd number.  This implies that the 2-category splits
\begin{equation}
\oUcat \cong \oUcat^{\text{even}} \oplus \oUcat^{\text{odd}}
\end{equation}
where $\oUcat^{\text{even}}$ only has even weights and $\oUcat^{\text{odd}}$ only has odd weights.
\end{remark}

Let $\oUqp$ denote the $(Q,\Pi)$-envelope of the 2-category $\oUcat$ and $\oUbar$ the underlying 2-category of the $(Q,\Pi)$ envelope as defined in Definition~\ref{def:Uund}.

 \subsection{Additional properties of $\oUcat$}

For later convenience we record several relations that follows from those in the previous section, see \cite{BE2} for more details.  Let $\lfloor n \rfloor$ denote the greatest integer less than $n$.
\begin{enumerate}

\item {\bf (Dot Slide Relations)}
\begin{equation} \label{rel:rightward_dot_slide}
\vcenter{\xy
  (-4,0)*{}; (4,0)*{} **\crv{(-4,-8) & (4,-8)};
 ?(1)*\dir{>}  ?(.2)*\dir{}+(0,-.1)*{\bullet}+(-2,-2)*{\scs n};
   (-1,-9)*{\scs \l};
\endxy}
\;\;  = \;\;
(-1)^{\lfloor \frac{n}{2}\rfloor}\;\;
\vcenter{\xy
  (-4,0)*{}; (4,0)*{} **\crv{(-4,-8) & (4,-8)};
 ?(1)*\dir{>}  ?(.85)*\dir{}+(0,-.1)*{\bullet}+(2,-2)*{\scs n};
   (-1,-9)*{\scs \l};
\endxy}
\qquad \qquad
\vcenter{
\xy
  (-4,0)*{}; (4,0)*{} **\crv{(-4,8) & (4,8)};
 ?(1)*\dir{>}  ?(.2)*\dir{}+(0,-.1)*{\bullet}+(-2,2)*{\scs n};
   (-1,9)*{\scs \l};
\endxy}
\;\;  = \;\;
(-1)^{\lfloor \frac{n}{2}\rfloor}\;\;
\vcenter{
\xy
  (-4,0)*{}; (4,0)*{} **\crv{(-4,8) & (4,8)};
 ?(1)*\dir{>}  ?(.85)*\dir{}+(0,-.1)*{\bullet}+(2,2)*{\scs n};
   (-1,9)*{\scs \l};
\endxy}
\end{equation}

\begin{align} \label{rel:leftward_dot_slide}
(-1)^{\lfloor \frac{n}{2} \rfloor}
\vcenter{\xy
  (-4,0)*{}; (4,0)*{} **\crv{(-4,-8) & (4,-8)};
 ?(0)*\dir{<}  ?(.2)*\dir{}+(0,-.1)*{\bullet}+(-2,-2)*{\scs n};
   (-1,-9)*{\scs \l};
\endxy}
\;\;  = \;\;
\begin{cases}
\quad \vcenter{\xy
  (-4,0)*{}; (4,0)*{} **\crv{(-4,-8) & (4,-8)};
 ?(0)*\dir{<}  ?(.2)*\dir{}+(6.6,-.1)*{\bullet}+(2,-1.5)*{\scs n};
   (-1,-9)*{\scs \l};
\endxy} \mbox{\hspace{1cm} if $n$ is even}\\ \\
(-1)^{\l} \;\;
\vcenter{\xy
  (-4,0)*{}; (4,0)*{} **\crv{(-4,-8) & (4,-8)};
 ?(0)*\dir{<}  ?(.2)*\dir{}+(6.6,-.1)*{\bullet}+(2,-1.5)*{\scs n};
   (-1,-9)*{\scs \l};
\endxy}
\;\; + \;\;
2 \;\;
\vcenter{\xy
  (-4,0)*{}; (4,0)*{} **\crv{(-4,-8) & (4,-8)};
 ?(0)*\dir{<}  ?(.2)*\dir{}+(6.6,-.1)*{\bullet}+(2,-1.5)*{\scs n-1};
  (0,-10)*{\bigotimes};
   (3,-9)*{\scs \l};
\endxy} \mbox{\hspace{1cm} if $n$ is odd}
\end{cases}
\end{align}
\begin{align}
(-1)^{\lfloor \frac{n}{2} \rfloor} \;
\vcenter{\xy
  (-4,0)*{}; (4,0)*{} **\crv{(-4,8) & (4,8)};
 ?(0)*\dir{<}  ?(.2)*\dir{}+(6.5,0)*{\bullet}+(2.2,-2)*{\scs n};
   (1,9)*{\scs \l};
\endxy}
\;\;  = \;\;
\begin{cases}
\quad \vcenter{\xy
  (-4,0)*{}; (4,0)*{} **\crv{(-4,8) & (4,8)};
 ?(0)*\dir{<}  ?(.2)*\dir{}+(0,0)*{\bullet}+(2,0)*{\scs n};
   (1,9)*{\scs \l};
\endxy} \mbox{\hspace{1cm} if $n$ is even}\\ \\
(-1)^{\l} \;\;
\vcenter{\xy
  (-4,0)*{}; (4,0)*{} **\crv{(-4,8) & (4,8)};
 ?(0)*\dir{<}  ?(.2)*\dir{}+(0,0)*{\bullet}+(2,0)*{\scs n};
   (1,9)*{\scs \l};
\endxy}
\;\; + \;\;
2 \quad
\vcenter{\xy
  (-4,0)*{}; (4,0)*{} **\crv{(-4,8) & (4,8)};
 ?(0)*\dir{<}  ?(.2)*\dir{}+(0,0)*{\bullet}+(-4,0)*{\scs n-1};
 (0,9)*{\bigotimes};
  (3,9)*{\scs \l};
\endxy} \mbox{\hspace{1cm} if $n$ is odd}
\end{cases}
\end{align}

\item {\bf (Bubble Slide Relations)}\label{rel:buble_slide}
\begin{align} \label{eq:ccbubslide}
\xy 0;/r.17pc/:
 (-2,8);(-2,-8); **\dir{-} ?(1)*\dir{>}+(2.3,0)*{\scriptstyle{}}
 ?(.1)*\dir{ }+(2,0)*{\scs };
 (7,0)*{\rbub{n}};
 (8,6)*{\scs \l};
 (-10,0)*{};(10,0)*{};(-2,-8)*{\scs };
 \endxy
 \;\; &= \;\;
 \sum_{r \geq 0} \;\; (2r + 1) \;\;
\xy 0;/r.17pc/:
 (0,8);(0,-8); **\dir{-} ?(1)*\dir{>}+(2.3,0)*{\scriptstyle{}}
 ?(.1)*\dir{ }+(2,0)*{\scs };
 (0,0)*{\txt\large{$\bullet$}};
 (4,0)*{\scs 2r};
 (-9,0)*{\rbub{n-2r}};
 (8,6)*{\scs \l};
 (-10,0)*{};(10,0)*{};(-2,-8)*{\scs };
 \endxy
\qquad \quad
\xy 0;/r.17pc/:
 (3,8);(3,-8); **\dir{-} ?(1)*\dir{>}+(2.3,0)*{\scriptstyle{}}
 ?(.1)*\dir{ }+(2,0)*{\scs };
 (-7,0)*{\lbub{n}};
 (8,6)*{\scs \l};
 (-10,0)*{};(10,0)*{};(-2,-8)*{\scs };
 \endxy
 \;\; &= \;\;
 \sum_{r \geq 0} \;\; (2r + 1) \;\;
\xy 0;/r.17pc/:
 (-2,8);(-2,-8); **\dir{-} ?(1)*\dir{>}+(2.3,0)*{\scriptstyle{}}
 ?(.1)*\dir{ }+(2,0)*{\scs };
 (-2,0)*{\txt\large{$\bullet$}};
 (2,0)*{\scs 2r};
 (9,0)*{\lbub{n-2r}};
 (8,6)*{\scs \l};
 (-10,0)*{};(10,0)*{};(-2,-8)*{\scs };
 \endxy
\end{align}

\item {\bf{(Pitchfork Relations)}}
\begin{align}
\vcenter{
\xy
  (-6,0)*{}; (6,0)*{} **\crv{(-6,6) & (6,8)} ?(1)*\dir{>};
  (0,0)*{}; (-6,8)*{} **\crv{(0,5) & (-6,2)}  ?(1)*\dir{>};
  (-1,9)*{\scs\l};
\endxy}
\;\; = \;\;
\vcenter{\xy
  (-6,0)*{}; (6,0)*{} **\crv{(-6,8) & (6,6)} ?(1)*\dir{>};
  (0,0)*{}; (6,8)*{} **\crv{(0,5) & (6,2)}  ?(1)*\dir{>};
  (-1,9)*{\scs\l};
\endxy}
\quad \quad \quad \quad
\vcenter{
\xy
  (-6,0)*{}; (6,0)*{} **\crv{(-6,6) & (6,8)} ?(1)*\dir{>};
  (0,0)*{}; (-6,8)*{} **\crv{(0,5) & (-6,2)}  ?(0)*\dir{<};
  (-1,9)*{\scs\l};
\endxy}
\;\; = \;\;
\vcenter{\xy
  (-6,0)*{}; (6,0)*{} **\crv{(-6,8) & (6,6)} ?(1)*\dir{>};
  (0,0)*{}; (6,8)*{} **\crv{(0,5) & (6,2)}  ?(0)*\dir{<};
  (-1,9)*{\scs\l};
\endxy}
\end{align}
\begin{align}
\vcenter{
\xy
  (-6,5)*{}; (6,5)*{} **\crv{(-6,-1) & (6,-3)} ?(1)*\dir{>};
  (-6,-3)*{}; (0,5)*{} **\crv{(-6,2) & (0,-1)}  ?(1)*\dir{>};
  (-1,9)*{\scs\l};
\endxy}
\;\; = \;\;
\vcenter{\xy
  (-6,5)*{}; (6,5)*{} **\crv{(-6,-1) & (6,-3)} ?(1)*\dir{>};
  (6,-3)*{}; (0,5)*{} **\crv{(6,2) & (0,-1)}  ?(1)*\dir{>};
  (-1,9)*{\scs\l};
\endxy}
\quad \quad \quad \quad
\vcenter{
\xy
  (-6,5)*{}; (6,5)*{} **\crv{(-6,-1) & (6,-3)} ?(1)*\dir{>};
  (6,-3)*{}; (0,5)*{} **\crv{(6,2) & (0,-1)}  ?(0)*\dir{<};
  (-1,9)*{\scs\l};
\endxy}
\;\; = \;\;
\vcenter{\xy
  (-6,5)*{}; (6,5)*{} **\crv{(-6,-1) & (6,-3)} ?(1)*\dir{>};
  (-6,-3)*{}; (0,5)*{} **\crv{(-6,2) & (0,-1)}  ?(0)*\dir{<};
  (-1,9)*{\scs\l};
\endxy}
\end{align}
\begin{align}
\vcenter{\xy
  (-6,0)*{}; (6,0)*{} **\crv{(-6,6) & (6,8)} ?(0)*\dir{<};
  (0,0)*{}; (-6,8)*{} **\crv{(0,5) & (-6,2)}  ?(1)*\dir{>};
  (-1,9)*{\scs\l};
\endxy}
\;\; = \;\;
\vcenter{
\xy
  (-6,0)*{}; (6,0)*{} **\crv{(-6,8) & (6,6)} ?(0)*\dir{<};
  (0,0)*{}; (6,8)*{} **\crv{(0,5) & (6,2)}  ?(1)*\dir{>};
  (-1,9)*{\scs\l};
\endxy}
\quad \quad \quad \quad
\vcenter{
\xy
  (-6,5)*{}; (6,5)*{} **\crv{(-6,-1) & (6,-3)} ?(0)*\dir{<};
  (-6,-3)*{}; (0,5)*{} **\crv{(-6,2) & (0,-1)}  ?(1)*\dir{>};
  (-1,9)*{\scs\l};
\endxy}
\;\; = \;\;
\vcenter{\xy
  (-6,5)*{}; (6,5)*{} **\crv{(-6,-1) & (6,-3)} ?(0)*\dir{<};
  (6,-3)*{}; (0,5)*{} **\crv{(6,2) & (0,-1)}  ?(1)*\dir{>};
  (-1,9)*{\scs\l};
\endxy}
\end{align}
\begin{align}
\vcenter{
\xy
  (-6,5)*{}; (6,5)*{} **\crv{(-6,-1) & (6,-3)} ?(0)*\dir{<};
  (6,-3)*{}; (0,5)*{} **\crv{(6,2) & (0,-1)}  ?(0)*\dir{<};
  (-1,9)*{\scs\l};
\endxy}
\;\; = -\;\;
\vcenter{\xy
  (-6,5)*{}; (6,5)*{} **\crv{(-6,-1) & (6,-3)} ?(0)*\dir{<};
  (-6,-3)*{}; (0,5)*{} **\crv{(-6,2) & (0,-1)}  ?(0)*\dir{<};
  (-1,9)*{\scs\l};
\endxy}
\quad \quad \quad \quad
\vcenter{
\xy
  (-6,0)*{}; (6,0)*{} **\crv{(-6,6) & (6,8)} ?(0)*\dir{<};
  (0,0)*{}; (-6,8)*{} **\crv{(0,5) & (-6,2)}  ?(0)*\dir{<};
  (-1,9)*{\scs\l};
\endxy}
\;\; = -\;\;
\vcenter{\xy
  (-6,0)*{}; (6,0)*{} **\crv{(-6,8) & (6,6)} ?(0)*\dir{<};
  (0,0)*{}; (6,8)*{} **\crv{(0,5) & (6,2)}  ?(0)*\dir{<};
  (-1,9)*{\scs\l};
\endxy}
\end{align}

\item {\bf{(Curl Relations)}}
For all $n\geq0$ we have,
\begin{equation}\label{rel:Curls}
\xy
(0,-8)*{};(0,8)*{} **\crv{(0,4) & (6,4) & (6,-4) & (0,-4)}?(1)*\dir{>};
(2,-2.5)*{\bullet};
(3,-0.5)*{\scs n};
(6,6)*{\scs \l};
(-3,0)*{};(8,0)*{};
\endxy = - \sum_{r = 0} ^{n - \l} (-1)^{(r+1)}\;\;\xy
 (0,-8)*{};(0,8)*{} **\dir{-} ?(1)*\dir{>} ?(.8)*{\bullet};
(-2,5)*{\scriptstyle r};
 (8,0)*{\lbub{n-r-\l}};
(6,6)*{\scs \l};
 (-3,0)*{};(15,0)*{};
\endxy  \quad\quad\xy
(0,-8)*{};(0,8)*{} **\crv{(0,4) & (-6,4) & (-6,-4) & (0,-4)}?(1)*\dir{>};
(-2,-2.5)*{\bullet};
(-3,-0.5)*{\scs n};
(4,-6)*{\scs \l};
(3,0)*{};(-8,0)*{};
\endxy = \sum_{r=0}^{n +\l +2}(-1)^{\l r}\xy
 (0,-8)*{};(0,8)*{} **\dir{-} ?(1)*\dir{>} ?(.8)*{\bullet};
(2,5)*{\scriptstyle r};
 (-8,0)*{\rbub{\l +n -r}};
(6,-6)*{\scs \l};
 (3,0)*{};(-13,0)*{};
\endxy
\end{equation}
Note that the exact form of the dotted curl relation depends on the placement of the dots inside the curl. See for example, \cite[(5.18) -- (5.21)]{BE2}.  Using the adjunctions the relations
\begin{align} \label{eq:downcurl1}
\vcenter{\xy 
    (-3,4)*{};(3,-4)*{} **\crv{(-3,1) & (3,-1)}?(0)*\dir{<};
    (3,4)*{};(-3,-4)*{} **\crv{(3,1) & (-3,-1)};
    (-3,-4)*{};(3,-4)*{} **\crv{(-3.2,-8) & (3.2,-8)}?(.2)*\dir{};
    (5,0)*{\scs  \l};
    \endxy}
\;\; &= \;\;
\sum_{r =0}^{\l}\;\; (-1)^{(\l+r+1)}\;\;
 \vcenter{\xy 
    (-3,4)*{};(3,4)*{} **\crv{(-3.2,-2) & (3.2,-2)}?(0)*\dir{<} ?(.2)*\dir{}+(0,0)*{\bullet};
    (-4,0)*{\scs r};
    (5,0)*{\scs  \l};
    (2,-6)*{\rbub{(\l-r)}};
    (-8,0)*{};(8,0)*{};
  \endxy }
\\ \nn \\ \label{eq:downcurl2}
\vcenter{\xy 
    (-3,4)*{};(3,-4)*{} **\crv{(-3,1) & (3,-1)};
    (3,4)*{};(-3,-4)*{} **\crv{(3,1) & (-3,-1)}?(1)*\dir{>};
    (-3,4)*{};(3,4)*{} **\crv{(-3.2,8) & (3.2,8)}?(.2)*\dir{};
    (5,0)*{\scs  \l};
    \endxy}
\;\; &= \;\;
\sum_{r = 0}^{-\l}\;\; (-1)^{(\l+r)}\;\;
\vcenter{\xy 
    (-3,-4)*{};(3,-4)*{} **\crv{(-3.2,2) & (3.2,2)}?(0)*\dir{<} ?(.2)*\dir{}+(0,0)*{\bullet};
    (-7,-1)*{\scs (\l-r)};
    (5,0)*{\scs  \l};
    (0,7)*{\lbub{r}};
    (-8,0)*{};(8,0)*{};
  \endxy }
\end{align}
follow.
\end{enumerate}

\subsection{The nondegeneracy conjecture} \label{sec:nondegen}

A spanning set for the space $\Hom_{\oUcat}(x,y)$ between arbitrary 1-morphisms $x,y$ was defined in
\cite[Section 3.4]{Lau-odd}
and simplified in \cite[Section 8]{BE2}.  In both instances it was conjectured that this spanning set is a basis.  For our classification of differentials we need bases for certain hom spaces that are a subset of the full nondegeneracy conjecture.

\medskip
\noindent {\bf Weak nondegeneracy conjecture} The following Hom spaces are spanned over $\Bbbk$ by the elements predicted by the non-degeneracy conjecture:
\begin{equation}
\begin{split}
  \Hom_{\oUcat}^2(\1_{\l},\Pi 1_{\l})
    &=
\left\langle \;
\xy
 (5,4)*{\l};
(0,0)*{\bigotimes};
\endxy
\; \right\rangle \\
 \Hom_{\oUcat}^4(E\1_{\l},E\Pi 1_{\l})
&=
\left\langle \;
\xy 0;/r.17pc/:
 (0,6);(0,-6); **\dir{-} ?(1)*\dir{>}+(2.3,0)*{\scriptstyle{}}
 ?(.1)*\dir{ }+(2,0)*{\scs };
 (0,0)*{\txt\large{$\bullet$}};
 (8,3)*{\scs \l};
 (3,1)*{\scs 2};
 (-6,0)*{};(10,0)*{};(-2,-8)*{\scs };
 \endxy
\; , \;\;
 \xy 0;/r.17pc/:
 (0,6);(0,-6); **\dir{-} ?(1)*\dir{>}+(2.3,0)*{\scriptstyle{}}
 ?(.1)*\dir{ }+(2,0)*{\scs };
 (0,1)*{\txt\large{$\bullet$}};
 (8,3)*{\scs \l};
 (-8,0)*{};(10,0)*{};(-2,-8)*{\scs };
 (5,-4)*{\bigotimes};
 \endxy
\;, \;\;
 \xy 0;/r.17pc/:
 (0,6);(0,-6); **\dir{-} ?(1)*\dir{>}+(2.3,0)*{\scriptstyle{}}
 ?(.1)*\dir{ }+(2,0)*{\scs };
 (8,5)*{\scs \l};
 (-6,0)*{};(10,0)*{};
 (8,-2)*{\lbub{2}};
 \endxy
\; \right\rangle \\
\Hom_{\oUcat}^2(EE\1_{\l},EE\1_{\l})
&=
\left\langle \;
   \xy 0;/r.18pc/:
  (3,4);(3,-4) **\dir{-}?(0)*\dir{<}+(2.3,0)*{};
  (-3,4);(-3,-4) **\dir{-}?(0)*\dir{<}+(2.3,0)*{}?(.5)*{\bullet};
  (6,2)*{\scs\l};
 \endxy
\; , \quad
\xy 0;/r.18pc/:
  (3,4);(3,-4) **\dir{-}?(0)*\dir{<}+(2.3,0)*{}?(.5)*{\bullet};
  (-3,4);(-3,-4) **\dir{-}?(0)*\dir{<}+(2.3,0)*{};
  (6,2)*{\scs \l};
 \endxy
\;, \quad
  \xy 0;/r.18pc/:
  (3,4);(3,-5) **\dir{-}?(0)*\dir{<}+(2.3,0)*{};
  (-3,4);(-3,-5) **\dir{-}?(0)*\dir{<}+(2.3,0)*{};
  (6,-2)*{\bigotimes};
  (6,2)*{\scs \l};
 \endxy
\;, \quad
  \xy 0;/r.18pc/:
  (0,0)*{\xybox{
    (-4,-4)*{};(4,4)*{} **\crv{(-4,-1) & (4,1)}?(1)*\dir{>};
    (4,-4)*{};(-4,4)*{} **\crv{(4,-1) & (-4,1)}?(1)*\dir{>}?(.75)*{\bullet};
    (-.9,2.3)*{\scs 2};
    (7,1)*{\scs \l};
     }};
  \endxy \;,
\xy 0;/r.18pc/:
  (0,0)*{\xybox{
    (4,4);(4,8) **\dir{-}?(1)*\dir{>} ;
    (-4,4);(-4,8) **\dir{-}?(1)*\dir{>}?(0.25)*{\bullet};
    (-4,-4)*{};(4,4)*{} **\crv{(-4,-1) & (4,1)}?(0.75)*{\bullet};
    (4,-4)*{};(-4,4)*{} **\crv{(4,-1) & (-4,1)};
     (8,1)*{\scs \l};
     }};
  \endxy
\right.
\\
& \quad
\left.
 \quad
\;, \quad
  \xy 0;/r.18pc/:
  (0,0)*{\xybox{
    (-4,-4)*{};(4,4)*{} **\crv{(-4,-1) & (4,1)}?(1)*\dir{>}?(.75)*{\bullet};
    (4,-4)*{};(-4,4)*{} **\crv{(4,-1) & (-4,1)}?(1)*\dir{>};
    (.9,2.3)*{\scs 2};
    (7,1)*{\scs \l};
     }};
  \endxy
\;, \quad
    \xy 0;/r.18pc/:
  (0,0)*{\xybox{
    (4,-4);(4,-6) **\dir{-}; (-4,-4);(-4,-6) **\dir{-};
    (-4,-4)*{};(4,4)*{} **\crv{(-4,-1) & (4,-1)}?(1)*\dir{>};
    (4,-4)*{};(-4,4)*{} **\crv{(4,-1) & (-4,-1)}?(1)*\dir{>}?(.75)*{\bullet};
    (8,1)*{\scs \l};
    (7,-4)*{\bigotimes};
    }};
  \endxy
\;, \quad
    \xy 0;/r.18pc/:
  (0,0)*{\xybox{
    (4,-4);(4,-6) **\dir{-}; (-4,-4);(-4,-6) **\dir{-};
    (-4,-4)*{};(4,4)*{} **\crv{(-4,-1) & (4,-1)}?(1)*\dir{>}?(.75)*{\bullet};
    (4,-4)*{};(-4,4)*{} **\crv{(4,-1) & (-4,-1)}?(1)*\dir{>};
    (8,1)*{\scs \l};
    (7,-4)*{\bigotimes};
    }};
  \endxy
\;, \quad
  \xy 0;/r.17pc/:
  (0,0)*{\xybox{    0;/r.18pc/:
    (4,-4);(4,-8) **\dir{-} ;
    (-4,-4);(-4,-8) **\dir{-} ;
    (12,-4)*{ \lbub{\scs 2}};
    (-4,-4)*{};(4,4)*{} **\crv{(-4,1) & (4,1)} ?(1)*\dir{>};
    (4,-4)*{};(-4,4)*{} **\crv{(4,1) & (-4,1)} ?(1)*\dir{>};
     (8,2)*{\scs \l};
     }};
  \endxy
\right\rangle
\end{split}
\end{equation}
\medskip

The results of \cite[Theorem 7.1]{Lau-odd} and \cite{BE2} coupled with the results from \cite{KKO,KKO2} imply that the 2-category $\oUcat$ admits a 2-representation on categories of modules over cyclotomic odd nilHecke algebras.  It should be possible to show the spanning sets above are a basis using this action.     However, it is difficult to extract formulas for the bubbles under this 2-representation so the weak form of the nondegeneracy conjecture remains open.  Note that from these assumptions and the adjunction axioms it is possible to deduce bases for hom spaces involving caps and cups in the corresponding degree.

%
\section{Derivations on the odd 2-category}
%
In this section we give a classification of derivations  on the odd 2-category $\oUcat$ assuming the weak nondegeneracy conjecture from Section~\ref{sec:nondegen}.  Assuming these spanning sets form a basis we are able to reduce degrees of freedom by comparing coefficients of basis elements.  We note that even without the weak nondegeneracy conjecture, we arrive at well defined derivations that suit our purposes for our main categorification result.

Here we look for derivations that are compatible with a natural dg-structure on odd (skew) polynomials which was shown by Ellis and Qi to extend to the odd nilHecke algebra.  To that end, we restrict our attention to differentials of bidgree $(2, \bar{1})$.  Recall that a derivation on a 2-category is just a derivation on the space of 2-morphisms which satisfies the Leibniz rule for both  horizontal and vertical composition of 2-morphisms.

\subsection{General form of derivations}

The most general form of a bidgree $(2,\bar{1})$ differential on the generating 2-morphisms of $\oUcat$ is given by
\begin{align} \label{eq:der1}
\partial \left (
 \xy 0;/r.17pc/:
 (0,6);(0,-6); **\dir{-} ?(1)*\dir{>}+(2.3,0)*{\scriptstyle{}}
 ?(.1)*\dir{ }+(2,0)*{\scs };
 (0,0)*{\txt\large{$\bullet$}};
 (8,3)*{\scs \l};
 (-7,0)*{};(10,0)*{};(-2,-8)*{\scs };
 \endxy \right )
 \;\; &:= \;\;
 \alpha_{1,\l}
\xy 0;/r.17pc/:
 (0,6);(0,-6); **\dir{-} ?(1)*\dir{>}+(2.3,0)*{\scriptstyle{}}
 ?(.1)*\dir{ }+(2,0)*{\scs };
 (0,0)*{\txt\large{$\bullet$}};
 (8,4)*{\scs \l};
 (3,1)*{\scs 2};
 (-7,0)*{};(10,0)*{};(-2,-8)*{\scs };
 \endxy
 \;\; + \;\;
 \alpha_{2,\l}
 \xy 0;/r.17pc/:
 (0,6);(0,-6); **\dir{-} ?(1)*\dir{>}+(2.3,0)*{\scriptstyle{}}
 ?(.1)*\dir{ }+(2,0)*{\scs };
 (0,1)*{\txt\large{$\bullet$}};
 (8,3)*{\scs \l};
 (-7,0)*{};(10,0)*{};(-2,-8)*{\scs };
 (5,-4)*{\bigotimes};
 \endxy
 \;\; + \;\;
 \alpha_{3,\l}
 \xy 0;/r.17pc/:
 (0,6);(0,-6); **\dir{-} ?(1)*\dir{>}+(2.3,0)*{\scriptstyle{}}
 ?(.1)*\dir{ }+(2,0)*{\scs };
 (8,5)*{\scs \l};
 (-6,0)*{};(10,0)*{};
 (8,-2)*{\lbub{2}};
 \endxy
 \\ & \nn \\
   \partial \left(\;
 \xy 0;/r.18pc/:
  (0,0)*{\xybox{
    (-4,-4)*{};(4,4)*{} **\crv{(-4,-1) & (4,1)}?(1)*\dir{>}?(.25)*{};
    (4,-4)*{};(-4,4)*{} **\crv{(4,-1) & (-4,1)}?(1)*\dir{>};
     (8,1)*{\scs \l};
     (-6,0)*{};(10,0)*{};
     }};
  \endxy
	\right) \;\;  &:= \;
    \beta_{1, {\l}}\;\;
 \xy  0;/r.18pc/:
  (3,4);(3,-4) **\dir{-}?(0)*\dir{<}+(2.3,0)*{};
  (-3,4);(-3,-4) **\dir{-}?(0)*\dir{<}+(2.3,0)*{};
  (8,2)*{\scs \l};
 \endxy
 \; + \;
 \beta_{2, {\l}}
 \xy  0;/r.18pc/:
  (0,0)*{\xybox{
    (-4,-4)*{};(4,4)*{} **\crv{(-4,-1) & (4,1)}?(1)*\dir{>};
    (4,-4)*{};(-4,4)*{} **\crv{(4,-1) & (-4,1)}?(1)*\dir{>}?(.7)*{\bullet};
     (8,1)*{\scs \l};
     (-5,0)*{};(10,0)*{};
     }};
  \endxy
  \; + \;
  \beta_{3, {\l}}
  \xy  0;/r.18pc/:
  (0,0)*{\xybox{
    (-4,-4)*{};(4,4)*{} **\crv{(-4,-1) & (4,1)}?(1)*\dir{>}?(.7)*{\bullet};
    (4,-4)*{};(-4,4)*{} **\crv{(4,-1) & (-4,1)}?(1)*\dir{>};
     (8,1)*{\scs \l};
     (-5,0)*{};(10,0)*{};
     }};
  \endxy
  \; + \;
  \beta_{4,{\l}}\;
  \xy  0;/r.18pc/:
  (0,0)*{\xybox{
    (-4,-4)*{};(4,4)*{} **\crv{(-4,-1) & (4,1)}?(1)*\dir{>}?(.25)*{};
    (4,-4)*{};(-4,4)*{} **\crv{(4,-1) & (-4,1)}?(1)*\dir{>};
     (8,3)*{ \scs \l};
     (8,-2)*{\scs \bigotimes};
     (-5,-6)*{};(10,0)*{};
     }};
  \endxy \qquad \qquad \quad \;
\end{align}
\begin{alignat}{2}
  \partial \left(\;
  \vcenter{\xy 
    (-3,-4)*{};(3,-4)*{} **\crv{(-3.2,2) & (3.2,2)}?(1)*\dir{>};
    (4,2)*{\scs  \l};
  \endxy }
    \right)\;  &:= \;
a_{\lambda-2} \;
\vcenter{\xy 
    (-3,-4)*{};(3,-4)*{} **\crv{(-3.2,2) & (3.2,2)}?(1)*\dir{>} ?(.2)*\dir{}+(0,0)*{\bullet};
    (4,2)*{\scs  \l};
  \endxy }
\;  + \;
b_{\lambda-2}
\vcenter{\xy 
    (-3,-4)*{};(3,-4)*{} **\crv{(-3.2,2) & (3.2,2)}?(1)*\dir{>} ;
    (6,2)*{\scs  \l};
    (0,5)*{\bigotimes};
  \endxy }
 \qquad \quad&&  \partial \left(\;
  \vcenter{\xy 
    (-3,4)*{};(3,4)*{} **\crv{(-3.2,-2) & (3.2,-2)}?(1)*\dir{>};
    (4,-1)*{\scs  \l};
  \endxy }\;
    \right)  \; :=\;
\bar{a}_{\lambda}
\vcenter{\xy 
    (-3,4)*{};(3,4)*{} **\crv{(-3.2,-2) & (3.2,-2)}?(1)*\dir{>} ?(.2)*\dir{}+(5,0)*{\bullet};
    (4,-2)*{\scs  \l};
  \endxy }
  + \;
\bar{b}_{\lambda}
\vcenter{\xy 
    (-6,4)*{};(6,4)*{} **\crv{(-5.5,-6) & (5.5,-6)}?(1)*\dir{>} ;
    (6,-2)*{\scs  \l};
    (0,2)*{\bigotimes};
  \endxy } \quad
   \\ & \nn\\
  \partial \left(\;\;
  \vcenter{\xy 
    (-3,-4)*{};(3,-4)*{} **\crv{(-3.2,2) & (3.2,2)}?(0)*\dir{<};
    (4,2)*{\scs  \l};
  \endxy }
    \right) \;  &:=\;
c_{\lambda}
\vcenter{\xy 
    (3,-4)*{};(-3,-4)*{} **\crv{(3.2,2) & (-3.2,2)}?(1)*\dir{>} ?(.2)*\dir{}+(0,0)*{\bullet};
    (4,2)*{\scs  \l};
  \endxy }
   + \;
d_{\lambda}
\vcenter{\xy 
    (6,-4)*{};(-6,-4)*{} **\crv{(5.5,6) & (-5.5,6)}?(1)*\dir{>} ;
    (6,2)*{\scs  \l};
    (0,-2)*{\bigotimes};
  \endxy }
\qquad \quad
  &&\partial \left( \;\;
  \vcenter{\xy 
    (-3,4)*{};(3,4)*{} **\crv{(-3.2,-2) & (3.2,-2)}?(0)*\dir{<};
    (4,-2)*{\scs  \l};
  \endxy }\;
    \right)
\; := \;
\bar{c}_{\lambda-2}\;
\vcenter{\xy 
    (-3,4)*{};(3,4)*{} **\crv{(-3.2,-2) & (3.2,-2)}?(0)*\dir{<} ?(.2)*\dir{}+(0,0)*{\bullet};
    (4,-2)*{\scs  \l};
  \endxy }
  + \;
\bar{d}_{\lambda-2}
\vcenter{\xy 
    (-3,4)*{};(3,4)*{} **\crv{(-3.2,-2) & (3.2,-2)}?(0)*\dir{<} ;
    (6,-2)*{\scs  \l};
    (0,-5)*{\bigotimes};
  \endxy }   \label{eq:derend}
\end{alignat}
for some coefficients in $\Bbbk$.
The image of all identity 2-morphisms are zero.  This definition is extended to arbitrary composites using the Leibniz rule.
By Remark~\ref{rem:even-odd} the derivations can be defined independently on $\oUcat^{\text{even}}$ and on $\oUcat^{\text{odd}}$.

In order for this assignment to define a derivation on $\oUcat$ it must respect the defining relations of the 2-category $\oUcat$.
For example, let us consider the right adjunction axiom \eqref{eq_biadj1}.   The left-hand-side is vertical composite of two 2-morphsism, call them $x$ and $y$.
\begin{equation}
  \xy 0;/r.17pc/:
    (-8,5)*{}="1";
    (0,5)*{}="2";
    (0,-5)*{}="2'";
    (8,-5)*{}="3";
    (-8,-10);"1" **\dir{-}?(.5)*\dir{>};
    (-12,0);(10,0); **[red]\dir{.};
    (-15,4)*{\scs x};
    (-15,-4)*{\scs y};
    "2";"2'" **\dir{-};
    "1";"2" **\crv{(-8,12) & (0,12)} ;
    "2'";"3" **\crv{(0,-12) & (8,-12)};
    "3"; (8,10) **\dir{-}?(1)*\dir{>};
    (15,-9)*{\scs \l};
    (10,8)*{\scs };
    (-10,-8)*{\scs };
    \endxy
    \quad = \quad
  \xy 0;/r.17pc/:
    (0,10);(0,-10); **\dir{-} ?(1)*\dir{>}+(2.3,0)*{\scriptstyle{}}
    ?(.1)*\dir{ }+(2,0)*{\scs };
    (-6,5)*{\scs \l+2};
    (8,5)*{\scs \l};
    (-10,0)*{};(10,0)*{};(-2,-8)*{\scs };
 \endxy
 \end{equation}
Using the Leibniz rule for this vertical composition $x \circ y$ of $x$ and $y$ gives that $\partial (x\circ y) = \partial(x)y + (-1)^{|x|}\partial(y)$, and the parity of $x$ is even, $|x| = 0$.
Hence,
\begin{equation} \label{d of right adjunction}
\partial \left (
 \xy 0;/r.17pc/:
    (-8,5)*{}="1";
    (0,5)*{}="2";
    (0,-5)*{}="2'";
    (8,-5)*{}="3";
    (-8,-10);"1" **\dir{-}?(.5)*\dir{>};
    "2";"2'" **\dir{-};
    "1";"2" **\crv{(-8,12) & (0,12)} ;
    "2'";"3" **\crv{(0,-12) & (8,-12)};
    "3"; (8,10) **\dir{-}?(1)*\dir{>};
    (15,-9)*{\scs \l};
    (-12,9)*{\scs \l+2};
    (10,8)*{\scs };
    (-10,-8)*{\scs };
    \endxy \right)
    \quad = \quad
    (a_{\l} + \bar{a}_{\l})
  \xy 0;/r.17pc/:
    (0,10);(0,-10); **\dir{-} ?(1)*\dir{>}+(2.3,0)*{\scriptstyle{}}
    ?(.1)*\dir{ }+(2,0)*{\scs };
    (0,0)*{\txt\large{$\bullet$}};
    (-6,5)*{\scs \l+2};
    (8,5)*{\scs \l};
    (-10,0)*{};(10,0)*{};(-2,-8)*{\scs };
 \endxy
 \;\; +\;\;
 (b_{\l} +\bar{b}_{\l})
 \xy 0;/r.17pc/:
    (0,10);(0,-10); **\dir{-} ?(1)*\dir{>}+(2.3,0)*{\scriptstyle{}}
    ?(.1)*\dir{ }+(2,0)*{\scs };
    (5,-2)*{\bigotimes};
    (-6,5)*{\scs \l+2};
    (8,5)*{\scs \l};
    (-10,0)*{};(10,0)*{};(-2,-8)*{\scs };
 \endxy
\end{equation}
The image of the right hand side of \eqref{eq_biadj1} under $\partial$ is zero, hence, using the linear independence of the 2-morphisms in \eqref{d of right adjunction} we obtain a relationship between the coefficients
\[
(a_{\l} + \bar{a}_{\l}) = 0, \qquad  (b_{\l} +\bar{b}_{\l})=0.
\]

\begin{lemma} \label{lem:derivation-start}
For the map $\partial \maps \oUcat \to \oUcat$ defined by \eqref{eq:der1}--\eqref{eq:derend} to preserve the odd nilHecke relations, the right adjunction axioms, and the parity left adjoint relations, the coefficients must take the form
\begin{align} \label{lemeq:der1}
  \partial \left ( \;\;
 \xy 0;/r.17pc/:
 (0,6);(0,-6); **\dir{-} ?(1)*\dir{>}+(2.3,0)*{\scriptstyle{}}
 ?(.1)*\dir{ }+(2,0)*{\scs };
 (0,0)*{\txt\large{$\bullet$}};
 (6,3)*{\scs \l};
 \endxy \;\;\right)
 \;\; &:= \;\;
 \alpha_{1,\l}
\xy 0;/r.17pc/:
 (0,6);(0,-6); **\dir{-} ?(1)*\dir{>}+(2.3,0)*{\scriptstyle{}}
 ?(.1)*\dir{ }+(2,0)*{\scs };
 (0,0)*{\txt\large{$\bullet$}};
 (8,3)*{\scs \l};
 (3,1)*{\scs 2};
 (-8,0)*{};(10,0)*{};(-2,-8)*{\scs };
 \endxy
 \;\; + \;\;
 \alpha_{2}
 \xy 0;/r.17pc/:
 (0,6);(0,-6); **\dir{-} ?(1)*\dir{>}+(2.3,0)*{\scriptstyle{}}
 ?(.1)*\dir{ }+(2,0)*{\scs };
 (0,1)*{\txt\large{$\bullet$}};
 (8,5)*{\scs \l};
 (-8,0)*{};(10,0)*{};(-2,-8)*{\scs };
 (5,-4)*{\bigotimes};
 \endxy
 \\ & \nn \\
   \partial \left(
 \xy 0;/r.18pc/:
  (0,0)*{\xybox{
    (-4,-4)*{};(4,4)*{} **\crv{(-4,-1) & (4,1)}?(1)*\dir{>}?(.25)*{};
    (4,-4)*{};(-4,4)*{} **\crv{(4,-1) & (-4,1)}?(1)*\dir{>};
     (8,1)*{\scs \l};
     (-6,0)*{};(10,0)*{};
     }};
  \endxy
	\right)\;\; & := \;
    \beta_1,_{\l}
 \xy  0;/r.18pc/:
  (3,4);(3,-4) **\dir{-}?(0)*\dir{<}+(2.3,0)*{};
  (-3,4);(-3,-4) **\dir{-}?(0)*\dir{<}+(2.3,0)*{};
  (8,2)*{\scs \l};
 \endxy
 \; + \;
 (\beta_{1,\l}-\alpha_{1,\l})
 \xy  0;/r.18pc/:
  (0,0)*{\xybox{
    (-4,-4)*{};(4,4)*{} **\crv{(-4,-1) & (4,1)}?(1)*\dir{>};
    (4,-4)*{};(-4,4)*{} **\crv{(4,-1) & (-4,1)}?(1)*\dir{>}?(.7)*{\bullet};
     (8,1)*{\scs \l};
     (-5,0)*{};(10,0)*{};
     }};
  \endxy
  \; + \;
  (\alpha_{1,\l} - \beta_{1,\l})
  \xy  0;/r.18pc/:
  (0,0)*{\xybox{
    (-4,-4)*{};(4,4)*{} **\crv{(-4,-1) & (4,1)}?(1)*\dir{>}?(.7)*{\bullet};
    (4,-4)*{};(-4,4)*{} **\crv{(4,-1) & (-4,1)}?(1)*\dir{>};
     (8,1)*{\scs \l};
     (-5,0)*{};(10,0)*{};
     }};
  \endxy
  \; + \;
  \alpha_2
  \xy  0;/r.18pc/:
  (0,0)*{\xybox{
    (-4,-4)*{};(4,4)*{} **\crv{(-4,-1) & (4,1)}?(1)*\dir{>}?(.25)*{};
    (4,-4)*{};(-4,4)*{} **\crv{(4,-1) & (-4,1)}?(1)*\dir{>};
     (8,3)*{ \scs \l};
     (8,-2)*{\scs \bigotimes};
     (-5,-6)*{};(10,0)*{};
     }};
  \endxy \quad
\end{align}
\begin{alignat}{2}
  \partial \left(\;
  \vcenter{\xy 
    (-3,-4)*{};(3,-4)*{} **\crv{(-3.2,2) & (3.2,2)}?(1)*\dir{>};
    (4,2)*{\scs  \l};
  \endxy } \;
    \right) \;  &:= \;
a_{\lambda-2} \;
\vcenter{\xy 
    (-3,-4)*{};(3,-4)*{} **\crv{(-3.2,2) & (3.2,2)}?(1)*\dir{>} ?(.2)*\dir{}+(0,0)*{\bullet};
    (4,2)*{\scs  \l};
  \endxy }
  + \;
b_{\lambda-2}\;
\vcenter{\xy 
    (-3,-4)*{};(3,-4)*{} **\crv{(-3.2,2) & (3.2,2)}?(1)*\dir{>} ;
    (6,2)*{\scs  \l};
    (0,5)*{\bigotimes};
  \endxy }
 \qquad && \partial \left( \;
  \vcenter{\xy 
    (-3,4)*{};(3,4)*{} **\crv{(-3.2,-2) & (3.2,-2)}?(1)*\dir{>};
    (4,-2)*{\scs  \l};
  \endxy } \;
    \right) \; :=\;
-a_\l\;
\vcenter{\xy 
    (-3,4)*{};(3,4)*{} **\crv{(-3.2,-2) & (3.2,-2)}?(1)*\dir{>} ?(.2)*\dir{}+(5,0)*{\bullet};
    (4,-2)*{\scs  \l};
  \endxy }
  - \;
b_{\l}
\vcenter{\xy 
    (-6,4)*{};(6,4)*{} **\crv{(-5.5,-6) & (5.5,-6)}?(1)*\dir{>} ;
    (6,-2)*{\scs  \l};
    (0,2)*{\bigotimes};
  \endxy }
   \\ & \nn\\
  \partial \left( \;\;
  \vcenter{\xy 
    (-3,-4)*{};(3,-4)*{} **\crv{(-3.2,2) & (3.2,2)}?(0)*\dir{<};
    (4,2)*{\scs  \l};
  \endxy } \;
    \right) \;& :=\;
c_{\lambda} \;
\vcenter{\xy 
    (3,-4)*{};(-3,-4)*{} **\crv{(3.2,2) & (-3.2,2)}?(1)*\dir{>} ?(.2)*\dir{}+(0,0)*{\bullet};
    (4,2)*{\scs  \l};
  \endxy }
  + \;
d_{\lambda}\;
\vcenter{\xy 
    (6,-4)*{};(-6,-4)*{} **\crv{(5.5,6) & (-5.5,6)}?(1)*\dir{>} ;
    (6,2)*{\scs  \l};
    (0,-2)*{\bigotimes};
  \endxy }
 \qquad
&&
\partial \left(\;\;
  \vcenter{\xy 
    (-3,4)*{};(3,4)*{} **\crv{(-3.2,-2) & (3.2,-2)}?(0)*\dir{<};
    (4,-2)*{\scs  \l};
  \endxy } \;
    \right)   \; := \;
(-1)^{\l}c_{\l-2}\;
\vcenter{\xy 
    (-3,4)*{};(3,4)*{} **\crv{(-3.2,-2) & (3.2,-2)}?(0)*\dir{<} ?(.2)*\dir{}+(0,0)*{\bullet};
    (4,-2)*{\scs  \l};
  \endxy }
  - \;
d_{\l-2}
\vcenter{\xy 
    (-3,4)*{};(3,4)*{} **\crv{(-3.2,-2) & (3.2,-2)}?(0)*\dir{<} ;
    (4,-2)*{\scs  \l};
    (0,-5)*{\bigotimes};
  \endxy }   \label{lemeq:derend}
\end{alignat}
where
\[
2\beta_{1,\l} = \alpha_{1,\l+2} + \alpha_{1,\l}.
\]
\end{lemma}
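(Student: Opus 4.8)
## Proof Proposal

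The plan is to impose that the assignment $\partial$ of \eqref{eq:der1}--\eqref{eq:derend} respects, one at a time, the three families of relations named in the statement (the odd nilHecke relations \eqref{eq_nilHecke1}--\eqref{eq_nilHecke2}, the right adjunction axioms \eqref{eq_biadj1}, and the parity left adjoint relations \eqref{eq_biadj2}), and to extract from each the constraints it forces on the coefficients $\alpha_{i,\l}$, $\beta_{i,\l}$, $a_\l$, $b_\l$, $c_\l$, $d_\l$, $\bar a_\l$, $\bar b_\l$, $\bar c_\l$, $\bar d_\l$. Throughout, the key mechanical device is the super Leibniz rule for both vertical and horizontal composition, applied to a diagrammatic relation $\Phi = \Psi$: since $\partial \Phi = \partial \Psi$, and since (by the weak nondegeneracy conjecture of Section~\ref{sec:nondegen}) the target hom spaces have explicit bases, one may expand both sides in those bases and equate coefficients. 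The worked example already in the text --- applying $\partial$ to \eqref{eq_biadj1} to deduce $a_\l + \bar a_\l = 0$ and $b_\l + \bar b_\l = 0$ --- is the template, and this is exactly what eliminates the barred coefficients and produces the stated formulas for $\partial$ of the left caps/cups in terms of $a_\l, b_\l$ (and of $\bar c_\l, \bar d_\l$ via the second adjunction in \eqref{eq_biadj1}, forcing them to be $\pm c, \pm d$ shifted).

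Concretely I would proceed as follows. \textbf{Step 1: the second right adjunction.} Apply $\partial$ to the second equation of \eqref{eq_biadj1} (the zig-zag for the downward strand), exactly as in the displayed example, to get relations linking $(c_\l, d_\l)$ with $(\bar c_\l, \bar d_\l)$; these fix the $\bar c, \bar d$ entries in \eqref{lemeq:derend}. \textbf{Step 2: the parity left adjoint relations \eqref{eq_biadj2}.} Applying $\partial$ to both equations here and using that $\partial$ commutes with the scalar $(-1)^{\l+1}$, together with the results of Step~1, pins down the remaining sign relationships among the cap/cup coefficients --- in particular it should force $\bar a_\l = -a_\l$, $\bar b_\l = -b_\l$ (consistent with Step~0) and relate the $\l$ and $\l-2$ labels as shown. \textbf{Step 3: the odd nilHecke relations.} This is the heart of the coefficient calculation for the dot and crossing. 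First apply $\partial$ to $\varphi_i^2 = 0$ (i.e.\ the first relation in \eqref{eq_nilHecke1}): using $\partial(\varphi^2) = \partial(\varphi)\varphi + \varphi\partial(\varphi)$ and the general form for $\partial$ of the crossing, compose on top and bottom with a crossing and reduce via the already-established relations (odd nilHecke, dot slides \eqref{rel:rightward_dot_slide}, curls). Expanding in the basis of $\Hom^2(EE\1_\l, EE\1_\l)$ listed in Section~\ref{sec:nondegen} gives linear relations forcing $\beta_2 = \alpha_2$, $\beta_{3,\l} = \alpha_{1,\l} - \beta_{1,\l}$ and $\beta_{4,\l} = \alpha_2$ (matching the shape in \eqref{lemeq:der1}), and the naturality/braid relation \eqref{eq_nilHecke1} should yield no further constraint beyond consistency. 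Then apply $\partial$ to the two ``$x\varphi + \varphi x = 1$'' relations in \eqref{eq_nilHecke2}: since $\partial(1) = 0$ on identity $2$-morphisms, the right-hand side contributes nothing, and the left-hand side, expanded in the same basis using \eqref{eq:der1} and the crossing derivative, produces the crucial numerical identity
\[
2\beta_{1,\l} = \alpha_{1,\l+2} + \alpha_{1,\l},
\]
together with the cancellation $\alpha_{2,\l} = \alpha_2$ independent of $\l$.

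The main obstacle, I expect, is Step~3 --- specifically the bookkeeping of signs coming from the super interchange law when moving dots past crossings and resolving the double-crossing term $\varphi_i^2$. In the even nilHecke setting the analogous computation in \cite{EQ2} is already delicate; here every ``slide'' picks up signs depending on $\l$ and on parities (see \eqref{rel:rightward_dot_slide}--\eqref{rel:leftward_dot_slide} and the odd $sl(2)$ relations \eqref{rei2}), and one must be careful that the $\bigotimes$-bubble (being an odd, square-zero element that is nonetheless central by \eqref{rel: centrality of odd bubbles}) is handled consistently. A secondary subtlety is that the weak nondegeneracy conjecture is only available for the specific hom spaces listed in Section~\ref{sec:nondegen}; one must check that all the basis expansions invoked above land in precisely those spaces (degree $2$ in $EE\1_\l$, degree $4$ in $E\1_\l \to E\Pi\1_\l$, degree $2$ in $\1_\l \to \Pi\1_\l$), which the bidegree-$(2,\bar1)$ hypothesis on $\partial$ guarantees. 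Once all coefficient relations are assembled, re-substitution gives exactly the normalized forms \eqref{lemeq:der1}--\eqref{lemeq:derend} with the single surviving linear constraint $2\beta_{1,\l} = \alpha_{1,\l+2} + \alpha_{1,\l}$, completing the lemma.
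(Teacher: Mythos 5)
Your overall strategy --- apply the super Leibniz rule to each cited relation, expand the images in the spanning sets from Section~\ref{sec:nondegen}, and match coefficients --- is exactly the paper's. Steps 0--2 on the adjunctions are essentially right, although the division of labor between \eqref{eq_biadj1} and \eqref{eq_biadj2} should be cleaner: the right adjunction axiom pins down $\bar a_\l, \bar b_\l$ in terms of $a_\l, b_\l$, while the parity left adjoint relation pins down $\bar c_\l, \bar d_\l$ in terms of $c_\l, d_\l$ (with the $(-1)^{\l+1}$ twist entering the sign).

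The genuine gap is in Step~3. You write $\partial(\varphi^2) = \partial(\varphi)\varphi + \varphi\partial(\varphi)$, but the crossing $\varphi$ has odd parity $\bar 1$, so the super Leibniz rule gives $\partial(\varphi^2) = \partial(\varphi)\varphi - \varphi\partial(\varphi)$; that minus sign is precisely what makes the $\beta_1$, $\beta_2$, $\beta_4$ contributions cancel and leaves the \emph{single} constraint $\beta_{3,\l} = -\beta_{2,\l}$. Consequently the relations you ascribe to $\varphi^2 = 0$ --- $\beta_2 = \alpha_2$, $\beta_4 = \alpha_2$, $\beta_{3,\l} = \alpha_{1,\l} - \beta_{1,\l}$ --- cannot come from that relation: $\alpha_1$ and $\alpha_2$ are dot coefficients and do not enter $\partial(\varphi^2)$ at all. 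All of those, together with the vanishing of $\alpha_{3,\l}$, the $\l$-independence of $\alpha_2$, and the key identity $2\beta_{1,\l} = \alpha_{1,\l+2} + \alpha_{1,\l}$, are obtained by applying $\partial$ to \eqref{eq_nilHecke2}, the relation that actually mixes the dot with the crossing. Finally, the plan to compose $\partial(\varphi^2)$ with crossings above and below so as to expand it in $\Hom^2(\cal{E}\cal{E}\onel, \cal{E}\cal{E}\onel)$ is degree-incompatible: $\partial(\varphi^2)$ already lives in bidegree $(-2, \bar 1)$ and each crossing lowers the $\Z$-degree by a further $2$, so this drives you away from degree $2$ rather than toward it. One should simply read off the constraint in bidegree $(-2, \bar 1)$, where under nondegeneracy the crossing itself is the only basis element, as the paper does.
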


\begin{proof}
This is a direct computation using the Leibniz rule.  The right adjunction axiom implies $\bar{a}_{\l} = a_{\l}$ and  $\bar{b}_{\l}=-b_{\l}$.  Similarly, the parity left adjoint equation implies $\bar{c}_{\l} = (-1)^{\l}c_{\l}$ and $\bar{d}_{\l} = -d_{\l}$.  The first nilHecke relation in \eqref{eq_nilHecke1} implies
\begin{align*}
& 0 = \partial \left(\;\;
	\vcenter{\xy 0;/r.18pc/:
    (-4,-4)*{};(4,4)*{} **\crv{(-4,-1) & (4,1)};
    (4,-4)*{};(-4,4)*{} **\crv{(4,-1) & (-4,1)};
    (-4,4)*{};(4,12)*{} **\crv{(-4,7) & (4,9)}?(1)*\dir{>};
    (4,4)*{};(-4,12)*{} **\crv{(4,7) & (-4,9)}?(1)*\dir{>};
    (7,8)*{ \l};
 \endxy}
 \;\;\right)
 \;\; = \;\;
 \beta_{1,\l}
 \xy 0;/r.18pc/:
  (0,0)*{\xybox{
    (-4,-4)*{};(4,4)*{} **\crv{(-4,-1) & (4,1)}?(1)*\dir{>}?(.25)*{};
    (4,-4)*{};(-4,4)*{} **\crv{(4,-1) & (-4,1)}?(1)*\dir{>};
     (8,1)*{ \l};
     (-10,0)*{};(10,0)*{};
     }};
  \endxy
  \;\;+\;\;
  \beta_{2,\l} \;\;
  \vcenter{\xy 0;/r.18pc/:
    (-4,-4)*{};(4,4)*{} **\crv{(-4,-1) & (4,1)};
    (4,-4)*{};(-4,4)*{} **\crv{(4,-1) & (-4,1)};
    (-4,4)*{};(4,12)*{} **\crv{(-4,7) & (4,9)}?(1)*\dir{>};
    (4,4)*{};(-4,12)*{} **\crv{(4,7) & (-4,9)}?(1)*\dir{>}?(.7)*{\bullet};
    (7,8)*{ \l};
 \endxy}
 \;\;+\;\;
  \beta_{3,\l}\;\;
  \vcenter{\xy 0;/r.18pc/:
    (-4,-4)*{};(4,4)*{} **\crv{(-4,-1) & (4,1)};
    (4,-4)*{};(-4,4)*{} **\crv{(4,-1) & (-4,1)};
    (-4,4)*{};(4,12)*{} **\crv{(-4,7) & (4,9)}?(1)*\dir{>}?(.7)*{\bullet};
    (4,4)*{};(-4,12)*{} **\crv{(4,7) & (-4,9)}?(1)*\dir{>};
    (7,8)*{ \l};
 \endxy}
 \;\; +\;\;
 \beta_{4,\l} \;\;
 \vcenter{\xy 0;/r.18pc/:
    (-4,-4)*{};(4,4)*{} **\crv{(-4,-1) & (4,1)};
    (4,-4)*{};(-4,4)*{} **\crv{(4,-1) & (-4,1)};
    (-4,4)*{};(4,12)*{} **\crv{(-4,7) & (4,9)}?(1)*\dir{>};
    (4,4)*{};(-4,12)*{} **\crv{(4,7) & (-4,9)}?(1)*\dir{>};
    (7,8)*{ \l};
    (7,2)*{\bigotimes};
 \endxy}
 \\ \nn \\
 & \hspace{3cm} - \;\;
 \beta_1,\l
 \xy 0;/r.18pc/:
  (0,0)*{\xybox{
    (-4,-4)*{};(4,4)*{} **\crv{(-4,-1) & (4,1)}?(1)*\dir{>}?(.25)*{};
    (4,-4)*{};(-4,4)*{} **\crv{(4,-1) & (-4,1)}?(1)*\dir{>};
     (8,1)*{ \l};
     (-10,0)*{};(10,0)*{};
     }};
  \endxy
   -\;\;
  \beta_{2,\l}\;\;
  \vcenter{\xy 0;/r.18pc/:
    (-4,-4)*{};(4,4)*{} **\crv{(-4,-1) & (4,1)};
    (4,-4)*{};(-4,4)*{} **\crv{(4,-1) & (-4,1)}?(.75)*{\bullet};
    (-4,4)*{};(4,12)*{} **\crv{(-4,7) & (4,9)}?(1)*\dir{>};
    (4,4)*{};(-4,12)*{} **\crv{(4,7) & (-4,9)}?(1)*\dir{>};
    (7,8)*{ \l};
 \endxy}
 \;\; -\;\;
  \beta_{3,\l}\;\;
  \vcenter{\xy 0;/r.18pc/:
    (-4,-4)*{};(4,4)*{} **\crv{(-4,-1) & (4,1)}?(.75)*{\bullet};
    (4,-4)*{};(-4,4)*{} **\crv{(4,-1) & (-4,1)};
    (-4,4)*{};(4,12)*{} **\crv{(-4,7) & (4,9)}?(1)*\dir{>};
    (4,4)*{};(-4,12)*{} **\crv{(4,7) & (-4,9)}?(1)*\dir{>};
    (7,8)*{ \l};
 \endxy}
 \;\; - \;\;
\beta_{4,\l}  \;\;
 \vcenter{\xy 0;/r.18pc/:
    (-4,-4)*{};(4,4)*{} **\crv{(-4,-1) & (4,1)};
    (4,-4)*{};(-4,4)*{} **\crv{(4,-1) & (-4,1)};
    (-4,4)*{};(4,12)*{} **\crv{(-4,7) & (4,9)}?(1)*\dir{>};
    (4,4)*{};(-4,12)*{} **\crv{(4,7) & (-4,9)}?(1)*\dir{>};
    (7,8)*{\scs \l};
    (7,-2)*{\bigotimes};
 \endxy}
 \\ \nn \\
 &\hspace{3cm} = \;\;
 (-\beta_{2,\l} - \beta_{3,\l})
 \xy  0;/r.18pc/:
  (0,0)*{\xybox{
    (-4,-4)*{};(4,4)*{} **\crv{(-4,-1) & (4,1)}?(1)*\dir{>}?(.25)*{};
    (4,-4)*{};(-4,4)*{} **\crv{(4,-1) & (-4,1)}?(1)*\dir{>};
     (8,1)*{ \l};
     (-10,0)*{};(10,0)*{};
     }};
  \endxy
  \\ & \nn
\end{align*}
which implies $\beta_{3,\l} = -\beta_{2,\l}$.  Making these substitutions the odd nilHecke relation  \eqref{eq_nilHecke2} involves the terms
\begin{align*}
 \partial \left(
\xy
  (0,0)*{\xybox{
    (-4,-4)*{};(4,4)*{} **\crv{(-4,-1) & (4,-1)}?(1)*\dir{>};
    (4,-4)*{};(-4,4)*{} **\crv{(4,-1) & (-4,-1)}?(1)*\dir{>}?(.75)*{\bullet};
     (8,1)*{\scs \l};
     (-7,0)*{};(10,0)*{};
     }};
  \endxy
  \right)
 & \;\;  = \;\;
  \alpha_{1,\l +2}
  \xy
  (0,0)*{\xybox{
    (-4,-4)*{};(4,4)*{} **\crv{(-4,-1) & (4,-1)}?(1)*\dir{>};
    (4,-4)*{};(-4,4)*{} **\crv{(4,-1) & (-4,-1)}?(1)*\dir{>}?(.75)*{\bullet};
    (-1,2.3)*{\scs 2};
    (7,1)*{\scs \l};
    (-10,0)*{};(10,0)*{};
     }};
  \endxy
  \; + \;\;
  \alpha_{2,\l+2}
  \xy
  (0,0)*{\xybox{
    (4,4);(4,8) **\dir{-}?(1)*\dir{>} ;
    (-4,4);(-4,8) **\dir{-}?(1)*\dir{>}?(0.45)*{\bullet};
    (0,3)*{ \bigotimes};
    (-4,-4)*{};(4,4)*{} **\crv{(-4,-1) & (4,1)};
    (4,-4)*{};(-4,4)*{} **\crv{(4,-1) & (-4,1)};
     (7,1)*{\scs  \l};
     (-10,0)*{};(10,0)*{};
     }};
  \endxy
  \;\; + \;\;
  \alpha_{3,\l+2}
  \xy
  (0,0)*{\xybox{ 0;/r.18pc/:
    (8,4);(8,8) **\dir{-}?(1)*\dir{>} ;
    (-8,4);(-8,8) **\dir{-}?(1)*\dir{>};
     (0,5)*{\lbub{2}};
    (-6,-6)*{};(8,4)*{} **\crv{(-6,-3) & (8,-3)};
    (6,-6)*{};(-8,4)*{} **\crv{(6,-3) & (-8,-3)};
     (8,-3)*{\scs \l};
     (-10,0)*{};(10,0)*{};
     }};
  \endxy
  \\ \nn \\
  & - \;\;
  \beta_{1,\l}
  \xy
  (3,4);(3,-4) **\dir{-}?(0)*\dir{<}+(2.3,0)*{};
  (-3,4);(-3,-4) **\dir{-}?(0)*\dir{<}+(2.3,0)*{}?(.5)*{\bullet};
  (7,2)*{\scs\l};
 \endxy
 \nn
 \;\; - \;\;
 \beta_{2,\l}
  \xy
  (0,0)*{\xybox{
    (-4,-4)*{};(4,4)*{} **\crv{(-4,-1) & (4,-1)}?(1)*\dir{>};
    (4,-4)*{};(-4,4)*{} **\crv{(4,-1) & (-4,-1)}?(1)*\dir{>}?(.75)*{\bullet};
    (-1,2.3)*{\scs 2};
    (7,1)*{\scs \l};
    (-10,0)*{};(10,0)*{};
     }};
  \endxy
  \;\; + \;\;
  \beta_{2,\l}
  \xy
  (0,0)*{\xybox{
    (4,4);(4,8) **\dir{-}?(1)*\dir{>} ;
    (-4,4);(-4,8) **\dir{-}?(1)*\dir{>}?(0.25)*{\bullet};
    (-4,-4)*{};(4,4)*{} **\crv{(-4,-1) & (4,1)}?(0.75)*{\bullet};
    (4,-4)*{};(-4,4)*{} **\crv{(4,-1) & (-4,1)};
     (8,1)*{\scs \l};
     (-10,0)*{};(10,0)*{};
     }};
  \endxy
  \;\; -  \;\;
  \beta_{3,\l}
  \xy
  (0,0)*{\xybox{
    (4,-4);(4,-6) **\dir{-}; (-4,-4);(-4,-6) **\dir{-};
    (-4,-4)*{};(4,4)*{} **\crv{(-4,-1) & (4,-1)}?(1)*\dir{>};
    (4,-4)*{};(-4,4)*{} **\crv{(4,-1) & (-4,-1)}?(1)*\dir{>}?(.75)*{\bullet};
    (8,1)*{\scs \l};
    (7,-4)*{\bigotimes};
    (-10,0)*{};(10,0)*{};
    }};
  \endxy
  \\ \nn \\
 & = \;
   (\alpha_{1,\l +2} -   \beta_{2,\l})
  \xy
  (0,0)*{\xybox{
    (-4,-4)*{};(4,4)*{} **\crv{(-4,-1) & (4,1)}?(1)*\dir{>};
    (4,-4)*{};(-4,4)*{} **\crv{(4,-1) & (-4,1)}?(1)*\dir{>}?(.75)*{\bullet};
    (-1,2.3)*{\scs 2};
    (7,1)*{\scs \l};
     }};
  \endxy
  \;  -  \;
  (\alpha_{2,\l+2} +   \beta_{3,\l})
    \xy
  (0,0)*{\xybox{
    (4,-4);(4,-6) **\dir{-}; (-4,-4);(-4,-6) **\dir{-};
    (-4,-4)*{};(4,4)*{} **\crv{(-4,-1) & (4,-1)}?(1)*\dir{>};
    (4,-4)*{};(-4,4)*{} **\crv{(4,-1) & (-4,-1)}?(1)*\dir{>}?(.75)*{\bullet};
    (8,1)*{\scs \l};
    (7,-4)*{\bigotimes};
    }};
  \endxy
  \;  +  \;
  \alpha_{3,\l+2}
  \xy
  (0,0)*{\xybox{    0;/r.18pc/:
    (4,-4);(4,-8) **\dir{-} ;
    (-4,-4);(-4,-8) **\dir{-} ;
    (12,-4)*{ \lbub{\scs 2}};
    (-4,-4)*{};(4,4)*{} **\crv{(-4,1) & (4,1)} ?(1)*\dir{>};
    (4,-4)*{};(-4,4)*{} **\crv{(4,1) & (-4,1)} ?(1)*\dir{>};
     (8,4)*{\scs \l};
     }};
  \endxy
  \\ \nn \\
\;\; & \qquad
  \;  +  \;
  3\alpha_{3,\l+2}
  \xy
  (0,0)*{\xybox{
    (4,4);(4,4) **\dir{-}?(1)*\dir{>};
    (-4,4);(-4,4) **\dir{-}?(1)*\dir{>};
    (-4,-4)*{};(4,4)*{} **\crv{(-4,-1) & (4,-1)}?(0.75)*{\bullet};
    (4,-4)*{};(-4,4)*{} **\crv{(4,-1) & (-4,-1)};
    (1,2)*{\scs 2};
    (8,2)*{\scs \l};
    }};
  \endxy
- \;\;
  \beta_{1,\l}\;
  \xy
  (3,4);(3,-4) **\dir{-}?(0)*\dir{<}+(2.3,0)*{};
  (-3,4);(-3,-4) **\dir{-}?(0)*\dir{<}+(2.3,0)*{}?(.5)*{\bullet};
  (8,2)*{\scs\l};
 \endxy
   \; + \;
  \beta_{2,\l}
  \xy
  (0,0)*{\xybox{
    (4,4);(4,8) **\dir{-}?(1)*\dir{>} ;
    (-4,4);(-4,8) **\dir{-}?(1)*\dir{>}?(0.25)*{\bullet};
    (-4,-4)*{};(4,4)*{} **\crv{(-4,-1) & (4,1)}?(0.75)*{\bullet};
    (4,-4)*{};(-4,4)*{} **\crv{(4,-1) & (-4,1)};
     (8,1)*{\scs \l};
     (-7,0)*{};(10,0)*{};
     }};
  \endxy
\end{align*}
where the last equality follows from bubble slide relation \eqref{rel:buble_slide}.  Similarly,
\begin{align*}
&\partial \left(
\xy
  (0,0)*{\xybox{
    (-4,-4)*{};(4,4)*{} **\crv{(-4,-1) & (4,1)}?(1)*\dir{>} ;
    (4,-4)*{};(-4,4)*{} **\crv{(4,-1) & (-4,1)}?(1)*\dir{>}?(.25)*{\bullet};
     (8,1)*{\scs \l};
     (-8,0)*{};(10,0)*{};
     }};
  \endxy
  \right)
  \;  =
 \; -
( \beta_{2,\l} + \alpha_{1,\l} )\;\;
 \xy
  (0,0)*{\xybox{
    (-4,-4)*{};(4,4)*{} **\crv{(-4,-1) & (4,-1)}?(1)*\dir{>} ;
    (4,-4)*{};(-4,4)*{} **\crv{(4,-1) & (-4,-1)}?(1)*\dir{>}?(.75)*{\bullet};
    (-5,2)*{\scs 2};
    (7,1)*{\scs \l};
     }};
  \endxy
 \;\;+\;\;
 (\beta_{3,\l} + \alpha_{2,\l})\;\;
 \xy
  (0,0)*{\xybox{
    (4,-4);(4,-6) **\dir{-}; (-4,-4);(-4,-6) **\dir{-};
    (-4,-4)*{};(4,4)*{} **\crv{(-4,-1) & (4,-1)}?(1)*\dir{>};
    (4,-4)*{};(-4,4)*{} **\crv{(4,-1) & (-4,-1)}?(1)*\dir{>}?(.75)*{\bullet};
    (8,1)*{\scs \l};
    (7,-4)*{\bigotimes};
    }};
  \endxy
\;\;-\;\;
  \alpha_{3,\l}
  \xy
  (0,0)*{\xybox{    0;/r.18pc/:
    (4,-4);(4,-8) **\dir{-} ;
    (-4,-4);(-4,-8) **\dir{-} ;
    (12,-4)*{ \lbub{\scs 2}};
    (-4,-4)*{};(4,4)*{} **\crv{(-4,1) & (4,1)} ?(1)*\dir{>};
    (4,-4)*{};(-4,4)*{} **\crv{(4,1) & (-4,1)} ?(1)*\dir{>};
     (8,4)*{\scs \l};
     (-7,0)*{};(10,0)*{};
     }};
  \endxy
  \\ \nn \\
  & \quad
  +
  (\beta_{1,\l}  -\beta_{2,\l} -  \alpha_{1,\l})\;\;
  \xy
  (3,4);(3,-4) **\dir{-}?(0)*\dir{<}+(2.3,0)*{}?(.5)*{\bullet};
  (-3,4);(-3,-4) **\dir{-}?(0)*\dir{<}+(2.3,0)*{};
  (7,2)*{\scs \l};
 \endxy
    +
 (\beta_{2,\l} +  \alpha_{1,\l}) \;\;
  \xy
  (3,4);(3,-4) **\dir{-}?(0)*\dir{<}+(2.3,0)*{};
  (-3,4);(-3,-4) **\dir{-}?(0)*\dir{<}+(2.3,0)*{}?(.5)*{\bullet};
  (7,2)*{\scs \l};
 \endxy
\; -
  \beta_{2,\l}
  \xy
  (0,0)*{\xybox{
    (4,4);(4,8) **\dir{-}?(1)*\dir{>} ;
    (-4,4);(-4,8) **\dir{-}?(1)*\dir{>}?(0.25)*{\bullet};
    (-4,-4)*{};(4,4)*{} **\crv{(-4,-1) & (4,1)}?(0.75)*{\bullet};
    (4,-4)*{};(-4,4)*{} **\crv{(4,-1) & (-4,1)};
     (8,1)*{\scs \l};
     }};
  \endxy
  -\;
 ( \beta_{3,\l} + \alpha_{2,\l}) \;\;
  \xy
  (3,4);(3,-5) **\dir{-}?(0)*\dir{<}+(2.3,0)*{};
  (-3,4);(-3,-5) **\dir{-}?(0)*\dir{<}+(2.3,0)*{};
  (6,-2)*{\bigotimes};
  (8,2)*{\scs \l};
 \endxy
\end{align*}
Therefore, compatibility with relation \eqref{eq_nilHecke2} requires
\begin{align*}
\partial \left(
\xy 0;/r.19pc/:
  (0,0)*{\xybox{
    (-4,-4)*{};(4,4)*{} **\crv{(-4,-1) & (4,1)}?(1)*\dir{>};
    (4,-4)*{};(-4,4)*{} **\crv{(4,-1) & (-4,1)}?(1)*\dir{>}?(.73)*{\bullet};
     (8,1)*{\scs \l};
     (-10,0)*{};(10,0)*{};
     }};
  \endxy
  \right)
  \;\; + \;\;
\partial \left(
\xy 0;/r.19pc/:
  (0,0)*{\xybox{
    (-4,-4)*{};(4,4)*{} **\crv{(-4,-1) & (4,1)}?(1)*\dir{>} ;
    (4,-4)*{};(-4,4)*{} **\crv{(4,-1) & (-4,1)}?(1)*\dir{>}?(.25)*{\bullet};
     (8,1)*{\scs \l};
     (-10,0)*{};(10,0)*{};
     }};
  \endxy
  \right)
  \;\; = 0
\end{align*} so   assuming the weak non-degeneracy conjecture
we get the following set of equations:
\begin{equation}
\begin{split}
  & \alpha_{1,\l+2} -\alpha_{1,\l} -2\beta_{2,\l} =0 \\
& \alpha_{2,\l+2} -\alpha_{2,\l}=0
\end{split}
\qquad
\begin{split}
& \alpha_{3,\l+2} - \alpha_{3,\l}=0 \\
& \alpha_{3,\l+2}=0
\end{split}
\qquad
\begin{split}
  & \beta_{3,\l}+\alpha_{2,\l}=0\\
& \beta_{1,\l} - \beta_{2,\l} - \alpha_{1,\l}=0
\end{split}
\end{equation}
From which we can deduce that  $\alpha_{2,\l}$ does not depend on the weight $\l$ in $\oUcat^{\text{even}}$ or $\l$ in $\oUcat^{\text{even}}$, so we set
$
\alpha_2 := \alpha_{2,\l} = \alpha_{2,\l+2}$,
and $\alpha_{3,\l}=0$ for all $\l$.
If we combine the first and the last equations we get
\begin{equation} \label{eqn:d of odd nilhecke2}
2\beta_{1,\l} = \alpha_{1,\l+2} +\alpha_{1,\l}.
\end{equation}
Equation \eqref{eqn:d of odd nilhecke2}  is redundantly implied by preserving the second nilhecke relation of \eqref{eq_nilHecke1}.
\end{proof}

\begin{lemma}
For $n\geq 0$, the map $\partial$ in Lemma~\ref{lem:derivation-start} satisfies
\begin{align}
\partial \left( \;
    \xy 0;/r.17pc/:
 (0,10);(0,-10); **\dir{-} ?(1)*\dir{>}+(2.3,0)*{\scriptstyle{}}
 ?(.1)*\dir{ }+(2,0)*{\scs };
 (0,0)*{\txt\large{$\bullet$}};
 (-6,5)*{ \scs \l+2};
 (8,5)*{ \scs \l};
 (4,0)*{\scs n};
 (-10,0)*{};(10,0)*{};(-2,-8)*{\scs };
 \endxy
  \; \right)
  \;\; &= \;\;
  \alpha_{1,\l} \delta_{n,\text{odd}}\;\;
   \xy 0;/r.17pc/:
 (0,10);(0,-10); **\dir{-} ?(1)*\dir{>}+(2.3,0)*{\scriptstyle{}}
 ?(.1)*\dir{ }+(2,0)*{\scs };
 (0,1)*{\txt\large{$\bullet$}};
 (7,1)*{\scs n+1};
 (-6,6)*{ \scs \l+2};
 (8,6)*{ \scs \l};
 (-10,0)*{};(10,0)*{};(-2,-8)*{\scs };
 \endxy
 \;\; + \;\;
  (-1)^{n+1}n \alpha_2
 \xy 0;/r.17pc/:
 (0,10);(0,-10); **\dir{-} ?(1)*\dir{>}+(2.3,0)*{\scriptstyle{}}
 ?(.1)*\dir{ }+(2,0)*{\scs };
 (0,1)*{\txt\large{$\bullet$}};
 (3,1)*{\scs n};
 (-6,5)*{\scs \l+2};
 (8,5)*{\scs \l};
 (4,-5.5)*{\bigotimes};
 \endxy
\\
\partial \left(\;
 \xy 0;/r.17pc/:
 (0,10);(0,-10); **\dir{-} ?(0)*\dir{<}+(2.3,0)*{\scriptstyle{}}
 ?(.1)*\dir{ }+(2,0)*{\scs };
 (0,0)*{\txt\large{$\bullet$}};
 (4,0)*{\scs n};
 (-6,5)*{ \scs \l-2};
 (8,5)*{ \scs \l};
 (-10,0)*{};(10,0)*{};(-2,-8)*{\scs };
 \endxy \;
\right)
\;\; &= \;\;
(-2 a_{\l} - \alpha_{1,\l})\delta_{n,\text{odd}}
\xy 0;/r.17pc/:
 (0,10);(0,-10); **\dir{-} ?(0)*\dir{<}+(2.3,0)*{\scriptstyle{}}
 ?(.1)*\dir{ }+(2,0)*{\scs };
 (0,0)*{\txt\large{$\bullet$}};
 (-6,5)*{\scs \l-2};
 (8,5)*{\scs \l};
 (5,0)*{\scs n+1};
 (-10,0)*{};(10,0)*{};(-2,-8)*{\scs };
 \endxy
 \;\; + \;\;
 (-1)^{n+1}n\alpha_2
 \xy 0;/r.17pc/:
 (0,10);(0,-10); **\dir{-} ?(0)*\dir{<}+(2.3,0)*{\scriptstyle{}}
 ?(.1)*\dir{ }+(2,0)*{\scs };
 (0,1)*{\txt\large{$\bullet$}};
 (4,2)*{\scs n};
 (-6,5)*{\scs \l-2};
 (8,5)*{\scs \l};
 (-10,0)*{};(10,0)*{};(-2,-8)*{\scs };
 (4,-5.5)*{\bigotimes};
 \endxy
\end{align}
\end{lemma}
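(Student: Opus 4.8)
The plan is to prove both formulas by induction on $n$, using the Leibniz rule for vertical composition of $2$-morphisms together with the single-dot formula \eqref{lemeq:der1} from Lemma~\ref{lem:derivation-start}. I would write the $n$-fold dotted upward strand as the vertical composite $D_n = D_{n-1}\circ D_1$ of an $(n-1)$-dot tower with a single dot $D_1$; since a dot has $\Z\times\Z_2$-degree $(2,\bar 1)$, the parity of $D_k$ is $\bar k$. The base cases are immediate: $\partial$ annihilates identity $2$-morphisms (so $n=0$ is trivial), and $n=1$ is exactly \eqref{lemeq:der1}. Beyond the Leibniz rule, I would need two small facts: first, that several dots on one strand compose to an honest power $x^n$ with no internal sign, so dot-towers are well defined; and second, that sliding an odd bubble vertically past a dot on a strand introduces a sign $(-1)$, which is forced by the super interchange law applied to the horizontal composite of the dot (parity $\bar 1$) with the odd bubble (parity $\bar 1$). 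I would also use the centrality relation \eqref{rel: centrality of odd bubbles} to normalise the position of the odd bubble, say at the bottom of the strand; call the resulting diagram with $n$ dots and one bottom bubble $B_n$.

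For the inductive step in the upward case I would expand $\partial(D_n) = \partial(D_{n-1})\circ D_1 + (-1)^{n-1}\, D_{n-1}\circ\partial(D_1)$. In the first summand the inductive hypothesis produces a term $\alpha_{1,\l}\delta_{n-1,\text{odd}}\, D_{n+1}$ and a bubble term $(-1)^{n}(n-1)\alpha_2\, B_{n-1}$; stacking the new bottom dot and sliding the bubble past it contributes a further $(-1)$, yielding $(-1)^{n+1}(n-1)\alpha_2\, B_n$. In the second summand, \eqref{lemeq:der1} gives $\alpha_{1,\l}\, D_{n+1}$ together with $\alpha_2\, B_n$, and here no slide is needed. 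Collecting coefficients, I expect the $D_{n+1}$-coefficient to be $\alpha_{1,\l}(\delta_{n-1,\text{odd}} + (-1)^{n-1}) = \alpha_{1,\l}\delta_{n,\text{odd}}$ and the $B_n$-coefficient to be $\alpha_2((-1)^{n+1}(n-1) + (-1)^{n-1}) = (-1)^{n+1}n\,\alpha_2$, which is the claimed identity.

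For the downward case I would first establish the $n=1$ instance, i.e. compute $\partial$ on a single downward dot, by rotating the upward dot through the cyclicity relation \eqref{eq_cyclic_dot}: expressing the downward dot via a rightward cup, a dotted upward strand, and a rightward cap (plus the curl/bubble correction appearing in \eqref{eq_cyclic_dot}), then applying the Leibniz rule with the cup, cap, and upward-dot formulas of Lemma~\ref{lem:derivation-start}. This should produce a single downward dot with leading coefficient $-2a_\l - \alpha_{1,\l}$ and bubble coefficient $\alpha_2$, matching the $n=1$ case of the second formula. With that base case in hand, the induction is word-for-word the same as in the upward case: the only input that changes is the value of the leading coefficient on a one-dot downward strand, and since the slide sign and the Leibniz sign are unchanged, the recursion again collapses to $\delta_{n,\text{odd}}$ on the leading term and $(-1)^{n+1}n\,\alpha_2$ on the bubble term.

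The part I expect to be most delicate is the sign bookkeeping: keeping a fixed convention for where the odd bubble sits, combining the slide sign $(-1)$ with the Leibniz sign $(-1)^{n-1}$ in the right order, and — above all — extracting the downward base case cleanly from \eqref{eq_cyclic_dot}, where the curl correction terms must be handled using the curl relations \eqref{rel:Curls} and the bubble conventions. None of this is conceptually hard, but it is exactly where an error would creep in, so I would first do the $n=1$ and $n=2$ cases explicitly as a sanity check before writing out the general step.
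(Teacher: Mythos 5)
Your argument is correct and is essentially the paper's own proof, which simply notes that the claim follows by induction on the number of dots via the Leibniz rule. Your bookkeeping of the sign from sliding the (odd-parity) bubble past a dot, and your derivation of the downward $n=1$ base case from the rotation defining the downward dot, are exactly the details being elided in the paper's one-line argument.
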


\begin{proof}
The claim follows by induction on the number of dots using the Leibniz rule.
\end{proof}

\subsection{Derivations and bubble relations}

The remaining relations in $\oUcat$ involve dotted bubbles.  We first compute the image of the map defined in Lemma~\ref{lem:derivation-start} on the odd bubble defined in \eqref{d of odd bubble}.    By a direct computation we have
\begin{align} \label{d of odd bubble}
\partial \left(
 \xy 0;/r.17pc/:
 (6,5)*{\scs \l};
 (-10,0)*{};(10,0)*{};(-2,-8)*{\scs };
 (0,0)*{\bigotimes};
 \endxy
 \right)
 \;\; = \;\;
 \begin{cases}
 (a_{\l -2} + c_{\l -2} + \alpha_{1,\l-2}\delta_{\l,\text{odd}})
 \xy 0;/r.17pc/:
 (6,5)*{\scs \l};
 (-6,0)*{};(10,0)*{};
 (0,0)*{\lbub{\scs 2}};
 \endxy
 \mbox{ if } \l\geq 0 \\ \\
 (a_{\l} + c_{\l } + \alpha_{1,\l }\delta_{\l,\text{odd}})
 \xy 0;/r.17pc/:
 (6,5)*{\scs \l};
 (-6,0)*{};(10,0)*{};
 (0,0)*{\rbub{\scs 2}};
 \endxy
 \mbox{ if } \l\leq 0
 \end{cases}
\end{align}

\begin{lemma}
 For the map $\partial$ defined in Lemma~\ref{lem:derivation-start} to preserve the odd cyclicity relation
 \eqref{eq_cyclic_dot}
\begin{equation*} 
\partial \left (
    \xy 0;/r.17pc/:
    (8,5)*{}="1";
    (0,5)*{}="2";
    (0,-5)*{}="2'";
    (-8,-5)*{}="3";
    (8,-10);"1" **\dir{-}?(1)*\dir{<};
    "2";"2'" **\dir{-} ?(.5)*\dir{<};
    "1";"2" **\crv{(8,12) & (0,12)} ?(0)*\dir{<};
    "2'";"3" **\crv{(0,-12) & (-8,-12)};
    "3"; (-8,10) **\dir{-};
    (-12,-9)*{\l};
    (0,4)*{\txt\large{$\bullet$}};
    (-10,8)*{\scs };
    (10,-8)*{\scs };
    \endxy \right )
     \quad = \quad 2 \;
     \partial \left (
           \xy 0;/r.17pc/:
 (4,10);(4,-10); **\dir{-} ?(0)*\dir{<}+(2.3,0)*{\scriptstyle{}}
 ?(.1)*\dir{ }+(2,0)*{\scs };
 (-5,0)*{\txt\large{$\bigotimes$}};
 (-8,8)*{ \l};
 (-10,0)*{};(10,0)*{};(-2,-8)*{\scs };
 \endxy \right )
 \;\; - \;\;
 \partial \left (
         \xy 0;/r.17pc/:
    (-8,5)*{}="1";
    (0,5)*{}="2";
    (0,-5)*{}="2'";
    (8,-5)*{}="3";
    (-8,-10);"1" **\dir{-};
    "2";"2'" **\dir{-} ?(.5)*\dir{<};
    "1";"2" **\crv{(-8,12) & (0,12)} ?(0)*\dir{<};
    "2'";"3" **\crv{(0,-12) & (8,-12)}?(1)*\dir{<};
    "3"; (8,10) **\dir{-};
    (-12,9)*{\l};
    (0,4)*{\txt\large{$\bullet$}};
    (10,8)*{\scs };
    (-10,-8)*{\scs };
    \endxy \right )
\end{equation*}
we must have
\begin{equation} \label{eqn:c_in_terms_of_a_alpha}
c_{\l} = -a_{\l} -\delta_{\l,\text{odd}}\alpha_{1,\l}.
\end{equation}
\end{lemma}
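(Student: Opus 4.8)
The plan is to apply the super-Leibniz rule to the odd cyclicity relation \eqref{eq_cyclic_dot} and read off the constraint it imposes on the coefficients introduced in Lemma~\ref{lem:derivation-start}. Recall that the first equality of \eqref{eq_cyclic_dot} \emph{defines} the dot on a downward strand, say $\mathsf{D}$, as the closure of an upward dot built from the right adjunction cup and cap of \eqref{eq_biadj1}, whose $\partial$-images are governed by $a_\l, b_\l$ together with $\bar a_\l = a_\l$, $\bar b_\l = -b_\l$ (established in the proof of Lemma~\ref{lem:derivation-start}). The second equality of \eqref{eq_cyclic_dot} then asserts the relation $\mathsf{D} = 2\mathsf{T} - \mathsf{D}'$, where $\mathsf{T}$ is the downward strand carrying an adjacent degree-two ($\bigotimes$) bubble and $\mathsf{D}'$ is the alternative closure of the upward dot, now built from the parity left adjoint cup and cap of \eqref{eq_biadj2}, whose $\partial$-images are governed by $c_\l, d_\l$ together with $\bar c_\l = (-1)^\l c_\l$, $\bar d_\l = -d_\l$. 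For $\partial$ to descend to $\oUqp$ it must satisfy
\[
\partial(\mathsf{D}') = 2\,\partial(\mathsf{T}) - \partial(\mathsf{D}).
\]

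First I would collect the three terms. The term $\partial(\mathsf{D})$ is already known: it is the $n=1$ case of the preceding lemma, so it is a combination of the downward $2$-dot and of (downward dot)$\,\cdot\,\bigotimes$ with coefficients $-2a_\l-\alpha_{1,\l}$ and $\alpha_2$. Next, $\partial(\mathsf{T})$ is computed by noting that $\partial$ kills identity strands, so $\partial(\mathsf{T})$ equals the downward strand times $\partial(\bigotimes)$, and the latter is supplied by \eqref{d of odd bubble}, whose leading coefficient is exactly $a_{\l'} + c_{\l'} + \alpha_{1,\l'}\delta_{\l',\text{odd}}$ for the appropriate weight $\l'$. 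Finally, $\partial(\mathsf{D}')$ is obtained by the same Leibniz computation as for $\mathsf{D}$ but with the parity left adjoint generators in place of the right adjunction ones, producing a combination of the downward $2$-dot, a dotted curl, and (downward dot)$\,\cdot\,\bigotimes$. I would then normalize all three expressions into the spanning set of the weak nondegeneracy conjecture (Section~\ref{sec:nondegen}) using the dot-slide relations \eqref{rel:rightward_dot_slide}--\eqref{rel:leftward_dot_slide} (which contribute the $(-1)^{\lfloor n/2\rfloor}$ and $(-1)^\l$ factors), the bubble-slide relations \eqref{rel:buble_slide}, and the downward curl relations \eqref{eq:downcurl1}--\eqref{eq:downcurl2}.

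With everything written in a common basis, comparing coefficients in $\partial(\mathsf{D}') = 2\partial(\mathsf{T}) - \partial(\mathsf{D})$ gives a short list of scalar equations. The coefficients of the downward $2$-dot and of (downward dot)$\,\cdot\,\bigotimes$ are matched automatically by the relations already in force in Lemma~\ref{lem:derivation-start} (in particular $2\beta_{1,\l}=\alpha_{1,\l+2}+\alpha_{1,\l}$), while the coefficient of the bare downward strand carrying a single degree-two bubble yields the one new constraint $a_\l + c_\l + \delta_{\l,\text{odd}}\alpha_{1,\l}=0$, i.e. \eqref{eqn:c_in_terms_of_a_alpha}; equivalently this says that $\partial$ annihilates the degree-two bubble. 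The hard part is not conceptual but bookkeeping: the parity left adjoint cap and cup carry $\Z_2$-degree of parity $\l+1$, so the super-Leibniz rule generates a web of signs that must be tracked precisely, and one must repeatedly straighten dotted curls and slide dots and degree-two bubbles past the caps and cups before the coefficient comparison can be carried out cleanly.
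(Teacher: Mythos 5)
Your high-level plan matches the paper's: apply the super-Leibniz rule to the second equality in \eqref{eq_cyclic_dot}, normalize into the spanning set from the weak nondegeneracy conjecture, and compare coefficients; and you correctly observe that the resulting constraint is equivalent to $\partial$ annihilating the odd bubble $\bigotimes$ (compare \eqref{d of odd bubble}).

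However, your bookkeeping claim about \emph{which} coefficient comparison produces the constraint is wrong, and in fact inverted relative to the paper. You assert that the coefficients of the downward $2$-dot and of $(\text{downward dot})\cdot\bigotimes$ ``are matched automatically by the relations already in force in Lemma~\ref{lem:derivation-start}'' and that the new constraint comes from the bare-bubble coefficient. The paper's computation shows the opposite: after applying $\partial$, the bare-bubble terms from $2\,\partial(\mathsf{T})$ and $\partial(\mathsf{D}')$ already cancel, while the coefficient of the downward $2$-dot is $-2a_\l-\alpha_{1,\l}$ on the left (this is the $n=1$ case of the preceding lemma, applied to $\mathsf{D}$) and $2c_\l+(-1)^{\l+1}\alpha_{1,\l}$ on the right. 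Equating those two is exactly what gives \eqref{eqn:c_in_terms_of_a_alpha}. This is not a redundancy you can wave away: none of the relations derived in Lemma~\ref{lem:derivation-start} (the $\bar a_\l=a_\l$, $\bar b_\l=-b_\l$, $\bar c_\l=(-1)^\l c_\l$, $\bar d_\l=-d_\l$ identifications, $\beta_{3,\l}=-\beta_{2,\l}$, $2\beta_{1,\l}=\alpha_{1,\l+2}+\alpha_{1,\l}$, etc.)\ relates $c_\l$ to $a_\l$ and $\alpha_{1,\l}$, so there is no way the $2$-dot coefficients could match ``for free.'' If you actually carried through the curl straightening and dot/bubble slides you sketch, you would find the $2$-dot comparison is where the information lives, the $(\text{dot})\cdot\bigotimes$ coefficient gives $\alpha_2=\alpha_2$ trivially, and the bare-bubble terms contribute no independent equation. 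The conclusion and its interpretation are right, but the stated route to it needs to be corrected.
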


\begin{proof}
Applying $\partial$ to \eqref{eq_cyclic_dot} implies
\begin{equation*}
(-2a_{\l} - \alpha_{1,\l})
 \xy 0;/r.17pc/:
 (4,10);(4,-10); **\dir{-} ?(0)*\dir{<}+(2.3,0)*{\scriptstyle{}}
 ?(.1)*\dir{ }+(2,0)*{\scs };
 (4,0)*{\txt\large{$\bullet$}};
 (1,0)*{\scs 2};
 (-8,8)*{\scs \l};
 (-10,0)*{};(10,0)*{};(-2,-8)*{\scs };
 \endxy
  +\;\;
 \alpha_2
 \xy 0;/r.17pc/:
 (0,10);(0,-10); **\dir{-} ?(0)*\dir{<}+(2.3,0)*{\scriptstyle{}}
 ?(.1)*\dir{ }+(2,0)*{\scs };
 (0,0)*{\txt\large{$\bullet$}};
 (5,-4)*{\txt\large{$\bigotimes$}};
 (-8,8)*{\scs \l};
 (-10,0)*{};(10,0)*{};(-2,-8)*{\scs };
 \endxy
 \;\; = \;\;
(2c_{\l} + (-1)^{\l+1}\alpha_{1,\l})
 \xy 0;/r.17pc/:
 (4,10);(4,-10); **\dir{-} ?(0)*\dir{<}+(2.3,0)*{\scriptstyle{}}
 ?(.1)*\dir{ }+(2,0)*{\scs };
 (4,0)*{\txt\large{$\bullet$}};
 (1,0)*{\scs 2};
 (-8,8)*{\scs \l};
 (-10,0)*{};(10,0)*{};(-2,-8)*{\scs };
 \endxy
 +\;\;
 \alpha_2
 \xy 0;/r.17pc/:
 (0,10);(0,-10); **\dir{-} ?(0)*\dir{<}+(2.3,0)*{\scriptstyle{}}
 ?(.1)*\dir{ }+(2,0)*{\scs };
 (0,0)*{\txt\large{$\bullet$}};
 (5,-4)*{\txt\large{$\bigotimes$}};
 (-8,8)*{\scs \l};
 (-10,0)*{};(10,0)*{};(-2,-8)*{\scs };
 \endxy
\end{equation*}
so comparing coefficients of the basis elements in the weak nondegeneracy conjecture implies
\[
2c_{\l} +(-1)^{\l+1}\alpha_{1,\l} = -2a_{\l} - \alpha_{1,\l}
\]
and the result follows.
\end{proof}

The lemma implies that any derivation $\partial$ must kill the odd bubble
\begin{align}
\partial \left(
 \xy 0;/r.17pc/:
 (6,5)*{\scs \l};
 (-10,0)*{};(10,0)*{};(-2,-8)*{\scs };
 (0,0)*{\bigotimes};
 \endxy
 \right)  \;\; = \;\; 0 ,
\end{align}
so that the centrality of the odd bubble relation \eqref{rel: centrality of odd bubbles} holds trivially.
Note that the real odd bubble is equal to the fake odd bubble using the relations of odd 2-category $\oUcat$
\begin{align*}
\xy 0;/r.17pc/:
 (8,5)*{\scs \l};
 (-10,0)*{};(10,0)*{};(-2,-8)*{\scs };
 (4,0)*{\lbub{\scs 1}};
 \endxy
 \;\; = \;\;
 \xy 0;/r.17pc/:
 (8,5)*{\scs \l};
 (-10,0)*{};(10,0)*{};(-2,-8)*{\scs };
 (4,0)*{\rbub{\scs 1}};
 \endxy
\end{align*}
for all $\l \in \Z$.  This is an immediate consequence of \cite[equation (5.8)]{BE2}.

\begin{lemma} \label{lem:odd-bub}
The derivation of an odd labeled (real) bubble is zero. That is, for $n\geq 0$,
\begin{align}
& \partial \left(
 \xy 0;/r.17pc/:
 (6,5)*{\scs \l};
 (-10,0)*{};(10,0)*{};(-2,-8)*{\scs };
 (0,0)*{\lbub{\scs 2n+1}};
 \endxy
 \right)
 \;\; = 0, \quad \text{for $\l \geq 0$}
\qquad \quad \partial \left(
 \xy 0;/r.17pc/:
 (6,5)*{\scs \l};
 (-10,0)*{};(10,0)*{};(-2,-8)*{\scs };
 (0,0)*{\rbub{\scs 2n+1}};
 \endxy
 \right)
 \;\; = 0 \quad \text{for $\l \leq 0$.}
\end{align}
\end{lemma}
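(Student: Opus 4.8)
The plan is to reduce the statement to two facts already in hand: that $\partial$ annihilates the degree-two odd bubble $\bigotimes$ (the computation immediately following \eqref{eqn:c_in_terms_of_a_alpha}), and that $\bigotimes$ squares to zero, \eqref{eq:oddbub-square}. Throughout I write $b_m$ for the clockwise dotted bubble of degree $2m$ (denoted $\lbub{m}$ in the statement), which for $\lambda\ge 0$ and $m\ge 1$ is a genuine bubble; the counterclockwise case $\lambda\le 0$ will be handled identically after replacing clockwise bubbles by counterclockwise ones, the two overlapping consistently at $\lambda=0$ because the real odd bubble equals the fake one there. For $n=0$ there is nothing to prove, since $b_1=\bigotimes$ by \eqref{eq:oddbub} and $\partial(\bigotimes)=0$. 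For $n\ge 1$ I will invoke \eqref{rel:decomp_of_bubble_2n+1_into_2n_and_1}, which expresses the odd bubble $b_{2n+1}$ as the vertical composite $b_{2n}\circ\bigotimes$ of endomorphisms of $\1_\lambda$; applying the Leibniz rule and using $\partial(\bigotimes)=0$ gives $\partial(b_{2n+1})=\partial(b_{2n})\,\bigotimes+b_{2n}\,\partial(\bigotimes)=\partial(b_{2n})\,\bigotimes$, so the whole lemma comes down to showing $\partial(b_{2n})\,\bigotimes=0$.

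The crucial step will be a parity count. A dot carries $\Z_2$-degree $\bar 1$, and inspecting the degree conventions of Section~\ref{sec:oddU} one sees that $b_m$ has $\Z_2$-degree $\bar m$; in particular $b_{2n}$ is $\Z_2$-even, so $\partial(b_{2n})$ is a $\Z_2$-odd element of $\End_{\oUqp}(\1_\lambda)$ because $\partial$ has $\Z_2$-degree $\bar 1$. Now the relations of $\oUqp$ imply that $\End_{\oUqp}(\1_\lambda)$ is spanned, in each bidegree, by products of dotted bubbles (this is part of the construction of the spanning set in \cite[Section 8]{BE2} and \cite[Section 3.4]{Lau-odd}, and does not require the nondegeneracy conjecture). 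Hence its $\Z_2$-odd part is spanned by monomials in which at least one odd-labeled bubble $b_{2k+1}$ occurs; since $b_{2k+1}=b_{2k}\,\bigotimes$ by \eqref{rel:decomp_of_bubble_2n+1_into_2n_and_1} and $\bigotimes$ is central by \eqref{rel: centrality of odd bubbles}, this $\Z_2$-odd part lies in the ideal of the (super)commutative algebra $\End_{\oUqp}(\1_\lambda)$ generated by $\bigotimes$. Therefore $\partial(b_{2n})\,\bigotimes$ lies in the ideal generated by $\bigotimes^2$, which vanishes by \eqref{eq:oddbub-square}; so $\partial(b_{2n})\,\bigotimes=0$ and $\partial(b_{2n+1})=0$. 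The $\rbub{2n+1}$ statement for $\lambda\le 0$ follows by the same argument with counterclockwise bubbles.

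The main obstacle I anticipate is not the idea but making the parity count completely rigorous: one must be sure that no $\Z_2$-even --- equivalently, $\bigotimes$-free --- monomial can contribute to $\partial(b_{2n})$, and one must check that the spanning of $\End_{\oUqp}(\1_\lambda)$ by bubble monomials, the identity \eqref{rel:decomp_of_bubble_2n+1_into_2n_and_1}, and the supercommutativity used to relocate $\bigotimes$ all remain valid in the small-weight edge cases, where some of the auxiliary bubbles are fake bubbles defined recursively through \eqref{rel:odd_inf_grassm}. As a safety net, the same conclusion can be obtained by a direct computation: presenting $b_{2n}$ as a cap--dots--cup circle and applying the Leibniz rule together with the explicit formulas of Lemma~\ref{lem:derivation-start} and the dot-slide relations, one finds that $\partial(b_{2n})$ is in fact a scalar multiple of $b_{2n+1}=b_{2n}\,\bigotimes$, after which $b_{2n+1}\,\bigotimes=b_{2n}\,\bigotimes^{2}=0$ finishes the proof without having to determine the scalar.
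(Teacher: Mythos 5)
Your argument is correct and takes essentially the same route as the paper's (deliberately terse) proof: both rest on the factorization $b_{2n+1}=b_{2n}\bigotimes$ from \eqref{rel:decomp_of_bubble_2n+1_into_2n_and_1}, the Leibniz rule, and the vanishing $\partial(\bigotimes)=0$ established in the preceding lemma. Your parity/ideal step, showing the $\Z_2$-odd part of $\END(\onel)$ is $\bigotimes$-divisible so that $\partial(b_{2n})\bigotimes\in(\bigotimes^2)=0$, makes explicit exactly what the paper leaves implicit, and your cap--dots--cup fallback matches the computation that actually appears in the proof of Lemma~\ref{lem:even-bub}.
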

\begin{proof}
The proof of the statement follows easily using the relation \eqref{rel:decomp_of_bubble_2n+1_into_2n_and_1}, the previous Lemma, and the Leibniz rule.
\end{proof}

\begin{lemma} \label{lem:even-bub}
 For the map $\partial$ defined in Lemma~\ref{lem:derivation-start} to preserve the degree zero bubble relation \eqref{eq:degzero-bub} we have
 \begin{equation} \label{eqn:d(deg-0 bubble)=0}
- a_{\l} - b_{\l} + c_{\l}-(-1)^{\l}d_{\l}-\alpha_{1,\l}\delta_{\l,\text{even}} + (\l+1)\alpha_2=0
\end{equation}
for all $\l \in \Z$,
so that any derivation of a dotted bubble must be given by
\begin{align}
\partial \left(
 \xy 0;/r.17pc/:
 (6,5)*{\scs \l};
 (-10,0)*{};(10,0)*{};(-2,-8)*{\scs };
 (0,0)*{\lbub{\scs n}};
 \endxy
 \right)
 \;\; = \;\;
 -\delta_{n,\text{even}}n\alpha_2
 \xy 0;/r.17pc/:
 (6,5)*{\scs \l};
 (-6,0)*{};(10,0)*{};
 (0,0)*{\lbub{\scs {n+1}}};
 \endxy \quad
 \mbox{ for $\l \geq 0$,}
\end{align}
\begin{align}
\partial \left(
 \xy 0;/r.17pc/:
 (6,5)*{\scs \l};
 (-10,0)*{};(10,0)*{};(-2,-8)*{\scs };
 (0,0)*{\rbub{\scs n}};
 \endxy
 \right)
 \;\; = \;\;
 -\delta_{n,\text{even}}n\alpha_2
 \xy 0;/r.17pc/:
 (6,5)*{\scs \l};
 (-6,0)*{};(10,0)*{};
 (0,0)*{\rbub{\scs {n+1}}};
 \endxy \quad
 \mbox{ for $\l \leq 0$.}
\end{align}
\end{lemma}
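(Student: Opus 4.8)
The plan is to compute $\partial$ of an arbitrary dotted bubble directly from Lemma~\ref{lem:derivation-start} and then impose consistency with the two relations that rigidify the low-degree bubbles: the degree-zero relation \eqref{eq:degzero-bub} and the infinite-Grassmannian relation \eqref{rel:odd_inf_grassm}. By Lemma~\ref{lem:odd-bub} the derivation already annihilates every odd-labelled real bubble, so the genuinely new content is the behaviour on even-labelled bubbles (in particular on the degree-zero ones), together with the coefficient relation \eqref{eqn:d(deg-0 bubble)=0}; one may do the computation for general $n$ uniformly and use Lemma~\ref{lem:odd-bub} as a consistency check for $n$ odd.

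First I would realize a real counterclockwise bubble $\lbub{n}$ of weight $\l\geq 0$ as a vertical composite of the right-oriented cup, a power $x^{\,n+\l-1}$ of the dot on the interior strand, and the right-oriented cap, and apply the super Leibniz rule for vertical composition. Into the outer two terms I substitute $\partial$ of the cap and cup from Lemma~\ref{lem:derivation-start} — recalling the adjunction-forced identities $\bar a_\l=a_\l$, $\bar b_\l=-b_\l$, $\bar c_\l=(-1)^\l c_\l$, $\bar d_\l=-d_\l$ and $c_\l=-a_\l-\delta_{\l,\mathrm{odd}}\alpha_{1,\l}$ from \eqref{eqn:c_in_terms_of_a_alpha} — and into the middle term the dot-power formula from the lemma above, applied at the weight labelling the interior strand. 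Every resulting diagram is a bubble carrying one extra dot, possibly with an adjoined odd bubble $\bigotimes$; using \eqref{rel:decomp_of_bubble_2n+1_into_2n_and_1} the latter are rewritten as bubbles with one more dot, $\partial(\bigotimes)=0$ keeps the expression closed, and the bubble-slide relations \eqref{eq:ccbubslide} are used where the extra dot must be merged with the existing ones. Collecting terms yields an identity $\partial(\lbub{n})=\kappa_{\l,n}\,\lbub{n+1}$ where $\kappa_{\l,n}$ is an explicit scalar, affine in $\l$ and depending on $n$ only through its parity; the $\Z_2$-grading enters through the parities of the cup, cap and dot-power, and tracking those Koszul signs carefully is where most of the work sits. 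The clockwise case $\l\leq 0$ is entirely parallel, with the sign changes being exactly those recorded in the parity left-adjoint part of Lemma~\ref{lem:derivation-start}.

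It remains to pin down $\kappa_{\l,n}$. For $\l\geq 1$ the degree-zero bubble equals $\Id_{\onen}$ by \eqref{eq:degzero-bub}, hence has vanishing derivative; taking $n=0$ in $\partial(\lbub{n})=\kappa_{\l,n}\,\lbub{n+1}$ forces $\kappa_{\l,0}=0$, which is precisely equation \eqref{eqn:d(deg-0 bubble)=0}. For $\l\leq -1$ the same argument applied to the degree-zero clockwise bubble gives \eqref{eqn:d(deg-0 bubble)=0} at negative weights; and for $\l=0$, where no degree-zero real bubble exists, I would apply $\partial$ to the $t=0$ case $\lbub{0}\,\rbub{0}=\Id_{\onen}$ of \eqref{rel:odd_inf_grassm} and feed in the values of $\partial$ on real bubbles together with the recursive definition of the fake bubbles to recover \eqref{eqn:d(deg-0 bubble)=0} there. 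Substituting \eqref{eqn:d(deg-0 bubble)=0} back into $\kappa_{\l,n}$ then cancels the entire weight-dependent part, leaving $\kappa_{\l,n}=-\delta_{n,\mathrm{even}}\,n\,\alpha_2$, which is the asserted formula for $\partial(\lbub{n})$ at $\l\geq 0$ and, by the mirror computation, for $\partial(\rbub{n})$ at $\l\leq 0$. The main obstacle throughout is the sign and parity bookkeeping in the generating computation; once that is in place, imposing the two rigidifying relations is routine.
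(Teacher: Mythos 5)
Your approach is essentially the same as the paper's: realize the $n$-labelled dotted bubble as (cap) composed with a power of the dot composed with (cup), apply the super Leibniz rule together with the cap/cup and dot-power formulas from Lemma~\ref{lem:derivation-start} and the lemma following it, collect the output into a scalar multiple $\kappa_{\l,n}$ of the $(n+1)$-labelled bubble using \eqref{rel:decomp_of_bubble_2n+1_into_2n_and_1} and bubble-sliding, then force $\kappa_{\l,0}=0$ from the degree-zero bubble relation \eqref{eq:degzero-bub} to obtain \eqref{eqn:d(deg-0 bubble)=0} and back-substitute. One slip in the writeup: you characterize the intermediate coefficient $\kappa_{\l,n}$ as ``affine in $\l$ and depending on $n$ only through its parity,'' but the dot-power formula feeds in an $\alpha_2$-term proportional to $n+\l-1$ (the literal dot count), so $\kappa_{\l,n}$ is genuinely linear in $n$; indeed your own final expression $-\delta_{n,\text{even}}\,n\,\alpha_2$ depends on $n$ linearly and not just through parity, so those two sentences of your description are mutually inconsistent even though the conclusion is correct. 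Your explicit treatment of $\l=0$ via the $t=0$ case of the odd Grassmannian relation \eqref{rel:odd_inf_grassm} is a touch more careful than the paper, which simply states the clockwise computation ``for $\l\leq 1$'' without flagging that the degree-zero clockwise bubble is fake at $\l=0,1$ and only makes sense through the Grassmannian recursion; either way \eqref{eqn:d(deg-0 bubble)=0} is established for all $\l\in\Z$.
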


\begin{proof}
For $n \geq 0$ the image under $\partial$ of the $n$-labelled dotted  bubble  is given by
\begin{align}
\partial \left(
 \xy 0;/r.17pc/:
 (6,5)*{\scs \l};
 (-10,0)*{};(10,0)*{};(-2,-8)*{\scs };
 (0,0)*{\lbub{\scs n}};
 \endxy
 \right)
 \;\; = \;\;
 \delta_{n,\text{even}}(a_{\l-2} +b_{\l -2} - c_{\l-2} + (-1)^{\l}d_{\l-2} + \alpha_{1,\l-2}\delta_{\l,\text{even}} -(n+\l -1)\alpha_2 )
 \xy 0;/r.17pc/:
 (6,5)*{\scs \l};
 (-6,0)*{};(10,0)*{};
 (0,0)*{\lbub{\scs {n+1}}};
 \endxy
\end{align}
for $\l \geq 0$, and
\begin{align}
\partial \left(
 \xy 0;/r.17pc/:
 (6,5)*{\scs \l};
 (-10,0)*{};(10,0)*{};(-2,-8)*{\scs };
 (0,0)*{\rbub{\scs n}};
 \endxy
 \right)
 \;\; = \;\;
 \delta_{n,\text{even}}(- a_{\l} -b_{\l}+ c_{\l}+(-1)^{\l+1}d_{\l}  -\alpha_{1,\l}\delta_{\l,\text{even}} -(n-\l-1)\alpha_2 )
 \xy 0;/r.17pc/:
 (6,5)*{\scs \l};
 (-6,0)*{};(10,0)*{};
 (0,0)*{\rbub{\scs {n+1}}};
 \endxy
\end{align}
for $\l \leq 0$.  The identity by \eqref{eq:degzero-bub} then implies that the degree zero bubble vanishes in the image of $\partial$
\begin{align*} 
&0 = \partial \left(
\xy 0;/r.17pc/:
 (6,5)*{\scs \l};
 (-6,0)*{};(10,0)*{};
 (0,0)*{\lbub{\scs 0}};
 \endxy
 \right )
  =
 \big(a_{\l-2}+b_{\l-2}- c_{\l-2} +(-1)^{\l}d_{\l-2}+\alpha_{1,\l-2}\delta_{\l,\text{even}} - (\l-1)\alpha_2\big)
 \xy 0;/r.17pc/:
 (6,5)*{\scs \l};
 (-6,0)*{};(10,0)*{};
 (0,0)*{\bigotimes};
 \endxy
\quad \text{for $\l \geq 1$,}
\\
&0 = \partial \left(
\xy 0;/r.17pc/:
 (6,5)*{\scs \l};
 (-6,0)*{};(10,0)*{};
 (0,0)*{\rbub{\scs 0}};
 \endxy
 \right )
=
 \big( - a_{\l} - b_{\l} +c_{\l}-(-1)^{\l}d_{\l}-\alpha_{1,\l}\delta_{\l,\text{even}} + (\l+1)\alpha_2 \big)
 \xy 0;/r.17pc/:
 (6,5)*{\scs \l};
 (-6,0)*{};(10,0)*{};
 (0,0)*{\bigotimes};
 \endxy
\quad
\text{for $\l \leq 1$,}
\end{align*}
so the result follows.
\end{proof}

\begin{remark}
The computations above are technically for real bubbles -- those with a positive number of dots.  However,  using odd infinite Grassmannian relation \eqref{rel:odd_inf_grassm} and \eqref{rel:decomp_of_bubble_2n+1_into_2n_and_1} to express fake bubbles in terms of the real bubbles,   the same formulas given in Lemma~\ref{lem:odd-bub} and \ref{lem:even-bub} will apply to fake bubbles as well.
\end{remark}

If we combine \eqref{eqn:c_in_terms_of_a_alpha} with the equation \eqref{eqn:d(deg-0 bubble)=0}  obtained from $\partial$ of degree-0 bubble is zero, we can express $d_{\l}$ as
\begin{equation} \label{eq:dlambda0}
d_{\l} = (-1)^{\l+1}(2a_{\l} +\alpha_{1,\l} +b_{\l} - (\l+1)\alpha_2) \qquad \quad \text{for all $\l \in \Z$.}
\end{equation}

\subsection{Derivations and curl relations}
Before proving the odd $\mf{sl}(2)$-relations it is convenient to study the image of some of the curl relations under the map $\partial$. We continue using the definition Lemma~\ref{lem:derivation-start} imposing the additional constraints from  \eqref{eqn:c_in_terms_of_a_alpha} and \eqref{eq:dlambda0}.

\begin{lemma}
Fix either   $\oUcat^{\text{even}}$ or   $\oUcat^{\text{odd}}$.  For the map $\partial$ defined in Lemma~\ref{lem:derivation-start} to preserve the curl relations
\begin{align} \label{eq:dot-curl1}
\vcenter{\xy 
    (-3,4)*{};(3,-4)*{} **\crv{(-3,1) & (3,-1)}?(1)*\dir{>};
    (3,4)*{};(-3,-4)*{} **\crv{(3,1) & (-3,-1)};
    (-3,4)*{};(3,4)*{} **\crv{(-3.2,8) & (3.2,8)}?(.2)*\dir{}+(5.5,-1.5)*{\bullet};
    (6,4)*{\scs  -\l};
    (-5,0)*{\scs  \l};
    \endxy}
    \;\; = \;\;
\vcenter{\xy 
    (-3,-4)*{};(3,-4)*{} **\crv{(-3.2,2) & (3.2,2)}?(1)*\dir{>} ?(.2)*\dir{};
    (-5,0)*{\scs  \l};
  \endxy }
  \qquad \text{for $\l \leq 0$,}
\qquad \qquad
  \vcenter{\xy 
    (-3,4)*{};(3,-4)*{} **\crv{(-3,1) & (3,-1)};
    (3,4)*{};(-3,-4)*{} **\crv{(3,1) & (-3,-1)}?(0)*\dir{<};
    (-3,-4)*{};(3,-4)*{} **\crv{(-3.2,-8) & (3.2,-8)}?(.2)*\dir{}+(0,0)*{\bullet};
    (-5,-5)*{\scs  \l};
    (5,0)*{\scs  \l};
    \endxy}
    \;\;=\;\;
\vcenter{\xy 
    (-3,4)*{};(3,4)*{} **\crv{(-3.2,-2) & (3.2,-2)}?(1)*\dir{>};
    (4,-2)*{\scs  \l};
    \endxy} \qquad \text{for $\l \geq 0$,}
\end{align}
we must have
\begin{equation} \label{eq:alpha1}
\alpha_{1}:= \alpha_{1,\l} = \alpha_{1,\l+2} = \beta_{1,\l}=\beta_{1,\l+2}, \qquad \quad \text{for all $\l \in \Z$.}
\end{equation}

\end{lemma}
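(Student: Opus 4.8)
The plan is to apply $\partial$ to each of the curl identities in \eqref{eq:dot-curl1}, using that $\partial$ kills identity $2$-morphisms so the right-hand sides contribute nothing, and then to expand the left-hand sides with the Leibniz rule. Each left-hand side is a vertical/horizontal composite of a cap (or cup), a crossing, and a dotted strand, and $\partial$ of each of these pieces is recorded in Lemma~\ref{lem:derivation-start} (together with the formulas for $\partial$ on dotted strands and dotted cups/caps obtained by iterating the Leibniz rule). Carrying out the expansion, with the super Leibniz signs dictated by the parities of the cap, cup, and crossing $2$-morphisms read off from their declared bidegrees, produces a linear combination of: dotted curls of the same shape, curls decorated by an odd bubble $\bigotimes$, and a ``nested'' curl carrying a degree $2$ bubble, with coefficients that are linear expressions in $\alpha_{1,\l}$, $\alpha_{1,\l+2}$, $\beta_{1,\l}$, $\alpha_2$, $a_\l$, $b_\l$, $c_\l$, $d_\l$.

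Next I would collapse all of these decorated curls back onto a single strand. The curl relations \eqref{eq:downcurl1}, \eqref{eq:downcurl2} and their dotted variants \cite[(5.18)--(5.21)]{BE2} — which are exactly the identities being differentiated — rewrite a curl as a dotted strand times a (possibly fake) bubble; the dot-slide relations \eqref{rel:rightward_dot_slide}--\eqref{rel:leftward_dot_slide} move dots across caps and cups; and the bubble identities \eqref{eq:oddbub}, \eqref{rel:decomp_of_bubble_2n+1_into_2n_and_1} reduce everything to a combination of a $\bullet$-decorated identity strand, a $\bigotimes$-decorated identity strand, and a bubble-decorated identity strand. Because the additional constraints \eqref{eqn:c_in_terms_of_a_alpha} and \eqref{eq:dlambda0} have already been imposed, the $a_\l,b_\l,c_\l,d_\l$ dependence cancels, and the only surviving content is an identity involving $\alpha_{1,\l}$, $\alpha_{1,\l+2}$, $\beta_{1,\l}$, and $\alpha_2$.

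Comparing coefficients of the basis $2$-morphisms provided by the weak nondegeneracy conjecture of Section~\ref{sec:nondegen} then yields a short linear system whose upshot is precisely $\alpha_{1,\l}=\alpha_{1,\l+2}$ (the $\alpha_2$ and bubble terms cancel between the two sides, since the differentiated curl relations carry no such terms on the right). Substituting $\alpha_{1,\l}=\alpha_{1,\l+2}$ into the relation $2\beta_{1,\l}=\alpha_{1,\l+2}+\alpha_{1,\l}$ from Lemma~\ref{lem:derivation-start} forces $\beta_{1,\l}=\alpha_{1,\l}$, and the relation $\beta_{1,\l}=\beta_{2,\l}+\alpha_{1,\l}$ then gives $\beta_{2,\l}=\beta_{3,\l}=0$ for free. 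Running this over all weights inside a fixed parity component $\oUqp^{\text{even}}$ or $\oUqp^{\text{odd}}$ shows that $\alpha_{1,\l}$ is constant on that component — the common value $\alpha_1$ — and the two weight ranges $\l\le 0$ and $\l\ge 0$ in \eqref{eq:dot-curl1} between them cover every $\l$.

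The main obstacle is the bookkeeping in the middle step: computing $\partial$ of the curl composite requires careful handling of the super Leibniz signs and of the parities of caps, cups, and crossings, while the subsequent normalization depends on getting the $(-1)^\l$ and $\lfloor n/2\rfloor$ signs in the curl and dot-slide relations exactly right. One also has to treat the even and odd weight sectors separately, since the Kronecker deltas $\delta_{\l,\text{even}}$, $\delta_{\l,\text{odd}}$ in Lemma~\ref{lem:derivation-start} and in \eqref{eqn:c_in_terms_of_a_alpha} switch different terms on and off; verifying that both cases collapse to the same conclusion $\alpha_{1,\l}=\alpha_{1,\l+2}$ is the most delicate point, and is where the constraints accumulated in the preceding lemmas must all be used.
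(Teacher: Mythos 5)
Your plan founders at the outset: the right-hand sides of \eqref{eq:dot-curl1} are \emph{not} identity $2$-morphisms. They are rightward caps and cups — generating $2$-morphisms of $\oUqp$ — and $\partial$ acts nontrivially on them. By Lemma~\ref{lem:derivation-start} a rightward cap maps to $c_{\l}\cdot(\text{dotted leftward cap}) + d_{\l}\cdot(\text{cap with odd bubble})$, and a rightward cup maps to $-a_{\l}\cdot(\text{dotted cup}) - b_{\l}\cdot(\text{cup with odd bubble})$. Since you discard the right-hand side, you will either derive spurious constraints (e.g.\ $a_\l=0$ by comparing the dotted-cap coefficient against zero) or miss the balancing terms entirely; either way the linear system you then read off is wrong, not just unsimplified.

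The actual argument does the opposite of what you propose. One computes $\partial$ of \emph{both} sides of the curl relation, reducing the left side to dotted caps/cups and caps/cups with odd bubbles using $\partial$ of the sideways crossing (this is where the formula \eqref{eq:partial-leftcrossing} enters) and the curl reduction $\eqref{eq:downcurl1}$--$\eqref{eq:downcurl2}$. Equating the coefficient of the cap-with-odd-bubble (respectively cup-with-odd-bubble) on the two sides yields an expression for $(-1)^{\l}d_{\l}$ (or $(-1)^{\l+1}d_{\l-2}$) involving $\alpha_{1,\l+2}$, $a_\l$, $b_\l$, $\alpha_2$. Comparing this against the \emph{previously derived} formula \eqref{eq:dlambda0}, which expresses $d_\l$ with $\alpha_{1,\l}$ in place of $\alpha_{1,\l+2}$, forces $\alpha_{1,\l}=\alpha_{1,\l+2}$. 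So the $a_\l,b_\l,\alpha_2$ dependence does not "cancel for free"; it is the consistency with \eqref{eq:dlambda0} that does the work. Once you accept that the right-hand sides must be differentiated, the rest of your outline (propagating the constraint into $2\beta_{1,\l}=\alpha_{1,\l+2}+\alpha_{1,\l}$ to obtain $\beta_{1,\l}=\alpha_1$, and treating even/odd weight sectors separately) is essentially correct.
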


\begin{proof}
This is a straightforward computation  after deriving the formulas for sideways crossings.  For the $\l \geq 0$ case we have
\begin{align*}
\partial \left( \;\;
\vcenter{\xy 
    (-3,4)*{};(3,-4)*{} **\crv{(-3,1) & (3,-1)};
    (3,4)*{};(-3,-4)*{} **\crv{(3,1) & (-3,-1)}?(0)*\dir{<};
    (-3,-4)*{};(3,-4)*{} **\crv{(-3.2,-8) & (3.2,-8)}?(.2)*\dir{}+(0,0)*{\bullet};
    (-5,-5)*{\scs  \l};
    (5,0)*{\scs  \l};
    \endxy} \;\; \right)
    \;=\;
-a_{\l}
\vcenter{\xy 
    (-3,4)*{};(3,4)*{} **\crv{(-3.2,-2) & (3.2,-2)}?(1)*\dir{>}+(0,-2)*{\bullet};
    (4,-2)*{\scs  \l};
    (-6,0)*{};(8,0)*{};
    \endxy}
      + \;\;
(2a_{\l-2} + \alpha_{1,\l} -b_{\l} + b_{\l-2} +(1-\l)\alpha_2 + (-1)^{\l}d_{\l-2})
\vcenter{\xy 
    (-3,4)*{};(3,4)*{} **\crv{(-3.2,-2) & (3.2,-2)}?(1)*\dir{>};
    (4,-2)*{\scs  \l};
    (0,-5)*{\bigotimes};
    (-8,0)*{};(8,0)*{};
    \endxy}
\end{align*}
whereas
\begin{align*}
\partial \left(
\vcenter{\xy 
    (-3,4)*{};(3,4)*{} **\crv{(-3.2,-2) & (3.2,-2)}?(1)*\dir{>};
    (4,-2)*{\scs  \l};
    (-6,0)*{};(6,0)*{};
    \endxy} \;\; \right)
\;\; = \;\;
-a_{\l}
\vcenter{\xy 
    (-3,4)*{};(3,4)*{} **\crv{(-3.2,-2) & (3.2,-2)}?(1)*\dir{>}+(0,-2)*{\bullet};
    (4,-2)*{\scs  \l};
    (-6,0)*{};(6,0)*{};
    \endxy}
\;\; -b_{\l}
\vcenter{\xy 
    (-3,4)*{};(3,4)*{} **\crv{(-3.2,-2) & (3.2,-2)}?(1)*\dir{>};
    (4,-2)*{\scs  \l};
    (0,-5)*{\bigotimes};
    (-6,0)*{};(6,0)*{};
    \endxy}
\end{align*}
equating coefficients of the corresponding terms implies
\[
-b_{\l} = 2a_{\l-2} + \alpha_{1,\l} -b_{\l} + b_{\l-2} +(1-\l)\alpha_2 + (-1)^{\l}d_{\l-2}
\]
 or
\[
(-1)^{\l+1}d_{\l-2}  = 2a_{\l-2} + \alpha_{1,\l}  + b_{\l-2} +(1-\l)\alpha_2 \qquad \text{for all $\l \geq 0$.}
\]
Likewise, the $\l\leq 0$ case implies
\[
(-1)^{\l}d_{\l} = (\l+1)\alpha_2 -b_{\l}   -2a_{\l} - \alpha_{1,\l+2}  \qquad \text{for all $\l\leq 0$. }
\]
Hence, \eqref{eq:alpha1} must hold for all values of $\l$.
Then combining \eqref{eq:alpha1} with \eqref{eq:dlambda0} implies
\[
\alpha_1:= \alpha_{1,\l-2} = \alpha_{1,\l}
\]
which together with \eqref{eqn:d of odd nilhecke2} implies
\[
\beta_{1,\l} = \beta_{1,\l-2} =  \alpha_{1}.
\]
\end{proof}

\subsection{Derivations and odd $sl(2)$ relations}

\begin{lemma} \label{lem:EFr2}
The map $\partial$ defined in Lemma~\ref{lem:derivation-start} with the constraints from \eqref{eq:alpha1} satisfies the following identities:
\begin{align*}
\partial \left( \;\; \vcenter{\xy 0;/r.18pc/:
     (-4,-4)*{};(4,4)*{} **\crv{(-4,-1) & (4,1)}?(1)*\dir{>};
     (4,-4)*{};(-4,4)*{} **\crv{(4,-1) & (-4,1)}?(0)*\dir{<};
     (-4,4)*{};(4,12)*{} **\crv{(-4,7) & (4,9)}?(0)*\dir{<};
     (4,4)*{};(-4,12)*{} **\crv{(4,7) & (-4,9)}?(1)*\dir{>};
     (7,8)*{\scs \l};
  \endxy} \;\; \right)
  \;\; & = \;\;
   (-a_{\l-2} - \alpha_1) \;\;
\vcenter{\xy 
    (-3,6)*{};(3,-2)*{} **\crv{(-3,3) & (3,1)}?(0)*\dir{<};
    (3,6)*{};(-3,-2)*{} **\crv{(3,3) & (-3,1)};
    (-3,-2)*{};(3,-2)*{} **\crv{(-3.2,-6) & (3.2,-6)}?(.2)*\dir{};
    (-3,-10)*{};(3,-10)*{} **\crv{(-3.2,-4) & (3.2,-4)}?(1)*\dir{>} ?(.2)*\dir{};
    (5,0)*{\scs  \l};
    \endxy}
\;\; - \;\;
(a_{\l-2} +\alpha_1\delta_{\l,\text{even}})\;\;
\vcenter{\xy 
    (-3,8)*{};(3,8)*{} **\crv{(-3.2,2) & (3.2,2)}?(0)*\dir{<};
    (-3,-1)*{};(3,-9)*{} **\crv{(-3,-4) & (3,-6)}?(1)*\dir{>};
    (3,-1)*{};(-3,-9)*{} **\crv{(3,-4) & (-3,-6)};
    (-3,-1)*{};(3,-1)*{} **\crv{(-3.2,4) & (3.2,4)}?(.2)*\dir{};
    (5,0)*{\scs  \l};
    \endxy}
    \\
\partial \left( \;\; \vcenter{\xy 0;/r.18pc/:
     (-4,-4)*{};(4,4)*{} **\crv{(-4,-1) & (4,1)}?(0)*\dir{<};
     (4,-4)*{};(-4,4)*{} **\crv{(4,-1) & (-4,1)}?(1)*\dir{>};
     (-4,4)*{};(4,12)*{} **\crv{(-4,7) & (4,9)}?(1)*\dir{>};
     (4,4)*{};(-4,12)*{} **\crv{(4,7) & (-4,9)}?(0)*\dir{<};
     (7,8)*{\scs \l};
  \endxy} \;\; \right)
  \;\; & = \;\;
  (a_{\l} + \alpha_1\delta_{\l,\text{even}})\;\;
\vcenter{\xy 
    (-3,6)*{};(3,-2)*{} **\crv{(-3,3) & (3,1)};
    (3,6)*{};(-3,-2)*{} **\crv{(3,3) & (-3,1)}?(0)*\dir{<};
    (-3,-2)*{};(3,-2)*{} **\crv{(-3.2,-6) & (3.2,-6)}?(.2)*\dir{};
    (-3,-10)*{};(3,-10)*{} **\crv{(-3.2,-4) & (3.2,-4)}?(0)*\dir{<} ?(.2)*\dir{};
    (5,0)*{\scs  \l};
    \endxy}
\;\; + \;\;
(a_{\l} +\alpha_1)\;\;
\vcenter{\xy 
    (-3,8)*{};(3,8)*{} **\crv{(-3.2,2) & (3.2,2)}?(1)*\dir{>};
    (-3,-1)*{};(3,-9)*{} **\crv{(-3,-4) & (3,-6)};
    (3,-1)*{};(-3,-9)*{} **\crv{(3,-4) & (-3,-6)}?(1)*\dir{>};
    (-3,-1)*{};(3,-1)*{} **\crv{(-3.2,4) & (3.2,4)}?(.2)*\dir{};
    (5,0)*{\scs  \l};
    \endxy}
\end{align*}
\end{lemma}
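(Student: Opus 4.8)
The plan is to compute both left-hand sides directly by the Leibniz rule. Each of the two displayed diagrams is a vertical composite $x\circ y$ of two sideways crossings, and the top factor has even parity, so $\partial(x\circ y)=\partial(x)\circ y+x\circ\partial(y)$; thus everything reduces to a finite list of diagrammatic identities in $\oUqp$ once $\partial$ is known on each flavour of sideways crossing.

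First I would record $\partial$ of the sideways crossings. These are defined in \eqref{eq_crossl-gen-cyc} and \eqref{eq_crossr-gen-cyc} as an ordinary upward crossing with a right cap attached on one side and a right cup on the other, so the Leibniz rule applied to that definition, together with Lemma~\ref{lem:derivation-start}, the relation \eqref{eqn:c_in_terms_of_a_alpha} expressing $c_{\l}$ via $a_{\l}$ and $\alpha_{1,\l}$, the formula \eqref{eq:dlambda0} for $d_{\l}$, and the simplification \eqref{eq:alpha1} that $\alpha_{1,\l}=\beta_{1,\l}=\alpha_1$ independently of $\l$, yields $\partial$ of each sideways crossing as a sum of a sideways crossing carrying one extra dot and a sideways crossing carrying an odd bubble. (The crossing-with-a-dot terms coming from \eqref{lemeq:der1} cancel because their coefficients are $\beta_{1,\l}-\alpha_{1,\l}=0$.) The pitchfork relations and the dot slide relations \eqref{rel:rightward_dot_slide}--\eqref{rel:leftward_dot_slide} are used to bring these to a standard form; this is exactly the computation already carried out in the proof of the preceding lemma. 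The asymmetric factor $\delta_{\l,\text{even}}$ in the statement is precisely the contribution of $\alpha_{1,\l}$ that enters through the leftward-pointing cup and cap, whose gradings carry a parity shift distinguishing even from odd weights.

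Next I would substitute these formulas into the Leibniz rule for the two composites and simplify. Expanding both $\partial$'s produces a handful of diagrams in which a dot or an odd bubble sits between a cap and a cup; these are resolved with the curl relations \eqref{rel:Curls}, \eqref{eq:downcurl1}, \eqref{eq:downcurl2}, which convert a curl into a vertical strand with dots and a bubble, and the bubble slide relations \eqref{eq:ccbubslide}, which move bubbles past vertical strands. Using \eqref{eqn:c_in_terms_of_a_alpha} and \eqref{eq:dlambda0} to collapse the resulting coefficients, the first composite reorganises into the displayed combination with coefficients $-a_{\l-2}-\alpha_1$ and $-(a_{\l-2}+\alpha_1\delta_{\l,\text{even}})$. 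The second identity is handled in the same way with the opposite orientation: the extra sign $(-1)^{\l+1}$ in the definition \eqref{eq_crossr-gen-cyc} and the parity-left-adjoint relation \eqref{eq_biadj2} account for the sign differences between the two displayed formulas, and the coefficients come out as $a_{\l}+\alpha_1\delta_{\l,\text{even}}$ and $a_{\l}+\alpha_1$.

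The main obstacle is the sign bookkeeping: the super interchange law contributes $(-1)^{|f||g|}$ each time two odd 2-morphisms exchange height, the leftward cups and caps introduce $(-1)^{\l}$-type signs and parity shifts, and the summands of the curl relations \eqref{rel:Curls} carry alternating signs $(-1)^{r+1}$ and $(-1)^{\l r}$. The step that actually makes the many terms collapse is substituting \eqref{eq:dlambda0} for $d_{\l}$, together with the facts already forced in Lemma~\ref{lem:derivation-start} that $\alpha_{3,\l}=0$ and that $\alpha_2$ is weight-independent; after those substitutions only routine simplification with the relations of $\oUqp$ remains.
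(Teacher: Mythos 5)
Your overall strategy — compute $\partial$ of the two flavours of sideways crossing by Leibniz applied to their definitions \eqref{eq_crossl-gen-cyc}, \eqref{eq_crossr-gen-cyc}, then compose — is the same route the paper takes: the proof displays exactly these formulas in \eqref{eq:partial-leftcrossing} and then declares the result a direct computation. However, there are two concrete errors in your write-up that would derail the calculation.

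First, the parity claim is wrong. You assert that ``the top factor has even parity, so $\partial(x\circ y)=\partial(x)\circ y+x\circ\partial(y)$,'' presumably modelling this on the worked example in the paper where $x$ is a right cap of parity $\bar0$. But both sideways crossings have $\Z_2$-degree $\bar1$: the one in \eqref{eq_crossl-gen-cyc} is right-cap ($\bar0$) $\circ$ crossing ($\bar1$) $\circ$ right-cup ($\bar0$), and the one in \eqref{eq_crossr-gen-cyc} is left-cap $\circ$ crossing $\circ$ left-cup with parities $\overline{\lambda+1}+\bar1+\overline{\lambda+1}=\bar1$. So the super Leibniz rule is $\partial(x\circ y)=\partial(x)\circ y-x\circ\partial(y)$, and dropping that minus sign changes the final coefficients.

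Second, your formula for $\partial$ of a sideways crossing is missing a term. You say it is ``a sum of a sideways crossing carrying one extra dot and a sideways crossing carrying an odd bubble,'' after observing that the crossing-with-a-dot contributions vanish because $\beta_{1,\lambda}-\alpha_{1,\lambda}=0$. That observation is correct, but the $\beta_{1,\lambda}\cdot\mathrm{id}$ term in \eqref{lemeq:der1} does \emph{not} vanish: it equals $\alpha_1\cdot\mathrm{id}$, and once dressed with the defining cap and cup it produces a cap--cup composite. This is the third summand in \eqref{eq:partial-leftcrossing}, with coefficient $\alpha_1+a_\lambda$ for one flavour and $-a_{\lambda-2}-\alpha_1\delta_{\lambda,\text{even}}$ for the other; further contributions to these coefficients come from the dots on the caps and cups after sliding. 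Since the right-hand side of the lemma consists entirely of such cap--cup pictures, the term you omitted is exactly the one producing the answer. Your plan to invoke the curl relations and bubble slides is appropriate for cleaning up the remaining r2-with-dot and r2-with-bubble terms (whose coefficients $(a_\lambda-a_{\lambda-2})$ and $(b_\lambda-b_{\lambda-2}-\alpha_2)$ are arranged to cancel against the matching contributions from the other factor), but without the identity-resolution term you cannot recover the displayed formulas.
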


\begin{proof}
The sideways crossings take the form
\begin{equation} \label{eq:partial-leftcrossing}
\begin{split}
\partial \left(\;\;
\xy 0;/r.18pc/:
  (0,0)*{\xybox{
    (-4,-4)*{};(4,4)*{} **\crv{(-4,-1) & (4,1)}?(1)*\dir{>} ;
    (4,-4)*{};(-4,4)*{} **\crv{(4,-1) & (-4,1)}?(0)*\dir{<};
    (9,2)*{\scs \lambda};
     }};
  \endxy \right )
&=
(a_{\l}-a_{\l-2}) \;\;
\xy 0;/r.18pc/:
  (0,0)*{\xybox{
    (-4,-4)*{};(4,4)*{} **\crv{(-4,-1) & (4,1)}?(1)*\dir{>} ;
    (4,-4)*{};(-4,4)*{} **\crv{(4,-1) & (-4,1)}?(0)*\dir{<}?(0.25)*{\bullet};
     (9,2)*{\scs \lambda};
     }};
  \endxy
  \; + \;
(b_{\l}-b_{\l-2} -\alpha_2) \;\;
 \xy 0;/r.18pc/:
  (0,0)*{\xybox{
    (-4,-4)*{};(4,4)*{} **\crv{(-4,-1) & (4,1)}?(1)*\dir{>};
    (4,-4)*{};(-4,4)*{} **\crv{(4,-1) & (-4,1)}?(0)*\dir{<};
     (9,2)*{\scs \l};
     (8,-4)*{\bigotimes};
     }};
  \endxy
  \; +\;
  (\alpha_1 + a_{\l})\; \;
  \vcenter{\xy 
    (-3,6)*{};(3,6)*{} **\crv{(-3.2,0) & (3.2,0)}?(1)*\dir{>};
    (-3,-4)*{};(3,-4)*{} **\crv{(-3.2,2) & (3.2,2)}?(1)*\dir{>};
    (6,2)*{\scs  \l};
  \endxy }
\\
\partial \left (\;\;
\xy 0;/r.18pc/:
  (0,0)*{\xybox{
    (-4,-4)*{};(4,4)*{} **\crv{(-4,-1) & (4,1)}?(0)*\dir{<} ;
    (4,-4)*{};(-4,4)*{} **\crv{(4,-1) & (-4,1)}?(1)*\dir{>};
     (8,2)*{\scs \lambda};
     }};
  \endxy \right )
 &=
  (-a_{\l} +a_{\l-2}) \;\;
 \xy 0;/r.18pc/:
  (0,0)*{\xybox{
    (-4,-4)*{};(4,4)*{} **\crv{(-4,-1) & (4,1)}?(0)*\dir{<}?(0.25)*{\bullet} ;
    (4,-4)*{};(-4,4)*{} **\crv{(4,-1) & (-4,1)}?(1)*\dir{>};
     (7,2)*{\scs \lambda};
     }};
  \endxy
  -
(b_{\l} - b_{\l-2} - \alpha_2) \;\;
 \xy 0;/r.18pc/:
  (0,0)*{\xybox{
    (-4,-4)*{};(4,4)*{} **\crv{(-4,-1) & (4,1)}?(0)*\dir{<};
    (4,-4)*{};(-4,4)*{} **\crv{(4,-1) & (-4,1)}?(1)*\dir{>};
     (9,2)*{\scs \l};
     (8,-4)*{\bigotimes};
     }};
  \endxy
 \; +
  (-a_{\l-2} -\alpha_1\delta_{\l,\text{even}}) \;\;
  \vcenter{\xy 
    (-3,6)*{};(3,6)*{} **\crv{(-3.2,0) & (3.2,0)}?(0)*\dir{<};
    (-3,-4)*{};(3,-4)*{} **\crv{(-3.2,2) & (3.2,2)}?(0)*\dir{<};
    (6,2)*{\scs  \l};
  \endxy }
\end{split}
\end{equation}
and the result follows by direct computation.
\end{proof}

\begin{lemma}
The map $\partial$ defined in Lemma~\ref{lem:derivation-start} with the constraints from \eqref{eq:alpha1} preserves the odd $\mf{sl}(2)$ relations \eqref{rei2} without any additional constraints.
\end{lemma}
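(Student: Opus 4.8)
The content of the statement is in fact a cancellation on one side: in each of the two identities of \eqref{rei2} the left-hand side is an identity $2$-morphism (two parallel strands, one upward and one downward, with no dots, caps, cups or crossings), so $\partial$ annihilates it by definition. Thus the plan is to differentiate the \emph{right-hand side} of each relation term by term, using the super Leibniz rule together with the $\partial$-formulas on generators already established (Lemma~\ref{lem:derivation-start} with the constraints \eqref{eqn:c_in_terms_of_a_alpha}, \eqref{eq:dlambda0}, \eqref{eq:alpha1}, the dotted-strand formulas proved just after Lemma~\ref{lem:derivation-start}, the sideways-crossing formulas \eqref{eq:partial-leftcrossing}, the bubble formulas of Lemma~\ref{lem:odd-bub} and Lemma~\ref{lem:even-bub}, and the crossing-through-cap/cup formulas of Lemma~\ref{lem:EFr2}), and to check that the total is $0$, with no new relation forced on the remaining free coefficients $\alpha_1,\alpha_2,a_\l,b_\l$.

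First I would handle the double-crossing term of \eqref{rei2}. Applying $\partial$ to it and invoking Lemma~\ref{lem:EFr2} (which computes $\partial$ of exactly these nested crossing/cap–cup diagrams), one obtains a combination of: a dotted nested cap–cup diagram, a curl-type term, and odd-bubble-decorated terms. The odd-bubble contributions are immediately simplified because $\partial(\bigotimes)=0$ (established before Lemma~\ref{lem:odd-bub}) and by the odd-bubble centrality \eqref{rel: centrality of odd bubbles} and the relation $\bigl(\bigotimes\bigr)^2=0$, which let one slide or delete the $\bigotimes$'s freely; what survives is a cup–bubble–cap term of the same shape as the summands on the right of \eqref{rei2}, together with a dotted cap–cup diagram.

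Next I would differentiate the triple sum $\sum_{f_1+f_2+f_3=\l-1}(-1)^{f_2}(\cdots)$. Applying the Leibniz rule to each cup–bubble–cap summand, the $f_1$-dotted cup, the $f_2$-dotted bubble, and the $f_3$-dotted cap each contribute a term carrying one extra dot, with coefficient depending only on the parity of the relevant dot number and on $\alpha_1,\alpha_2,a_\l$: Lemma~\ref{lem:even-bub} and Lemma~\ref{lem:odd-bub} give the bubble contributions, and the dotted-strand lemma gives the cap/cup ones (here again $\partial$ kills the odd bubbles). Reindexing the three resulting sums ($f_1\mapsto f_1+1$, $f_2\mapsto f_2+1$, $f_3\mapsto f_3+1$) and normalizing everything to a common basis of cap–bubble–cup (and dotted cap–cup) diagrams via the dot-slide relations \eqref{rel:rightward_dot_slide}, the bubble-slide relations \eqref{eq:ccbubslide}, and the odd infinite-Grassmannian relation \eqref{rel:odd_inf_grassm}, the $\alpha_2$-pieces telescope and the $\alpha_1$- and $a_\l$-pieces reassemble into exactly the terms coming from $\partial$ of the double crossing (with opposite sign), so the grand total is $0$. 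The second identity in \eqref{rei2} is treated identically after exchanging clockwise and counterclockwise bubbles, using the $\l\le 0$ branches of the bubble formulas of Lemma~\ref{lem:even-bub}/\ref{lem:odd-bub} and the sign $(-1)^\l$ already present in \eqref{lemeq:derend}.

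The main obstacle is purely bookkeeping: keeping track of the Koszul signs coming from the super interchange law and from the parities of the dotted strands and caps/cups, and then verifying that the three reindexed sums cancel against each other and against $\partial$ of the crossing term, leaving no residual equation. The structural reason this succeeds — and the reason Lemma~\ref{lem:EFr2} and the curl lemma are proved first — is that the constraint \eqref{eq:alpha1}, namely $\alpha_1=\beta_1$, is precisely what forces ``differentiate then resolve'' to agree with ``resolve then differentiate'' on a mixed crossing; once \eqref{eq:alpha1} holds, the odd $\mf{sl}(2)$ relations \eqref{rei2} impose nothing further, and $\partial$ is a well-defined derivation on all of $\oUqp$ depending (on each parity block) only on the scalars $\alpha_1,\alpha_2$ and the weight-indexed families $a_\l,b_\l$.
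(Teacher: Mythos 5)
Your proposal is correct and takes essentially the same route as the paper: differentiate the right-hand side of \eqref{rei2} using the super Leibniz rule, handle the double-crossing term via Lemma~\ref{lem:EFr2}, differentiate the triple sum termwise, reindex, and verify the total vanishes without new constraints. The paper's proof is just the explicit execution of this plan (it records only the first relation, carries out the reindexing $r'=r+1$, $n'=n+1$, $k'=k+1$, and closes with Lemma~\ref{lem:EFr2} together with the curl relations \eqref{eq:downcurl1}--\eqref{eq:downcurl2}).
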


\begin{proof}
We prove the first relation in \eqref{rei2}.  The second can be proven similarly.  First we compute
\begin{align*}
&\partial \left(
\sum_{\overset{r+n+k}{=\l-1}}
(-1)^{k}\;
\vcenter{\xy
 (3,9)*{};(-3,9)*{} **\crv{(3,4) & (-3,4)} ?(1)*\dir{>}?(.8)*{\bullet};
 (-4,6)*{\scriptstyle r};
 (0,0)*{\smccbub{}};
 (2.4,0)*{\bullet};
 (6,0)*{\scriptstyle *+k};
 (-3,-9)*{};(3,-9)*{} **\crv{(-3,-4) & (3,-4)} ?(1)*\dir{>}?(.2)*{\bullet};
 (-4,-6)*{\scriptstyle n};
 (6,4)*{\scs \l};
 (-8,0)*{};(8,0)*{};
\endxy} \right)
= \;\;
\sum_{\overset{r'+n+k}{\overset{=\l}{r'\geq 1}}}
(-1)^{n}\;
(a_{\l-2}+  \alpha_{1}\delta_{\l+r,\text{even}} )
\vcenter{\xy
 (3,9)*{};(-3,9)*{} **\crv{(3,4) & (-3,4)} ?(1)*\dir{>}?(.8)*{\bullet};
 (-6,6)*{\scriptstyle r'};
 (0,0)*{\smccbub{}};
 (2.4,0)*{\bullet};
 (6,0)*{\scriptstyle *+k};
 (-3,-9)*{};(3,-9)*{} **\crv{(-3,-4) & (3,-4)} ?(1)*\dir{>}?(.2)*{\bullet};
 (-4,-6)*{\scriptstyle n};
 (6,4)*{\scs \l};
 (-8,0)*{};(8,0)*{};
\endxy} \\ &
\quad
\;\; + \;\;
\sum_{\overset{r+n'+k}{\overset{=\l}{\scs n' \geq 1} }}
(-1)^{n'+1+k}\;
(a_{\l-2} + \alpha_1\delta_{n',\text{even}})
\vcenter{\xy
 (3,9)*{};(-3,9)*{} **\crv{(3,4) & (-3,4)} ?(1)*\dir{>}?(.8)*{\bullet};
 (-5,6)*{\scriptstyle r};
 (0,0)*{\smccbub{}};
 (2.4,0)*{\bullet};
 (6,0)*{\scriptstyle *+k};
 (-3,-9)*{};(3,-9)*{} **\crv{(-3,-4) & (3,-4)} ?(1)*\dir{>}?(.2)*{\bullet};
 (-6,-6)*{\scriptstyle n'};
 (6,4)*{\scs \l};
 (-8,0)*{};(8,0)*{};
\endxy}
\;\; + \;\;\sum_{\overset{r+n+k'}{=\l}}
(-1)^{n} \delta_{k',\text{odd}}\;
(   -2a_{\l-2}  -\alpha_{1} )
\vcenter{\xy
 (3,9)*{};(-3,9)*{} **\crv{(3,4) & (-3,4)} ?(1)*\dir{>}?(.8)*{\bullet};
 (-6,6)*{\scriptstyle r };
 (0,0)*{\smccbub{}};
 (2.4,0)*{\bullet};
 (7,0)*{\scriptstyle *+k'};
 (-3,-9)*{};(3,-9)*{} **\crv{(-3,-4) & (3,-4)} ?(1)*\dir{>}?(.2)*{\bullet};
 (-4,-6)*{\scriptstyle n};
 (6,4)*{\scs \l};
 (-8,0)*{};(8,0)*{};
\endxy}
\end{align*}
After simplifying this reduces to

\begin{align*}
& \qquad = - (a_{\l-2}+  \alpha_{1}\delta_{\l ,\text{even}} )\sum_{\overset{n+k}{=\l}}
 (-1)^{n}\;
\vcenter{\xy
 (3,9)*{};(-3,9)*{} **\crv{(3,4) & (-3,4)} ?(1)*\dir{>} ;
 (-6,6)*{\scriptstyle  };
 (0,0)*{\smccbub{}};
 (2.4,0)*{\bullet};
 (6,0)*{\scriptstyle *+k};
 (-3,-9)*{};(3,-9)*{} **\crv{(-3,-4) & (3,-4)} ?(1)*\dir{>}?(.2)*{\bullet};
 (-4,-6)*{\scriptstyle n};
 (6,4)*{\scs \l};
 (-8,0)*{};(8,0)*{};
\endxy}
\;\; - \;\;(a_{\l-2} + \alpha_1 )
\sum_{\overset{r +k}{=\l}}
(-1)^{ 1+k}\;
\vcenter{\xy
 (3,9)*{};(-3,9)*{} **\crv{(3,4) & (-3,4)} ?(1)*\dir{>}?(.8)*{\bullet};
 (-5,6)*{\scriptstyle r};
 (0,0)*{\smccbub{}};
 (2.4,0)*{\bullet};
 (6,0)*{\scriptstyle *+k};
 (-3,-9)*{};(3,-9)*{} **\crv{(-3,-4) & (3,-4)} ?(1)*\dir{>} ;
 (-6,-6)*{\scriptstyle  };
 (6,4)*{\scs \l};
 (-8,0)*{};(8,0)*{};
\endxy}
\end{align*}
The claim follow using Lemma~\ref{lem:EFr2} and the curl relations \eqref{eq:downcurl1} and \eqref{eq:downcurl2}.
\end{proof}

\subsection{Classification of derivations}
We summarize our results up to this point in the following:

\begin{proposition}\label{prop:derivation_on_oUcat}
There is a bidegree $(2,\bar{1})$ derivation $\partial$   of the odd 2-category $\oUcat^{\text{even}}$ or  $\oUcat^{\text{odd}}$ defined on generating 2-morphisms:
\begin{alignat}{2}
%
&\partial \left( \;\;
 \xy 0;/r.17pc/:
 (0,6);(0,-6); **\dir{-} ?(1)*\dir{>}+(2.3,0)*{\scriptstyle{}}
 ?(.1)*\dir{ }+(2,0)*{\scs };
 (0,0)*{\txt\large{$\bullet$}};
 (6,3)*{ \scs \l};
 \endxy \;\;
\right)
\; = \;
\alpha_{1 } \;\;
\xy 0;/r.17pc/:
 (0,6);(0,-6); **\dir{-} ?(1)*\dir{>}+(2.3,0)*{\scriptstyle{}}
 ?(.1)*\dir{ }+(2,0)*{\scs };
 (0,0)*{\txt\large{$\bullet$}};
 (8,5)*{\scs \l};
 (4,0)*{2};
 \endxy
 \; + \;
 \alpha_2 \;\;
 \xy 0;/r.17pc/:
 (0,6);(0,-6); **\dir{-} ?(1)*\dir{>}+(2.3,0)*{\scriptstyle{}}
 ?(.1)*\dir{ }+(2,0)*{\scs };
 (0,1)*{\txt\large{$\bullet$}};
 (8,5)*{\scs \l};
 (4,-5.5)*{\bigotimes};
 \endxy
\qquad && \partial \left(
 \xy 0;/r.19pc/:
  (0,0)*{\xybox{
    (-4,-4)*{};(4,4)*{} **\crv{(-4,-1) & (4,1)}?(1)*\dir{>}?(.25)*{};
    (4,-4)*{};(-4,4)*{} **\crv{(4,-1) & (-4,1)}?(1)*\dir{>};
     (7,1)*{\scs \l};
     (-8,0)*{};(9,0)*{};
     }};
  \endxy
	\right) = \;\;
 \alpha_1  \;\;
 \xy 0;/r.19pc/:
  (3,4);(3,-4) **\dir{-}?(0)*\dir{<}+(2.3,0)*{};
  (-3,4);(-3,-4) **\dir{-}?(0)*\dir{<}+(2.3,0)*{};
  (8,2)*{\scs \l};
 \endxy
  \;\; - \;\;
  \alpha_2 \;
  \xy 0;/r.19pc/:
  (0,0)*{\xybox{
    (-4,-4)*{};(4,4)*{} **\crv{(-4,-1) & (4,1)}?(1)*\dir{>}?(.25)*{};
    (4,-4)*{};(-4,4)*{} **\crv{(4,-1) & (-4,1)}?(1)*\dir{>};
     (8,2)*{ \scs \l};
     (8,-3)*{\bigotimes};
     }};
  \endxy
\\
& \partial \left(\;
  \vcenter{\xy 
    (-3,-4)*{};(3,-4)*{} **\crv{(-3.2,2) & (3.2,2)}?(1)*\dir{>};
    (4,2)*{\scs  \l};
  \endxy } \;
    \right)  \; = \;
a_{\l -2}
\vcenter{\xy 
    (-3,-4)*{};(3,-4)*{} **\crv{(-3.2,2) & (3.2,2)}?(1)*\dir{>} ?(.2)*\dir{}+(0,0)*{\bullet};
    (4,2)*{\scs  \l};
  \endxy }
\;  + \;
b_{\l -2} \;
\vcenter{\xy 
    (-3,-4)*{};(3,-4)*{} **\crv{(-3.2,2) & (3.2,2)}?(1)*\dir{>} ;
    (6,2)*{\scs  \l};
    (0,4)*{\bigotimes};
    (-0,10)*{};
  \endxy }
\qquad
&& \partial
\left(\; \label{def:d-right cup}
  \vcenter{\xy 
    (-3,4)*{};(3,4)*{} **\crv{(-3.2,-2) & (3.2,-2)}?(1)*\dir{>};
    (4,-2)*{\scs  \l};
  \endxy } \;
    \right)  \; = \;
-a_{\l}\;
\vcenter{\xy 
    (-3,4)*{};(3,4)*{} **\crv{(-3.2,-2) & (3.2,-2)}?(1)*\dir{>} ?(.2)*\dir{}+(5,0)*{\bullet};
    (4,-2)*{\scs  \l};
  \endxy }
\;  - \;
b_{\l}\;
\vcenter{\xy 
    (-6,4)*{};(6,4)*{} **\crv{(-5.5,-6) & (5.5,-6)}?(1)*\dir{>} ;
    (6,-2)*{\scs  \l};
    (0,2)*{\bigotimes};
  \endxy }
 \\
& \partial \left(\;\;
  \vcenter{\; \xy 
    (-3,-4)*{};(3,-4)*{} **\crv{(-3.2,2) & (3.2,2)}?(0)*\dir{<};
    (4,2)*{\scs  \l};
  \endxy } \;
    \right) \; = \;
c_{\l} \;
\vcenter{\xy 
    (3,-4)*{};(-3,-4)*{} **\crv{(3.2,2) & (-3.2,2)}?(1)*\dir{>} ?(.2)*\dir{}+(0,0)*{\bullet};
    (4,2)*{\scs  \l};
  \endxy }
 \; + \;
d_{\l} \;
\vcenter{\xy 
    (6,-4)*{};(-6,-4)*{} **\crv{(5.5,6) & (-5.5,6)}?(1)*\dir{>} ;
    (6,2)*{\scs  \l};
    (0,-2)*{\bigotimes};
  \endxy }
\qquad
&& \partial \left(\;\;  \label{def:d-left cup}
  \vcenter{\xy 
    (-3,4)*{};(3,4)*{} **\crv{(-3.2,-2) & (3.2,-2)}?(0)*\dir{<};
    (4,-2)*{\scs  \l};
  \endxy }\;
    \right)  \; = \;
(-1)^{\l}c_{\l-2} \;
\vcenter{\xy 
    (-3,4)*{};(3,4)*{} **\crv{(-3.2,-2) & (3.2,-2)}?(0)*\dir{<} ?(.2)*\dir{}+(0,0)*{\bullet};
    (4,-2)*{\scs  \l};
  \endxy }
\; - \;
d_{\l-2}\;
\vcenter{\xy 
    (-3,4)*{};(3,4)*{} **\crv{(-3.2,-2) & (3.2,-2)}?(0)*\dir{<} ;
    (4,-2)*{\scs  \l};
    (0,-5)*{\bigotimes};
  \endxy }
\end{alignat}
with relations
\begin{align}
& c_{\l} = -a_{\l} -\delta_{\l,\text{odd}}\alpha_{1 } \\
& d_{\l} = (-1)^{\l+1}(2a_{\l} +\alpha_{1} +b_{\l} - (\l+1)\alpha_2).
\end{align}
Furthermore, assuming the weak nondegeneracy conjecture from section~\ref{sec:nondegen} this is the most general bidegree $(2,\bar{1})$ derivation on $\oUcat$.
\end{proposition}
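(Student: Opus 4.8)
The plan is to assemble the constraints extracted in the lemmas of this section and to check that, beyond those, the remaining defining relations of $\oUqp$ impose no further conditions. First I would invoke Lemma~\ref{lem:derivation-start}: imposing compatibility with the odd nilHecke relations \eqref{eq_nilHecke1}--\eqref{eq_nilHecke2}, the right adjunction axioms \eqref{eq_biadj1}, and the parity left adjoint relations \eqref{eq_biadj2} cuts the general bidegree $(2,\bar{1})$ ansatz \eqref{eq:der1}--\eqref{eq:derend} down to the form \eqref{lemeq:der1}--\eqref{lemeq:derend}. Already at this stage one gets $\alpha_{3,\l}=0$, the fact that $\alpha_2:=\alpha_{2,\l}$ is independent of the weight within a fixed parity block, and the relations $\beta_{3,\l}=-\beta_{2,\l}$, $\beta_{1,\l}=\beta_{2,\l}+\alpha_{1,\l}$, and $2\beta_{1,\l}=\alpha_{1,\l+2}+\alpha_{1,\l}$ (equation \eqref{eqn:d of odd nilhecke2}).

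Next I would feed in the bubble relations. Using the formula \eqref{d of odd bubble} for $\partial$ of the degree-$2$ odd bubble together with the odd cyclicity relation \eqref{eq_cyclic_dot} produces \eqref{eqn:c_in_terms_of_a_alpha}, that is $c_{\l}=-a_{\l}-\delta_{\l,\text{odd}}\alpha_{1,\l}$; in particular $\partial$ annihilates the odd bubble, so the positivity relations \eqref{eq_positivity_bubbles} and the centrality relation \eqref{rel: centrality of odd bubbles} are automatic. Lemmas~\ref{lem:odd-bub} and \ref{lem:even-bub} then compute $\partial$ on arbitrary dotted bubbles; preserving the degree-zero bubble relation \eqref{eq:degzero-bub} yields \eqref{eqn:d(deg-0 bubble)=0}, and combining this with \eqref{eqn:c_in_terms_of_a_alpha} gives the expression \eqref{eq:dlambda0} for $d_{\l}$. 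By the remark following Lemma~\ref{lem:even-bub} the infinite Grassmannian relation \eqref{rel:odd_inf_grassm} and the decomposition \eqref{rel:decomp_of_bubble_2n+1_into_2n_and_1} then hold as well, as do the bubble slide relations \eqref{eq:ccbubslide}.

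The curl relations are the decisive remaining input: after computing $\partial$ of the sideways crossings via \eqref{eq:partial-leftcrossing} and matching coefficients in \eqref{eq:dot-curl1}, one is forced to conclude that $\alpha_{1,\l}$ is constant within each parity block and coincides with $\beta_{1,\l}$, i.e. \eqref{eq:alpha1}, so that a single scalar $\alpha_1$ suffices. At this point the surviving free data are $\alpha_1$, $\alpha_2$, $(a_{\l})$, $(b_{\l})$, with $c_{\l}$ and $d_{\l}$ determined by the two displayed relations. To finish I would check that the odd $\mf{sl}(2)$ relations \eqref{rei2}, the dot-slide relations \eqref{rel:rightward_dot_slide}--\eqref{rel:leftward_dot_slide}, the pitchfork relations, and the curl relations \eqref{rel:Curls} impose nothing new; the first of these is the only substantial point and is handled by the sideways-crossing formulas \eqref{eq:partial-leftcrossing}, Lemma~\ref{lem:EFr2}, and the curl relations \eqref{eq:downcurl1}--\eqref{eq:downcurl2}, while the rest are formal consequences of relations already treated. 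Since each lemma not only restricts the coefficients but also verifies that the relation in question is satisfied once the restriction holds, the same computations read in both directions give necessity and sufficiency simultaneously: every choice of $\alpha_1,\alpha_2,(a_\l),(b_\l)$ obeying the two relations defines a bidegree $(2,\bar{1})$ derivation.

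I expect the main obstacle to be the verification of the odd $\mf{sl}(2)$ relations \eqref{rei2}, which is exactly why it was isolated into Lemma~\ref{lem:EFr2} and the lemma immediately preceding this proposition; everything else is a bookkeeping exercise with the Leibniz rule under the weak nondegeneracy assumption from Section~\ref{sec:nondegen}, which is what licenses the comparison of coefficients of basis elements throughout.
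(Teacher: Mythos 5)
Your proposal reproduces the paper's argument essentially verbatim: the proposition in question is explicitly flagged as a summary ("We summarize our results up to this point"), and its proof is precisely the accumulation of Lemma~\ref{lem:derivation-start}, the bubble lemmas (giving \eqref{eqn:c_in_terms_of_a_alpha}, \eqref{eqn:d(deg-0 bubble)=0}, and hence \eqref{eq:dlambda0}), the curl lemma (forcing \eqref{eq:alpha1}), and the $\mf{sl}(2)$ lemma via Lemma~\ref{lem:EFr2}, in the same order you list them. Your concluding remark that the pitchfork, dot-slide, bubble-slide, and curl identities impose nothing further is justified because those are derived consequences of the defining relations already handled, which is also implicit in the paper's treatment.
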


%
\section{Differentials and fantastic filtrations }
%

\subsection{Classification of differentials}
 \begin{proposition}\label{prop:differential_on_oUcat}
Given parameters $\alpha_1,a_{\l},b_{\l},c_{\l}$ and $d_{\l}$ with $\l\in \Z$, there is a bidegree $(2,\bar{1})$ differential $\partial$  (i.e. $\partial^2=0$) on  the odd 2-category $\oUcat^{\text{even}}$ or $\l$ in $\oUcat^{\text{odd}}$ defined on generating 2-morphisms:
\begin{alignat}{2}
%
&\partial \left( \;\;
 \xy 0;/r.17pc/:
 (0,6);(0,-6); **\dir{-} ?(1)*\dir{>}+(2.3,0)*{\scriptstyle{}}
 ?(.1)*\dir{ }+(2,0)*{\scs };
 (0,0)*{\txt\large{$\bullet$}};
 (8,3)*{ \scs \l};
 \endxy \;
\right)
\;  = \;
\alpha_{1 } \;\;
\xy 0;/r.17pc/:
 (0,6);(0,-6); **\dir{-} ?(1)*\dir{>}+(2.3,0)*{\scriptstyle{}}
 ?(.1)*\dir{ }+(2,0)*{\scs };
 (0,0)*{ \bullet };
 (8,3)*{\scs \l};
 (4,0)*{\scs 2};
 \endxy
 \qquad \quad
&& \partial \left( \label{def:d_crossing}
 \xy 0;/r.19pc/:
  (0,0)*{\xybox{
    (-4,-4)*{};(4,4)*{} **\crv{(-4,-1) & (4,1)}?(1)*\dir{>}?(.25)*{};
    (4,-4)*{};(-4,4)*{} **\crv{(4,-1) & (-4,1)}?(1)*\dir{>};
     (7,1)*{\scs \l};
     (-8,0)*{};(9,0)*{};
     }};
  \endxy
	\right) = \;\;
 \alpha_1  \;\;
 \xy 0;/r.19pc/:
  (3,4);(3,-4) **\dir{-}?(0)*\dir{<}+(2.3,0)*{};
  (-3,4);(-3,-4) **\dir{-}?(0)*\dir{<}+(2.3,0)*{};
  (8,2)*{\scs \l};
 \endxy
 \\ \smallskip
& \partial \left(  \;
  \vcenter{\xy 
    (-3,-4)*{};(3,-4)*{} **\crv{(-3.2,2) & (3.2,2)}?(1)*\dir{>};
    (4,2)*{\scs  \l};
  \endxy }
   \; \right) \; = \;
a_{\l -2} \;
\vcenter{\xy 
    (-3,-4)*{};(3,-4)*{} **\crv{(-3.2,2) & (3.2,2)}?(1)*\dir{>} ?(.2)*\dir{}+(0,0)*{\bullet};
    (4,2)*{\scs  \l};
  \endxy }
\;  + \;
b_{\l -2}\;
\vcenter{\xy 
    (-3,-4)*{};(3,-4)*{} **\crv{(-3.2,2) & (3.2,2)}?(1)*\dir{>} ;
    (6,2)*{\scs  \l};
    (0,5)*{\bigotimes};
  \endxy }
\qquad
&& \partial\left(\;\; 
  \vcenter{\xy 
    (-3,4)*{};(3,4)*{} **\crv{(-3.2,-2) & (3.2,-2)}?(1)*\dir{>};
    (4,-2)*{\scs  \l};
  \endxy } \;
    \right)  \; =\;
-a_{\l} \;
\vcenter{\xy 
    (-3,4)*{};(3,4)*{} **\crv{(-3.2,-2) & (3.2,-2)}?(1)*\dir{>} ?(.2)*\dir{}+(5,0)*{\bullet};
    (4,-2)*{\scs  \l};
  \endxy }
\;  - \;
b_{\l} \;
\vcenter{\xy 
    (-6,4)*{};(6,4)*{} **\crv{(-5.5,-6) & (5.5,-6)}?(1)*\dir{>} ;
    (6,-2)*{\scs  \l};
    (0,2)*{\bigotimes};
  \endxy }
 \\  \bigskip
& \partial \left( \; \label{def:d-left cap}
  \vcenter{\xy 
    (-3,-4)*{};(3,-4)*{} **\crv{(-3.2,2) & (3.2,2)}?(0)*\dir{<};
    (4,2)*{\scs  \l};
  \endxy } \;
    \right) \; = \;
c_{\l}\;
\vcenter{\xy 
    (3,-4)*{};(-3,-4)*{} **\crv{(3.2,2) & (-3.2,2)}?(1)*\dir{>} ?(.2)*\dir{}+(0,0)*{\bullet};
    (4,2)*{\scs  \l};
  \endxy }
\;  + \;
d_{\l} \;
\vcenter{\xy 
    (6,-4)*{};(-6,-4)*{} **\crv{(5.5,6) & (-5.5,6)}?(1)*\dir{>} ;
    (6,2)*{\scs  \l};
    (0,-2)*{\bigotimes};
  \endxy }
\qquad &&
\partial \left(\; \; 
  \vcenter{\xy 
    (-3,4)*{};(3,4)*{} **\crv{(-3.2,-2) & (3.2,-2)}?(0)*\dir{<};
    (4,-2)*{\scs  \l};
  \endxy } \;
    \right) \;=\;
(-1)^{\l}c_{\l-2}\;
\vcenter{\xy 
    (-3,4)*{};(3,4)*{} **\crv{(-3.2,-2) & (3.2,-2)}?(0)*\dir{<} ?(.2)*\dir{}+(0,0)*{\bullet};
    (4,-2)*{\scs  \l};
  \endxy }
\; - \;
d_{\l-2}\;
\vcenter{\xy 
    (-3,4)*{};(3,4)*{} **\crv{(-3.2,-2) & (3.2,-2)}?(0)*\dir{<} ;
    (6,-2)*{\scs  \l};
    (0,-5)*{\bigotimes};
  \endxy }
\end{alignat}
with relations
\begin{align} \label{eq:clambda}
& c_{\l} = -a_{\l} -\delta_{\l,\text{odd}} \alpha_{1 }\\
& d_{\l} = (-1)^{\l+1}(2a_{\l} +\alpha_{1} +b_{\l}) \label{eq:dlambda} \\
& a_{\l}(a_{\l} + \alpha_1) = 0.
\end{align}
Furthermore, assuming the weak nondegeneracy conjecture from section~\ref{sec:nondegen}, this is the most general bidegree $(2,\bar{1})$ differential on $\oUcat$.
\end{proposition}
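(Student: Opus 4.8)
The plan is to start from the classification of bidegree $(2,\bar1)$ derivations already obtained in Proposition~\ref{prop:derivation_on_oUcat} and impose the single extra condition $\partial^2=0$. The first point to record is that $\partial^2$ is again a derivation, now of even parity: since $|\partial x|=|x|+\bar1$, the two middle terms in $\partial^2(x\circ y)=\partial\big(\partial(x)\circ y+(-1)^{|x|}x\circ\partial(y)\big)$ cancel, and likewise for horizontal composition, so $\partial^2$ obeys the ordinary (signless) Leibniz rule. Since every $2$-morphism is built from the generating $2$-morphisms and identities by sums and (horizontal and vertical) composition, and $\partial^2$ kills identities, it follows that $\partial^2=0$ everywhere as soon as $\partial^2$ vanishes on the six generating $2$-morphisms: the dot on $\cal E$, the $\cal{EE}$-crossing, and the four cups and caps. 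The body of the proof is then the computation of $\partial^2$ on these six generators, using the Leibniz rule together with the formula for $\partial$ on dotted strands (the lemma following Lemma~\ref{lem:derivation-start}), the formulas for $\partial$ on bubbles (Lemmas~\ref{lem:odd-bub} and~\ref{lem:even-bub}), the identities $\partial(\bigotimes)=0$ and $\bigotimes^2=0$, and the curl and dot-slide relations \eqref{rel:leftward_dot_slide}, \eqref{eq:downcurl1}, \eqref{eq:downcurl2}.

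For the dot, a direct calculation using $\partial(\bullet^2)=-2\alpha_2\,\bullet^2\bigotimes$, $\partial(\bullet\,\bigotimes)=\alpha_1\,\bullet^2\bigotimes$, and the super-interchange sign for sliding the odd bubble past a dot gives $\partial^2(\bullet)=-\alpha_1\alpha_2\,\bullet^2\bigotimes$; by the weak nondegeneracy conjecture this element is nonzero, so the vanishing forces $\alpha_1\alpha_2=0$. For the crossing there is nothing to do: $\partial(\text{crossing})$ is a linear combination of $\mathrm{id}_{\cal{EE}}$ and $\mathrm{id}_{\cal{EE}}\cdot\bigotimes$, both of which are $\partial$-closed, so $\partial^2(\text{crossing})=0$ automatically and no new condition arises.

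The substantive part is $\partial^2$ of the cups and caps. Writing $\partial$ of such a generator $\kappa$ as $a\,\kappa_\bullet+b\,\kappa_{\bigotimes}$, the Leibniz rule makes $\partial(\kappa_{\bigotimes})$ proportional to $a\,\kappa_{\bullet\bigotimes}$ (two odd bubbles in one region vanish), while $\partial(\kappa_\bullet)$ splits into a term $\partial(\text{dot})\circ\kappa$ — which contributes $\alpha_1\,\kappa_{\bullet\bullet}+\alpha_2\,\kappa_{\bullet\bigotimes}$ when the decorated leg points upward, and the analogous downward expression otherwise — and a term $\text{dot}\circ\partial(\kappa)$. Collecting terms, $\partial^2(\kappa)$ has the shape $a(a+\alpha_1)\,\kappa_{\bullet\bullet}+a\alpha_2\,\kappa_{\bullet\bigotimes}+(\text{further $\bigotimes$-decorated curls})$, and one reduces the curls to the standard basis predicted by the weak nondegeneracy conjecture using \eqref{eq:downcurl1}--\eqref{eq:downcurl2} and the odd dot-slide relation \eqref{rel:leftward_dot_slide}. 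Comparing coefficients of the resulting basis $2$-morphisms yields exactly the quadratic relation $a_\lambda(a_\lambda+\alpha_1)=0$; moreover the extra ``$+2(\cdots\bigotimes)$'' term in \eqref{rel:leftward_dot_slide} forces a condition of the form $2\alpha_2=0$, hence $\alpha_2=0$ since $\mathrm{char}\,\Bbbk\neq2$. With $\alpha_2=0$ the earlier constraint $\alpha_1\alpha_2=0$ is subsumed and the $(\lambda+1)\alpha_2$ term drops out of $d_\lambda$, leaving $c_\lambda=-a_\lambda-\delta_{\lambda,\text{odd}}\alpha_1$ and $d_\lambda=(-1)^{\lambda+1}(2a_\lambda+\alpha_1+b_\lambda)$. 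Finally one checks the converse: for these choices, with $\alpha_1$ and $(b_\lambda)_\lambda$ free and $a_\lambda(a_\lambda+\alpha_1)=0$, $\partial^2$ vanishes on all six generators, hence everywhere; this is carried out separately on $\oUqp^{\text{even}}$ and $\oUqp^{\text{odd}}$ as allowed by Remark~\ref{rem:even-odd}.

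I expect the main obstacle to be precisely the bookkeeping in the cup/cap step: one must track leg orientations (which selects the upward versus downward dotted-strand formula and, via the curl relations, brings in orientation-dependent signs), the super-interchange signs incurred when passing odd bubbles through dots and caps, and the factors of $2$ produced by \eqref{rel:leftward_dot_slide}. It is the interplay of these that isolates $\alpha_2=0$ and the relation $a_\lambda(a_\lambda+\alpha_1)=0$ out of the a priori weaker conditions $\alpha_1\alpha_2=0$ and $a_\lambda\alpha_2=0$; everything else is routine application of the Leibniz rule.
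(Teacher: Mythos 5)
Your approach is the same as the paper's: apply $\partial$ twice to each generating $2$-morphism, starting from the classification in Proposition~\ref{prop:derivation_on_oUcat}, compare coefficients against the weak-nondegeneracy basis, and extract $\alpha_2=0$ together with $a_\lambda(a_\lambda+\alpha_1)=0$. One concrete slip: you claim the crossing imposes no condition because $\partial(\text{crossing})$ is a combination of $\operatorname{id}_{\cal{EE}}$ and $\operatorname{id}_{\cal{EE}}\bigotimes$, both $\partial$-closed. In fact Proposition~\ref{prop:derivation_on_oUcat} gives $\partial(\text{crossing})=\alpha_1\,\operatorname{id}_{\cal{EE}}-\alpha_2\cdot(\text{crossing decorated by }\bigotimes)$; the second summand involves the crossing itself, not the identity, and is not $\partial$-closed. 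Applying $\partial$ again yields $\partial^2(\text{crossing})=-\alpha_1\alpha_2\,\operatorname{id}_{\cal{EE}}\bigotimes$, so the crossing \emph{does} impose a condition. It coincides with the $\alpha_1\alpha_2=0$ already obtained from the dot, so your final conclusion is unaffected, but the justification offered for dismissing the crossing is not correct. The paper actually records six equations from this step ($\alpha_2(2+\alpha_1)=0$, $\alpha_1\alpha_2=0$, $a_\lambda(a_\lambda+\alpha_1)=0$, $a_\lambda\alpha_2=0$, $(a_\lambda+\alpha_1\delta_{\lambda,\text{odd}})(a_\lambda+\alpha_1\delta_{\lambda,\text{even}})=0$, and $\alpha_2(a_\lambda+\alpha_1\delta_{\lambda,\text{odd}})=0$) and then deduces $\alpha_2=0$; your observation that the factor of $2$ forces $\alpha_2=0$ in characteristic $\neq 2$ is the same deduction once $\alpha_1\alpha_2=0$ is used to reduce $\alpha_2(2+\alpha_1)=0$ to $2\alpha_2=0$.
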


\begin{proof}
We compute $\partial^2$ of each generating 2-morphism from the general derivation in Proposition~\ref{prop:derivation_on_oUcat} and set the resulting equation equal to zero.  This produces the equations
\begin{equation}
\begin{split}
  &\alpha_2(2+\alpha_1) = 0 \\
  &\alpha_1\alpha_2 = 0
\end{split}\qquad
\begin{split}
  & a_{\l}(a_{\l} + \alpha_1) = 0 \\
  & a_{\l}\alpha_2 = 0
\end{split}
\qquad
\begin{split}
& (a_{\l} + \alpha_1\delta_{\l,\text{odd}})(a_{\l} + \alpha_1\delta_{\l,\text{even}}) = 0 \\
& \alpha_2(a_{\l} + \alpha_1\delta_{\l,\text{odd}}) = 0.
\end{split}
\end{equation}
Hence, for $\partial^2=0$ we must have $\alpha_2=0$ and $a_{\l}(a_{\l} + \alpha_1)=0$.
\end{proof}

Note that Lemmas \ref{lem:odd-bub} and \ref{lem:even-bub} imply that the differential kills all dotted bubbles:
\begin{align*}
& \partial \left(
 \xy 0;/r.17pc/:
 (6,5)*{\scs \l};
 (-10,0)*{};(10,0)*{};(-2,-8)*{\scs };
 (0,0)*{\lbub{\scs n}};
 \endxy
 \right)
 \;\; = \;\;
  \partial \left(
 \xy 0;/r.17pc/:
 (6,5)*{\scs \l};
 (-10,0)*{};(10,0)*{};(-2,-8)*{\scs };
 (0,0)*{\rbub{\scs n}};
 \endxy
 \right)
 \;\; = 0
\end{align*}
for all $\l\in \Z$ and $n\geq 0$.


\subsection{Fantastic filtrations on $\cal{E}\cal{F}$ and $\cal{F}\cal{E}$} \label{sec:fantastic-EF-FE}

In this section we show that the odd $sl(2)$-isomorphisms \eqref{rei2} give rise to differentials on $\oUcat$ providing fantastic filtrations for $\cal{E}\cal{F}\onel$ and $\cal{F}\cal{E}\onel$. We refer the reader to Section~\ref{sec:fantastic} for the preliminaries on the Fantastic filtration.

For each $\l \in \Z$ define $I=\{0, 1, \dots, |\lambda|\}$.  We define data $\{u_i, v_i\}_{i \in I}$ giving rise to an idempotent factorization determined by the odd $sl(2)$-relation.
We begin with case $\l\geq 0$ corresponding to the first relation in \eqref{rei2}.  Recall the family of 2-categorical differentials defined in Proposition~\ref{prop:differential_on_oUcat}.

Consider the set of 1-morphisms
\[
 \mathbb{X}_{\l} := \{
 \cal{E}\cal{F}\onel, \cal{F}\cal{E}\onel, \onel 
\},
\]
and its endomorphism dg-superalgebra $R = \End_{\oUcat}(\mathbb{X}_{\l})$.
We provide a fantastic filtration of the representable dg-supermodule associated to $\cal{E}\cal{F}\onel$.
Here our investigation departs from \cite{EQ2} in that the most natural filtration
\begin{alignat}{2} \label{eq:EF-fant}
&u_n := \;\;
\sum_{r\geq 0} (-1)^{(\l +n+r+1)}
   \vcenter{\xy 
    (-3,4)*{};(3,4)*{} **\crv{(-3.2,-2) & (3.2,-2)}?(0)*\dir{<} ?(.2)*\dir{}+(0,0)*{\bullet};
    (-4,0)*{\scs r};
    (5,0)*{\scs  \l};
    (0,-5)*{\smccbub{}}+(2.5,0)*{\bullet};
    (9,-5)*{\scs -n-r-2};
    (-8,0)*{};(8,0)*{};
  \endxy }
  \quad   (0\leq n \leq \l-1),
\qquad \quad &&u_{\l} := \;\;
\xy 0;/r.17pc/:
  (0,0)*{\xybox{
    (-4,4)*{};(4,-4)*{} **\crv{(-4,1) & (4,-1)}?(0)*\dir{<} ;
    (4,4)*{};(-4,-4)*{} **\crv{(4,1) & (-4,-1)}?(1)*\dir{>};
     (8,1)*{\scs \l};
     }};
  \endxy
\\ & && \nn\\ \nn
 &v_n :=
\vcenter{\xy 
    (-3,-4)*{};(3,-4)*{} **\crv{(-3.2,2) & (3.2,2)}?(1)*\dir{>} ?(.2)*\dir{}+(0,0)*{\bullet};
    (4,2)*{\scs  \l};
    (-5,-1)*{\scs  n};
    (-8,0)*{};(8,0)*{};
  \endxy }
  \quad   (0\leq n \leq \l-1),
\qquad \quad  &&v_{\l} := \;\;-
\xy 0;/r.17pc/:
  (0,0)*{\xybox{
    (-4,4)*{};(4,-4)*{} **\crv{(-4,1) & (4,-1)}?(1)*\dir{>} ;
    (4,4)*{};(-4,-4)*{} **\crv{(4,1) & (-4,-1)}?(0)*\dir{<};
     (8,1)*{\scs \l};
     }};
  \endxy
\end{alignat}\\
on the morphism $\cal{E}\cal{F}\onel$ leads to a trivial differential when we impose the fantastic filtration condition
\begin{equation}\label{eqn:fc.filtration_requirement}
v_i \partial(u_j) =0 \text{,  for $i \leq j$.}
\end{equation}
In Definition~\ref{def:modorder} we define an order $\prec$ on $I$ for which the maps in \eqref{eq:EF-fant} give rise to fantastic filtrations.

\medskip

The conditions on $\{u_i,v_i\}$ in \eqref{eq:uv-equations} follow immediately from the axioms of $\oUcat$  using \eqref{rei2} , \eqref{rel:Curls} and \eqref{rel:odd_inf_grassm}, see for example \cite[Equations (5.13) and (5.14)]{BE2}.
We check $v_{i}\partial(u_{j}) = 0$ for $0 \leq i \leq j \leq \l-1$.
\begin{align*}
&0 = v_{i}\partial(u_j) = \\
&
 \sum_{r \geq 0} (-1)^{\l + j + r + 1} \left(
(\alpha_1\delta_{r,\text{odd}} + (-1)^{r+\l}c_{\l-2})\hspace{-0.2in}
\vcenter{\xy 
    (5,3)*{\smcbub{}}+(-2.5,0)*{\bullet};
    (-8,3)*{\scs (r+i+2-\l)+*};
    (10,0)*{\scs  \l};
    (1,-5)*{\smccbub{}}+(2.5,0)*{\bullet};
    (14,-5)*{\scs *+(\l-j-r-1) };
    (-8,0)*{};(8,0)*{};
  \endxy }
  +
(-1)^{r+1}d_{\l-2} \hspace{-0.2in}
\vcenter{\xy 
    (5,3)*{\smcbub{}}+(-2.5,0)*{\bullet};
    (-8,3)*{\scs (r+i+1-\l)+*};
    (10,0)*{\scs  \l};
    (5,-2.5)*{\bigotimes};
    (1,-9)*{\smccbub{}}+(2.5,0)*{\bullet};
    (14,-8)*{\scs (\l-j-r-1)+*};
  \endxy } \right )
\\
& =
  \sum_{r' = \max(0, i-\l+2)}^{i-j+1} (-1)^{i + j + r' - 1}
(\alpha_1\delta_{r'-\l+i,\text{odd}} + (-1)^{r'+i}c_{\l-2})
\vcenter{\xy 
    (5,3)*{\smcbub{}}+(-2.5,0)*{\bullet};
    (-2,3)*{\scs r'+*};
    (10,0)*{\scs  \l};
    (5,-5)*{\smccbub{}}+(2.5,0)*{\bullet};
    (18,-5)*{\scs *+(i-j+1-r') };
    (-8,0)*{};(8,0)*{};
  \endxy }
\\ \nn \\
& \qquad \; +  \;
  \sum_{r' = 0}^{i-j} (-1)^{\l+j}
 d_{\l-2}
\vcenter{\xy 
    (5,3)*{\smcbub{}}+(-2.5,0)*{\bullet};
    (-2,3)*{\scs r'+*};
    (10,0)*{\scs  \l};
    (5,-2.5)*{\bigotimes};
    (5,-9)*{\smccbub{}}+(2.5,0)*{\bullet};
    (18,-8)*{\scs i-j-r'+*};
    (-8,0)*{};(8,0)*{};
  \endxy }
\end{align*}
where we set $r'=r-\l+2+i$ in the first sum and $r'=r-\l+1+i$ in the second.  Note that only the even bubbles are nonzero in the second sum by \eqref{eq:oddbub-square}, so that by \eqref{rel:odd_inf_grassm} this term simplifies
\begin{align}
& =
 \sum_{r = \max(0, i-\l+2)}^{i-j+1} (-1)^{i + j + r - 1}
(\alpha_1\delta_{r-\l+i,\text{odd}} + (-1)^{r+i}c_{\l-2})\hspace{-0.2in}
\vcenter{\xy 
    (5,3)*{\smcbub{}}+(-2.5,0)*{\bullet};
    (-2,3)*{\scs r+*};
    (10,0)*{\scs  \l};
    (5,-5)*{\smccbub{}}+(2.5,0)*{\bullet};
    (18,-5)*{\scs *+(i-j+1-r) };
    (-8,0)*{};(8,0)*{};
  \endxy }
  +
  \delta_{i,j} (-1)^{\l+j}
 d_{\l-2}\;\;
\vcenter{\xy 
    (10,0)*{\scs  \l};
    (5,-2.5)*{\bigotimes};
  \endxy } \nn
  \\
& =
 \sum_{r = \max(0, i-\l+2)}^{i-j+1} \left( (-1)^{i +j + r+1}  \alpha_1\delta_{i+r,\text{odd}}
 +(-1)^{ j}a_{\l-2} \right)
\vcenter{\xy 
    (5,3)*{\smcbub{}}+(-2.5,0)*{\bullet};
    (-2,3)*{\scs r+*};
    (10,0)*{\scs  \l};
    (5,-5)*{\smccbub{}}+(2.5,0)*{\bullet};
    (18,-5)*{\scs *+(i-j+1-r) };
    (-8,0)*{};(8,0)*{};
  \endxy }
\label{eq:simple-fc} \\\nn
  & \qquad \quad
\; \hspace{1.3cm} + \;
  \delta_{i,j}(-1)^{j-1}
(2a_{\l-2} +\alpha_{1} +b_{\l-2})\;\;
\vcenter{\xy 
    (10,0)*{\scs  \l};
    (5,-2.5)*{\bigotimes};
  \endxy }
\end{align}
where we used \eqref{eq:clambda} and \eqref{eq:dlambda} to eliminate $c_{\l-2}$ and $d_{\l-2}$.

If we are interested in the case when $i \leq j$ then this equation only provides constraints when $i=j$ and when $j=i+1$.   At $i=j$ we get
\[
  \begin{array}{cl}
     - \alpha_1\delta_{i,\text{odd}}
 +(-1)^{ i}a_{\l-2}   +  \alpha_1\delta_{i,\text{even}}
 +(-1)^{ i}a_{\l-2}
 -
(-1)^{i}
(2a_{\l-2} +\alpha_{1} +b_{\l-2})   =0, \quad & \text{if $i \leq \l-2$} \\
     \left(   \alpha_1\delta_{i,\text{even}}
 +(-1)^{ i}a_{\l-2} \right)
 +
 (-1)^{i-1}
(2a_{\l-2} +\alpha_{1} +b_{\l-2})  =0 & \text{if $i = \l-1$ }
  \end{array}
\]
which imply
\begin{align}
  b_{\l-2} &= 0  \label{eq:blambda}\\
  a_{\l-2} &=-\alpha_1\delta_{\l,\text{even}}\label{eq:a-alpha}
\end{align}

At $j = i+1 \leq \l-1$ we must have $r=0$ in \eqref{eq:simple-fc} which requires
\begin{align}
      \alpha_1\delta_{i,\text{odd}}
 +(-1)^{ i+1}a_{\l-2}   = 0
\end{align}
or
\begin{align}
   \alpha_1\delta_{i,\text{odd}} = -
 (-1)^{ i+1}a_{\l-2} = (-1)^{i+1}   \alpha_1\delta_{\l,\text{even}}.
\end{align}
If $\l$ and $i$ are both even, or if they are both odd, this implies that $\alpha_1=0$  and the differential collapses.  Note that if $i$ is odd this reduces to \eqref{eq:a-alpha}.  To avoid the collapse of the differential we modify the total order on $I$.
\begin{definition} \label{def:modorder}
Define a total order $\prec$ on the set $I=I_{\l}=\{0, 1, \dots, |\lambda|\}$ by modifying the standard order $i < j$ by declaring that
\begin{align}
  i+1 \prec i \qquad \text{if $i,\l$ are both even, or both odd}.
\end{align}
\end{definition}

With the order $(I,\prec)$, the condition \eqref{eqn:fc.filtration_requirement} becomes
\begin{equation}\label{eqn:fc.filtration_requirement-mod}
v_i \partial(u_j) =0 \text{,  for $i \preceq j$.}
\end{equation}
With this modified order we still must verify that $v_{i+1}\partial(u_i)=0$ when $i$ and $\l$ have the same parity.
Expressed in our previous $i,j$ notation this condition says $v_{i}\partial(u_j)=0$ when $i=j+1\leq \l-1$ and $j,\l$ both even, or both odd.  From \eqref{eq:simple-fc} we see that this amounts to checking that
\begin{align}
 \sum_{r  = \max(0, j+1-\l+2)}^{2} \left( (-1)^{2+ r }  \alpha_1\delta_{j+1+r,\text{odd}}
 +(-1)^{ j}a_{\l-2} \right)
\vcenter{\xy 
    (5,3)*{\smcbub{}}+(-2.5,0)*{\bullet};
    (-2,3)*{\scs r+*};
    (10,0)*{\scs  \l};
    (5,-5)*{\smccbub{}}+(2.5,0)*{\bullet};
    (15,-5)*{\scs *+(2-r) };
    (-8,0)*{};(8,0)*{};
  \endxy }
\end{align}
which requires
\begin{align}
\left(    \alpha_1\delta_{j+1,\text{odd}}
 +(-1)^{ j}a_{\l-2} \right)&=0 
\end{align}
  since the odd bubble squares to zero.  Since we assume $j$ and $\l$ have the same parity this agrees with~\eqref{eq:a-alpha}.

Next we consider the case $i=j=\l$. Using the derivation of the sideways crossing from \eqref{eq:partial-leftcrossing} implies  
\begin{align*}
v_{\l}\partial(u_{\l}) =
(a_{\l} - a_{\l-2})\;\;
 \vcenter{\xy 0;/r.18pc/:
     (-4,-4)*{};(4,4)*{} **\crv{(-4,-1) & (4,1)}?(0)*\dir{<}?(.25)*{\bullet};
     (4,-4)*{};(-4,4)*{} **\crv{(4,-1) & (-4,1)}?(1)*\dir{>};
     (-4,4)*{};(4,12)*{} **\crv{(-4,7) & (4,9)}?(1)*\dir{>};
     (4,4)*{};(-4,12)*{} **\crv{(4,7) & (-4,9)}?(0)*\dir{<};
     (7,8)*{\scs \l};
  \endxy}
\;\; + \;\;
(b_{\l} - b_{\l-2}) \;\;
\vcenter{\xy 0;/r.18pc/:
     (-4,-4)*{};(4,4)*{} **\crv{(-4,-1) & (4,1)}?(0)*\dir{<};
     (4,-4)*{};(-4,4)*{} **\crv{(4,-1) & (-4,1)}?(1)*\dir{>};
     (-4,4)*{};(4,12)*{} **\crv{(-4,7) & (4,9)}?(1)*\dir{>};
     (4,4)*{};(-4,12)*{} **\crv{(4,7) & (-4,9)}?(0)*\dir{<};
     (8,-5)*{\bigotimes};
     (7,8)*{\scs \l};
  \endxy}
\;\; + \;\;
(a_{\l-2} + \alpha_1\delta_{\l,\text{even}})\;\;
\vcenter{\xy 
    (-3,6)*{};(3,-2)*{} **\crv{(-3,3) & (3,1)};
    (3,6)*{};(-3,-2)*{} **\crv{(3,3) & (-3,1)}?(0)*\dir{<};
    (-3,-2)*{};(3,-2)*{} **\crv{(-3.2,-6) & (3.2,-6)}?(.2)*\dir{};
    (-3,-10)*{};(3,-10)*{} **\crv{(-3.2,-4) & (3.2,-4)}?(0)*\dir{<} ?(.2)*\dir{};
    (5,0)*{\scs  \l};
    \endxy}
\end{align*}
Together with \eqref{eq:blambda} and \eqref{eq:a-alpha} the termwise vanishing of the coefficients above imply  that
\begin{align*}
&   a_{\l} = a_{\l-2} = -\alpha_1\delta_{\l,\text{even}}\\
&   b_{\l} = b_{\l-2} = 0
\end{align*}
Then we can further simplify the remaining coefficients from \eqref{eq:clambda} and \eqref{eq:dlambda} to
\begin{align} \label{eq:cd-alpha}
c_{\l} = (-1)^{\l}\alpha_1, \qquad  d_{\l} = \alpha_1
\end{align}
and all the coefficients have been reduced to a single parameter $\alpha_1$.

The only remaining cases are $v_i\partial(u_{\l})$ for $i < \l$.  With the constraints derived thus far it is not hard to show that $\partial(u_{\l})=0$, so that $v_i\partial(u_{\l}) =0$ is satisfied for all $i < \l$.

\begin{definition} \label{def:partial-fc}
Define a bidegree $(2,\bar{1})$ differential $\partial_{\alpha}$  on the space of 2-morphisms of the odd 2-category $\oUcat^{\text{even}}$ or $\l$ in $\oUcat^{\text{odd}}$ given on generating 2-morphisms:
\begin{alignat*}{2}
%
&\partial_{\alpha} \left( \;\;
 \xy 0;/r.17pc/:
 (0,6);(0,-6); **\dir{-} ?(1)*\dir{>}+(2.3,0)*{\scriptstyle{}}
 ?(.1)*\dir{ }+(2,0)*{\scs };
 (0,0)*{ \bullet };
 (8,5)*{ \scs \l};
 \endxy \;\;
\right)
\;  = \;
\alpha\;\;
\xy 0;/r.17pc/:
 (0,8);(0,-8); **\dir{-} ?(1)*\dir{>}+(2.3,0)*{\scriptstyle{}}
 ?(.1)*\dir{ }+(2,0)*{\scs };
 (0,0)*{ \bullet };
 (8,5)*{\scs \l};
 (4,0)*{\scs 2};
 \endxy
\qquad
&&  \partial_{\alpha} \left(
 \xy 0;/r.17pc/:
  (0,0)*{\xybox{
    (-4,-4)*{};(4,4)*{} **\crv{(-4,-1) & (4,1)}?(1)*\dir{>}?(.25)*{};
    (4,-4)*{};(-4,4)*{} **\crv{(4,-1) & (-4,1)}?(1)*\dir{>};
     (7,1)*{\scs \l};
     (-8,0)*{};(9,0)*{};
     }};
  \endxy
	\right) = \;\;
 \alpha  \;\;
 \xy 0;/r.17pc/:
  (3,4);(3,-4) **\dir{-}?(0)*\dir{<}+(2.3,0)*{};
  (-3,4);(-3,-4) **\dir{-}?(0)*\dir{<}+(2.3,0)*{};
  (8,2)*{\scs \l};
 \endxy
 \\
& \partial_{\alpha} \left(\;\;
   \vcenter{\xy 
    (-3,-4)*{};(3,-4)*{} **\crv{(-3.2,2) & (3.2,2)}?(1)*\dir{>};
    (4,2)*{\scs  \l};
  \endxy }
   \; \right)  \; = \;
   -\alpha \delta_{\l,\text{even}} \;
\vcenter{\xy 
    (-3,-4)*{};(3,-4)*{} **\crv{(-3.2,2) & (3.2,2)}?(1)*\dir{>} ?(.2)*\dir{}+(0,0)*{\bullet};
    (4,2)*{\scs  \l};
  \endxy }
\quad
&&
  \partial_{\alpha} \left( \;\;
  \vcenter{\xy 
    (-3,4)*{};(3,4)*{} **\crv{(-3.2,-2) & (3.2,-2)}?(1)*\dir{>};
    (4,-2)*{\scs  \l};
  \endxy }
    \;\right)  \; =\;
  \alpha \delta_{\l,\text{even}}
\vcenter{\xy 
    (-3,4)*{};(3,4)*{} **\crv{(-3.2,-2) & (3.2,-2)}?(1)*\dir{>} ?(.2)*\dir{}+(5,0)*{\bullet};
    (4,-2)*{\scs  \l};
  \endxy }
 \\
& \partial_{\alpha} \left(\;\;
  \vcenter{\xy 
    (-3,-4)*{};(3,-4)*{} **\crv{(-3.2,2) & (3.2,2)}?(0)*\dir{<};
    (4,2)*{\scs  \l};
  \endxy }
    \;\right) \; =\;
(-1)^{\l}\alpha \;\;
\vcenter{\xy 
    (3,-4)*{};(-3,-4)*{} **\crv{(3.2,2) & (-3.2,2)}?(1)*\dir{>} ?(.2)*\dir{}+(0,0)*{\bullet};
    (4,2)*{\scs  \l};
  \endxy }
\;  + \;
\alpha
\vcenter{\xy 
    (6,-4)*{};(-6,-4)*{} **\crv{(5.5,6) & (-5.5,6)}?(1)*\dir{>} ;
    (6,2)*{\scs  \l};
    (0,-2)*{\bigotimes};
    (-8,0)*{};(8,0)*{};
  \endxy }
 \qquad \;\; &&
  \partial_{\alpha} \left( \;\;
  \vcenter{\xy 
    (-3,4)*{};(3,4)*{} **\crv{(-3.2,-2) & (3.2,-2)}?(0)*\dir{<};
    (4,-2)*{\scs  \l};
  \endxy }
  \;  \right) \; =\;
 \alpha \;\;
\vcenter{\xy 
    (-3,4)*{};(3,4)*{} **\crv{(-3.2,-2) & (3.2,-2)}?(0)*\dir{<} ?(.2)*\dir{}+(0,0)*{\bullet};
    (4,-2)*{\scs  \l};
  \endxy }
\; - \;
\alpha \;
\vcenter{\xy 
    (-3,4)*{};(3,4)*{} **\crv{(-3.2,-2) & (3.2,-2)}?(0)*\dir{<} ;
    (6,-2)*{\scs  \l};
    (0,-5)*{\bigotimes};
  \endxy }
\end{alignat*}
\end{definition}

The computations above have established the following.

\begin{proposition} \label{prop:filtration}
Consider either $\oUcat^{\text{even}}$ or   $\oUcat^{\text{odd}}$ and suppose  that $\partial_{\alpha}$ is as in Definition~\ref{def:partial-fc}.  Then  the data $\{u_c, v_c\}_{c\in I}$, with the total order $(I,\prec)$ from Definition~\ref{def:modorder}, yield a fantastic filtration on $\cal{E}\cal{F}\onel$ when $\l \geq 0$ and on $\cal{F}\cal{E}\onel$ when $\l \leq 0$.

\end{proposition}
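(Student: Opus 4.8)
The plan is to assemble the coefficient identities already obtained in Section~\ref{sec:fantastic-EF-FE} into a verification of the hypotheses of Proposition~\ref{prop: Filtration is dg} together with the extra conditions defining a fantastic filtration, and then to invoke Theorem~\ref{thm:Fantastic_filt_eqn}. Throughout I would treat $\l\geq 0$ with the data $\{u_c,v_c\}_{c\in I}$ of \eqref{eq:EF-fant} and the order $\prec$ of Definition~\ref{def:modorder}; the case $\l\leq 0$ is identical after applying the argument to the second relation of \eqref{rei2} (equivalently, via the symmetry of $\oUqp$ exchanging $\cal{E}\leftrightarrow\cal{F}$ and $\l\leftrightarrow-\l$). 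First I would record two preliminary facts. (i) The elements $\{u_c,v_c\}$ satisfy the hypotheses $u_iv_iu_i=u_i$, $v_iu_iv_i=v_i$, $v_iu_j=\delta_{i,j}$ of Lemma~\ref{lemma:orthogonal_idemp_cond.}: these follow from the odd $\mf{sl}(2)$-relation \eqref{rei2} together with the bubble relations and the (bi)adjunction axioms \eqref{eq_biadj1}--\eqref{eq_biadj2} (see \cite{BE2}), which express $e:=\sum_{c\in I}u_cv_c=\id_{\cal{E}\cal{F}\onel}$ as a sum of orthogonal idempotents realizing $\cal{E}\cal{F}\onel\cong\cal{F}\cal{E}\onel\oplus\bigoplus_{c=0}^{\l-1}\onel\la 1-\l+2c\ra$, with $v_cu_c$ the identity of the $c$-th summand. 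In particular $\partial_\alpha(e)=0$, since $\partial_\alpha$ annihilates every identity 2-morphism. (ii) The map $\partial_\alpha$ of Definition~\ref{def:partial-fc} is the specialization of the derivation of Proposition~\ref{prop:derivation_on_oUcat} with $\alpha_1=\alpha$, $\alpha_2=0$, $b_\l=0$, $a_\l=-\alpha\,\delta_{\l,\text{even}}$ (so that the derived formulas yield $c_\l=(-1)^\l\alpha$ and $d_\l=\alpha$ as in \eqref{eq:cd-alpha}); since $\alpha_2=0$ and $a_\l(a_\l+\alpha)=-\alpha^2\delta_{\l,\text{even}}\delta_{\l,\text{odd}}=0$, Proposition~\ref{prop:differential_on_oUcat} guarantees $\partial_\alpha^2=0$.

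By Theorem~\ref{thm:Fantastic_filt_eqn}, the filtration $F^{\bullet}$ will be a filtration by super dg-modules once $v_i\partial_\alpha(u_j)=0$ for all $i\preceq j$ in the order $\prec$, and this is precisely what the computation around \eqref{eq:simple-fc} checks. For $0\le i\preceq j\le\l-1$ the expression $v_i\partial_\alpha(u_j)$ reduces by \eqref{eq:simple-fc} to a sum of (possibly fake) bubbles that vanishes termwise once $b_{\l-2}=0$ and $a_{\l-2}=-\alpha\delta_{\l,\text{even}}$ -- which are exactly the coefficients of $\partial_\alpha$ -- the only further constraint appearing for $i=j+1$ with $i,\l$ of equal parity (a relation now \emph{required} by $\prec$ rather than excluded), namely $\alpha\delta_{j+1,\text{odd}}+(-1)^ja_{\l-2}=0$, which holds because then $\delta_{j+1,\text{odd}}=\delta_{j,\text{even}}=\delta_{\l,\text{even}}$. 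For $i=j=\l$ one uses the sideways-crossing derivation \eqref{eq:partial-leftcrossing}: with $a_\l=a_{\l-2}=-\alpha\delta_{\l,\text{even}}$ and $b_\l=b_{\l-2}=0$ all coefficients of $v_\l\partial_\alpha(u_\l)$ vanish. Finally, for $i<\l=j$ one checks directly from \eqref{eq:partial-leftcrossing} and the established coefficient relations that $\partial_\alpha(u_\l)=0$, so $v_i\partial_\alpha(u_\l)=0$ automatically.

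It remains to verify the extra fantastic condition $\partial_\alpha(v_cu_c)=0$ for each $c\in I$ (recalling $\partial_\alpha(e)=0$ from above). For $c<\l$, $v_cu_c$ is a closed diagram in $\End_{\oUqp}(\onel\la 1-\l+2c\ra)$, a polynomial in dotted bubbles; since $\partial_\alpha$ kills every dotted bubble by Lemmas~\ref{lem:odd-bub} and~\ref{lem:even-bub} (as recorded after Proposition~\ref{prop:differential_on_oUcat}) and is a derivation, $\partial_\alpha(v_cu_c)=0$ -- in fact $v_cu_c$ is degree zero, hence a scalar multiple of the identity by \eqref{eq:degzero-bub}, and $v_cu_cv_c=v_c$ forces it to be $\id$. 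For $c=\l$, $v_\l u_\l$ is, up to sign, the identity of $\cal{F}\cal{E}\onel$ by \eqref{rei2} and the adjunctions, so $\partial_\alpha(v_\l u_\l)=0$ as well (alternatively, $\partial_\alpha(v_\l u_\l)=\partial_\alpha(v_\l)\circ u_\l$ since $\partial_\alpha(u_\l)=0$ and $v_\l$ has even parity, and this composite is checked to vanish with the given coefficients). Together with the previous paragraph this shows that $\{u_c,v_c\}_{c\in I}$ with the total order $(I,\prec)$ satisfies every hypothesis of Proposition~\ref{prop: Filtration is dg} and the definition of a fantastic filtration, proving the claim; the case $\l\leq 0$ on $\cal{F}\cal{E}\onel$ is the mirror argument.

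The one genuinely delicate point -- and the whole reason the nonstandard order $\prec$ is introduced -- is the dg-filtration condition at consecutive indices $i,i+1$ of the same parity as $\l$. With the naive order $<$ this condition forces $\alpha\delta_{i,\text{odd}}=(-1)^{i+1}\alpha\delta_{\l,\text{even}}$, i.e. $\alpha=0$, collapsing $\partial_\alpha$ to the trivial differential; reversing the order within each such pair trades this for the harmless identity $\alpha\delta_{j+1,\text{odd}}+(-1)^ja_{\l-2}=0$ above. So the crux is combinatorial, and the remaining work is bookkeeping of bubble degrees, parities, and signs in \eqref{eq:simple-fc} and its boundary cases, together with the routine facts $\partial_\alpha(u_\l)=0$ and $v_cu_c=\id$.
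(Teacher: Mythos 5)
Your proof is correct and takes essentially the same route as the paper: verify the idempotent/factorization conditions from the odd $\mf{sl}(2)$ relation and adjunction axioms, invoke the $v_i\partial_\alpha(u_j)=0$ computations worked out in the run-up to the statement, note $\partial_\alpha(e)=\partial_\alpha(v_cu_c)=0$ because the relevant composites are identities (or, for $e$, a sum of 2-morphisms summing to the identity), and handle $\l\leq 0$ by the mirror relation in \eqref{rei2}. The only inessential difference is that you give a longer-than-necessary justification of $\partial_\alpha(v_cu_c)=0$ (since $v_cu_c$ is just the identity of the $c$-th object by $v_iu_j=\delta_{ij}$, no bubble argument is needed); the paper's own proof is terser and simply cites the preceding computations.
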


\section{Covering Kac-Moody algebras}\label{subsec-covering-sl2}
In this section we review the rank one covering Kac-Moody algebra from \cite{CHW}, see also \cite{Clark-IV}.
In subsection~\ref{sec:small} and \ref{sec:qless} we consider specializations at certain roots of unity.  For more on covering Kac-Moody algebras at a root of unity see~\cite{CSW}.

\subsection{Covering quantum group}

Set $\Q(q)^\pi=\Q(q)[\pi]/(\pi^2-1)$.
\begin{definition}
The covering quantum group ${\bf U}_{q,\pi}={\bf U}_{q,\pi}(\mf{sl}_2)$ associated to $\mf{sl}_2$ is the $\Q(q)^{\pi}$-algebra with
generators $E$, $F$, $K$, $K^{-1}$, $J$, and $J^{-1}$ and relations
\begin{enumerate}
  \item  $KK^{-1}=1=K^{-1}K$,
  \quad $JJ^{-1}=1=J^{-1}J$,
  \item  $KE = q^2EK, \qquad \qquad  \;\; KF=q^{-2}FK$,
  \item  $JE = \pi^2EK, \qquad \qquad  \;\;JF=\pi^{-2}FK$,
  \item  $EF-\pi FE=\frac{JK-K^{-1}}{\pi q-q^{-1}}$.
\end{enumerate}
\end{definition}


Define the $(q,\pi)$-analogues of integers, factorials, and binomial coefficients by
\[
[n]=\frac{(\pi q)^n-q^{-n}}{\pi q-q^{-1}},\qquad
[a]!= \prod_{i=1}^{a}[i], \qquad
\left[\!\!
 \begin{array}{c}
   n \\
   a
 \end{array}
 \!\!\right]
 =
 \frac{\prod_{i=1}^a[n+i-a]}{[a]!}.
\]
Note as in \cite{CHW} that
$\left[\!\!
 \begin{array}{c}
   n \\
   a
 \end{array}
 \!\!\right] = \frac{[n]!}{[a]![n-a]!}$ for $n \geq a \geq 0$ and $[-n]=-\pi^n[n]$.  Let $\cal{A}=\Z[q,q^{-1}]$, $\cal{A}_{\pi}=\Z[q,q^{-1},\pi]/(\pi^2-1)$, and $\Q(q)^\pi=\Q(q)[\pi]/(\pi^2-1)$.

The idempoteneted (or modified) form $\U_{q,\pi}$ of the covering algebra ${\bf U}_{q,\pi}$ is obtained by replacing the unit of ${\bf U}_{q,\pi}$ with a collection of orthogonal idempotents $\lbrace 1_\l :\lambda\in\Z\rbrace$ indexed by the weight lattice of ${\rm U}_{q,\pi}$.  In particular, there is no need for generators $K$ or $J$ since
\begin{equation}
 K^{\pm} 1_{\l} = q^{\pm \l} 1_{\l}, \qquad  J^{\pm} 1_{\l} = \pi^{\pm \l} 1_{\l},
\end{equation}
in $\U_{q,\pi}$, see for example \cite[Section 6.1]{ClarkWang} or \cite[Definition 3.1]{Clark-IV}.

\begin{definition}\label{definition-covering-sl2} The \emph{idempotented form} $\U_{q,\pi}$ of \emph{quantum covering $\mf{sl}_2$} is the (non-unital) $\Q(q)^\pi$-algebra generated by orthogonal idempotents $\lbrace1_{\l}:\lambda\in\Z\rbrace$ and elements
\begin{equation}
1_{\l+2} E 1_{\l} =E1_{\l}=1_{\l+2} E,\qquad1_{\l}F1_{\l+2}=F1_{\l+2}=1_{\l}F, \qquad\lambda\in\Z,
\end{equation}
subject to the \emph{covering $\mf{sl}_2$ relation},
\begin{equation}\label{eqn-covering-sl2-relation}
EF1_{\l}-\pi FE1_{\l}=[\l]1_{\l}.
\end{equation}
The \emph{integral idempotented form} is the $\cal{A}_\pi$-subalgebra $\AUdotpi\subset\Udotpi$ generated by the divided powers
\begin{equation}
E^{(a)}1_{\l}=\frac{E^a1_{\l}}{[a]!},\quad1_{\l}F^{(a)}=\frac{1_{\l}F^a}{[a]!}.
\end{equation}
\end{definition}

There are direct sum decompositions of algebras
\[
 \U_{q,\pi} = \bigoplus_{\l,\mu \in \Z}1_{\mu}\U_{q,\pi}1_{\l} \qquad \qquad \UA_{q,\pi} = \bigoplus_{\l,\mu \in
 \Z}1_{\mu}(\UA_{q,\pi})1_{\l}
\]
with $1_{\mu}(\UA_{q,\pi})1_{\l}$ the $\Z[q,q^{-1},\pi]$-subalgebra spanned by $1_{\mu}\E{a}\F{b}1_{\l}$
and $1_{\mu}\F{b}\E{a}1_{\l}$ for $a,b \in \Z_+$.

\subsection{Canonical basis} \label{sec:canonical}
Clark and Wang show in~\cite[Theorem 6.2]{ClarkWang} that the algebra $\U_{q,\pi}$ has a $\cal{A}_{\pi}$-canonical basis $\B_{q,\pi}$, extending Lusztig's basis~\cite[Proposition 25.3.2]{Lus4} for $\mf{sl}_2$,  given by 
\begin{enumerate}[(i)]
     \item $E^{(a)}F^{(b)}1_{\l} \quad \;\;\;\quad $for $a$,$b\in \Z_+$,
     $n\in\Z$, $\l\leq b-a$,
     \item $\pi^{ab} F^{(b)}E^{(a)}1_{\l} \quad$ for $a$,$b\in\Z_+$, $\l\in\Z$,
     $\l\geq
     b-a$,
\end{enumerate}
where $\E{a}\F{b}1_{b-a}=\pi^{ab}\F{b}\E{a}1_{b-a}$.  The importance of this
basis is that the structure constants are in $\N[q,q^{-1},\pi]/(\pi^2-1)$.  In particular, for
$x,y \in \B_{q,\pi}$
\[
 xy = \sum_{x \in \B_{q,\pi}}m_{x,y}^z z
\]
with $z\in \B_{q,\pi}$ and $m_{x,y}^z \in \N[q,q^{-1},\pi]/(\pi^2-1)$.
Let $_{\mu}(\B_{q,\pi})_{\l}$ denote the set of
elements in $\B_{q,\pi}$ belonging to $1_{\mu}(\U_{q,\pi})1_{\l}$.  Then the set $\B_{q,\pi}$ is a union
\[
 \B_{q,\pi} = \coprod_{\l,\mu\in \Z} {}_{\mu}(\B_{q,\pi})_{\l}.
\]

%
\subsection{Quotients of the covering algebra} \label{sec:cov-quotient}
%

The following can be found in \cite[Section 7.3]{ClarkWang}.  For our purposes we take this as the definition of the (super)algebras $\U(\mf{sl}_2)$ and $\U(\mf{osp}(1|2)$.

\begin{proposition} \label{prop:pi-special}
Specializing $\pi=1$, the quotient $\U_{q,\pi}/\la \pi -1\ra$ is isomorphic to the quantum group $\U(\mf{sl}_2)$.  Specializing $\pi=-1$, the quotient $\U_{q,\pi}/\la \pi +1\ra$ is isomorphic to $\U(\mf{osp}(1|2)$ -- the idempotent form of the quantum superalgebra for $\mf{osp}(1|2)$. The canonical basis of $\U_{q,\pi}$ specializes at $\pi=1$, respectively $\pi=-1$, to a canonical basis for $\U(\mf{sl}_2)$, resp. $\U(\mf{osp}(1|2)$\footnote{It is important to note that the positivity of the canonical basis for the superalgebra $\U(\mf{osp}(1|2)$ is quite unexpected and would not be possible without the parameter $\pi$.}.
\end{proposition}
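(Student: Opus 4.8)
The plan is to establish the isomorphisms $\U_{q,\pi}/\langle\pi-1\rangle \cong \U(\mf{sl}_2)$ and $\U_{q,\pi}/\langle\pi+1\rangle \cong \U(\mf{osp}(1|2))$ by directly comparing the defining presentations, and then to transport the canonical basis along these isomorphisms. First I would recall that $\U_{q,\pi}$ is defined by idempotents $\{1_\l\}$, generators $E1_\l, F1_\l$ subject to the single covering relation $EF1_\l - \pi FE1_\l = [\l]1_\l$, where $[\l] = \frac{(\pi q)^\l - q^{-\l}}{\pi q - q^{-1}}$. Setting $\pi = 1$ collapses $\Q(q)^\pi = \Q(q)[\pi]/(\pi^2-1)$ to $\Q(q)$, turns $[\l]$ into the usual quantum integer $\frac{q^\l - q^{-\l}}{q - q^{-1}}$, and turns the covering relation into $EF1_\l - FE1_\l = [\l]1_\l$, which is precisely the defining relation for the idempotented quantum group $\dot{\mathbf U}(\mf{sl}_2)$ in Lusztig's presentation. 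Setting $\pi = -1$ collapses the ground ring to $\Q(q)$ as well, turns $[\l]$ into $\frac{(-q)^\l - q^{-\l}}{-q - q^{-1}}$, and turns the covering relation into the super-commutator relation $EF1_\l + FE1_\l = [\l]1_\l$, which is the defining relation for $\dot{\mathbf U}(\mf{osp}(1|2))$; since the paper explicitly takes the presentations in \cite[Section 7.3]{ClarkWang} as the definitions of these (super)algebras, this comparison is essentially a matter of matching generators and relations.

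\textbf{Key steps.} The argument proceeds as follows. (1) Observe that the quotient $\U_{q,\pi}/\langle\pi\mp 1\rangle$ has the same generators and idempotents as $\U_{q,\pi}$, with the single family of relations becoming the $\pi=\pm 1$ specialization; check that no relations are lost or gained, so the quotient presentation matches the target presentation verbatim. This gives surjective algebra maps $\U_{q,\pi}/\langle\pi\mp 1\rangle \to \dot{\mathbf U}(\mf{sl}_2)$, resp. $\dot{\mathbf U}(\mf{osp}(1|2))$. (2) For injectivity, use the canonical basis $\dot{\mathbb B}_{q,\pi}$ of $\U_{q,\pi}$ from \cite[Theorem 6.2]{ClarkWang}, which has elements $E^{(a)}F^{(b)}1_\l$ for $\l \le b-a$ and $\pi^{ab}F^{(b)}E^{(a)}1_\l$ for $\l \ge b-a$ (with the overlap $E^{(a)}F^{(b)}1_{b-a} = \pi^{ab}F^{(b)}E^{(a)}1_{b-a}$). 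This is a $\cal{A}_\pi$-basis, hence its image under specialization spans the quotient. (3) Check that under $\pi \mapsto 1$ this image is exactly Lusztig's canonical basis $\{E^{(a)}F^{(b)}1_\l : \l \le b-a\} \cup \{F^{(b)}E^{(a)}1_\l : \l \ge b-a\}$ of $\dot{\mathbf U}(\mf{sl}_2)$ \cite[Proposition 25.3.2]{Lus4}, and under $\pi \mapsto -1$ it is a basis of $\dot{\mathbf U}(\mf{osp}(1|2))$ (with the sign $\pi^{ab} = (-1)^{ab}$ absorbed). Since both specialized sets are linearly independent in the respective targets, the surjections are isomorphisms and carry $\dot{\mathbb B}_{q,\pi}|_{\pi=\pm 1}$ to the respective canonical bases.

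\textbf{Main obstacle.} The genuinely nontrivial point — and the one the paper flags in its footnote — is the positivity of the canonical basis for $\dot{\mathbf U}(\mf{osp}(1|2))$: that the structure constants of $\dot{\mathbb B}_{q,\pi}|_{\pi=-1}$ lie in $\N[q,q^{-1}]$. This is not something to reprove here; it follows from the fact that the structure constants of $\dot{\mathbb B}_{q,\pi}$ lie in $\N[q,q^{-1},\pi]/(\pi^2-1)$ (\cite[Theorem 6.2]{ClarkWang}) and specialize accordingly, but it is the substantive content being inherited. The routine parts — matching presentations, tracking the sign $\pi^{ab}$ through divided powers, confirming $[-n] = -\pi^n[n]$ specializes correctly — are straightforward, but care is needed to ensure that the idempotented (non-unital) setting does not introduce subtleties (e.g. that the weight-space decompositions $\U_{q,\pi} = \bigoplus_{\l,\mu} 1_\mu \U_{q,\pi} 1_\l$ are compatible with the specialization, which they are since the idempotents are untouched). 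So the proof is essentially a bookkeeping argument layered on top of the cited structural results, with the only real weight resting on the Clark--Wang canonical basis theorem.
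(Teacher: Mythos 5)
The paper does not actually \emph{prove} this proposition: immediately before the statement, it says ``The following can be found in \cite[Section 7.3]{ClarkWang}. For our purposes we take this as the definition of the (super)algebras $\U(\mf{sl}_2)$ and $\U(\mf{osp}(1|2))$.'' So the paper's route is a citation plus a definitional move — the isomorphisms hold essentially tautologically once the targets are declared to be the $\pi=\pm 1$ specializations, and the canonical-basis statement is delegated entirely to \cite[Theorem 6.2]{ClarkWang}. Your reconstruction from first principles is correct and more self-contained than what the paper does: matching the idempotented presentation of $\U_{q,\pi}/\langle\pi\mp 1\rangle$ against Lusztig's presentation of $\dot{\mathbf U}(\mf{sl}_2)$ (resp.\ the standard one for $\dot{\mathbf U}(\mf{osp}(1|2))$) gives surjectivity, and specializing the $\cal{A}_\pi$-canonical basis of \cite[Theorem~6.2]{ClarkWang} and comparing against \cite[Proposition~25.3.2]{Lus4} gives injectivity. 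You also correctly locate the only nonformal content — the positivity of structure constants at $\pi=-1$ — in the Clark--Wang theorem rather than in this proposition. The only thing worth flagging is that your step (2) is slightly redundant if one adopts the paper's convention (taking these specializations as the \emph{definitions} of the targets), but it is exactly the right step if $\dot{\mathbf U}(\mf{sl}_2)$ and $\dot{\mathbf U}(\mf{osp}(1|2))$ are given by their standard constructions rather than by these presentations; so your argument is the one you would need if you did not already have Clark--Wang's Section~7.3 in hand.
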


We now describe various further specializations of the $q$ parameter.
Define a quotient of $\cal{A}_{\pi}$ given by
\[
\mathcal{R}=\Z[q,q^{-1},\pi]/(\pi^2-1, 1+q^2\pi).
\]
Here we have set $q^2 = - \pi$ with $\pi^2 =1$.  Hence, at
 $\pi=-1$ we have $q^2=1$ so that $\mathcal{R}=\Z$. At $\pi=1$, $q^2=-1$, so that $\mathcal{R}=\Z[\sqrt{-1}]$. In $\mathcal{R}$ we have $\pi q = -q^{-1}$ so that the $(q,\pi)$ quantum integers become
\begin{equation} \label{eq:Rint}
[n]_{\cal{R}} = \frac{\pi^n q^n-q^{-n} }{\pi q - q^{-1} } =  = \frac{(-1)^n q^{-n}-q^{-n} }{- 2q^{-1} }
=
q^{-n+1}\delta_{n,\text{odd}}.
\end{equation}

Since $\UA_{q,\pi}$ s has an $\cal{A}_{\pi}$-canonical basis (see \cite[Section 7.1]{ClarkWang}) we change base
\begin{equation}
\U_{\mathcal{R}} := \UA_{q,\pi} \otimes_{\cal{A}_{\pi}} \mathcal{R}.
\end{equation}
Equation~\eqref{eq:Rint} implies
\begin{equation}
 E^2 = [2] E^{(2)} = 0, \qquad F^2 = [2]F^{(2)} = 0
\end{equation}
 in $\U_{\mathcal{R}}$.  This implies $E^{a} = F^{a} =0$ in $\cal{R}$ for $a >1$.  Further, from the presentation of $\UA_{q,\pi}$ given in \cite[Proposition 6.1]{ClarkWang} we see that there are no other relations.  Hence, we have  the following.

\begin{proposition} \label{prop:piq-special}
The $\cal{R}$-algebra $\U_{\mathcal{R}}$ has a presentation given as the nonunital associative $\cal{R}$-algebra  given by generators
$
 \left\{
  E1_{\l}, \;\;F1_{\l}, \;\; 1_{\l}, \;\; \l \in \Z
 \right\}
 $
subject to the relations
\begin{enumerate}[(i)]
  \item $1_{\l} 1_{\mu} = \delta_{\l,\mu}$
  \item $E1_{\l} = 1_{\l+2} E, \qquad F1_{\l} = 1_{\l-2}F$,
  \item $EF1_{\l} - \pi FE1_{\l} = [\lambda]_{\cal{R}} 1_{\l}$, \label{eq:cov-quot-rel3}
  \item $E^{2} =0, \qquad F^2=0$. \label{eq:cov-quot-rel4}
\end{enumerate}
Further, $\U_{\mathcal{R}}$ has an $\cal{R}$-basis given by the elements\footnote{Our use of divided power notation is not needed in the case of the fourth root of unity.  We use this notation for ease in converting between the canonical basis at generic $q$. }
\begin{equation}
 \dot{\mathbb{B}}_{\cal{R}} := \left\{ E^{(a)}F^{(b)}1_{\l} \mid a,b \in \{0,1\}, \; \l \leq b-a\right\} \cup
  \left\{\pi^{ab}F^{(b)}E^{(a)}1_{\l} \mid a,b \in \{0,1\}, \; \l \geq b-a\right\},
\end{equation}
over all $\l\in \Z$ with it understood that $\E{a}\F{b}1_{b-a}=\pi^{ab}\F{b}\E{a}1_{b-a}$.
\end{proposition}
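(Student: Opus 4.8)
The plan is to establish the presentation by the standard strategy for modified (idempotented) quantum groups: show that the stated relations (i)--(iv) hold in $\U_{\mathcal{R}} = \UA_{q,\pi}\otimes_{\cal{A}_\pi}\mathcal{R}$, and then show they are a complete set of relations by exhibiting the listed set $\dot{\mathbb{B}}_{\cal{R}}$ as a spanning set whose cardinality matches the dimension of the algebra presented abstractly by (i)--(iv). First I would verify the relations: (i) and (ii) are immediate from the corresponding relations in $\UA_{q,\pi}$ (the idempotents are orthogonal and $E,F$ shift weights), which pass to the base change. Relation (iii) is the image of the covering $\mf{sl}_2$ relation \eqref{eqn-covering-sl2-relation} under the specialization, using the computation \eqref{eq:Rint} that $[\l]_{\cal{R}} = q^{-\l+1}\delta_{\l,\text{odd}}$; since this is the defining relation of $\UA_{q,\pi}$ tensored down, it holds in $\U_{\mathcal{R}}$. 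Relation (iv) follows from \eqref{eq:Rint} at $n=2$: $[2]_{\cal{R}}=0$, so $E^2 = [2]_{\cal{R}}E^{(2)} = 0$ and likewise $F^2 = [2]_{\cal{R}}F^{(2)}=0$; consequently $E^{(a)}=F^{(a)}=0$ for $a>1$, so only $a,b\in\{0,1\}$ survive in the canonical basis.

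Next I would address the basis claim. The idempotented integral form $\UA_{q,\pi}$ has the $\cal{A}_\pi$-canonical basis $\B_{q,\pi}$ recalled in Section~\ref{sec:canonical}, consisting of $E^{(a)}F^{(b)}1_\l$ for $\l\le b-a$ and $\pi^{ab}F^{(b)}E^{(a)}1_\l$ for $\l\ge b-a$, with the overlap identification $\E{a}\F{b}1_{b-a}=\pi^{ab}\F{b}\E{a}1_{b-a}$. Since $\B_{q,\pi}$ is a free $\cal{A}_\pi$-basis, the base change $\UA_{q,\pi}\otimes_{\cal{A}_\pi}\mathcal{R}$ is free over $\mathcal{R}$ on the image of $\B_{q,\pi}$. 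Killing $E^{(a)},F^{(a)}$ for $a>1$ (forced by $[2]_{\cal{R}}=0$ and the fact that all higher divided powers are obtained by multiplying $E$'s or $F$'s, again using the presentation of $\UA_{q,\pi}$ from \cite[Proposition 6.1]{ClarkWang}) leaves exactly the set $\dot{\mathbb{B}}_{\cal{R}}$. This shows $\dot{\mathbb{B}}_{\cal{R}}$ is an $\mathcal{R}$-basis of $\U_{\mathcal{R}}$, provided one knows no further collapse occurs --- this is where one invokes that \cite[Proposition 6.1]{ClarkWang} gives a presentation of $\UA_{q,\pi}$ with the canonical basis, so base changing introduces no relations beyond $[2]_{\cal{R}}=0$.

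Finally I would close the argument that (i)--(iv) is a \emph{presentation}: let $\widetilde{\U}$ be the abstract nonunital $\mathcal{R}$-algebra on generators $E1_\l, F1_\l, 1_\l$ modulo (i)--(iv). There is a surjection $\widetilde{\U}\twoheadrightarrow\U_{\mathcal{R}}$ since the relations hold. To see it is injective it suffices to show $\widetilde{\U}$ is spanned over $\mathcal{R}$ by (the images of) the elements of $\dot{\mathbb{B}}_{\cal{R}}$: any word in $E$'s, $F$'s, $1_\l$'s can be reduced using (ii) to collect idempotents, using (iv) to forbid $EE$ and $FF$ substrings, and using (iii) to move any $FE$ past an $EF$ (straightening to the ordered monomials $E^{(a)}F^{(b)}1_\l$ or $F^{(b)}E^{(a)}1_\l$ with $a,b\in\{0,1\}$, with the boundary case $\l=b-a$ handled by the overlap identity $\E{a}\F{b}1_{b-a}=\pi^{ab}\F{b}\E{a}1_{b-a}$, which itself follows from (iii)). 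Hence $\dim_{\mathcal{R}}\widetilde{\U}\le |\dot{\mathbb{B}}_{\cal{R}}|=\dim_{\mathcal{R}}\U_{\mathcal{R}}$, forcing the surjection to be an isomorphism. The main obstacle I anticipate is the bookkeeping in this straightening argument --- precisely checking that the relations suffice to reorder every monomial into the claimed normal form, paying attention to the weight-dependent signs $\pi^{ab}$ and to the boundary weight $\l=b-a$ where the two families in $\dot{\mathbb{B}}_{\cal{R}}$ overlap; once that combinatorial reduction is in hand, the dimension count finishes the proof.
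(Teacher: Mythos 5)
Your overall strategy matches the paper's (base change the integral idempotented form along $\cal{A}_\pi\to\mathcal{R}$, quote the canonical basis, verify the relations, and reduce words to a normal form), and your verification of (i)--(iv) together with the straightening argument at the end are correct in outline. However, the step ``killing $E^{(a)},F^{(a)}$ for $a>1$, forced by $[2]_{\mathcal{R}}=0$'' does not hold and the way you use it would fail. Since $\B_{q,\pi}$ is a \emph{free} $\cal{A}_\pi$-basis of $\UA_{q,\pi}$, the base change $\UA_{q,\pi}\otimes_{\cal{A}_\pi}\mathcal{R}$ remains free over $\mathcal{R}$ on the image of $\B_{q,\pi}$; in particular the higher divided powers $E^{(2)}1_\lambda$, $F^{(2)}1_\lambda$, and so on are still \emph{nonzero} basis vectors after base change. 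The identity $E^2=[2]_{\mathcal{R}}E^{(2)}=0$ only tells you $E^2$ vanishes; since $[2]_{\mathcal{R}}=0$ one cannot cancel to deduce $E^{(2)}=0$, and nothing in the integral form expresses $E^{(2)}$ as a polynomial in $E,F,1_\lambda$ without denominators. Consequently the equality $|\dot{\mathbb{B}}_{\mathcal{R}}|=\dim_{\mathcal{R}}\U_{\mathcal{R}}$ that closes your final step is false for the full base change, which is strictly larger in every weight component. (The paper's own proof sketch asserts $E^{(a)}=F^{(a)}=0$ for $a>1$ and contains the same slip, so you have faithfully reproduced an imprecision in the source.)

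The repair is to read $\U_{\mathcal{R}}$ as the $\mathcal{R}$-subalgebra of $\UA_{q,\pi}\otimes_{\cal{A}_\pi}\mathcal{R}$ generated by $E1_\lambda$, $F1_\lambda$, $1_\lambda$, which is the object the stated generators-and-relations presentation actually describes and is consistent with the later identification with the small quantum group $\dot{u}_{\sqrt{-1}}(\mf{sl}_2)$. Then $E^{(a)}$ for $a\geq 2$ simply is not an element of $\U_{\mathcal{R}}$ rather than being killed. Your straightening argument (reduce $EE$ and $FF$ to zero by (iv), collect idempotents by (ii), and use (iii) to resolve alternating $EFEF\cdots$ words down to $E^aF^b1_\lambda$ or $F^bE^a1_\lambda$ with $a,b\in\{0,1\}$, choosing the order according to the sign of $\lambda-(b-a)$) shows $\dot{\mathbb{B}}_{\mathcal{R}}$ spans this subalgebra; and it is linearly independent automatically because it is a subset of the canonical basis of the free $\mathcal{R}$-module $\UA_{q,\pi}\otimes_{\cal{A}_\pi}\mathcal{R}$. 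The surjection $\widetilde{\U}\twoheadrightarrow\U_{\mathcal{R}}$ then carries a spanning set onto a linearly independent set, so it is injective directly, and no separate dimension comparison is needed.
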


The algebra $\U_{\mathcal{R}} $ splits as a direct sum
\[
\U_{\mathcal{R}} = \U_{\mathcal{R}}^{\text{even}} \oplus \U_{\mathcal{R}}^{\text{odd}}
\]
where $\U_{\mathcal{R}}^{\text{even}}$ , respectively $\U_{\mathcal{R}}^{\text{odd}}$ corresponds to the subalgebra containing only even, respectively odd, weights $\l \in \Z$.

%
\subsection{Small quantum $\mf{sl}_2$} \label{sec:small}
%

In this section we connect the covering algebra at parameters $(q,\pi)=(\sqrt{-1},1)$ with the small quantum group.
The small quantum group introduced by Lusztig is a finite dimensional Hopf algebra over the field of cyclotomic integers~\cite{Lus90}. Here we consider the small quantum group at a fourth root of unity.


Let $\sqrt{-1}$ be a primitive fourth root of unity and consider the ring of cyclotomic integers
\begin{equation}
  \Z[\sqrt{-1}] = \Z[q,q^{-1}] /\Psi_4(q) = \Z[q,q^{-1}]/(1+q^2),
\end{equation}
where  $\Psi_n$ denote the $n$th cyclotomic polynomial.  Denote by $\U_{\Z[\sqrt{-1}]}$ the idempotented $\Z[\sqrt{-1}]$-algebra defined by change of basis
\[
 \U_{\Z[\sqrt{-1}]} = \UA \otimes_{\Z[q,q^{-1}]} \Z[\sqrt{-1}].
\]

Set $[k]_{\sqrt{-1}}$ to be the quantum integer $[k]$ evaluated at $\sqrt{-1}$.
 The divided power relation implies that in  $\U_{\Z[\sqrt{-1}]}$ the elements
\begin{equation}
 E^k1_{\l}  = [k]_{\sqrt{-1}}E^{(k)}1_{\l}, \qquad  F^k1_{\l} = [k]_{\sqrt{-1}}F^{(k)}1_{\l}
\end{equation}
are only nonzero when $0 \leq k \leq 2$.

%
%
%
%
%
%

The following Proposition follows immediately from Proposition~\ref{prop:pi-special} and \ref{prop:piq-special}.

\begin{proposition}
The specialization $\U_{\cal{R}}|_{\pi=1} = \U_{q,\pi}|_{\pi=1,q=\sqrt{-1}}$ is isomorphic to the small quantum group
$\dot{u}_{\sqrt{-1}}(\mf{sl}_2)$.
\end{proposition}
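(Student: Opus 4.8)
The plan is to deduce the statement directly from the two results it cites, namely Proposition~\ref{prop:pi-special} and Proposition~\ref{prop:piq-special}, by tracing through the specializations carefully. First I would observe that $\U_{\cal{R}}$ was defined as $\UA_{q,\pi}\otimes_{\cal{A}_\pi}\cal{R}$ with $\cal{R}=\Z[q,q^{-1},\pi]/(\pi^2-1,1+q^2\pi)$, so setting $\pi=1$ in $\cal{R}$ forces $1+q^2=0$, i.e. $q$ becomes a primitive fourth root of unity and $\cal{R}/\langle\pi-1\rangle\cong\Z[\sqrt{-1}]$. Thus $\U_{\cal{R}}|_{\pi=1}$ is, by construction, $\UA_{q,\pi}\otimes_{\cal{A}_\pi}\Z[\sqrt{-1}]$ where the $\cal{A}_\pi$-module structure factors through $\pi\mapsto 1$. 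Applying Proposition~\ref{prop:pi-special}, the quotient $\U_{q,\pi}/\langle\pi-1\rangle$ is $\U(\mf{sl}_2)$ (and the integral form $\AUdotpi/\langle\pi-1\rangle$ is $\UA(\mf{sl}_2)=\UA$), so $\U_{\cal{R}}|_{\pi=1}\cong\UA\otimes_{\Z[q,q^{-1}]}\Z[\sqrt{-1}]=\U_{\Z[\sqrt{-1}]}$, which is the object appearing in Section~\ref{sec:small}.

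Next I would identify $\U_{\Z[\sqrt{-1}]}$ with the small quantum group $\dot u_{\sqrt{-1}}(\mf{sl}_2)$. The cleanest route is to use the explicit presentation of $\U_{\cal{R}}$ from Proposition~\ref{prop:piq-special}: at $\pi=1$ the covering $\mf{sl}_2$ relation~\eqref{eq:cov-quot-rel3} becomes $EF1_\l-FE1_\l=[\l]_{\cal R}1_\l=\delta_{\l,\mathrm{odd}}q^{-\l+1}1_\l$ by~\eqref{eq:Rint}, together with the truncation relations $E^2=F^2=0$ from~\eqref{eq:cov-quot-rel4} and the weight-space relations (i) and (ii). I would then compare this with the standard presentation of the idempotented small quantum $\mf{sl}_2$ at a fourth root of unity: there $E^2=F^2=0$ holds because $[2]_{\sqrt{-1}}=0$ kills $E^{(2)},F^{(2)}$, and the commutator relation $EF1_\l-FE1_\l=[\l]1_\l$ reduces to the same $\delta_{\l,\mathrm{odd}}$-expression. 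Matching the $\cal{R}$-basis $\dot{\mathbb B}_{\cal R}$ (with $a,b\in\{0,1\}$) against the standard monomial basis of $\dot u_{\sqrt{-1}}(\mf{sl}_2)$ gives the isomorphism on the level of free modules, and checking it intertwines multiplication is immediate from the presentations.

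The remaining care point — and the step I expect to be the most delicate — is bookkeeping rather than conceptual: one must be sure that the integral form one lands in after tensoring really is Lusztig's integral form $\UA$ over $\Z[q,q^{-1}]$ (so that $\otimes_{\Z[q,q^{-1}]}\Z[\sqrt{-1}]$ genuinely produces the small quantum group and not some larger or smaller lattice), and that the canonical basis specialization in Proposition~\ref{prop:pi-special} is compatible with the further $q$-specialization in Proposition~\ref{prop:piq-special}. Here I would invoke the fact that $\AUdotpi$ has an $\cal{A}_\pi$-canonical basis (Section~\ref{sec:canonical}, \cite[Theorem 6.2]{ClarkWang}), so both specializations are flat base changes along the canonical basis; hence no collapsing occurs and the composite specialization is well-defined. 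With that in hand the Proposition follows, so in the write-up I would simply state: "Combining Propositions~\ref{prop:pi-special} and \ref{prop:piq-special} with the identification $\cal{R}/\langle\pi-1\rangle\cong\Z[\sqrt{-1}]$ and the presentation of $\dot u_{\sqrt{-1}}(\mf{sl}_2)$, we obtain $\U_{\cal R}|_{\pi=1}\cong\dot u_{\sqrt{-1}}(\mf{sl}_2)$."
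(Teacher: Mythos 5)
The paper's own proof is a one-liner (``follows immediately from Propositions~\ref{prop:pi-special} and~\ref{prop:piq-special}''), and your proposal supplies details along essentially the same route, so at the level of overall strategy you agree with the authors.

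However, the extra step you insert — identifying $\U_{\cal{R}}|_{\pi=1}$ with $\U_{\Z[\sqrt{-1}]}=\UA\otimes_{\Z[q,q^{-1}]}\Z[\sqrt{-1}]$ and then claiming this is $\dot u_{\sqrt{-1}}(\mf{sl}_2)$ — contains a genuine error. The algebra $\UA$ is generated over $\Z[q,q^{-1}]$ by the divided powers $E^{(a)}$, $F^{(a)}$ and is free with the Lusztig canonical basis $\{E^{(a)}F^{(b)}1_\l\}\cup\{F^{(b)}E^{(a)}1_\l\}$ over all $a,b\ge 0$; tensoring a free module along a ring map sends basis elements to basis elements, so $E^{(2)}1_\l$ remains a nonzero element of $\U_{\Z[\sqrt{-1}]}$. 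That algebra is therefore Lusztig's \emph{restricted} integral form at $\sqrt{-1}$, which strictly contains the small quantum group. Your sentence ``there $E^2=F^2=0$ holds because $[2]_{\sqrt{-1}}=0$ kills $E^{(2)},F^{(2)}$'' has the implication the wrong way around: from $E^2=[2]E^{(2)}$ and $[2]_{\sqrt{-1}}=0$ one gets $E^2=0$, but this says nothing about $E^{(2)}$ — in fact $E^{(2)}$ survives. So the middle link of your chain $\U_{\cal{R}}|_{\pi=1}\cong\U_{\Z[\sqrt{-1}]}\cong\dot u_{\sqrt{-1}}(\mf{sl}_2)$ does not hold as stated, and the whole chain cannot be simultaneously true.

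The difficulty is that Proposition~\ref{prop:piq-special} asserts $E^{(a)}=F^{(a)}=0$ for $a>1$ in $\U_{\cal{R}}:=\UA_{q,\pi}\otimes_{\cal{A}_\pi}\cal{R}$, but the justification given there ($E^2=[2]E^{(2)}=0$) does not imply the vanishing of $E^{(2)}$ for exactly the reason above. So the ``gap'' you inherit is latent in the paper. A correct write-up of the present Proposition needs to either (a) replace $\U_{\cal{R}}$ by the $\cal{R}$-subalgebra generated by the un-divided generators $E1_\l,F1_\l,1_\l$ (the image of the non-integral Chevalley generators, which is exactly where the relations of Proposition~\ref{prop:piq-special} hold and where matching the presentation of $\dot u_{\sqrt{-1}}(\mf{sl}_2)$ is immediate), or (b) make explicit that ``small quantum group'' refers to that subalgebra of $\U_{\Z[\sqrt{-1}]}$, not to the restricted form. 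As submitted, your proposal repeats the false step rather than catching it; the rest of your bookkeeping ($\cal{R}/\langle\pi-1\rangle\cong\Z[\sqrt{-1}]$, specializing the covering commutator relation via \eqref{eq:Rint}, the flatness remark via the canonical basis) is correct and in line with what the authors intend.
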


%
\subsection{$q$-less subalgebra} \label{sec:qless}
%

In this section we consider the specialization $(q,\pi) = (-1,-1)$, corresponding to setting the quantum parameter $q=-1$ in $\U(\mf{osp}(1|2))$.  We show this specialization has a connection with the superalgebra $\mf{gl}(1|1)$ via its $\mf{sl}(1|1)$ subalgebras.

The quantum group ${\bf U}_q(\mf{sl}(1|1)$ is the unital associative $\Q(q)$-algebra with generators $E$, $F$, $H$, $H^{-1}$ and relations
\begin{equation}
\begin{split}
 & HH^{-1} = H^{-1}H = 1, \\
 & E^2 = F^2 =0, \\
 & HE = EF, \quad HF = FH, \\
 & EF+FE = \frac{H-H^{-1}}{q-q^{-1}}
\end{split}
\end{equation}
This algebra also admits a modified form~\cite{Yin1} given below.
\begin{definition}
The modified form $\U(\mf{sl}(1|1))$ of quantum $\mf{sl}(1|1)$ the (non-unital) $\Q(q)$-algebra obtained from ${\bf U}_q(\mf{sl}(1|1)$ by replacing the unit by a collection of orthogonal idempotents $1_\l$ for $\l \in \Z$ such that
\[
1_{\l} 1_{\mu} = \delta_{\l,\mu}, \quad
H1_{\l} = 1_{\l}H = q^n 1_\l, \quad
1_{\l} E = E1_{\l},  \quad
1_{\l} F = F 1_{\l}
\]
so that
\[
EF 1_{\l} + FE1_{\l} = [\l] 1_{\l},
\]
where here $[\l]$ denotes the usual quantum integer.
\end{definition}

Since the action of $E$ and $F$ does not change the weight space $\l$, there is clearly a decomposition of algebras
\[
 \U(\mf{sl}(1|1))  = \bigoplus_{\l \in \Z} \U(\mf{sl}(1|1))1_{\l}.
\]
The algebra $\U(\mf{sl}(1|1))$ admits an integral form $\UA(\mf{sl}(1|1))$ defined over $\cal{A}=\Z[q,q^{-1}]$.

The relations in $\U(\mf{sl}(1|1))$ are very similar to the relations in $\U_{\cal{R}}$ at parameters $(q,\pi)=(-1,-1)$. However,  there isn't a specialization of $q$ in the usual quantum integers ($\pi=1$) that agree with the $(q,\pi)=(-1,-1)$ covering integers $[n]_{\cal{R}}$.  Instead, we see from \eqref{eq:Rint} that at $q=-1$, the integers $[\l]_{\cal{R}}$ are either 0 or 1.

\begin{proposition} \label{prop:gl}
There are $\Z$-algebra isomorphisms
\begin{equation}\begin{split}
  &\U_{\cal{R}}^{\text{even}}|_{\pi=-1} = \U_{q,\pi}^{\text{even}}|_{(q=-1, \pi=-1)} \cong \U(\mf{sl}(1|1))1_0
\\
  &\U_{\cal{R}}^{\text{odd}}|_{\pi=-1}= \U_{q,\pi}^{\text{odd}}|_{(q=-1, \pi=-1)} \cong \U(\mf{sl}(1|1))1_1
\end{split}
\end{equation}
determined by sending $E1_{\l}, F1_{\l} \in \U_{\cal{R}}$ to the corresponding element in $\U(\mf{sl}(1|1))$.
\end{proposition}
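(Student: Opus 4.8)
The plan is to deduce both isomorphisms by comparing presentations. By Proposition~\ref{prop:piq-special}, specializing $\pi=-1$ and using the evaluation \eqref{eq:Rint} of the covering quantum integer --- which gives $[\l]_{\cal R}\big|_{\pi=-1}=\delta_{\l,\text{odd}}$, i.e. $[\l]_{\cal R}=0$ for $\l$ even and $[\l]_{\cal R}=1$ for $\l$ odd --- the algebra $\U_{\cal R}^{\text{even}}\big|_{\pi=-1}$ is presented as the nonunital $\Z$-algebra on generators $\{1_{\l},\,E1_{\l},\,F1_{\l}: \l\in 2\Z\}$ subject to $1_{\l}1_{\mu}=\delta_{\l,\mu}1_{\l}$, $E1_{\l}=1_{\l+2}E$, $F1_{\l}=1_{\l-2}F$, $E^{2}=F^{2}=0$, and $EF1_{\l}+FE1_{\l}=0$; likewise $\U_{\cal R}^{\text{odd}}\big|_{\pi=-1}$ has the same presentation with $\l$ ranging over $2\Z+1$ and the last relation replaced by $EF1_{\l}+FE1_{\l}=1_{\l}$. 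The first step is thus just to unwind Proposition~\ref{prop:piq-special} in these two cases.

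The second step is to record the parallel presentation of the summands $\U(\mf{sl}(1|1))1_{0}$ and $\U(\mf{sl}(1|1))1_{1}$. The key observation is that on each such summand the only relation beyond $1_{\l}1_{\mu}=\delta_{\l,\mu}1_{\l}$, the weight conditions, and $E^{2}=F^{2}=0$ is $EF1_{\l}+FE1_{\l}=[\epsilon]1_{\l}$ with $\epsilon\in\{0,1\}$ \emph{constant on the summand}; since $[0]=0$ and $[1]=1$, no power of $q$ survives, so $\U(\mf{sl}(1|1))1_{\epsilon}$ is already defined over $\Z$ and has exactly the presentation above with $[\l]_{\cal R}$ replaced by $[\epsilon]$. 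I would extract this either from the modified-form definition together with \cite[Section 7.3]{ClarkWang} (cf. \cite{Yin1}), or, more safely, by reading off the $\cal A$-basis of $\UA(\mf{sl}(1|1))$ and noting the divided-power notation collapses to $a,b\in\{0,1\}$.

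With both presentations in hand the isomorphisms are essentially forced: define $\varphi\maps \U_{\cal R}^{\text{even}}\big|_{\pi=-1}\to \U(\mf{sl}(1|1))1_{0}$ (resp. the odd analogue) on generators by $1_{\l}\mapsto 1_{\l}$, $E1_{\l}\mapsto E1_{\l}$, $F1_{\l}\mapsto F1_{\l}$, matching the weight of the target to that of the source. Because $[\l]_{\cal R}\big|_{\pi=-1}$ agrees with $[0]$ in the even case and with $[1]$ in the odd case, $\varphi$ sends defining relations to defining relations, and the identical assignment in the other direction is a two-sided inverse. To make the argument self-contained --- and in particular to rule out hidden relations on the $\U(\mf{sl}(1|1))$ side --- I would additionally verify that $\varphi$ carries the $\Z$-basis $\dot{\mathbb B}_{\cal R}$ of Proposition~\ref{prop:piq-special} bijectively onto the corresponding basis $\{E^{(a)}F^{(b)}1_{\l},\ \pi^{ab}F^{(b)}E^{(a)}1_{\l}: a,b\in\{0,1\}\}$ of $\U(\mf{sl}(1|1))1_{\epsilon}$, the triangularity conditions $\l\le b-a$ and $\l\ge b-a$ transferring verbatim.

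The main obstacle I expect is the weight-lattice bookkeeping on the $\U(\mf{sl}(1|1))$ side: one must pin down the correct identification of the weights occurring in $\U_{\cal R}^{\text{even}}$ (resp. $\U_{\cal R}^{\text{odd}}$) with those of the summand $\U(\mf{sl}(1|1))1_{0}$ (resp. $1_{1}$), and confirm that the listed presentation of that summand is \emph{complete}; this is precisely where the comparison of bases does the real work, while the compatibility of $\varphi$ with the relations themselves is a routine check using nothing beyond the defining relations of the two presentations.
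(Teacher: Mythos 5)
Your plan tracks the paper's own sketch (use Proposition~\ref{prop:piq-special} together with $[\l]_{\cal R}|_{q=-1}=\delta_{\l,\text{odd}}$), but the obstacle you flag at the end as ``weight-lattice bookkeeping'' is in fact a fatal gap that neither you nor the paper resolve. As defined in this paper, $\U(\mf{sl}(1|1))1_0$ has a \emph{single} idempotent $1_0$ (because $1_\l E=E1_\l$ and $1_\l F=F1_\l$, i.e.\ $E,F$ preserve the $\mf{sl}(1|1)$-weight), and is free of rank $4$ over $\Z$ with basis $\{1_0,E1_0,F1_0,EF1_0\}$. By contrast $\U_{\cal R}^{\text{even}}|_{\pi=-1}$ carries the countable family of pairwise-orthogonal idempotents $\{1_\l\}_{\l\in 2\Z}$ and has infinite $\Z$-rank. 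There is no bijective $\Z$-algebra map between an algebra of rank $4$ and one of infinite rank, so your prescription ``$1_\l\mapsto 1_\l$, matching the weight of the target to that of the source'' is not defined (the target only has $1_0$), and your final plan to carry $\dot{\mathbb B}_{\cal R}$ bijectively onto the $\mf{sl}(1|1)$-basis ``with the triangularity conditions transferring verbatim'' cannot succeed, since on the $\mf{sl}(1|1)$ side there is no varying weight and hence no triangularity condition to transfer. The obvious collapsing map $1_\l\mapsto 1_0$ also fails multiplicativity: in $\U_{\cal R}$ one has $(E1_\l)(F1_\l)=(1_{\l+2}E1_\l)(1_{\l-2}F1_\l)=0$ because $1_\l 1_{\l-2}=0$, whereas in $\U(\mf{sl}(1|1))1_0$ the same product is $(E1_0)(F1_0)=EF1_0\neq 0$.

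So the statement cannot be proved as literally written; it has to be repaired before your presentation comparison has a chance. The surrounding text (the $\U(\mf{gl}(1|1))$ discussion following the proposition) suggests the intended target is a $q$-less algebra whose weight lattice is still shifted by $E$ and $F$ --- for instance the $\l_1+\l_2$-constant slice of $\U(\mf{gl}(1|1))$, where $E,F$ shift $\l_1-\l_2$ by $\pm 2$ exactly as in $\U_{\cal R}$ --- rather than the one-weight summand $\U(\mf{sl}(1|1))1_\mu$. Once the target is re-chosen so that both sides have matching families of orthogonal idempotents, your comparison of presentations via Proposition~\ref{prop:piq-special} and the computation $[\l]_{\cal R}|_{q=-1}\in\{0,1\}$ does go through; as written, your proposal reproduces the imprecision of the statement rather than closing the gap.
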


\begin{proof}
 By \eqref{eq:Rint} the quantum integer $[\l]_{\cal{R}}$ at $q=-1$ is either 0 or 1.  The result follows immediately from Proposition~\ref{prop:pi-special} and \ref{prop:piq-special}.
\end{proof}

\begin{remark}
 In Kauffman and Saleur's work constructing the Alexander-Conway polynomial from ${\bf U}_q(\mf{sl}(1|1))$ they restrict their attention to a specialization ($\mathit{\lambda}=1$ in their notation, see \cite[Equation (2.1)]{KaufS}), that corresponds in our notation to restricting to $\U(\mf{sl}(1|1))1_1$.  As noted above, the entire algebra $\U(\mf{sl}(1|1))1_{1}$ has a presentation over $\Z$, rather than $\Q(q)$.  The quantum parameter enters the Alexander story in the work of Kauffman and Saleur via the coproduct on ${\bf U}_q(\mf{sl}(1|1))$.
\end{remark}

Recall the modified form of quantum $\mf{gl}(1|1)$,  defined for example in \cite[Definition 3.2]{TVW}.
\begin{definition}
The \emph{idempotented form} $\U(\mf{gl}(1|1))$ of quantum $\mf{gl}(1|1)$ is the (non-unital) $\Q(q)$-algebra generated by orthogonal idempotents $\lbrace 1_{(\l_1,\l_2)}: (\l_1,\l_2)\in\Z^2\rbrace$ so that
$$
1_{(\l_1,\l_2)} 1_{(\l_1',\l_2')} = \delta_{\l_1,\l_1'} \delta_{\l_2,\l_2'} 1_{(\l_1,\l_2)},$$ and elements
\begin{equation}
\begin{split}
1_{\l_1+1,\l_2-1} E 1_{(\l_1,\l_2)} =E1_{(\l_1,\l_2)}= 1_{(\l_1+1,\l_2-1)} E, \\
1_{\l_1-1,\l_2+1} F 1_{(\l_1,\l_2)} =F1_{(\l_1,\l_2)}= 1_{(\l_1-1,\l_2+1)} F,
\end{split}
\end{equation}
for $(\l_1,\l_2)\in\Z^2$,
subject to the relation,
\begin{equation}
EF1_{(\l_1,\l_2)} +FE 1_{(\l_1,\l_2)} =[\l_1-\l_2]1_{(\l_1,\l_2)} .
\end{equation}
\end{definition}

Note that   the action of $E$ and $F$ preserves the lines in $\Z^2$ of slope $(\l_1-\l_2)$.  In particular, if we restrict to weights $(\l_1, \l_2)$ such that $\l_1-\l_2 = \mu$, then this subalgebra of  $\U(\mf{gl}(1|1))$ is isomorphic to $\U(\mf{sl}(1|1))1_{\mu}$. Hence, we have shown that the covering algebra $\U_{q,\pi}$ specializes at $(q,\pi)=(\sqrt{-1},1)$ to the small quantum group for $\mf{sl}_2$ and to a ``$q$-less subalgebra" of modified $\mf{gl}(1|1)$ at parameters $(-1,-1)$.

%
\section{Categorification results}
%

%
\subsection{Divided power modules }
%

%

Recall the graded superalgebra $\ONH_n$ from section~\ref{sec:oddnil} and the graded superalgebra isomorphism
\begin{equation} \label{eq:ONH2OPOL}
  \ONH_n \cong {\rm END}_{{\rm O}\Lambda_n}(\OPol_n),
\end{equation}
where $\OPol_n$ is the unique (up to isomorphism and grading shift)  graded indecomposable projective $\ONH_n$-supermodule.
 Taking gradings and parity into account, this isomorphism gives rise to a graded supermodule isomorphism
\[
 \ONH_n \cong \bigoplus_{[n]^!_{q,\pi}}(\OPol_n)
\]
where the direct sum over $[n]^!_{q,\pi}$ indicates the direct sum of copies of $\OPol_n$ with appropriate parity and degree shifts, see for example \cite[Lemma 11.1]{BE2}.

In \cite{EllisQi} they equip the superalgebra $\OPol_n$ with a   dg-structure defined by
\begin{equation}
  \partial(x_i)  = x_i^2.
\end{equation}
Denote the resulting $({\rm OPol}_n, {\rm O}\Lambda_n)$-dg-bimodule by $Z_n$,
\[
Z_n := \OPol_n \mathbf{z}
\]
where $Z_n$ is a rank-one free left module with cyclic vector $\mathbf{z}$.
Any dg-supermodule structure on the rank-one free left $\OPol_n$-module is determined by the value of $\partial$ on the cyclic vector.  These are parameterized by $\alpha \in \{0,1\}^a$ \cite[Proposition 3.1]{EllisQi}. In light of \eqref{eq:ONH2OPOL}, a dg-module structure on this module induces a compatible differentials on $\ONH_n$.

\begin{theorem} \hfill \label{thm:dg-onh}
\begin{enumerate}
  \item There is an equivalence of dg-superalgebras (Corollary 3.9 \cite{EllisQi})
\begin{equation} \label{eq:ONH}
 (\ONH_n, \partial) \longrightarrow \END_{{\rm O}\Lambda_n^{{\rm op}}}(Z_n) .
\end{equation}

\item For any $n\geq 0$, $Z_n$ is a finite-cell right dg-supermodule over ${\rm O}\Lambda_n$ (\cite[Proposition 3.16]{EllisQi}).

\item  If $n \geq 2$, then $\ONH_n$ is an acyclic dg-superalgebra.  Consequently, the derived category $\cal{D}(\ONH_n)$ is equivalent to the zero category (\cite{EllisQi} Proposition 3.16).

\item As a left $\ONH_n$ dg module, $Z_n$ is only cofibrant if $n=0,1$ and is acyclic otherwise \cite[Proposition 3.17]{EllisQi}.
\end{enumerate}
\end{theorem}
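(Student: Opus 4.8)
The plan is to reconstruct the final statement, Theorem~\ref{thm:dg-onh}, by assembling its four parts from the cited results of Ellis--Qi and Egilmez--Kitchloo--Lauda, translating each into the language of the present paper and checking that the differential $\partial$ used here matches theirs. Since every item carries a precise citation, the work is not to reprove these facts from scratch but to (i) identify the objects $P_n$, $Z_n$, ${\rm O}\Lambda_n$, ${\rm OPol}_n$ with those appearing in the references, and (ii) verify that the bidegree $(2,\bar 1)$ differential on $\ONH_n$ coming from our Lemma~\ref{lem:derivation-start} (equivalently Proposition~\ref{prop:differential_on_oUcat} restricted to the $\cal{E}$-strands) agrees with the one Ellis--Qi put on the odd nilHecke algebra. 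The latter compatibility check is where I would concentrate the exposition: one writes out $\partial$ on the generators $x_i$ and $\varphi_i$ of $\ONH_n$, observes that on $x_i$ it is (up to the bubble terms, which vanish inside a strand diagram with no caps/cups) $\partial(x_i) = \alpha x_i^2$ and on $\varphi_i$ it is $\partial(\varphi_i)=\alpha(\cdots)$ as forced by \eqref{def:d_crossing}, and compares with \cite[Section 3]{EllisQi}. Once the two dg-algebra structures are identified, parts (1)--(4) follow verbatim from the quoted propositions.

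Concretely, for part (1) I would invoke the isomorphism $\ONH_n\cong {\rm Mat}_{{\rm O}\Lambda_n}(P_n)$ of \cite{EKL}, upgrade it to a dg statement by transporting $\partial$ across the isomorphism to a differential on $\END_{{\rm O}\Lambda_n^{\rm op}}(Z_n)$, and cite \cite[Corollary 3.9]{EllisQi} for the fact that this is an equivalence (quasi-isomorphism) of dg-algebras. For part (2), the finite-cell property of $Z_n$ over ${\rm O}\Lambda_n$ is exactly \cite[Proposition 3.16]{EllisQi}; I would recall that a finite-cell module is built by finitely many iterated mapping cones of shifted copies of the free module, and that this is the relevant notion of ``compact'' by the discussion in Section~\ref{sec:fantastic} (see also \cite[Example 2.4]{EQ2}). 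For part (3), the acyclicity of $\ONH_n$ for $n\geq 2$ is again \cite[Proposition 3.16]{EllisQi}: the key input is that already $\ONH_2$ is acyclic — one exhibits a contracting homotopy, e.g. using that $\partial(\varphi_1 x_1) $ or a similar expression equals an invertible element times an idempotent — and then $\ONH_n$ for $n\geq 2$ contains $\ONH_2$ as a ``corner'' / is built from it, forcing acyclicity; consequently $\cal{D}(\ONH_n)\simeq 0$ because every object is quasi-isomorphic to zero. For part (4), the statement that $Z_n$ is cofibrant as a left $\ONH_n$-dg-module only for $n=0,1$ and acyclic otherwise is \cite[Proposition 3.17]{EllisQi}; here the $n\geq 2$ acyclicity is inherited from part (3) since $Z_n$ is a module over the acyclic algebra $\ONH_n$, and for $n=0,1$ one checks cofibrancy directly (these are the cases where $\ONH_n$ is itself non-acyclic and $Z_n$ admits a finite filtration with free subquotients).

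The main obstacle I anticipate is purely bookkeeping: making sure that the differential $\partial_\alpha$ of Definition~\ref{def:partial-fc}, when restricted to a diagram consisting only of upward strands with dots and crossings (no caps, cups, or bubbles, since those do not occur inside $\ONH_n$), coincides — after rescaling $\alpha$ and matching sign conventions for the $\Z_2$-grading — with the differential Ellis--Qi define on the odd nilHecke algebra, whose normalization may differ by the placement of the extra dot or a sign $(-1)^{\lfloor n/2\rfloor}$ as in \eqref{rel:rightward_dot_slide}. Once this normalization is pinned down, the rest is citation. I would therefore structure the proof as: first a short paragraph establishing the identification of dg-algebra structures, then four short paragraphs, one per item, each essentially a pointer to the corresponding result in \cite{EllisQi} with the translation made explicit. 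If one wishes to avoid even this translation, an alternative is to simply take $\partial$ on $\ONH_n$ to be, by definition, the Ellis--Qi differential and note that our Proposition~\ref{prop:differential_on_oUcat} shows it is the unique (up to scalar) derivation on the $\cal{E}$-part, so no compatibility check is logically required at all.
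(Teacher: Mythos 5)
The paper gives no proof of this theorem; it is a direct compilation of the four results of Ellis--Qi cited in each item, and your proposal correctly identifies this and amounts to the same approach. Your closing remark — that one can simply take the differential on $\ONH_n$ to be the Ellis--Qi one by definition, making the compatibility check unnecessary — is in fact exactly how the paper treats it (the differentials in Section~6 are constructed from the outset to restrict to the Ellis--Qi differential on upward-strand diagrams, so the normalization worry you flag does not arise).
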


In light of the above theorem, we denote the dg-module $Z_n$ by $\cal{E}^{(n)}_+$ as \eqref{eq:ONH} gives a dg-categorification of the divided power relation $E^n = [n]^!_{q,\pi}E^{(n)}$ in the covering algebra $\mathbf{U}_{q,\pi}$.  Likewise, one has the dg-module $\cal{E}^{(n)}_-$  which can be realized as the dg $\ONH_n$-module with a conjugate action
\[
 \cal{E}^{(n)}_- := (\cal{E}^{(n)}_+)^{\omega}
\]
where $\omega$ is defined in section~\ref{sec:oddnil}.

%
%
\subsection{The Grothendieck ring of $\mf{U}(\mf{sl}_2)$}
%

This section closely follows Section 5 of \cite{EQ2}.
In what follows we consider the graded dg 2-supercategory $\mf{U} = \mf{U}(\mf{sl}_2)$ and use the notation of the dg-supercategory $_{\mu}\mf{U}_{\l}$ and its corresponding graded dg-superalgebra $_{\mu}\mathbb{A}_{\l}$ interchangeably.  Denote the abelian category of dg-supermodules over $(\mf{U},\partial)$ by $\mf{U}_{\partial}\dmod$. It decomposes into a direct sum of dg-supercategories
\begin{equation}
  \mf{U}_{\partial}\dmod = \bigoplus_{\l,\mu}(_{\mu}\mf{U}_{\l})_{\partial}\dmod.
\end{equation}
Horizontal composition induces induction functors
\begin{align}
 (_{\l_3}\mf{U}_{\l_2} \otimes {}_{\l_2} \mf{U}_{\l_1})_{\partial}\dmod
 &\longrightarrow  ({}_{\l_3} \mf{U}_{\l_1})_{\partial}\dmod
 \\
 \cal{M} \boxtimes \cal{N} & \mapsto \Ind(\cal{M} \boxtimes \cal{N}) \nn
\end{align}
for any $\l_1,\l_2,\l_3,\l_4 \in \Z$.  At the level of derived categories, the induction functor gives rise to an exact functor
\begin{equation}
  \Ind \maps \cal{D}(\mf{U} \otimes \mf{U}, \partial) \longrightarrow \cal{D}(\mf{U},\partial)
\end{equation}
and $\cal{R}$-linear maps
\begin{equation}
  [\Ind] \maps K_0(\cal{D}(\mf{U} \otimes \mf{U}, \partial)) \to K_0(\mf{U},\partial),
\end{equation}
where the Grothendieck group of dg-2-supercateogory is defined in Section~\ref{section:Groth-dg-2cat}.

 %

 For fixed $\l \in \Z$ define
\begin{equation}
{}_{\l}\mf{U} := \bigoplus_{\mu \in \Z} {}_{\mu}\mf{U}_{\l},
\qquad \qquad
\mf{U}_{\l} := \bigoplus_{\mu \in \Z} {}_{\l}\mf{U}_{\mu}.
\end{equation}

%
%

\begin{definition}
  Fix $n\in \N$.
\begin{enumerate}
 \item The left dg-supermodule $\onel\cal{E}^{(n)}$ over $(\oUcat,\partial)$ is the module
 \[
  \onel \cal{E}^{(n)} := \Ind_{\ONH_n}^{{}_{\l}\mf{U}}(\cal{E}^{(n)}_+),
 \]
where the induction comes from the composition of inclusions of superalgebras
\[
 \ONH_n \longrightarrow \END_{{}_{\l}\mf{U}_{\l-2n}}(\onel \cal{E}^n)  \longrightarrow {}_{\l}\mf{U}_{\l-2n}
\]
for each $\l \in \Z$ given by mapping $x_i$ to an upward oriented dot on the $i$-th copy of $\cal{E}$ and $\partial_i$ to an upward oriented crossing of the $i$th and $(i+1)$st term.
 \item The left dg-supermodule $\cal{F}^{(n)}\onel$ over $(\oUcat,\partial)$ is the induced module
 \[
  \cal{F}^{(n)}\onel := \Ind_{\ONH_n}^{\mf{U}{}_{\l}}(\cal{E}^{(n)}_-),
 \]
where the induction comes from the composition of inclusions of superalgebras
\[
\ONH_n \stackrel{\omega} {\longrightarrow} \ONH_n^{{\rm sop}} \longrightarrow \END_{{}_{\l-2n}\mf{U}_{\l}}(\onel \cal{F}^n)  \longrightarrow {}_{\l-2n}\mf{U}_{\l},
\]
 where $x_i^{{\rm sop}}$  and $\partial_i^{{\rm sop}}$ correspond to a downward oriented dot and crossing, respectively.
\end{enumerate}
\end{definition}

The dg-supermodules  $\onel \cal{E}^{(n)}$ and $ \cal{F}^{(n)}\onel$ are  representable $(\mf{U},\partial)$-modules in the sense of Definition~\ref{def:rep}.

\begin{corollary} \label{cor:517}
  Fix $\l \in \Z$ and $n \in \N$.
\begin{enumerate}
  \item The  representable module $\onel \cal{E}^n$ (resp. $\cal{F}^n\onel$) admits an $n!$-step filtration whose subquotients are isomorphic to grading and parity shifts of the divided power module  $\onel \cal{E}^{(n)}$ (resp. $\cal{F}^{(n)}\onel$).

  \item The divided power modules are acyclic whenever $n \geq 2$.

  \item The dg-supermodule $\onel \cal{E}^{(n)}$ (resp. $\cal{F}^{(n)}\onel$) is cofibrant over the dg-supercategory $(_{\l}\mf{U},\partial)$ (resp. $(\mf{U}_{\l})$) for $n=0,1$, and its image in the derived category $\cal{D}(_{\l}\mf{U}, \partial)$  (resp. $\cal{D}(\mf{U}_{\l}, \partial)$) is compact.
\end{enumerate}
\end{corollary}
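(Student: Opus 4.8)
The plan is to deduce Corollary~\ref{cor:517} by transporting the dg-module structure results from Theorem~\ref{thm:dg-onh} (the Ellis--Qi facts about $\ONH_n$, $Z_n = \cal{E}^{(n)}_+$, and $\cal{E}^{(n)}_- = (\cal{E}^{(n)}_+)^{\omega}$) through the induction functor that defines $\onel\cal{E}^{(n)}$ and $\cal{F}^{(n)}\onel$. For part (1), I would first recall that as a plain (non-dg) module $\onel\cal{E}^n = \Ind_{\ONH_n}^{{}_{\l}\mf{U}}(\ONH_n)$ and that $\ONH_n \cong {\rm Mat}_{{\rm O}\Lambda_n}(P_n)$, so $\ONH_n$ admits an $n!$-step filtration by (shifts of) $P_n = Z_n$ as a left $\ONH_n$-module; this is the odd analogue of the nilHecke fact used in \cite{EQ2}. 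The content is that this filtration is compatible with $\partial$: here one invokes the fantastic-filtration machinery of Section~\ref{sec:fantastic}, namely Proposition~\ref{prop: Filtration is dg} and Theorem~\ref{thm:Fantastic_filt_eqn}, applied to the idempotents in $\ONH_n$ coming from a complete set of matrix-unit-type data $\{u_i,v_i\}$. The key point to check is that the relevant idempotents $e$ satisfy $\partial(e)=0$ and $\partial(v_iu_i)=0$, which follows because the differential $\partial_\alpha$ on $\oUqp$ restricts on the $\cal{E}$-strands to the Ellis--Qi differential on $\ONH_n$ (it sends a dot to twice a dot with the odd bubble / squared dot and kills crossings in the relevant degree), and the Ellis--Qi differential is known to be compatible with the idempotent decomposition giving $Z_n$. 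Since induction is exact and additive, applying $\Ind$ to this filtration of $\ONH_n$ yields the asserted $n!$-step filtration of $\onel\cal{E}^n$ with subquotients grading/parity shifts of $\onel\cal{E}^{(n)}$; the case of $\cal{F}^{(n)}\onel$ is identical after applying the Chevalley involution $\omega$.

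For part (2), I would argue that acyclicity of $\onel\cal{E}^{(n)}$ for $n\geq 2$ reduces to acyclicity of $\cal{E}^{(n)}_+ = Z_n$ as asserted in Theorem~\ref{thm:dg-onh}(4) (as a left $\ONH_n$-module, $Z_n$ is acyclic for $n\geq 2$), together with the fact that induction along the inclusion ${\rm Sym}[\texttt{d}]\otimes\ONH_n \to {}_{\l}\oUqp$ — which is flat, being a free module construction on the nose — preserves acyclicity. More precisely, $\onel\cal{E}^{(n)}$ is by definition $\Ind$ of $\cal{E}^{(n)}_+$ regarded through ${\rm Sym}[\texttt{d}]\otimes\ONH_n$; since the extra ${\rm Sym}[\texttt{d}]$-factor acts through the (dg) bubble algebra $\End_{\oUcat}(\onel)$ and tensoring a free module over an acyclic complex stays acyclic, the total complex $\onel\cal{E}^{(n)}$ is acyclic. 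One should be slightly careful that ${\rm Sym}[\texttt{d}]$ itself is not acyclic, so the acyclicity genuinely comes from the $Z_n$ factor; writing $\onel\cal{E}^{(n)} \cong {}_{\l}\mf{U}_{\l-2n} \otimes_{{\rm Sym}[\texttt{d}]\otimes\ONH_n} ({\rm Sym}[\texttt{d}]\otimes Z_n)$ and using that $Z_n$ is acyclic and the tensor factor is free/cofibrant over $\ONH_n$ gives the claim. This is the exact analogue of the argument in \cite[Section 5]{EQ2}.

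For part (3), cofibrancy for $n=0,1$: when $n=0$ the module $\onel\cal{E}^{(0)}=\onel$ is the representable module over $({}_{\l}\mf{U},\partial)$ hence cofibrant, and when $n=1$ Theorem~\ref{thm:dg-onh}(2)(4) gives that $Z_1 = \cal{E}^{(1)}_+ = \cal{E}_+$ is a finite-cell, cofibrant left $\ONH_1$-module; since induction along a dg inclusion sends cofibrant (finite-cell) modules to cofibrant (finite-cell) modules, $\onel\cal{E}^{(1)} = \onel\cal{E}$ is cofibrant over $({}_{\l}\mf{U},\partial)$. Compactness of its image in $\cal{D}({}_{\l}\mf{U},\partial)$ then follows because a finite-cell dg-module over a dg-category has compact image in the derived category (cf.\ the remarks in Section~\ref{sec:fantastic} and \cite[Example 2.4]{EQ2}), or equivalently because $\onel\cal{E}$ is a direct summand of the representable module $\onel\cal{E}^1$ after the fantastic filtration of part (1) splits. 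Again the $\cal{F}^{(n)}\onel$ statements follow by applying $\omega$. The main obstacle I anticipate is step (1): one must verify rigorously that the fantastic-filtration conditions of Theorem~\ref{thm:Fantastic_filt_eqn} hold for the matrix-unit data inside $\ONH_n$ with respect to $\partial_\alpha$ — i.e.\ that the explicit $\{u_i,v_i\}$ realizing $\ONH_n \cong {\rm Mat}_{{\rm O}\Lambda_n}(P_n)$ satisfy $v_i\partial_\alpha(u_j)=0$ for $j\geq i$ in a suitable order — and to see that this is precisely the computation Ellis and Qi carried out in proving Theorem~\ref{thm:dg-onh}, so that no new calculation beyond citing their work (adapted to our sign/degree conventions) is needed.
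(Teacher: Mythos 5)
Your proposal is correct and takes essentially the same route as the paper, which simply cites Theorem~\ref{thm:dg-onh} and transports the listed properties of $\cal{E}^{(n)}_+ = Z_n$ (and $\cal{E}^{(n)}_-$) through the exact, additive induction functors defining $\onel\cal{E}^{(n)}$ and $\cal{F}^{(n)}\onel$. The one worry you flag at the end — needing to independently verify the fantastic-filtration conditions for the matrix-unit data inside $\ONH_n$ — is not a real gap: that compatibility of the idempotent decomposition with $\partial$ is precisely the content of the Ellis--Qi results packaged in Theorem~\ref{thm:dg-onh} (in particular the isomorphism $(\ONH_n,\partial)\cong\END_{{\rm O}\Lambda_n^{\rm op}}(Z_n)$ and the finite-cell statement), so no new computation beyond the citation is required; your instinct to invoke their work rather than redo it is the intended argument.
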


\begin{proof}
This follows from the corresponding properties of $\cal{E}^{(n)}_+$ and $\cal{E}^{(n)}_-$ from Theorem~\ref{thm:dg-onh}.
\end{proof}

\begin{definition}
For any $a,b \geq 0$ and $\l \in \Z$, define $\cal{E}^{(a)}\cal{F}^{(b)}\onel$ to be the induced dg-module
\[
\cal{E}^{(a)}\cal{F}^{(b)}\onel := \Ind_{\mf{U}_{\l-2b} \otimes \mf{U}_{\l}}^{\mf{U}_{\l}}
 \left(
\cal{E}^{(a)}\onenn{\l-2b} \boxtimes \cal{F}^{(b)}\onel
\right),
\]
with induction defined along the inclusion
\[
\mf{U}_{\l-2b} \otimes \mf{U}_{\l} \longrightarrow  \mf{U}_{\l},
\qquad
\zeta_1 \onenn{\l-2b} \otimes \onenn{\mu}\zeta_2\onel \mapsto \delta_{\l-2b,\mu} \zeta_1 \zeta_2 \onel.
\]
The dg-supermodule $\cal{F}^{(b)}\cal{E}^{(a)}\onel$ is defined similarly.  Following \cite{EQ2} we refer to these modules as {\em canonical modules} over $\mf{U}_{\l}$.
\end{definition}

The fantastic filtrations on $\cal{E}\cal{F}\onel$ and $\cal{F}\cal{E}\onel$ established in section~\ref{sec:fantastic-EF-FE} give rise to a filtration on an arbitrary reprentable module of the form  $Q^r\Pi^{\bar{s}}\cal{E}_{\und{\epsilon}}\onel \in \mf{U}_{\l}$ by dg modules of the form $Q^{u}\Pi^{\bar{v}}\cal{E}^{a}\cal{F}^{b}\onel$ or $Q^{u}\Pi^{\bar{v}}\cal{F}^{b}\cal{E}^{a}\onel$ for $a,b \in \N$ and $u\in \Z$ and $v \in \Z_2$.
Define
\begin{align}
&\mathbb{X}_{\l} := \left\{ \cal{E}^{(a)}\cal{F}^{(b)}\onel \mid a,b \in \{0,1\}, \; \l \leq b-a\right\} \cup
  \left\{ \cal{F}^{(b)}\cal{E}^{(a)}\onel \mid a,b \in \{0,1\}, \; \l \geq b-a\right\}.
\end{align}

\begin{proposition} \label{prop:endo-equiv}
There is a derived equivalence of graded dg-supercategories
\begin{equation}
 \cal{D}(\mf{U}_{\l}) \cong \cal{D}(\END_{\mf{U}_{\l}}(\mathbb{X}_{\l}))
\end{equation}
\end{proposition}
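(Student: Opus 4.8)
The plan is to realize $\mathbb{X}_{\l}$ as a set of compact generators of the triangulated category $\cal{D}(\mf{U}_{\l})$ and then to invoke the dg-Morita (tilting) theorem for dg-categories \cite{Keller}, which identifies $\cal{D}(\mf{U}_{\l})$ with the derived category of the dg-endomorphism algebra $\END_{\mf{U}_{\l}}(\mathbb{X}_{\l})$, i.e.\ of the full dg-subcategory of $(\mf{U}_{\l})_{\partial}\dmod$ on cofibrant representatives of the objects of $\mathbb{X}_{\l}$. This is the super dg-analogue of \cite[Section~5]{EQ2}, and the structure of the argument follows that source closely.

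First I would check that every object of $\mathbb{X}_{\l}$ is compact and cofibrant in $\cal{D}(\mf{U}_{\l})$. Each canonical module $\cal{E}^{(a)}\cal{F}^{(b)}\onel$ (resp.\ $\cal{F}^{(b)}\cal{E}^{(a)}\onel$) with $a,b\in\{0,1\}$ is induced from $\cal{E}^{(a)}_+\boxtimes\cal{F}^{(b)}_-$ along a morphism of dg-categories; by Corollary~\ref{cor:517}(3) the divided power factors $\cal{E}^{(a)}$, $\cal{F}^{(b)}$ are cofibrant with compact image for $a,b=0,1$, and induction is exact, left Quillen (hence preserves cofibrancy), and has a coproduct-preserving right adjoint (restriction, hence preserves compactness). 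So the modules in $\mathbb{X}_{\l}$ are compact cofibrant objects of $\cal{D}(\mf{U}_{\l})$.

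The heart of the argument is to show that $\mathbb{X}_{\l}$ generates. The representable dg-modules $\cal{E}_{\und{\epsilon}}\onel\la t\ra$ over $(\mf{U}_{\l},\partial)$ form a set of compact generators of $\cal{D}(\mf{U}_{\l})$ (the standard fact for module categories over a small dg-category), so it suffices to place each such representable in the localizing subcategory generated by $\mathbb{X}_{\l}$. I would argue by induction on the length $m$ of the signed sequence $\und{\epsilon}$, using the fantastic filtrations of Section~\ref{sec:fantastic-EF-FE}. When $\und{\epsilon}$ contains two adjacent like symbols, the fantastic filtration of $\cal{E}^{(n)}$ (resp.\ $\cal{F}^{(n)}$), together with Corollary~\ref{cor:517}(1)--(2), exhibits $\cal{E}\cal{E}$ (resp.\ $\cal{F}\cal{F}$) occurring inside a word as a filtered object whose subquotients are grading/parity shifts of $\cal{E}^{(2)}$ (resp.\ $\cal{F}^{(2)}$), which is acyclic and hence zero in $\cal{D}$, together with shifts of strictly shorter words. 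When $\und{\epsilon}$ has an $\cal{F}$ immediately preceding an $\cal{E}$ with local weight $\geq 0$ (resp.\ an $\cal{E}$ immediately preceding an $\cal{F}$ with local weight $\leq 0$), Proposition~\ref{prop:filtration} supplies a fantastic filtration on $\cal{E}\cal{F}\onel$ (resp.\ $\cal{F}\cal{E}\onel$) with subquotients the $\partial$-stable summands $\cal{F}\cal{E}\onel$ (resp.\ $\cal{E}\cal{F}\onel$) and the identity summands $\onel\la 1-\l+2c\ra$; in the derived category this realizes the word as an iterated cone of words with fewer ``inversions.'' Iterating these two moves straightens an arbitrary representable into a finite extension in $\cal{D}(\mf{U}_{\l})$ of grading and parity shifts of $\cal{E}^{(a)}\cal{F}^{(b)}\onel$ and $\cal{F}^{(b)}\cal{E}^{(a)}\onel$ with $a,b\le 1$, and the weight constraints then pin these down to the members of $\mathbb{X}_{\l}$ (using $\cal{E}^{(a)}\cal{F}^{(b)}1_{b-a}\cong\cal{F}^{(b)}\cal{E}^{(a)}1_{b-a}$ on the boundary). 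Hence $\mathbb{X}_{\l}$ compactly generates $\cal{D}(\mf{U}_{\l})$, and Keller's theorem yields the asserted equivalence.

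The main obstacle I anticipate is the bookkeeping in this last step: the fantastic-filtration data of Proposition~\ref{prop:filtration} is written for a single $\cal{E}\cal{F}\onel$ or $\cal{F}\cal{E}\onel$, and one must check that the resulting triangles iterate consistently inside arbitrarily long words, that the ``overshoot'' divided powers with $n\ge 2$ genuinely vanish by acyclicity rather than merely being shifted away, and---because the fantastic filtration lives on $\cal{E}\cal{F}\onel$ only for $\l\geq 0$ and on $\cal{F}\cal{E}\onel$ only for $\l\leq 0$---that the reduction terminates inside $\mathbb{X}_{\l}$ in both sign regimes ($\l=0$ handled by either), exactly as in \cite{EQ2}. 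A secondary point is to make $\END_{\mf{U}_{\l}}(\mathbb{X}_{\l})$ well defined by fixing the cofibrant replacements provided by Corollary~\ref{cor:517}, so that its cohomology computes the morphism spaces in $\cal{D}(\mf{U}_{\l})$.
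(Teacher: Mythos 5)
Your argument is correct and follows the same path as the paper's proof: compactness and cofibrancy of the modules in $\mathbb{X}_{\l}$ come from Corollary~\ref{cor:517}, generation is supplied by the fantastic filtrations of Section~\ref{sec:fantastic-EF-FE} together with acyclicity of the higher divided powers, and a dg-Morita theorem for compact cofibrant generators finishes the argument. The paper compresses all of this into a short paragraph by citing \cite[Proposition 2.10]{EQ2} and the observation made immediately before the proposition (that the fantastic filtrations reduce an arbitrary representable $\cal{E}_{\und{\epsilon}}\onel\la t\ra$ to the modules $\cal{E}^{a}\cal{F}^{b}\onel$ and $\cal{F}^{b}\cal{E}^{a}\onel$), so your proposal is a faithful expansion of what the paper delegates to \cite{EQ2}.
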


\begin{proof}
The statements in Corollary~\ref{cor:517} apply to the modules $\cal{E}^{(a)}\cal{F}^{(b)}\onel$ and $\cal{F}^{(b)}\cal{E}^{(a)}\onel$; in particular, $\mathbb{X}_{\l}$ consists of compact and cofibrant modules.  Hence, \cite[Proposition 2.10]{EQ2} adapted to the super setting provides the equivalence.
\end{proof}

The cofibrance of the modules in $\mathbb{X}_{\l}$ enables us to compute the derived endomorphism ring $\cal{D}(\END_{\mf{U}_{\l}}(\mathbb{X}_{\l}))$ in the usual manner avoiding cofibrant replacement (see \eqref{eq:cofib}).   The following lemma then follows as a direct consequence of \cite[Proposition 8.2]{Lau-odd}, which shows that the for any $x,y$ in $\mathbb{X}_{\l}$
\[
 \dim \left( \Hom^k_{\mf{U}}(x,y) \right)
\;\; = \;\;
 \left\{
   \begin{array}{ll}
     0 , & \hbox{if $k<0$;} \\
     1, & \hbox{if $x=y$ and $k=0$} \\
     0, & \hbox{ if $x\neq y$ and $k=0$.}
   \end{array}
 \right.
 \]

\begin{lemma} \label{eq:strong_pos}
The endomorphism algebra $\END_{\mf{U}_\l}(\mathbb{X}_{\l})$ is a strongly positive dg-superalgebra.
\end{lemma}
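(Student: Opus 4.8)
The plan is to verify directly, for $A := \END_{\mf{U}_\l}(\mathbb{X}_\l)$ equipped with the induced differential $\partial_\alpha$ of Definition~\ref{def:partial-fc}, the three defining properties of a (strongly) positive DG-algebra in the sense of \cite{EQ2}: that $A$ is non-negatively graded with respect to the quantum $\Z$-grading, that $A^0$ is semisimple, and that $\partial_\alpha$ acts trivially on $A^0$. First I would observe that $A$ is genuinely a super DG-algebra: by Corollary~\ref{cor:517} each object of $\mathbb{X}_\l$ is a cofibrant and compact dg-module over $(\mf{U}_\l,\partial_\alpha)$, so $\partial_\alpha$ descends to a well-defined differential on the endomorphism complex and satisfies the Leibniz rule, exactly as in the preceding dg-Morita statement.

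The non-negativity of the grading and the structure of $A^0$ I would extract from \cite[Proposition 8.3]{Lau-odd}, which records the $(\Z\times\Z_2)$-graded dimensions of $\Hom_{\mf{U}_\l}(X,Y)$ for all $X,Y$ of the form $\cal{E}^{(a)}\cal{F}^{(b)}\onel$ or $\cal{F}^{(b)}\cal{E}^{(a)}\onel$. The two consequences I want to read off are: every such Hom space is supported in non-negative quantum degree, so that $A=\bigoplus_{i\geq 0}A^i$ with each $A^i$ finite-dimensional; and the quantum-degree-zero part of $\Hom(X,Y)$ vanishes when $X$ and $Y$ are distinct members of $\mathbb{X}_\l$, while $\End^0(X)=\Bbbk\cdot\id_X$. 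Since $\mathbb{X}_\l$ is a finite set, this yields $A^0\cong\prod_{X\in\mathbb{X}_\l}\Bbbk$, which is split semisimple; in particular $A$ is strongly positive.

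Finally, for the differential: $\partial_\alpha$ has bidegree $(2,\bar 1)$, hence $\partial_\alpha(A^0)\subseteq A^2$, and since $A^0$ is spanned by identity $2$-morphisms of the $1$-morphisms underlying $\mathbb{X}_\l$, on which $\partial_\alpha$ vanishes by construction, we get $\partial_\alpha|_{A^0}=0$. Together with the previous paragraph this establishes all the axioms of a strongly positive DG-algebra. The only step needing genuine care is the identification of $A^0$: one must know there are no "extra" degree-zero endomorphisms beyond scalars (for instance no odd one), but this is precisely what the dimension formula of \cite[Proposition 8.3]{Lau-odd} guarantees, since the odd bubble, dots, and all crossings contribute strictly positive quantum degree; so once that reference is in hand there is no further obstacle beyond unpacking it.
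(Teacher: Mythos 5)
Your proposal is correct and follows essentially the same route as the paper: the paper itself simply states that the lemma ``follows as a direct consequence of \cite[Proposition 8.3]{Lau-odd}'' and offers no further argument, so what you have done is to unpack exactly what that citation is supposed to deliver. You correctly read off from the dimension formula that all Hom spaces between canonical modules are concentrated in non-negative quantum degree, that distinct objects of $\mathbb{X}_\l$ have no degree-zero morphisms between them so that $A^0\cong\prod_{X\in\mathbb{X}_\l}\Bbbk$ (with $\mathbb{X}_\l$ finite), and that $\partial_\alpha$ kills $A^0$ because it annihilates identity endomorphisms of dg-modules. Two minor remarks: cofibrance is not what makes the differential on $\END(\mathbb{X}_\l)$ well defined (that holds for any dg-modules via $\partial(f)=\partial_N f-(-1)^{|f|}f\partial_M$); rather it is what ensures this underived $\END$-complex is the correct dg-Morita companion, which is how the surrounding discussion uses it. Also, your observation that $\partial_\alpha(A^0)\subseteq A^2$ is subsumed by, and weaker than, your second observation that $\partial_\alpha$ vanishes identically on identities, so it can be dropped.
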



%

\begin{corollary} \label{cor:k0-canonical}
For any weight $\l \in \Z$, the Grothendieck group $K_0(\mf{U}_{\l},\partial)$ of the graded dg-supercategory $\mf{U}_{\l}$ is isomorphic to the corresponding $R$-span of canonical basis elements
\[
 K_0(\mf{U}_{\l}) \cong \cal{R}  \la \dot{\mathbb{B}}_{\cal{R}} 1_{\l} \ra
\]
where
\[
 \dot{\mathbb{B}}_{\cal{R}} 1_{\l}:= \left\{ E^{(a)}F^{(b)}1_{\l} \mid a,b \in \{0,1\}, \; \l \leq b-a\right\} \cup
  \left\{\pi^{ab}F^{(b)}E^{(a)}1_{\l} \mid a,b \in \{0,1\}, \; \l \geq b-a\right\}.
\]
The isomorphism sends the class $\left[Q^0\Pi^{\bar{0}}\cal{E}^{(a)}\cal{F}^{(b)}\onel\right]$ or $\left[Q^0\Pi^{\overline{ab}}\cal{F}^{(b)}\cal{E}^{(a)}\onel\right]$ from $\mathbb{X}_{\l}$ to the corresponding element in $\dot{\mathbb{B}}_{\cal{R}} 1_{\l}$.
\end{corollary}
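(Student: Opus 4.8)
The plan is to assemble the statement from three ingredients already in place: the dg-Morita equivalence $\cal{D}(\mf{U}_{\l}) \cong \cal{D}(\END_{\mf{U}_{\l}}(\mathbb{X}_{\l}))$, the strong positivity of $\END_{\mf{U}_\l}(\mathbb{X}_{\l})$ from Lemma~\ref{eq:strong_pos}, and the collapse theorem for positive dg-algebras (\cite{Sch}, \cite{EQ2} Cor.~2.6) combined with the Gaussian-integer ground ring from Corollary~\ref{cor:Ko-super}. First I would invoke the derived equivalence and the positivity lemma so that $K_0(\mf{U}_{\l}) \cong K_0(\END_{\mf{U}_\l}(\mathbb{X}_{\l})) \cong K_0\big(\END_{\mf{U}_\l}(\mathbb{X}_{\l})^0\big)$, where the superscript $0$ denotes the degree-zero part. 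The degree-zero part is semisimple, so $K_0$ is the free $\cal{R}$-module on the primitive idempotents, i.e.\ on the isomorphism classes of the cofibrant generators $[\cal{E}^{(a)}\cal{F}^{(b)}\onel]$ and $[\cal{F}^{(b)}\cal{E}^{(a)}\onel]$ appearing in $\mathbb{X}_{\l}$; here I should remember to keep track of the $\Z_2$-grading via $\Pi$, which is exactly what makes the ground ring $\cal{R}=\Z[q,q^{-1},\pi]/(\pi^2-1,1+q^2\pi)$ rather than $\Z[q,q^{-1},\pi]/(\pi^2-1)$.

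The main content is then the matching of the generators of $\mathbb{X}_{\l}$ with the basis $\dot{\mathbb{B}}_{\cal{R}} 1_{\l}$ from Proposition~\ref{prop:piq-special}. Both sets are indexed identically: $\{\cal{E}^{(a)}\cal{F}^{(b)}\onel : a,b\in\{0,1\},\ \l\le b-a\} \cup \{\cal{F}^{(b)}\cal{E}^{(a)}\onel : a,b\in\{0,1\},\ \l\ge b-a\}$ on the categorical side, and the analogous list of $E^{(a)}F^{(b)}1_{\l}$ and $\pi^{ab}F^{(b)}E^{(a)}1_{\l}$ on the algebraic side, with the same convention $\E{a}\F{b}1_{b-a}=\pi^{ab}\F{b}\E{a}1_{b-a}$. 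So I would define the $\cal{R}$-module map on generators in the evident way and observe it is a bijection of bases, hence an isomorphism of free $\cal{R}$-modules. The point requiring care is that the filtrations produced by the fantastic filtrations of Section~\ref{sec:fantastic-EF-FE} do not create \emph{additional} classes: the modules $\cal{E}^{(a)}\cal{F}^{(b)}\onel$ or $\cal{F}^{(b)}\cal{E}^{(a)}\onel$ with $a\ge 2$ or $b\ge 2$ are acyclic by Corollary~\ref{cor:517}(2), hence zero in the Grothendieck group, so only $a,b\in\{0,1\}$ survive, and when $\l=b-a$ the two descriptions coincide, exactly matching the relation $\E{a}\F{b}1_{b-a}=\pi^{ab}\F{b}\E{a}1_{b-a}$.

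The one step I expect to be the genuine obstacle is establishing that there are \emph{no relations} among the surviving classes beyond those already recorded---equivalently, that the degree-zero part of $\END_{\mf{U}_\l}(\mathbb{X}_{\l})$ has exactly the predicted number of simple modules with no identifications forced by the differential. This is where \cite[Proposition 8.3]{Lau-odd} (the graded dimension count of $\Hom$'s between the modules in $\mathbb{X}_{\l}$, already used to prove Lemma~\ref{eq:strong_pos}) does the work: strong positivity guarantees $\Hom^0$ between non-isomorphic generators vanishes and $\End^0$ of each generator is $\Bbbk$, so the degree-zero part is a product of copies of $\Bbbk$ indexed precisely by the generators, and $K_0$ is free of the expected rank. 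I would also double-check the $q$- and $\pi$-grading shifts so that the isomorphism is $\cal{R}$-linear and not merely $\Z$-linear---this amounts to tracking the grading shifts in the fantastic filtration subquotients and comparing with the normalization of $\dot{\mathbb{B}}_{\cal{R}}1_{\l}$, which is routine given the conventions fixed in Section~\ref{sec:oddU}. Finally I would note that the direct-sum splitting $\mf{U}\cong\mf{U}^{\text{even}}\oplus\mf{U}^{\text{odd}}$ of Remark~\ref{rem:even-odd} matches the splitting $\U_{\cal{R}}=\U_{\cal{R}}^{\text{even}}\oplus\U_{\cal{R}}^{\text{odd}}$, so the weight-by-weight statement assembles into the full result.
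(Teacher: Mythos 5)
Your proposal is correct and follows essentially the same route the paper has set up: the dg-Morita equivalence $\cal{D}(\mf{U}_{\l}) \cong \cal{D}(\END_{\mf{U}_{\l}}(\mathbb{X}_{\l}))$, the strong positivity of the endomorphism algebra (Lemma~\ref{eq:strong_pos}), the collapse theorem $K_0(A)\cong K_0(A^0)$ for positive dg-algebras, and Corollary~\ref{cor:Ko-super} to fix the ground ring $\cal{R}$. The paper treats the statement as an immediate corollary of these ingredients without a separate proof paragraph, so your reconstruction is accurate in both content and strategy, including the observation that acyclicity of higher divided powers (Corollary~\ref{cor:517}) restricts to $a,b\in\{0,1\}$ and that the identification $\cal{E}^{(a)}\cal{F}^{(b)}\onenn{b-a}\cong\cal{F}^{(b)}\cal{E}^{(a)}\onenn{b-a}$ matches the relation $\E{a}\F{b}1_{b-a}=\pi^{ab}\F{b}\E{a}1_{b-a}$ in $\dot{\mathbb{B}}_{\cal{R}}$.
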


\begin{proof}
Proposition~\ref{prop:endo-equiv} and  Lemma~\ref{eq:strong_pos} imply that  $\cal{D}(\mf{U}_{\l})$ is equivalent to a positively graded dg-endomorphism algebra.  The result then follows by Theorem~\ref{thm:Ko-positive}.
\end{proof}

As a consequence of strong positivity we also have the following result.

\begin{corollary}
For any weights $\l_1, \l_2, \l_3, \l_4 \in \Z$, the dg-supercategories ${}_{\l_4}\mf{U}_{\l_3}$, and ${}_{\l_2}\mf{U}_{\l_1}$ have the Kunneth property
\[
 K_0 \left(( {}_{\l_4}\mf{U}_{\l_3}) \right) \otimes_{\cal{R}} K_0\left( {}_{\l_2}\mf{U}_{\l_1} \right)
 \cong
 K_0\left( {}_{\l_4}\mf{U}_{\l_3} \otimes {}_{\l_2}\mf{U}_{\l_1}\right).
\]
\end{corollary}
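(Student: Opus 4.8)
The plan is to deduce the Künneth property from strong positivity, exactly as in the even case treated in \cite{EQ2}. Recall that a strongly positive DG-algebra $A = \bigoplus_{i \geq 0} A^i$ has semisimple degree-zero part $A^0$ on which the differential vanishes, and by the theorem of \cite{Sch} (quoted above) satisfies $K_0(A) \cong K_0(A^0)$; moreover for such algebras the compact derived category is generated by the finitely many indecomposable projective $A^0$-modules, pulled back to $A$. Since by Lemma~\ref{eq:strong_pos} each $\END_{\mf{U}_\l}(\mathbb{X}_\l)$ is strongly positive, the dg-Morita equivalence $\cal{D}(\mf{U}_\l) \cong \cal{D}(\END_{\mf{U}_\l}(\mathbb{X}_\l))$ reduces the computation of $K_0$ of any hom dg-category ${}_{\mu}\mf{U}_\l$ to a finite combinatorial problem, with $K_0$ a free $\cal{R}$-module on the explicit canonical basis elements lying in that weight block, as recorded in Corollary~\ref{cor:k0-canonical}.

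The key steps, in order, are as follows. First I would observe that ${}_{\l_4}\mf{U}_{\l_3} \otimes {}_{\l_2}\mf{U}_{\l_1}$ is again dg-Morita equivalent to the tensor product DG-algebra $\END_{{}_{\l_4}\mf{U}_{\l_3}}(\mathbb{X}_{\l_3}) \otimes \END_{{}_{\l_2}\mf{U}_{\l_1}}(\mathbb{X}_{\l_1})$, restricting the modules $\mathbb{X}_{\l_i}$ to the appropriate weight-block summands; here one uses that the external tensor product of cofibrant compact modules is again cofibrant and compact, so the dg-Morita equivalence descends to the tensor product. Second, I would note that a tensor product of strongly positive DG-algebras is again strongly positive: the grading is the sum grading (hence non-negative), the degree-zero part is $A^0 \otimes B^0$ which is semisimple since $\Bbbk$ is a field (here one uses $\mathrm{char}\,\Bbbk \neq 2$ and that the relevant $A^0$, $B^0$ are matrix algebras over $\Bbbk$ coming from the positivity of the odd nilHecke setup, so no separability subtlety arises), and the differential still kills $(A\otimes B)^0$ by the Leibniz rule. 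Third, for a strongly positive DG-algebra the Grothendieck group of the compact derived category is identified with $K_0$ of the semisimple degree-zero part, so
\[
K_0\big(\cal{D}^c(A \otimes B)\big) \cong K_0(A^0 \otimes B^0) \cong K_0(A^0) \otimes_{\cal{R}} K_0(B^0) \cong K_0(\cal{D}^c(A)) \otimes_{\cal{R}} K_0(\cal{D}^c(B)),
\]
where the middle isomorphism is the classical Künneth formula for Grothendieck groups of semisimple algebras over a field (the indecomposable projectives of $A^0 \otimes B^0$ are exactly the external tensor products of those of $A^0$ and $B^0$). Combining these three identifications with Corollary~\ref{cor:k0-canonical} applied to each factor and to the product yields the claimed isomorphism $K_0({}_{\l_4}\mf{U}_{\l_3}) \otimes_{\cal{R}} K_0({}_{\l_2}\mf{U}_{\l_1}) \cong K_0({}_{\l_4}\mf{U}_{\l_3} \otimes {}_{\l_2}\mf{U}_{\l_1})$.

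The main obstacle I anticipate is bookkeeping rather than conceptual: one must check carefully that the external tensor product of the generating modules $\mathbb{X}_{\l_3} \boxtimes \mathbb{X}_{\l_1}$ really does generate the compact derived category of the tensor product 2-category and consists of cofibrant compact objects, so that \cite[Proposition 2.10]{EQ2} (the dg-Morita criterion) applies verbatim; this is where the super signs in the $(Q,\Pi)$-envelope and the fact that the differential has bidegree $(2,\bar 1)$ could in principle cause trouble, though since the filtration arguments of Section~\ref{sec:fantastic-EF-FE} already produced these modules as iterated extensions of shifts of canonical modules, cofibrancy and compactness are inherited. A secondary point to verify is that $K_0$ of a strongly positive DG-algebra over $\cal{R}$ — which is not a field but $\Z[q,q^{-1},\pi]/(\pi^2-1,1+q^2\pi)$ — still behaves like $K_0$ of the semisimple $A^0$; this is fine because the relevant $A^0$ are products of matrix algebras over $\Bbbk$ and the $\cal{R}$-module structure on $K_0$ comes only from the grading shifts $\la 1 \ra$ and parity shift $\Pi$, which act freely, so the Künneth isomorphism is $\cal{R}$-linear by construction.
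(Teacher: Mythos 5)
Your proposal is correct and follows essentially the same route as the paper: the paper's entire proof is to invoke Lemma~\ref{eq:strong_pos} together with \cite[Corollary 2.22]{EQ2} specialized to $p=2$, and what you have written is in effect a detailed reconstruction of the argument underlying that cited corollary (dg-Morita reduction to the strongly positive endomorphism algebra, stability of strong positivity under tensor product, and the K\"unneth formula for the semisimple degree-zero part). The paper treats those steps as a black box; you fill them in, which is a reasonable thing to do without the reference at hand, but conceptually the two proofs coincide.
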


\begin{proof}
Using Lemma~\ref{eq:strong_pos} and \cite[Corollary  2.22]{EQ2} at $p=2$ the result follows.
\end{proof}

It follows that $K_0(\mf{U},\partial)$ is idempotented $\cal{R}$-algebra, with multiplication given by the induction functor:
\[
[Ind] \maps K_0(\mf{U}) \otimes_{\cal{R}} K_0(\mf{U}) \longrightarrow K_0(\mf{U}).
\]

\begin{theorem} \label{thm:main}
  There is an isomorphism of $\cal{R}$-algebras
\begin{equation}
  \U_{\cal{R}}    \longrightarrow K_0(\mf{U},\partial)
\end{equation}
that sends $E1_{\l} \mapsto [Q^0\Pi^{\bar{0}}\cal{E}\onel]$ and $1_{\l}F \mapsto [Q^0\Pi^{\bar{0}}\onel\cal{F}]$ for any weights $\l \in \Z$.
\end{theorem}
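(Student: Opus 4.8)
The plan is to construct the algebra homomorphism $\U_{\cal{R}}\to K_0(\mf{U},\partial)$ using the presentation of $\U_{\cal{R}}$ from Proposition~\ref{prop:piq-special} and then show it is an isomorphism by comparing it against the canonical basis identification in Corollary~\ref{cor:k0-canonical}. First I would observe that $K_0(\mf{U},\partial)$ is an idempotented $\cal{R}$-algebra with multiplication given by $[\Ind]$, where the idempotents $1_{\l}$ correspond to $[\onel]$; this uses the Künneth property established just above and the positivity Lemma~\ref{eq:strong_pos}. To define the map on generators, I send $E1_{\l}\mapsto [\cal{E}\onel]$, $F1_{\l}\mapsto [\onel\cal{F}]$, and $1_{\l}\mapsto[\onel]$, and then verify that the defining relations (i)--(iv) of Proposition~\ref{prop:piq-special} hold in $K_0(\mf{U},\partial)$.

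Relations (i) and (ii) are immediate from the 2-category structure (orthogonality of the $\onel$ and the weight-shift conventions for $\cal{E}$ and $\cal{F}$). Relation (iv), $E^2=F^2=0$, follows from Corollary~\ref{cor:517}(2): the divided power module $\onel\cal{E}^{(2)}$ (resp. $\cal{F}^{(2)}\onel$) is acyclic, hence zero in the derived category, and since $[\cal{E}\cE\onel] = [2]_{\cal{R}}[\cal{E}^{(2)}\onel]$ and $[2]_{\cal{R}}=0$ by \eqref{eq:Rint}, or more directly since $\cal{E}\cE\onel$ itself admits a filtration with acyclic subquotients, we get $[\cal{E}\cE\onel]=0$. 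The covering $\mf{sl}_2$ relation (iii), $EF1_{\l}-\pi FE1_{\l}=[\l]_{\cal{R}}1_{\l}$, is the crucial one: it comes from the fantastic filtrations on $\cal{E}\cal{F}\onel$ and $\cal{F}\cal{E}\onel$ constructed in Proposition~\ref{prop:filtration} via Theorem~\ref{thm:Fantastic_filt_eqn}. Concretely, for $\l\geq 0$ the filtration on $\cal{E}\cal{F}\onel$ with data $\{u_i,v_i\}_{i\in I}$, $I=\{0,\dots,\l\}$, gives in $K_0$ the relation $[\cal{E}\cal{F}\onel] = [\cal{F}\cal{E}\onel] + \sum_{c=0}^{\l-1}[\onel\la 1-\l+2c\ra]$, where the summands $v_iu_i$ for $i<\l$ are identity-like $2$-morphisms on grading-shifted copies of $\onel$ and the $i=\l$ term recovers $[\cal{F}\cal{E}\onel]$; accounting for the parity shifts $\Pi$ in the grading conventions and using $[\onel\Pi^a\la t\ra] = \pi^a q^t[\onel]$ together with $\sum_{c=0}^{\l-1}q^{1-\l+2c}\pi^{?}= [\l]_{\cal{R}}$, this yields exactly relation (iii) with the sign $\pi$ appearing from the $\Pi$-shift inherent in the cap/cup degrees. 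The $\l\leq 0$ case is symmetric using the filtration on $\cal{F}\cal{E}\onel$.

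Having a well-defined algebra map, surjectivity follows because $K_0(\mf{U},\partial) = \bigoplus_{\l}K_0(\mf{U}_{\l})$ and by Corollary~\ref{cor:k0-canonical} each $K_0(\mf{U}_{\l})$ is spanned over $\cal{R}$ by the classes of the modules in $\mathbb{X}_{\l}$, which are precisely the images of the basis $\dot{\mathbb{B}}_{\cal{R}}1_{\l}$ under our map. Injectivity then follows by a rank count: the target $K_0(\mf{U}_{\l})\cong \cal{R}\la\dot{\mathbb{B}}_{\cal{R}}1_{\l}\ra$ is free of the same rank as $\U_{\cal{R}}1_{\l}$ by Proposition~\ref{prop:piq-special}, and the map carries basis to basis, hence is an $\cal{R}$-module isomorphism on each weight block, hence a ring isomorphism overall. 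The main obstacle I anticipate is bookkeeping the precise powers of $q$ and $\pi$ coming from the grading and parity shifts in the fantastic filtration of Proposition~\ref{prop:filtration} --- matching $[\Ind]$ applied to the subquotients $\cal{F}\cal{E}\onel\langle s\rangle$ and $\onel\langle s\rangle$ against $[\l]_{\cal{R}}$ with its $\delta_{\l,\text{odd}}$ behavior from \eqref{eq:Rint}; once the degrees of $u_i,v_i$ and the parity shift on the cup/cap (which is \emph{not} the mod $2$ reduction of the $\Z$-degree) are tracked carefully, the $\pi$ in the super-commutator relation \eqref{eq:cov-quot-rel3} should drop out automatically, but verifying this requires care with the $(Q,\Pi)$-envelope conventions from Section~\ref{sec:QPi-envelope}.
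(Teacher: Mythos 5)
Your proposal is correct and takes essentially the same route as the paper: verify relation \eqref{eq:cov-quot-rel3} via the fantastic filtrations of Proposition~\ref{prop:filtration}, verify relation \eqref{eq:cov-quot-rel4} via the acyclicity of divided power modules from Corollary~\ref{cor:517}, and conclude isomorphism because the map carries the basis $\dot{\mathbb{B}}_{\cal{R}}1_{\l}$ bijectively onto the basis of $K_0(\mf{U}_{\l})$ from Corollary~\ref{cor:k0-canonical}. The $q$- and $\pi$-bookkeeping you flag in your final paragraph is precisely the step the paper delegates to the Elias--Qi convolution-diagram machinery (their Remark 2.7 and Theorem 6.11) rather than carrying out explicitly.
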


\begin{proof}
We first must show that the defining relations for $\U_{\cal{R}}$ hold in $K_0(\mf{U},\partial)$.  The nontrivial relations from Proposition~\ref{prop:piq-special} to check are \eqref{eq:cov-quot-rel3} and \eqref{eq:cov-quot-rel4}. The fantastic filtrations on $\cal{E}\cal{F}\onel$ and $\cal{F}\cal{E}\onel$  from Proposition~\ref{prop:filtration} give rise to convolution diagrams establishing $\eqref{eq:cov-quot-rel3}$ in $\cal{D}(\mf{U},\partial)$ on the corresponding representable modules, see \cite[Remark 2.7, Theorem 6.11]{EQ2}.    Relation \eqref{eq:cov-quot-rel4} follows from the acyclicity results in Corollary~\ref{cor:517}.  The resulting homomorphism of algebras is an isomorphism because it sends $ \dot{\mathbb{B}}_{\cal{R}}1_{\l}$ to the symbols of modules in $\mathbb{X}_{\l}$ which form a basis for  $ K_0(\mf{U},\partial)$ by Corollary~\ref{cor:k0-canonical}.
\end{proof}

\begin{corollary}
 \label{cor:main}
The map sending $E1_{\l} \mapsto [Q^0\Pi^{\bar{0}}\cal{E}\onel]$ and $1_{\l}F \mapsto [Q^0\Pi^{\bar{0}}\onel\cal{F}]$ for any weights $\l \in \Z$ defines
\begin{enumerate}[(i)]
  \item  an isomorphism of $\Z[\sqrt{-1}]$-algebras
\begin{equation}
  \dot{u}_{\Z[\sqrt{-1}]}(\mf{sl}(2)) \longrightarrow K_0(\mf{U},\partial)|_{\pi=1}
\end{equation}
at $\pi=1$, and
\item  an isomorphism of $\Z$-algebras
\begin{equation}
  \U_{\cal{R}}|_{\pi=-1} \longrightarrow K_0(\mf{U},\partial)|_{\pi=-1}
\end{equation}
at $\pi=-1$, where $\U_{\cal{R}}|_{\pi=-1}$ is a $\Z$-subalgebra of $\U(\mf{sl}(1|1))$ by Proposition~\ref{prop:gl}.
\end{enumerate}
\end{corollary}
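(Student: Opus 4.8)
The statement will follow directly from Theorem~\ref{thm:main} once combined with the ring-theoretic identifications collected in Sections~\ref{sec:small} and~\ref{sec:qless}, so the plan is essentially to base change the main theorem along the two specializations of the ground ring. Recall that Theorem~\ref{thm:main} gives an isomorphism of $\cal{R}$-algebras $\U_{\cal{R}} \longrightarrow K_0(\mf{U},\partial)$ sending $E1_{\l} \mapsto [\cal{E}\onel]$ and $1_{\l}F \mapsto [\onel\cal{F}]$, where $\cal{R} = \Z[q,q^{-1},\pi]/(\pi^2-1,\,1+q^2\pi)$. I would first record, as in Corollary~\ref{cor:Ko-super} and the computation of $K_0(\cal{D}(\Bbbk))$ in Section~\ref{sec:gaussian}, that the quotient rings $\cal{R}/(\pi-1)$ and $\cal{R}/(\pi+1)$ are $\Z[\sqrt{-1}]$ and $\Z$ respectively, and that by definition $K_0(\mf{U},\partial)|_{\pi=\pm 1}$ and $\U_{\cal{R}}|_{\pi=\pm1}$ denote the base changes $(-)\otimes_{\cal{R}}\cal{R}/(\pi\mp1)$. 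Since Theorem~\ref{thm:main} is an isomorphism of $\cal{R}$-algebras, tensoring it over $\cal{R}$ with $\cal{R}/(\pi\mp1)$ is formal and produces isomorphisms $\U_{\cal{R}}|_{\pi=\pm1} \cong K_0(\mf{U},\partial)|_{\pi=\pm1}$ still given on generators by the same formulas.

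For part~(i) I would then identify $\U_{\cal{R}}|_{\pi=1}$ with the small quantum group. This is precisely the content of the final Proposition of Section~\ref{sec:small}, which itself follows by combining Proposition~\ref{prop:pi-special} (the quotient $\U_{q,\pi}/\langle\pi-1\rangle$ is the quantum group $\U(\mf{sl}_2)$) with the $q$-specialized presentation of Proposition~\ref{prop:piq-special} (so that the further specialization $q=\sqrt{-1}$, using $E^2=F^2=0$ and the integers $[n]_{\cal{R}}$, yields $\dot{u}_{\Z[\sqrt{-1}]}(\mf{sl}(2))$). Composing this with the base-changed Theorem~\ref{thm:main} gives the desired isomorphism $\dot{u}_{\Z[\sqrt{-1}]}(\mf{sl}(2)) \to K_0(\mf{U},\partial)|_{\pi=1}$ sending $E1_{\l}\mapsto[\cal{E}\onel]$ and $1_{\l}F\mapsto[\onel\cal{F}]$. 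For part~(ii) the base-changed theorem directly gives the isomorphism of $\Z$-algebras $\U_{\cal{R}}|_{\pi=-1}\cong K_0(\mf{U},\partial)|_{\pi=-1}$ on the same generators, and by Proposition~\ref{prop:gl} the source decomposes as $\U_{\cal{R}}^{\text{even}}|_{\pi=-1}\oplus\U_{\cal{R}}^{\text{odd}}|_{\pi=-1}\cong \U(\mf{sl}(1|1))1_0\oplus\U(\mf{sl}(1|1))1_1$, which is the $\Z$-subalgebra of $\U(\mf{sl}(1|1))$ asserted in the statement.

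In short, the genuine mathematical content has already been discharged in Theorem~\ref{thm:main} — which in turn relies on the fantastic filtrations of Section~\ref{sec:fantastic-EF-FE} and the acyclicity and cofibrancy of the divided power modules from Corollary~\ref{cor:517} — so no new obstacle arises here. The only points meriting a sentence of care are: (a) that forming the Grothendieck group commutes with scalar specialization $\cal{R}\to\cal{R}/(\pi\mp1)$, which is automatic because the isomorphism of Theorem~\ref{thm:main} is $\cal{R}$-linear; and (b) that at $\pi=-1$ one has $q^2=1$ in $\cal{R}/(\pi+1)$, so by \eqref{eq:Rint} the structure constants $[n]_{\cal{R}}$ collapse to $\delta_{n,\text{odd}}$ and become independent of $q$, which is exactly what lets $K_0(\mf{U},\partial)|_{\pi=-1}$ be an honest $\Z$-algebra matching Proposition~\ref{prop:gl}.
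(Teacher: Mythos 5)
Your proposal is correct and matches the paper's implicit argument: the corollary is stated without a separate proof precisely because it follows from Theorem~\ref{thm:main} by formally base changing along $\cal{R}\to\cal{R}/(\pi\mp1)$ and then invoking the identifications in Sections~\ref{sec:small} and~\ref{sec:qless}. You also correctly pinpoint the only points requiring a word of care, namely that $\cal{R}$-linearity of the isomorphism makes specialization automatic and that $[n]_{\cal{R}}$ becomes $q$-independent at $\pi=-1$.
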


%

\bibliographystyle{plain}
\bibliographystyle{amsalpha}
\bibliography{bib_odd-diff-new}
 %
%

\end{document}